\numberwithin{equation}{section}
\theoremstyle{plain}
\newtheorem{theorem}[equation]{Theorem}
\newtheorem{lemma}[equation]{Lemma}
\newtheorem{proposition}[equation]{Proposition}
\newtheorem{corollary}[equation]{Corollary}
\newtheorem{introtheorem}{Theorem}
\theoremstyle{definition}
\newtheorem{definition}[equation]{Definition}
\newtheorem{variant}[equation]{Variant}
\newtheorem{construction}[equation]{Construction}
\newtheorem{example}[equation]{Example}
\newtheorem{remark}[equation]{Remark}
\newtheorem{warning}[equation]{Warning}
\newtheorem*{nwarning}{Warning}
\newtheorem{notation}[equation]{Notation}
\setlist[itemize]{itemsep=3pt, topsep=3pt, labelindent=1ex, itemindent=0mm, labelsep=1.2ex, leftmargin=*}
\setlist[enumerate]{itemsep=3pt, topsep=3pt, labelindent=1ex, itemindent=0mm, labelsep=1.2ex, leftmargin=*, label=(\arabic*)}
\newcommand{\mf}[1]{\mathfrak{#1}}
\newcommand{\ul}[1]{\underline{#1}}
\newcommand{\mb}[1]{\mathbf{#1}}
\newcommand{\mm}[1]{\mathrm{#1}}
\newcommand{\cat}[1]{
\StrLen{#1}[\mystrlen]
\ifnum\mystrlen=1 \mathscr{#1}
\else \mathrm{#1}
\fi} 
\newcommand{\scat}[1]{\mathsf{#1}} 
\newcommand{\modcat}[1]{\mathsf{#1}} 
\newcommand{\hcat}[1]{\mb{#1}} 
\newcommand{\dcat}[1]{\mathbb{#1}}
\newcommand{\sSet}[0]{\cat{sSet}}
\newcommand{\sS}[0]{\cat{S}}
\newcommand{\Span}[0]{\cat{Span}}
\newcommand{\colim}{\operatornamewithlimits{\mathrm{colim}}}
\newcommand{\rto}[1]{\stackrel{#1}{\rt}}
\newcommand{\rt}[0]{\longrightarrow}
\newcommand{\lt}[0]{\longleftarrow} 
\newcommand{\hooklongrightarrow}{\lhook\joinrel\longrightarrow}
\newcommand{\ho}[0]{\mathrm{ho}}
\newcommand{\core}[0]{\mf{c}}
\newcommand{\Map}[0]{\mm{Map}}
\newcommand{\Fun}[0]{\cat{Fun}}
\newcommand{\Aut}[0]{\mm{Aut}}
\newcommand{\rN}[0]{\mm{N}}
\newcommand{\Del}[0]{\mb{\Delta}}
\newcommand{\op}[0]{\mm{op}}
\newcommand{\cart}[0]{\mm{cart}}
\newcommand{\Seg}[0]{\mm{Seg}}
\newcommand{\enr}[0]{\mm{enr}}
\newcommand{\Ar}[0]{\mathbb{A}\mm{r}}
\newcommand{\Tw}[0]{\mathbb{T}\mm{w}}
\newcommand{\Cat}[0]{\cat{Cat}}
\newcommand{\dCat}[0]{\cat{Cat}_d}
\newcommand{\dfCat}[0]{\cat{Cat}^{\otimes d}}
\newcommand{\DCat}[0]{\cat{Cat}^{\otimes 2}}
\newcommand{\bisAn}[0]{\cat{bis}\sS}
\newcommand{\CAT}[0]{\cat{CAT}}
\newcommand{\SegCoPSh}[0]{\cat{coPSh}^{\mm{Seg}}}
\newcommand{\Cocart}[0]{\cat{Cocart}}
\newcommand{\SegS}[0]{\mathsf{CatAlg}}
\newcommand{\CompSegS}[0]{\mathsf{Cat}}
\newcommand{\SegCoPShproj}[0]{\mathsf{coPSh}^{\mm{Seg, proj}}}
\newcommand{\SegCoPShinj}[0]{\mathsf{coPSh}^{\mm{Seg, inj}}}
\newcommand{\EnrFun}[0]{\mathsf{EnrFun}}
\newcommand{\vertcolor}{red!70!black}
\newcommand{\vto}[0]{\begin{tikzcd}[cramped, ampersand replacement=\&, column sep=1.3pc] {}\arrow[r, \vertcolor] \& {}\end{tikzcd}}
\title{On straightening for Segal spaces}
\author{Joost Nuiten}
\email{joost.nuiten@math.univ-toulouse.fr}
\address{IMT, Universit\'e Toulouse 3 Paul Sabatier, France.}
\begin{document}

\begin{abstract}
The straightening-unstraightening correspondence of Grothendieck and Lurie \hbox{provides an equivalence} between cocartesian fibrations between $(\infty, 1)$-categories and diagrams of $(\infty, 1)$-categories. We provide an alternative proof of this correspondence, as well as an extension of straightening-unstraightening to all higher categorical dimensions. This is based on an explicit combinatorial result relating two types of fibrations between double categories, which can be applied inductively to construct the straightening of a cocartesian fibration between higher categories.
\end{abstract}

\maketitle

\setcounter{tocdepth}{1}
\tableofcontents

\section{Introduction}
The Grothendieck construction, or \emph{unstraightening}, is a standard procedure in category theory that associates to a diagram of categories $F\colon \cat{C}\rt \Cat$ a map of categories $\pi\colon \mm{Un}(F)\rt \cat{C}$ whose fibers are precisely the values of the diagram $F$. This maneuver does not result in a loss of information: the unstraightening functor
$$\begin{tikzcd}
\mm{Un}\colon \Fun(\cat{C}, \Cat)\arrow[r] & \Cat/\cat{C}
\end{tikzcd}$$
is an embedding, whose image is the subcategory of \emph{cocartesian fibrations} over $\cat{C}$ and maps between them that preserve cocartesian arrows \cite{sga1}. 

The practical consequences of this result are twofold. On the one hand, the Grothendieck construction also provides an equivalence between cocartesian fibrations over $\cat{C}$ and pseudofunctors $F\colon \cat{C}\rt \Cat$. From this point of view, cocartesian fibrations provide a convenient way to encode coherent diagrams of categories, which can be rectified to strict diagrams via the inverse of unstraightening, aptly called \emph{straightening}. Consequently, the language of fibrations starts to play an essential role in the homotopy coherent setting of $(\infty, 1)$-categories, where a version of straightening and unstraightening is available as well \cite{lur09} (see also \cite{rie18}). Indeed, many homotopy coherent diagrams of $(\infty, 1)$-categories are most naturally described in terms of cocartesian fibrations.

On the other hand, the unstraightening of a diagram $F\colon \cat{C}\rt \Cat$ is convenient to describe ``lax constructions'' with categories, stemming from the fact that $\mm{Un}(F)$ is the oplax colimit of $F$ \cite{gep17}. For example, a lax natural transformation between two diagrams $F$ and $G$ can be described by a map $\mm{Un}(F)\rt \mm{Un}(G)$ over $\cat{C}$ that need not preserve cocartesian arrows (see e.g.\ \cite{gag20, hau21} for a discussion in the $\infty$-categorical setting).

\medskip

The purpose of this text is to establish a version of straightening and unstraightening for $(\infty, d)$-categories. Recall that at the moment, the theory of $(\infty, d)$-categories is only in the first stages of its development, in particular when compared to $(\infty, 1)$-category theory; it is already not straightforward to compare the various different homotopy-theoretic models for $(\infty, d)$-categories (see e.g.\ \cite{sim12} for a textbook account and \cite{bar21} for a more recent state of affairs). Nonetheless, $(\infty, d)$-categories have been of interest in various areas of mathematics, also outside of the realm of category theory itself. For example, higher categories play a role in (derived) algebraic geometry via the six functor formalism ($d=2$) \cite{gai16, mac21} and other constructions involving correspondences \cite{cal21}, while in topology they arise in factorization homology \cite{aya18, sch14} and notably in topological field theories \cite{cal19, lur10}. 

Given the central place of the (un)straightening correspondence in $(\infty, 1)$-category theory, it seems fair to say that the lack of a higher-categorical version of this correspondence has been one of the main problems in the theory of $(\infty, d)$-categories (henceforth simply referred to as \emph{$d$-categories}). For instance, the unstraightening procedure could be useful in the study of lax natural transformations between diagrams of $d$-categories and in the study of (lax) limits and colimits. Our main results establish such a procedure and show that it is essentially unique.

To this end, we define for each $(d+1)$-category $\cat{C}$ two certain (non-full) subcategories
$$
\Cocart_d(\cat{C})\subseteq \Cat_{d+1}/\cat{C}\qquad \qquad \cat{Cart}_d(\cat{C})\subseteq \Cat_{d+1}/\cat{C}
$$
whose objects we refer to as \emph{$d$-cocartesian fibrations} and \emph{$d$-cartesian fibrations}; the two notions are equivalent under reversing the directions of morphisms in every categorical dimension. 
The fibers of a $d$-cocartesian fibration are $d$-categories; in particular, a $0$-cocartesian fibration between $1$-categories is simply a left fibration between $1$-categories \cite{lur09}. Furthermore, a $1$-cocartesian fibration between $1$-categories, viewed as $2$-categories with only invertible $2$-morphisms, is simply a cocartesian fibration in the usual sense. 

More generally, a map of $(d+1)$-categories $\pi\colon \cat{D}\rt \cat{C}$ is roughly said to be a $d$-cocartesian fibration if it induces $(d-1)$-cartesian fibrations on mapping $d$-categories and if every $1$-morphism in $\cat{C}$ admits enough cocartesian lifts (in an enriched sense, see Section \ref{sec:cocart fib} for more details). For $d=2$, the notion of a $2$-cocartesian fibration between $(2, 2)$-categories has also been considered (in a different variance) by Hermida \cite{her99} (see also \cite{bak11, buc14}). More recently, $2$-cocartesian fibrations between $(\infty, 2)$-categories have been described in terms of inner cocartesian $2$-fibrations between scaled simplicial sets \cite{gag21}.

Our main result is then the following:
\begin{introtheorem}[Theorem \ref{thm:unstraightening}]\label{introthm:A}
For each $(d+1)$-category $\cat{C}$, there exist equivalences of $1$-categories (here $\core_1$ takes the $1$-category underlying a $(d+1)$-category)
$$\begin{tikzcd}[row sep=0.1pc]
\mm{Un}\colon \core_1\Fun_{d+1}(\cat{C}, \hcat{Cat}_d)\arrow[r, yshift=1ex, "\sim"{swap}]& \Cocart_d(\cat{C})\colon \mm{St}\arrow[l, yshift=-1ex] \\
\mm{Un}\colon \core_1\Fun_{d+1}(\cat{C}^{\op}, \hcat{Cat}_d)\arrow[r, yshift=1ex, "\sim"{swap}]& \cat{Cart}_d(\cat{C})\colon \mm{St}\arrow[l, yshift=-1ex]
\end{tikzcd}$$
between $d$-cocartesian fibrations over $\cat{C}$ and $(d+1)$-functors $\cat{C}\rt \hcat{Cat}_d$ to the $(d+1)$-category of $d$-categories, resp.\ between $d$-cartesian fibrations and $(d+1)$-functors $\cat{C}^{\op}\rt \hcat{Cat}_d$. 
\end{introtheorem}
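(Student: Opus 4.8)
The plan is to prove the theorem by induction on $d$, peeling off one categorical dimension at each step by means of an explicit combinatorial comparison of fibrations over double categories, as the abstract advertises. It is convenient to split the statement into the composite of two equivalences and to prove both by the same inductive mechanism: one step identifies $\Cocart_d(\cat{C})$ with a $1$-category of \emph{Segal copresheaves} of $d$-categories on $\cat{C}$ (this becomes Corollary~\ref{cor:left fibs are copresheaves}), and the other identifies Segal copresheaves on $\cat{C}$ with $(d+1)$-functors $\cat{C}\rt\hcat{Cat}_d$ (this becomes Theorem~\ref{thm:unstraightening}); the cartesian case is the same argument over $\cat{C}^\op$. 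I would run the induction with the cartesian and cocartesian cases bundled together, since the definition of a $d$-cocartesian fibration invokes $(d-1)$-\emph{cartesian} fibrations on mapping $d$-categories, so the two equivalences in the statement are genuinely entangled.

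Before the induction I would fix models: a $(d+1)$-category is presented as a complete Segal object $\cat{C}_\bullet\colon\Del^\op\rt\Cat_d$, i.e.\ a category enriched in $(\Cat_d,\times)$, and a Segal copresheaf valued in $d$-categories is the corresponding combinatorial gadget over such a $\cat{C}_\bullet$; the subcategory $\Cocart_d(\cat{C})\subseteq\Cat_{d+1}/\cat{C}$ is set up in Section~\ref{sec:cocart fib} precisely so that the Grothendieck-style totalization of a Segal copresheaf lands in it.

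The base case $d=0$ is standard: left (resp.\ right) fibrations of Segal spaces over a $1$-category $\cat{C}$ are equivalent to copresheaves (resp.\ presheaves) of spaces on $\cat{C}$ — the straightening theorem for $(\infty,1)$-categories in the Segal-space model, where $\hcat{Cat}_0=\sS$. For the inductive step, assuming the theorem for $d$-categories, I would write the given $(d+1)$-category $\cat{C}$ as a complete Segal object $\cat{C}_\bullet$ in $\Cat_d$ and unwind what a $d$-cocartesian fibration $\pi\colon\cat{D}\rt\cat{C}$ amounts to: on mapping $d$-categories it restricts to $(d-1)$-cartesian fibrations, which straighten by the inductive hypothesis, while what remains is the datum of cocartesian lifts of the $1$-morphisms of $\cat{C}$ (in the enriched sense), organized along the simplicial direction of $\cat{C}_\bullet$. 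The explicit combinatorial result — proven by a direct analysis of the relevant diagrams in $\Del$ — is, roughly, an equivalence between two kinds of fibrations over a double category $\cat{E}_\bullet$: those maps $\cat{F}_\bullet\rt\cat{E}_\bullet$ of double categories that are cocartesian fibrations in the enriching direction and left fibrations (in the Segal sense) in the simplicial direction, versus the Segal copresheaves of $1$-categories on $\cat{E}_\bullet$. Feeding the inductive hypothesis into the values of this comparison (promoting ``$1$-category'' to ``$(d-1)$-category'') and applying it along $\cat{C}_\bullet$ yields the mutually inverse functors $\mm{Un}$ and $\mm{St}$ over $\cat{C}$; rerunning the argument over $\cat{C}^\op$ gives the cartesian variant, which is the source of the change of variance in the statement.

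The hard part, beyond establishing the double-category comparison itself, is to make the induction yield an equivalence of $1$-categories — not merely a bijection on equivalence classes — and to keep it so at every stage. One must check that all constructions are natural in $\cat{C}$, i.e.\ compatible with base change along $(d+1)$-functors; that cocartesian-arrow-preserving maps on the fibration side are matched with natural transformations on the copresheaf side; and, most delicately, that the Segal and completeness conditions are transported faithfully through each step — that the comparison restricts correctly from the level of $\SegS$ to that of $\CompSegS$. Carrying the cartesian/cocartesian duality cleanly through the mapping-$d$-category recursion is the attendant bookkeeping.
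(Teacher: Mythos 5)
Your factorization of the statement into two equivalences --- $d$-cocartesian fibrations $\simeq$ Segal copresheaves (the combinatorial half) and Segal copresheaves $\simeq$ $(d+1)$-functors $\cat{C}\rt\hcat{Cat}_d$ (the rectification half, due to Boavida) --- is exactly the paper's decomposition, and you correctly identify that the engine is a comparison of two fibration types over double categories. But your induction scheme is not the one the paper runs, and as written it has a gap. The paper does \emph{not} induct on the categorical dimension $d$ with the straightening theorem for $(d-1)$-categories as inductive hypothesis. Instead it keeps the entire $(d+1)$-fold simplicial space $X\rt\cat{C}$ in view at all times and performs a $d$-step iteration \emph{within} fixed $d$: it introduces interpolating categories $\cat{Fib}_d^{(k)}$ of ``$k$-th stage fibrations'' ($k=0,\dots,d$), where the $0$-th stage contains the Segal copresheaves and the $d$-th stage contains the $d$-cocartesian fibrations, and each step $\cat{Fib}_d^{(k-1)}\simeq\cat{Fib}_d^{(k)}$ is the single double-category equivalence $\Psi^\perp$ of Theorem~\ref{thm:main theorem} applied in the $k$-th and $(k+1)$-st simplicial coordinates (with a parity alternation between (left,\,cart) and (cocart,\,right) that absorbs the cartesian/cocartesian entanglement you rightly worry about). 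No straightening of any inner layer ever occurs; the only place enriched functors enter is once, at stage $0$, via the rectification theorem.

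The gap in your version is the reassembly step. You propose to straighten each map of mapping $d$-categories $\Map_{\cat{D}}(x_0,x_1)\rt\Map_{\cat{C}}(px_0,px_1)$ by the inductive hypothesis and then glue these straightened presheaves together with the cocartesian lifts of $1$-morphisms. But the composition laws of $\cat{D}$ are strong maps of $(d-1)$-cartesian fibrations, and turning the fiberwise straightenings into coherent composition data for a functor $\cat{C}\rt\hcat{Cat}_d$ requires a straightening equivalence that is natural and monoidal to all orders --- which is essentially the theorem you are trying to prove, one dimension down but with strictly more structure than the $1$-categorical inductive hypothesis supplies. ``Feeding the inductive hypothesis into the values of the comparison'' does not resolve this: the paper's $\Psi^\perp$ is a pure reindexing of multi-simplicial spaces (its effect on fibers is literally to swap two simplicial directions, Proposition~\ref{prop:why its called reflection}), precisely so that no coherence data ever has to be reconstructed. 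Separately, your base case invokes the standard straightening of left fibrations in the Segal-space model, whereas the paper deliberately avoids Lurie's theorem and derives even the $d=0$ case from Boavida's Quillen-equivalence argument; and the cartesian variant is obtained in the paper by taking opposites in all dimensions (Remark~\ref{rem:cart case}), not by a parallel induction.
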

Our proof of Theorem \ref{introthm:A} \emph{only relies on the straightening equivalence for left} (i.e.\ $0$-cocartesian) \emph{fibrations}, which we recall in Section \ref{sec:copresheaves}. In particular, it provides an independent proof of the straightening-unstraightening equivalence for cocartesian fibrations of Lurie \cite{lur09}. In fact, we will use an inductive argument to deduce Theorem \ref{introthm:A} from a slight generalization of the $1$-categorical straightening equivalence, Theorem \ref{introthm:B} below.

To facilitate this inductive argument, we make use of the model for $(d+1)$-categories given by iterated complete Segal spaces \cite{bar21}. Recall that these are certain types of $(d+1)$-fold simplicial spaces 
$$
\cat{C}\colon \Del^{\times d+1, \op}\rt \sS.
$$
Here the $d$-fold simplicial space $\cat{C}(0)=\cat{C}(0, -)$ is constant and corresponds to the space of objects of $\cat{C}$, while $\cat{C}(1)$ is itself an iterated complete Segal space, modeling the $d$-category of arrows in $\cat{C}$.

This Segal space model has the benefit that a diagram of $d$-categories indexed by a $(d+1)$-category $\cat{C}$ can be encoded explicitly by a map of $(d+1)$-fold simplicial spaces $X\rt \cat{C}$ satisfying the following two conditions \cite{boa18, ras21}:
\begin{enumerate}
\item $X(0)$ is a $d$-category.
\item The Segal maps $X(n)\rt \cat{C}(n)\times_{\cat{C}(\{0\})} X(\{0\})
$ are equivalences of $d$-categories.
\end{enumerate}
We will refer to such maps as \emph{Segal copresheaves}, because they should be viewed as encoding a homotopy coherent action of the $(d+1)$-category $\cat{C}$ on the $d$-category $X(0)$. In particular, fixing the last $d$ simplicial coordinates yields a left fibration between $1$-categories. Work of Boavida \cite{boa18} gives a way to straighten each of these left fibrations, resulting in an equivalence between the $1$-category of Segal copresheaves over $\cat{C}$ and the $1$-category of $(d+1)$-functors $\cat{C}\rt \hcat{Cat}_d$ (see also \cite{ras21} and Section \ref{sec:copresheaves}). This equivalence can be thought of as a rectification procedure relating an external (Segal-style) definition of copresheaves to their internal definition as functors $\cat{C}\rt \hcat{Cat}_d$ (cf.\ in particular Theorem \ref{thm:rectification}).

\begin{nwarning}
Let us emphasize that one should \emph{not} think of a Segal copresheaf $X\rt \cat{C}$ as some sort of cocartesian fibration: its domain is typically not a $(d+1)$-category (unless $d=0$) and the notion of a Segal copresheaf is therefore not defined internal to the theory of $(d+1)$-categories. In particular, note that the fibers of a Segal copresheaf are constant in the \emph{first} simplicial direction, while the fibers of a $d$-cocartesian fibration between $(d+1)$-categories are constant in the \emph{last} simplicial direction. Unfortunately, the terminology employed in \cite{boa18, ras21} seems to suggest otherwise. 
\end{nwarning}

The straightening-unstraightening equivalence from Theorem \ref{introthm:A} is then given by a $d$-step combinatorial procedure that passes from Segal copresheaves to $d$-cocartesian fibrations. In each step, we modify the behaviour of the map of $(d+1)$-simplicial spaces $X\rt \cat{C}$ only in two consecutive simplicial directions, using the following result:
\begin{introtheorem}[Theorem \ref{thm:main theorem}]\label{introthm:B}
Let $\dcat{C}$ be a double category, i.e.\ a bisimplicial space satisfying the complete Segal conditions in the two simplicial directions (referred to as horizontal and vertical). Then there is an equivalence of categories
$$\begin{tikzcd}
\Psi^{\perp}\colon \cat{Fib}^{\mm{left, cart}}(\dcat{C})\arrow[r, yshift=1ex] & \cat{Fib}^{\mm{cocart, right}}(\dcat{C})\colon \Psi^{\top}\arrow[l, yshift=-1ex, "\sim"{swap}]
\end{tikzcd}$$
between maps of double categories $p\colon \dcat{D}\rt \dcat{C}$ of the following two types:
\begin{enumerate}
\item $p$ is a left fibration in the horizontal and a cartesian fibration in the vertical direction.
\item $p$ is a cocartesian fibration in the horizontal and a right fibration in the vertical direction.
\end{enumerate}
Furthermore, the fibers of $\dcat{D}\rt \dcat{C}$ and $\Psi^\perp(\dcat{D})\rt \dcat{C}$ differ simply by exchanging the horizontal and vertical directions.
\end{introtheorem}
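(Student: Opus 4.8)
\emph{Proof strategy.} The key observation is that, via straightening in the two simplicial directions, both sides of the asserted equivalence are \emph{two different presentations of one and the same $\dcat{C}$-indexed family of categories}, and the functors $\Psi^{\perp}$ and $\Psi^{\top}$ merely pass between the two presentations. I would begin by recording the combinatorial characterizations of the four notions involved: a map of Segal spaces $q\colon E\rt B$ is a left fibration exactly when the squares $E_{[n]}\rt E_{\{0\}}\times_{B_{\{0\}}}B_{[n]}$ are equivalences, a right fibration exactly when the analogous squares with $\{0\}$ replaced by $\{n\}$ are equivalences, and $q$ is a cartesian (resp.\ cocartesian) fibration exactly when it is a Reedy iso-fibration for which the restriction $E_{[1]}\rt B_{[1]}\times_{B_{\{1\}}}E_{\{1\}}$ (resp.\ $E_{[1]}\rt B_{[1]}\times_{B_{\{0\}}}E_{\{0\}}$) admits a fully faithful relative right (resp.\ left) adjoint with an explicitly described essential image. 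Applying these in the horizontal and vertical directions of a bisimplicial space, and using the Segal conditions to reconstruct a fibration from its restriction to bidegrees $\le 1$ together with this fibrancy datum, describes both $\cat{Fib}^{\mm{left, cart}}(\dcat{C})$ and $\cat{Fib}^{\mm{cocart, right}}(\dcat{C})$ concretely.

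Next I would carry out the straightening. Given $p$ in $\cat{Fib}^{\mm{left, cart}}(\dcat{C})$, straighten it in the horizontal direction (it is there a left fibration, so this yields a covariant datum) and in the vertical direction (it is there a cartesian fibration, so this yields a contravariant datum), with the other direction serving as a parameter; the combinatorics of Segal spaces make both operations explicit. This produces a single family of categories indexed by $\dcat{C}$, covariantly along horizontal morphisms and contravariantly along vertical ones, and one checks --- this is the only genuinely two-dimensional point --- that no further compatibility is required, i.e.\ the two transports interact exactly as the squares of $\dcat{C}$ dictate. Running the same bookkeeping for a member of $\cat{Fib}^{\mm{cocart, right}}(\dcat{C})$ --- straighten the horizontal cocartesian direction (covariant) and the vertical right fibration (contravariant) --- yields a family of \emph{exactly the same shape}. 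Define $\Psi^{\perp}(p)$ to be the unstraightening of the family obtained from $p$, now read the second way (cocartesian horizontally, right vertically), and $\Psi^{\top}$ symmetrically. On bisimplicial spaces this is a completely explicit pointwise formula, and by construction it fixes $\dcat{C}$ and, fibrewise over each object of $\dcat{C}$, merely interchanges which of the two simplicial coordinates records the (complete) Segal structure of the fibre; this gives the final assertion of the theorem.

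It then remains to verify $\Psi^{\top}\Psi^{\perp}\simeq\mm{id}$ and $\Psi^{\perp}\Psi^{\top}\simeq\mm{id}$, which is immediate once the two straightening procedures are known to land in the same place: each round trip is ``straighten, then unstraighten the same datum,'' so the straightening/unstraightening equivalences in the two directions cancel the constructions. Functoriality in maps over $\dcat{C}$ and completeness of $\Psi^{\perp}(\dcat{D})$ in its new pair of directions (so that it is again a double category) follow from the explicit formula together with the reconstruction from low bidegrees.

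The main obstacle is precisely the two-dimensional compatibility just mentioned: straightening in the horizontal direction and straightening in the vertical direction are a priori independent, and the crux of the argument is a Beck--Chevalley-type identity in the double category $\dcat{C}$ relating pullback along horizontal face maps with cartesian pushforward along vertical ones. This is what ensures both that the horizontal and vertical straightenings of a single $p$ fit together into one $\dcat{C}$-indexed family and that, conversely, unstraightening such a family in the swapped sense genuinely produces cocartesian lifts horizontally and a right fibration vertically with the required universal properties; verifying it is the combinatorial heart of the theorem.
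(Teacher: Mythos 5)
There is a genuine gap here, and it is essentially the whole theorem. Your strategy is to straighten a (left, cart)-fibration in each of the two simplicial directions separately and to recognize both $\cat{Fib}^{\mm{left, cart}}(\dcat{C})$ and $\cat{Fib}^{\mm{cocart, right}}(\dcat{C})$ as presentations of one category of ``$\dcat{C}$-indexed families of categories, covariant horizontally and contravariant vertically.'' But no such straightening equivalence is available as an input: Lurie-style (un)straightening applies to a (co)cartesian fibration in a single simplicial direction over a fixed base, and producing from it a coherent two-directional indexed family --- a homotopy-coherent lax double functor from $\dcat{C}$ to the double category of categories, with squares of $\dcat{C}$ acting by \emph{non-invertible} natural transformations --- is precisely the content of the theorem, not a reduction of it. Your ``Beck--Chevalley-type identity'' framing points at the wrong kind of datum: the squares of $\dcat{C}$ do not impose an identity or isomorphism between horizontal pushforward and vertical pullback; they supply genuinely lax $2$-cells, and the problem is to \emph{specify} this coherence data compatibly at all bisimplicial levels, not to \emph{verify} an equation. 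The proposal flags this as ``the combinatorial heart'' but contains no argument for it, and the round-trip identities $\Psi^{\top}\Psi^{\perp}\simeq\mm{id}$ are only ``immediate'' once both straightening equivalences onto a common target have been established --- which is the missing content.

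There is also a structural mismatch with the paper's architecture worth noting: the paper proves this theorem by a direct combinatorial construction with no recourse to straightening --- $\Psi^{\perp}(\dcat{D})(m,n)$ is defined as the space of maps $\Ar[m]\times[0,n]\rt\dcat{D}$ over $[m,n]\rt\dcat{C}$ sending the diagonal cells to cartesian vertical arrows, functoriality is obtained via marked bisimplicial spaces and explicit cosimplicial kernels, and invertibility is exhibited by an explicit zigzag $\mm{id}\rt\Psi_A\lt\Psi_B\rt\Psi_T\simeq\Psi^{\top}\Psi^{\perp}$ whose legs are checked to be equivalences by hand (using relative right Kan extensions and completeness). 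The theorem is then \emph{used} to derive straightening for cocartesian fibrations. A proof that presupposes straightening would therefore be circular in this context even if the missing two-dimensional coherence argument were supplied. If you want to salvage your approach, the honest version of it is to replace ``straighten in each direction'' by an explicit choice of the lax coherence data at every bidegree $(m,n)$ --- and the bookkeeping device that does this is exactly the kernel $\Ar[m]\times[0,n]$ of the paper's reflection construction.
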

The definition of the equivalence $\Psi^\perp$ uses rather simple combinatorics, inspired by the classical Grothendieck construction. Indeed, recall that for $F\colon \cat{C}\rt \Cat$, an object in the Grothendieck construction is a tuple $(c, x)$ with $c\in \cat{C}$ and $x\in F(c)$, and a morphism is given by the composite of a cocartesian (``horizontal'') arrow $f\colon (c, x)\rt (c', f_!x)$, followed by a ``vertical" arrow in the fiber $F(c')$. Likewise, $\Psi^\perp(\dcat{D})$ is a double category with the same objects as $\dcat{D}$, but with a horizontal arrow consisting of a horizontal arrow in $\dcat{D}$ followed by a fiberwise vertical arrow in $\dcat{D}$. 

Theorem \ref{introthm:A} now follows from an inductive application of Theorem \ref{introthm:B}. The lowest dimensional case, the straightening-unstraightening correspondence between $2$-functors $\cat{C}\rt \Cat_1$ and $1$-cocartesian fibrations over a $2$-category $\cat{C}$, is simply a special case of Theorem \ref{introthm:B}: when the double category $\dcat{C}$ is a $2$-category, the domain and codomain of $\Psi^\perp$ consist precisely of Segal copresheaves and $1$-cocartesian fibrations, respectively. In particular, when applied to a diagram of $(1, 1)$-categories $F\colon \cat{C}\rt \Cat_{(1, 1)}$ indexed by a $(1, 1)$-category $\cat{C}$, one obtains the following two-step process. We first associate to $F$ a Segal copresheaf over $\cat{C}$, which corresponds informally to passing from strict to pseudo-functors, after which the functor $\Psi^\perp$ produces the classical Grothendieck construction.

\medskip

Note that Theorem \ref{introthm:A} is not complete: both $(d+1)$-functors $\cat{C}\rt \hcat{Cat}_d$ and $d$-cocartesian fibrations over $\cat{C}$ can naturally be organized into $(d+1)$-categories, rather than $1$-categories. In the latter case, the $(d+1)$-category $\hcat{Cocart}_d(\cat{C})$ of $d$-cocartesian fibrations can be realized as a subcategory of the $(d+2)$-category $\hcat{Cat}_{d+1}/\cat{C}$. Both of these (large) $(d+1)$-categories furthermore depend functorially on $\cat{C}$ by restriction and base change respectively; more precisely, they can each be organized into functors of $(d+2)$-categories $\hcat{Cat}_{d+1}^{\op}\rt \hcat{CAT}_{d+1}$. We then prove the following refinement of Theorem \ref{introthm:A}:
\begin{introtheorem}[Theorem \ref{thm:unstraightening very functorial}]\label{introthm:C}
There is a unique natural equivalence of $(d+2)$-functors $\hcat{Cat}_{d+1}^{\op}\rt \hcat{CAT}_{d+1}$ given at a $(d+1)$-category $\cat{C}$ by an equivalence of $(d+1)$-categories
$$\begin{tikzcd}
\mm{Un}\colon \Fun_{d+1}(\cat{C}, \hcat{Cat}_d)\arrow[r, yshift=1ex, "\sim"{swap}] & \hcat{Cocart}_d(\cat{C})\colon \mm{St}.\arrow[l, yshift=-1ex]
\end{tikzcd}$$
\end{introtheorem}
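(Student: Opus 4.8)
The plan is to upgrade the $1$-categorical equivalence of Theorem~\ref{introthm:A} to an equivalence of $(d+1)$-categories, and then to promote this to a natural equivalence of $(d+2)$-functors on $\hcat{Cat}_{d+1}^{\op}$. First I would fix, for each $(d+1)$-category $\cat{C}$, the $(d+1)$-category $\hcat{Cocart}_d(\cat{C})$ as a (non-full) subcategory of the slice $(d+2)$-category $\hcat{Cat}_{d+1}/\cat{C}$: its objects are the $d$-cocartesian fibrations, its $1$-morphisms are those maps over $\cat{C}$ preserving cocartesian lifts in each categorical degree, and its higher morphisms are inherited from the slice. Dually, $\Fun_{d+1}(\cat{C}, \hcat{Cat}_d)$ is the internal-hom $(d+1)$-category. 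The key point is that Theorem~\ref{introthm:A} already identifies the maximal sub-$1$-groupoids (the cores $\core_1$) of these two $(d+1)$-categories, so it suffices to promote this to an equivalence of the full $(d+1)$-categories.

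The natural route is to identify both $(d+1)$-categories with mapping objects of a single ambient $(\infty,1)$-category (or better, to realize them as internal homs), so that an equivalence of spaces of objects, together with an identification of mapping $d$-categories, forces an equivalence of the whole. Concretely, I would show that both sides are \emph{cotensored} over $\hcat{Cat}_{d+1}$ in a compatible way: for a $(d+1)$-category $\cat{K}$, a $(d+1)$-functor $\cat{K}\rt \hcat{Cocart}_d(\cat{C})$ should correspond to a $d$-cocartesian fibration over $\cat{C}$ equipped with a suitably coherent $\cat{K}$-action, and likewise on the functor side $\cat{K}\rt \Fun_{d+1}(\cat{C},\hcat{Cat}_d)$ is just $\Fun_{d+1}(\cat{K}\times\cat{C},\hcat{Cat}_d)$ on cores. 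Applying Theorem~\ref{introthm:A} with $\cat{C}$ replaced by $\cat{K}\times\cat{C}$ (using that $\hcat{Cocart}_d$ is stable under products in the base in the evident way) then yields a natural bijection on $\core_1\Fun_{d+1}(\cat{K},-)$ for every test $(d+1)$-category $\cat{K}$; by the Yoneda lemma for $(d+1)$-categories this gives the desired equivalence $\mm{Un}\colon\Fun_{d+1}(\cat{C},\hcat{Cat}_d)\xrightarrow{\ \sim\ }\hcat{Cocart}_d(\cat{C})$, natural in $\cat{C}$ by construction. For naturality as $(d+2)$-functors $\hcat{Cat}_{d+1}^{\op}\rt\hcat{CAT}_{d+1}$, I would check that unstraightening commutes with base change (a map $g\colon\cat{C}'\rt\cat{C}$ induces $g^*$ on both sides, compatibly via the pullback description of $\hcat{Cocart}_d$ and precomposition on the functor side), and assemble these compatibilities into a morphism of $(d+2)$-functors using the functoriality already built into the slice and internal-hom constructions; the equivalence statement is then checked objectwise, where it is Theorem~\ref{introthm:A} enhanced as above.

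The uniqueness clause I would handle separately and last: any two natural equivalences of $(d+2)$-functors differ by a natural automorphism of $\hcat{Cocart}_d(-)$, and I would pin this down by evaluating at the terminal $(d+1)$-category (or at representables $\Del^n$ in each direction) and invoking that automorphisms of the identity functor on the relevant diagram $(d+2)$-category are trivial — essentially a rigidity statement coming from the fact that $\hcat{Cat}_d$ with its universal $d$-cocartesian fibration has no nontrivial automorphisms over the point. Alternatively, uniqueness follows from the fact that the functor $\core_1\Fun_{d+1}(-,\hcat{Cat}_d)$ is corepresented by $\hcat{Cat}_d$ together with its universal family, so any natural transformation is determined by where it sends that universal object, which is forced by compatibility with Theorem~\ref{introthm:A}.

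The main obstacle I anticipate is the bookkeeping needed to make ``$(d+1)$-category of $d$-cocartesian fibrations'' and its functoriality precise enough to run the Yoneda argument: one must check that the subcategory inclusion $\hcat{Cocart}_d(\cat{C})\subseteq\hcat{Cat}_{d+1}/\cat{C}$ interacts correctly with internal homs (i.e.\ that a $(d+1)$-functor out of $\cat{K}$ lands in the subcategory iff it does so pointwise on objects and preserves cocartesian arrows fiberwise), and that $d$-cocartesian fibrations over $\cat{K}\times\cat{C}$ really do correspond to $\cat{K}$-shaped diagrams of $d$-cocartesian fibrations over $\cat{C}$ — this is a relative version of the "fibrations over a product" principle and is where the Segal-space model and Theorem~\ref{introthm:B} do the real work, since one needs the combinatorial description of cocartesian lifts to be compatible with adding an extra product factor in the base. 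Once that compatibility is in hand, everything else is formal.
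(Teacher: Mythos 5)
Your overall strategy matches the paper's: establish the $1$-categorical equivalence first, show both sides are suitably (co)tensored, promote to a $(d+1)$-equivalence by a representability/Yoneda argument, and get uniqueness from the rigidity of $\hcat{Cat}_d$ (the paper's Proposition \ref{prop:rigidity of d-cat}, which rests on \cite[Theorem 10.1]{bar21}). Your uniqueness paragraph is essentially the paper's argument. However, there are two concrete gaps in the existence part. First, you say you would "assemble these compatibilities into a morphism of $(d+2)$-functors using the functoriality already built into the slice and internal-hom constructions" — but the slice $(d+2)$-categories $\hcat{Cat}_{d+1}/\cat{C}$ are \emph{not} functorial in $\cat{C}$ in the required $2$-functorial sense, which is exactly the subtlety the paper flags in the introduction. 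The paper resolves this by first proving that $d$-cocartesian fibrations assemble into a $(d+1)$-cartesian fibration $\pi\colon\hcat{Cocart}_d\rt\hcat{Cat}_{d+1}$ (Proposition \ref{prop:cocart forms cart fibration}) and then \emph{defining} the functor $\hcat{Cocart}_d(-)$ as its straightening via Theorem \ref{introthm:A} applied one categorical dimension higher. Without some such device your $(d+2)$-functor is not actually constructed.

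Second, the identification $\hcat{Cocart}_d(\cat{K}\times\cat{C})\simeq\Fun_{d+1}(\cat{K},\hcat{Cocart}_d(\cat{C}))$, which you correctly single out as the main obstacle, is where most of the work lies, and your suggestion that it follows from Theorem \ref{introthm:B} and a "fibrations over a product" principle is not how it is (or, as far as I can see, can easily be) closed. The paper's proof of this cotensoring statement (Theorem \ref{thm:cocart representable}) does not revisit the combinatorics of Theorem \ref{introthm:B}: it uses the tensoring over $\hcat{Cat}_d(\kappa)$ to reduce to underlying $1$-categories, limit-preservation to reduce to generators $\cat{K},\cat{C}\in\Theta_{d+1}$, and then an adjoint-functor argument whose crux is that a certain natural endomorphism of the identity on $\Cat_d(\kappa)$ is trivial — again invoking \cite[Theorem 10.1]{bar21}. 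So the rigidity input is needed for \emph{existence}, not only for uniqueness, and your proposal as written does not supply an argument that would substitute for it.
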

The statement of Theorem \ref{introthm:C} hides the following subtlety: it is in fact not so obvious how to write down the $(d+2)$-functor 
$$\begin{tikzcd}
\hcat{Cocart}_d(-)\colon \hcat{Cat}_{d+1}^{\op}\arrow[r] & \hcat{CAT}_{d+1}; \quad \cat{C}\arrow[r, mapsto] & \hcat{Cocart}_d(\cat{C}).
\end{tikzcd}$$
For example, notice that the slice $(d+2)$-categories $\hcat{Cat}_{d+1}/\cat{C}$ do not depend on $\cat{C}$ in a $2$-functorial way in general. As one may expect, it will instead be more convenient to organize all $(d+1)$-categories of $d$-cocartesian fibrations into a $(d+1)$-cartesian fibration $\pi\colon \hcat{Cocart}_d\rt \hcat{Cat}_{d+1}$. The above functor then arises as its straightening, using Theorem \ref{introthm:A} (one categorical dimension higher).

\medskip

Variants of Theorem \ref{introthm:A} and Theorem \ref{introthm:C} have appeared in the literature before. The straightening of $0$-cocartesian fibrations nowadays has various proofs \cite{boa18, heu15, lur09}. Theorem \ref{introthm:A} for $1$-cocartesian fibrations over $2$-categories is the content of Lurie's scaled straightening and unstraightening theorem \cite{gag20, lur09g}. Work of Buckley \cite{buc14} describes the Grothendieck construction for $2$-cocartesian fibrations between $(2, 2)$-categories as an equivalence of $3$-categories. Gaitsgory and Rozenblyum prove a strengthening of Theorem \ref{introthm:C} for $2$-cocartesian fibrations between $(\infty, 2)$-categories, which also allows for lax natural transformations \cite[Chapter 11, Theorem 1.1.8]{gai16}; this seems to rely on some unproven statements about $(\infty, 2)$-categories appearing in Chapter 10 of loc.\ cit., notably about the (conjectured) model for $(\infty, 2)$-categories in terms of lax squares. Ayala, Mazel-Gee and Rozenblyum have recently deduced this lax refinement of straightening for $(\infty, 2)$-categories from our results and have also established a version for locally $2$-cocartesian fibrations \cite{aya19}. Finally, let us mention that a version of Theorem \ref{introthm:C} has been hypothesized in \cite{mac21}, where it is used to prove a universal property for the $(\infty, 2)$-category of correspondences in an $(\infty, 1)$-category with fiber products.

\subsection*{Outline}
Let us give a brief overview of the contents of this paper.
\emph{Sections} \ref{sec:dcat} \emph{and} \ref{sec:reflecting} treat Theorem \ref{introthm:B} and contain our main combinatorial results: Section \ref{sec:dcat} discusses the fibrations of double categories appearing in Theorem \ref{introthm:B}, and defines the functor $\Psi^\perp$ on objects (Section \ref{sec:reflection pointwise}). In Section \ref{sec:reflecting}, we then show that $\Psi^\perp$ extends to an equivalence of categories (Theorem \ref{thm:main theorem}). This essentially already gives a proof of straightening and unstraightening for cocartesian fibrations of $1$-categories (see Corollary \ref{cor:straightening low degrees}).

\emph{Section} \ref{sec:higher cats} gives some background material on higher categories, mainly using the model of iterated complete Segal spaces. To produce examples of higher categories (e.g.\ of the $(d+1)$-category $\hcat{Cat}_d$ of $d$-categories), we also recall the comparison to the point-set model of (strict) enriched categories (Section \ref{sec:point set}). Section \ref{sec:copresheaves} reviews the results of Boavida \cite{boa18}, relating Segal copresheaves over $\cat{C}$ to functors $\cat{C}\rt \hcat{Cat}_d$. For our purposes, we will need to slightly rephrase the results from loc.\ cit.\ to make them ($(\infty, 1)$-)functorial in the base $\cat{C}$; this requires some technical results about cartesian fibrations in the setting of categories of fibrant objects, discussed in \emph{Appendix} \ref{sec:fib obj}.

In \emph{Section} \ref{sec:cocart fib}, we give the definitions of $d$-cocartesian fibrations and $d$-cartesian fibrations. The main result here (Proposition \ref{prop:cocart forms cart fibration}) asserts that $d$-cocartesian fibrations can themselves be organized into a $(d+1)$-cartesian fibration $\pi\colon \hcat{Cocart}_d\rt \hcat{Cat}_{d+1}$.

Finally, \emph{Section} \ref{sec:straightening} contains the proofs of our main results, Theorems \ref{introthm:A} and \ref{introthm:C}. We first prove a version of straightening-unstraightening at the level of $1$-categories (Theorem \ref{thm:unstraightening}). Using this, it follows that there is a $(d+2)$-functor $\hcat{Cocart}_d\colon \hcat{Cat}_{d+1}^{\op}\rt \hcat{CAT}_{d+1}$ sending each $(d+1)$-category $\cat{C}$ to the large $(d+1)$-category of $d$-cocartesian fibrations over $\cat{C}$. We then show that this functor is representable by $\hcat{Cat}_d$ (Theorem \ref{thm:unstraightening very functorial}), providing the existence of the natural equivalence of Theorem \ref{introthm:C}. The uniqueness of this natural equivalence follows from the results in Section \ref{sec:rigidity}.

\subsection*{Conventions and notation}
We phrase our results as much as possible in the language of $(\infty, 1)$-categories (with the notable exception of Sections \ref{sec:point set} and \ref{sec:copresheaves}, where we need to recourse to point-set models). This is mostly a matter of terminology: effectively, we only start from the $(\infty, 1)$-category $\sS$ of spaces instead of the Kan--Quillen model structure on simplicial sets to avoid having to impose fibrancy conditions at every step. In particular, we avoid any $\infty$-categorical machinery that (tacitly) relies on the straightening results of Lurie.

Throughout the text, we will omit all prefixes ``$\infty$-''. For instance, we abbreviate $(\infty, 1)$-categories simply to categories and $(\infty, d)$-categories to $d$-categories, referring to ordinary categories instead as \emph{$(1, 1)$-categories}. We write $\core_k\cat{C}$ for the \emph{$k$-core} of a higher category, i.e.\ the $k$-category obtained by removing non-invertible morphisms in dimensions $>k$.

A \emph{subspace} of a space is a $(-1)$-truncated map $S\hooklongrightarrow T$, i.e.\ an inclusion of path components. Likewise, by a \emph{subcategory} of a $d$-category we will mean a $(-1)$-truncated map $i\colon \cat{C}\hooklongrightarrow \cat{D}$; inductively, this means that $i$ induces a subspace inclusion on spaces of objects and a subcategory inclusion on mapping $(d-1)$-categories. (This is in fact closer to the classical notion of a \emph{replete} subcategory.) In particular, this means that specifying a subcategory of a $d$-category comes down to specifying a subcategory of its homotopy $(d, d)$-category.

Categories and $d$-categories will typically be denoted by $\cat{C}$, while $d$-fold categories (see Section \ref{sec:dcat}) will be denoted $\dcat{C}$. We have furthermore tried to differentiate these notationally from specific point-set models by enriched categories or relative categories: for example, a $(d+1)$-category $\cat{C}$ can be presented by an ordinary category $\scat{C}$ strictly enriched in the $(1, 1)$-category $\CompSegS_{d}$ of fibrant objects in the $d$-categorical model structure (cf.\ Section \ref{sec:point set}). In particular, the $(d+1)$-category $\hcat{Cat}_d$ of $d$-categories is modeled by $\CompSegS_{d}$ with its canonical self-enrichment (Definition \ref{def:d+1cat of dcats}); we write $\Cat_d=\core_1\hcat{Cat}_d$ for its $1$-core. Likewise, we will denote by $\hcat{CAT}_d$ and $\cat{CAT}_d$ the (higher) categories of large $d$-categories.

\subsection*{Acknowledgments}
This text finds its origins (via Proposition \ref{prop:orthocart}) in joint work with Rune Haugseng, Fabian Hebestreit and Sil Linskens; I thank them for several helpful discussions, especially about constructions with spans. I would also like to thank Damien Calaque, Harry Gindi, Yonatan Harpaz, Corina Keller, Claudia Scheimbauer and Pelle Steffens for various useful conversations. Finally, I am grateful to the anonymous referee for their useful comments that helped improve the paper. 
This project has received funding from the European Research Council (ERC) under the European Union's Horizon 2020 research and innovation programme (grant agreement No 768679).

\section{Fibrations of double categories}\label{sec:dcat}
In this section, we will introduce a certain type of fibration between double categories, behaving like a left fibration in one direction and like a cartesian fibration in the other. We will show that such a fibration has a `\emph{reflection}', which is a cocartesian fibration in one direction and a right fibration in the other.

\subsection{Recollections on double categories}
Recall that a simplicial space $X\colon \Del^{\op}\rt \sS$ is said to be a \emph{complete Segal space} if it satisfies the following two conditions \cite{rez01}:
\begin{enumerate}
\item \emph{Segal condition}: for every $n\geq 2$, the natural map
$$\begin{tikzcd}
X\big(n\big)\arrow[r] & X\big(\{0\leq 1\}\big)\times_{X(\{1\})} X\big(\{1\leq \dots \leq n\}\big)
\end{tikzcd}$$
is an equivalence. Equivalently, $X(n)\simeq X(1)\times_{X(0)} \dots \times_{X(0)} X(1)$ via the inclusions of the intervals $\{i\leq i+1\}$ into $[n]$.

\item\label{it:completeness} \emph{completeness}: consider the simplicial set $H=\Delta[3]\big/\sim$ obtained by collapsing the $1$-simplex $\{0\leq 2\}$ to a point and collapsing the $1$-simplex $\{1\leq 3\}$ to a (different) point. Then the map $X(0)\rt \Map_{s\sS}(H, X)$ is an equivalence.
\end{enumerate}
Recall that complete Segal spaces are a model for the homotopy theory of ($(\infty, 1)$-)categories; we will therefore also refer to complete Segal spaces simply as \emph{(1-)categories}. We will be interested in analogues of complete Segal spaces for diagrams of spaces indexed by multiple copies of the simplex category.
\begin{notation}\label{not:objects in product of Delta}
For any $k\geq 0$, we will write $\vec{n}_k=([n_1], \dots, [n_k])\in \Del^{\times k}$ for a generic (but fixed) $k$-tuple of objects in $\Del$. Furthermore, we will abbreviate $\vec{0}_k=([0], \dots, [0])$. 
For example, if $X\colon \Del^{\times d}\rt \sS$ is a $d$-fold simplicial space, then 
$$\begin{tikzcd}
X(\vec{n}_k, -, \vec{0}_{d-k-1})\colon \Del^{\op}\arrow[r] & \sS
\end{tikzcd}$$
denotes the simplicial space obtained by fixing the first $k$ inputs to be $\vec{n}_k$ and the last $d-k-1$ inputs to be $0$, varying only the $(k+1)$-st input.
Finally, we will write $X(n)\colon \Del^{\times d-1}\rt \sS$ for the $(d-1)$-fold simplicial space given by $X(n, -, \dots, -)$.
\end{notation}
\begin{definition}\label{def:dfcat}
A \emph{$d$-fold category} $\dcat{C}$ is a diagram $\dcat{C}\colon (\Del^{\op})^{\times d}\rt \sS$ such that for each $1\leq k\leq d$, all the simplicial spaces
$$\begin{tikzcd}
\dcat{C}\big(\vec{n}_{k-1}, -, \vec{n}_{d-k}\big)\colon \Del^{\op}\arrow[r] & \sS
\end{tikzcd}$$
are complete Segal spaces. When $d=2$, we will refer to $\dcat{C}$ as a \emph{double category}. We will denote by $\dfCat\subseteq \Fun\big((\Del^{\op})^{\times d}, \sS\big)$ the full subcategory spanned by the $d$-fold categories.
\end{definition}
\begin{warning}
Because we impose completeness conditions in all directions, this notion of a $d$-fold category is \emph{not quite} the $\infty$-categorical analogue of that of a \emph{$d$-uple category} in the sense of Ehresmann \cite{ehr63}, i.e.\ a (strict) category object in $(d-1)$-uple categories. The latter type of objects (which are sometimes also called $d$-fold categories) corresponds instead to the more general notion of a \emph{$d$-uple Segal space} \cite{hau14b}.

For instance, for a double category in the sense of Definition \ref{def:dfcat}, the completeness conditions in the horizontal and vertical direction imply that the horizontal equivalences and vertical equivalences coincide. Many examples of double (and more generally, $d$-uple) Segal spaces do not satisfy this condition: as an example, one can take the double Segal space of associative algebras, with horizontal morphisms given by bimodules and vertical morphisms given by algebra homomorphisms \cite{hau14b}.
\end{warning}
\begin{notation}[Opposites]\label{not:opposite}
Let $\dcat{C}$ be a $d$-fold category and $1\leq k\leq d$. We will write $\dcat{C}^{k-\op}$ for the $d$-fold category given by taking opposites in the $k$-th variable:
$$\begin{tikzcd}[column sep=3.2pc]
\dcat{C}^{k-\op}\colon \Del^{\times d}\arrow[rr, "\mm{id}^{\times k-1}\times \op\times \mm{id}^{\times d-k}"] & & \Del^{\times d}\arrow[r, "\dcat{C}"] & \sS.
\end{tikzcd}$$
When $k=1$, we will simply write $\dcat{C}^{\op}$.
If $\dcat{C}$ is a double category, we will furthermore write $\dcat{C}^{\mm{rev}}$ for the double category with its coordinates reflected, i.e.\ $\dcat{C}^\mm{rev}(m, n)=\dcat{C}(n, m)$.
\end{notation}

In the case of double categories, we will employ the following conventions: we will refer to the direction of the first copy of $\Del^{\op}$ as the \emph{horizontal} direction and to the second copy of $\Del^{\op}$ as the \emph{vertical} direction. Furthermore, we will use the notation $\rt$ for horizontal arrows and and $\vto$ for vertical arrows.
\begin{example}\label{ex:generating dcat}
Let $\cat{C}$ and $\cat{D}$ be categories. We denote by $\cat{C}\boxtimes \cat{D}$ the double category given by 
$$
\cat{C}\boxtimes \cat{D}\big([p], [q]\big)=\Map_{\cat{Cat}}\big([p], \cat{C}\big)\times \Map_{\cat{Cat}}\big([q], \cat{D}\big).
$$
In particular, we write $[m, n]=[m]\boxtimes [n]$ for the (generating) double categories that look like
$$\begin{tikzcd}[column sep=1.2pc, row sep=1.2pc]
00\arrow[r]\arrow[d, \vertcolor] & 10\arrow[r]\arrow[d, \vertcolor] & \dots\arrow[r]\arrow[d, \vertcolor] & m0\arrow[d, \vertcolor]\\
\vphantom{00}\dots \arrow[d, \vertcolor]\arrow[r]  & \dots\vphantom{00} \arrow[r]\arrow[d, \vertcolor] & \vphantom{00}\dots\arrow[r]\arrow[d, \vertcolor] & \vphantom{00}\dots\arrow[d, \vertcolor]\\
0n\arrow[r] & 1n\arrow[r] & \dots \arrow[r] & mn.
\end{tikzcd}$$
The squares in this picture should not be viewed as commuting squares, but rather as being filled with some type of non-invertible $2$-arrow; no confusion should arise since one cannot compose $\rt$ and $\vto$. 
\end{example}
\begin{example}\label{ex:arrow dcat}
Let $n\geq 0$ and consider the \emph{arrow double category} $\Ar[n]$ defined by
$$
\Map_{\DCat}\Big([p, q], \Ar[n]\Big)\simeq \Map_{\cat{Cat}}\Big([q]\star [p], [n]\Big).
$$
where $\star$ denotes the join. In other words, it is the double category that looks like
$$\begin{tikzcd}[column sep=1.2pc, row sep=1.2pc]
00\arrow[r] & 01\arrow[r]\arrow[d, \vertcolor] & \dots\vphantom{00} \arrow[r]\arrow[d, \vertcolor] & 0n\arrow[d, \vertcolor]\\
& 11\arrow[r] & \dots\vphantom{00}\arrow[d, \vertcolor] \arrow[r] & \dots\vphantom{00}\arrow[d, \vertcolor]\\
& & \dots \vphantom{00}\arrow[r] & \dots\vphantom{00}\arrow[d, \vertcolor] \\
& & & nn.
\end{tikzcd}$$
There is an evident map $\Ar[n]\rt [n, n]$, and we denote by $\pi_{\mm{hor}}\colon \Ar[n]\rt [n, 0]$ and $\pi_{\mm{vert}}\colon \Ar[n]\rt [0, n]$ the obvious projections.
\end{example}
\begin{example}\label{ex:twisted arrow dcat}
Let $n\geq 0$ and consider the double category $\Tw[n]$ defined by
$$
\Map_{\DCat}\Big([p, q], \Tw[n]\Big)\simeq \Map_{\cat{Cat}}\Big([q]^{\op}\star [p], [n]\Big).
$$
In other words, $\Tw[n]\simeq \Ar[n]^{2-\mm{op}}$ and can be depicted as
$$\begin{tikzcd}[column sep=1.2pc, row sep=1.2pc]
00\arrow[r] & 01\arrow[r]\arrow[d, \vertcolor, leftarrow] & \dots \arrow[r]\arrow[d, \vertcolor, leftarrow] & 0n\arrow[d, \vertcolor, leftarrow]\\
& 11\arrow[r] & \dots\vphantom{00}\arrow[d, \vertcolor, leftarrow] \arrow[r] & \dots\vphantom{00}\arrow[d, \vertcolor, leftarrow]\\
& & \dots \arrow[r] & \dots\vphantom{00}\arrow[d, \vertcolor, leftarrow] \\
& & & nn.
\end{tikzcd}$$
\end{example}
\begin{proposition}\label{prop:dcat cart closed}
For any $d\geq 1$, the category $\dfCat$ is cartesian closed. In fact, the localization 
$$\begin{tikzcd}
\Fun\big((\Del^{\op})^{\times d}, \sS\big) \arrow[r, yshift=0.7ex]\arrow[r, yshift=-0.7ex, hookleftarrow] & {\dfCat}
\end{tikzcd}$$ 
of $d$-fold simplicial spaces is monoidal for the cartesian product.
\end{proposition}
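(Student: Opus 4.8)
The plan is to exhibit $\dfCat$ as a reflective localization of the $\infty$-topos $\Fun\big((\Del^{\op})^{\times d}, \sS\big)$ and to show that this localization is an \emph{exponential ideal}; granting this, both assertions become formal. Indeed, $\Fun\big((\Del^{\op})^{\times d}, \sS\big)$ is cartesian closed (being a presheaf $\infty$-category), and a reflective subcategory of a cartesian closed presentable category that is moreover an exponential ideal is itself cartesian closed, with products and internal mapping objects computed as in the ambient category; the reflector then automatically inverts $f \times Z$ for every local equivalence $f$ and every object $Z$, which is precisely what it means for the localization to be monoidal for the cartesian product.

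First I would record that $\dfCat$ is the full subcategory of objects local with respect to a small set $W$ of maps. By Rezk \cite{rez01}, a simplicial space is a complete Segal space exactly when it is local with respect to the spine inclusions $\mm{Sp}[m] \hooklongrightarrow \Delta[m]$ (for $m \geq 2$) together with the completeness map $H \rt \Delta[0]$. For a fixed coordinate $k$ and fixed $\vec{n}_{k-1}, \vec{n}_{d-k}$, the restriction functor $\dcat{C} \mapsto \dcat{C}(\vec{n}_{k-1}, -, \vec{n}_{d-k})$ admits a left adjoint, sending a simplicial space $Z$ to the external product $\Delta^{\vec{n}_{k-1}} \boxtimes Z \boxtimes \Delta^{\vec{n}_{d-k}}$. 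Taking $W$ to be the set of all external products of one of the generating complete Segal maps above, placed in an arbitrary one of the $d$ coordinates, with representables in the remaining coordinates, a $d$-fold simplicial space is $W$-local if and only if it is a $d$-fold category. Hence $\dfCat$ is an accessible, and so reflective, localization of $\Fun\big((\Del^{\op})^{\times d}, \sS\big)$; I write $\ol{W}$ for the associated strongly saturated class of local equivalences.

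Next I would verify the exponential ideal property, equivalently that $\ol{W}$ is stable under $-\times Z$ for every object $Z$. Since $-\times Z$ preserves colimits, since $\ol{W}$ satisfies two-out-of-three and is closed under cobase change and under colimits in the arrow category, and since every object is a colimit of representables, the usual manipulations reduce this to showing that $f \times \Delta^{\vec{n}} \in \ol{W}$ for every generator $f \in W$ and every representable $\Delta^{\vec{n}} = \Delta^{n_1} \boxtimes \dots \boxtimes \Delta^{n_d}$. As products in the functor category are computed coordinatewise, such a product is again an external product $P \boxtimes \psi \boxtimes Q$, in which $P$ and $Q$ lie in the coordinates other than $k$ and, in the $k$-th coordinate, $\psi$ is one of $\mm{Sp}[m] \times \Delta[n_k] \hooklongrightarrow \Delta[m] \times \Delta[n_k]$ or $H \times \Delta[n_k] \rt \Delta[n_k]$. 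Writing $P$ and $Q$ as colimits of representables and again invoking closure of $\ol{W}$ under colimits of arrows, together with the adjunction above, the membership $P \boxtimes \psi \boxtimes Q \in \ol{W}$ reduces to the statement that $\Map_{s\sS}\big(\psi, \dcat{D}(\vec{a}, -, \vec{b})\big)$ is an equivalence for every $d$-fold category $\dcat{D}$ and all $\vec{a}, \vec{b}$; and this holds because $\dcat{D}(\vec{a}, -, \vec{b})$ is a complete Segal space by definition, while $\psi$ is a complete Segal equivalence.

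The last point is the only place where non-formal input enters: it says that $\mm{Sp}[m] \times \Delta[n] \hooklongrightarrow \Delta[m] \times \Delta[n]$ and $H \times \Delta[n] \rt \Delta[n]$ are complete Segal equivalences, i.e.\ that the $1$-dimensional complete Segal space localization is itself monoidal for the cartesian product, which is a theorem of Rezk \cite{rez01}. I expect this, together with the bookkeeping needed to recognize $f \times \Delta^{\vec{n}}$ as an external product of the shape $P \boxtimes \psi \boxtimes Q$, to be the main (and essentially only real) obstacle; the rest consists of routine manipulations with adjunctions and with the closure properties of strongly saturated classes.
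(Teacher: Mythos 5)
Your proof is correct and is essentially the paper's argument in dual form: the paper verifies the exponential-ideal condition directly, checking that the internal hom $\dcat{C}^{[m_1,\dots,m_d]}$ out of a representable is again a $d$-fold category (reducing via the decomposition $[m_1,\dots,m_d]=[m_1,0,\dots,0]\times\dots\times[0,\dots,0,m_d]$ to Rezk's theorem that complete Segal spaces are cartesian closed), whereas you check the adjoint condition that the strongly saturated class of local equivalences is closed under products with representables, which reduces by the same external-product bookkeeping to the same input from Rezk. The two formulations are interchanged by the adjunction $\Map(f\times \Delta^{\vec n},\dcat{D})\simeq \Map(f,\dcat{D}^{\Delta^{\vec n}})$, so the difference is one of packaging rather than substance.
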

We will write $Y^X$ for the internal mapping object in  $d$-fold simplicial spaces; the proposition implies that this is also the internal mapping object for $d$-fold categories.
\begin{proof}
It suffices to verify that for each $d$-fold category $\dcat{C}$ and $m_1, \dots, m_d\geq 0$, the $d$-fold simplicial space given by
$$
\dcat{C}^{[m_1, \dots, m_d]}(n_1, \dots, n_d)=\Map\big([m_1, \dots, m_d]\times [n_1, \dots, n_d], \dcat{C}\big)
$$
remains a $d$-fold category. Using that 
$[m_1, \dots, m_d]=[m_1, 0, \dots, 0]\times\dots\times [0,\dots, 0, m_d]
$
together with symmetry, it suffices to verify that $\dcat{C}^{[m, 0, \dots, 0]}$ is a $d$-fold category, i.e.\ satisfies the complete Segal conditions in all simplicial directions.

For the first direction, note that each $\dcat{C}^{[m, 0, \dots, 0]}(-, \vec{n}_{d-1})$ agrees with $\dcat{C}(-, \vec{n}_{d-1})^{[m]}$, which is a $1$-category by \cite[Theorem 7.2]{rez01}. For the remaining simplicial directions, note that the $(d-1)$-fold simplicial space
$$\begin{tikzcd}
\dcat{C}^{[m, 0, \dots, 0]}(n, -)\colon (\Del^{\op})^{\times d-1}\arrow[r] & \sS
\end{tikzcd}$$
sends each $(n_2, \dots, n_d)$ to space of maps between simplicial spaces
\begin{align*}
\Map_{s\sS}\Big([m]\times [n], \dcat{C}(-, n_2, \dots, n_d)\Big) &\simeq \Map_{s\sS}\Big(\colim_{[k]\in \Del/[m]\times [n]} \ [k], \ \dcat{C}(-, n_2, \dots, n_d)\Big)\\
&\simeq \lim_{[k]\in (\Del/[m]\times [n])^{\op}} \dcat{C}(k, n_2, \dots, n_d).
\end{align*}
In other words, $\dcat{C}^{[m, 0, \dots, 0]}(n, -)$ is naturally equivalent to the limit of a certain diagram consisting of the $(d-1)$-fold categories $\dcat{C}(k, -)$. Since $\Cat^{\otimes d-1}\subseteq \Fun\big((\Del^{\op})^{\times d-1}, \sS\big)$ is closed under limits, each $\dcat{C}^{[m, 0, \dots, 0]}(n, -)$ is a $(d-1)$-fold category, as desired.
\end{proof}

\subsection{Fibrations of double categories}
Let $p\colon \cat{D}\rt \cat{C}$ be a map of $1$-categories. Recall that an arrow $\alpha\colon d_1\rt d_2$ in $\cat{D}$ is called \emph{p-cartesian} (or simply cartesian, if $p$ is understood) if for any diagram
$$\begin{tikzcd}
\Lambda^2[2]\arrow[r, "\sigma"]\arrow[d] & \cat{D}\arrow[d]\\
{[2]}\arrow[r]\arrow[ru, dotted] & \cat{C}
\end{tikzcd}$$
where the restriction of $\sigma$ to $[1<2]$ is given by $\alpha$, there is a contractible space of lifts, as indicated. Let $\cat{D}(1)^{\cart}\subseteq \cat{D}(1)$ denote the subspace consisting of cartesian arrows. Then $p$ is said to be a \emph{cartesian fibration} if the map $\cat{D}(1)^{\cart}\rt \cat{D}(\{1\})\times_{\cat{C}(\{1\})} \cat{C}(1)$ induces a surjection on path components \cite{aya17, lur09}. It is called a \emph{right fibration} if furthermore every arrow in $\cat{D}$ is cartesian. Equivalently, it is a right fibration if the map $\cat{D}(1)\rt \cat{D}(\{1\})\times_{\cat{C}(\{1\})} \cat{C}(1)$ is an equivalence, see e.g.\ \cite[Proposition 1.7]{boa18}.
Taking opposite categories, one obtains the notion of a \emph{cocartesian} arrow and a \emph{cocartesian} (resp.\ \emph{left}) \emph{fibration} (some alternative characterizations also appear in Section \ref{sec:pre-cocart}).

We will be interested in the following variant of this notion for double categories:
\begin{definition}\label{def:fibrations}
A map $\dcat{D}\rt \dcat{C}$ of double categories is called a \emph{(left, cart)-fibration} if the following conditions hold:
\begin{enumerate}
\item\label{it:hor left} The square of categories
$$\begin{tikzcd}
\dcat{D}([1], -)\arrow[r]\arrow[d] & \dcat{D}(\{0\}, -)\arrow[d]\\
\dcat{C}([1], -)\arrow[r] & \dcat{C}(\{0\}, -)
\end{tikzcd}$$
is cartesian. Equivalently, each $\dcat{D}(-, n)\rt \dcat{C}(-, n)$ is a left fibration of categories. 

\item\label{it:vert cart} $\dcat{D}(0, -)\rt \dcat{C}(0, -)$ is a cartesian fibration of categories.
\end{enumerate}
A \emph{strong} map between (left, cart)-fibrations is a commuting square
$$\begin{tikzcd}
\dcat{D}\arrow[r, "f"]\arrow[d, "p"{swap}] & \dcat{D}'\arrow[d, "p'"]\\
\dcat{C}\arrow[r] & \dcat{C}'
\end{tikzcd}$$
where $f$ preserves cartesian vertical arrows. We will write $\cat{Fib}^{\mm{left, cart}}\subseteq \Fun([1], \DCat)$ for the subcategory of (left, cart)-fibrations and strong maps between them.
\end{definition}
\begin{remark}\label{rem:behaviour on squares}
By condition \ref{it:hor left}, the map $\cat{D}([1], -)\rt \cat{C}([1], -)$ is a cartesian fibration as well and $\cat{D}([1], -)\rt \cat{D}(\{0\}, -)$ preserves cartesian arrows. The other face map $\cat{D}([1], -)\rt \cat{D}(\{1\}, -)$ is \emph{not} required to preserve cartesian arrows.
\end{remark}
\begin{remark}\label{rem:basic properties of leftcart fibs}
The class of (left, cart)-fibrations is stable under composition and base change. Furthermore, a map $\dcat{D}\rt \ast$ is a (left, cart)-fibration if and only if $\dcat{D}$ is constant in the horizontal direction. In particular, the fibers of a (left, cart)-fibration are categories concentrated in the vertical direction (and constant in the horizontal direction).
\end{remark}
Informally, a map of double categories $p\colon \dcat{D}\rt \dcat{C}$ is a (left, cart)-fibration if the following conditions hold:
\begin{enumerate}[label=(\alph*)]
\item For every horizontal arrow $\alpha\colon c_0\to c_1$ in $\dcat{C}$ and $d_0\in p^{-1}(c_0)$, there exists a unique horizontal arrow $d_0\rt \alpha_!(d_0)$ lifting $\alpha$. Furthermore, given any square $\sigma$ in $\dcat{C}$ together with a lift of its left vertical arrow
$$\begin{tikzcd}
d_{00}\arrow[r, dotted]\arrow[d, \vertcolor]\arrow[rd, phantom, "\tilde{\sigma}"] & \alpha_!(d_{00})\arrow[d, \vertcolor, dotted]\\
d_{01}\arrow[r, dotted] & \alpha'_!(d_{01})
\end{tikzcd}\quad\longmapsto\quad \begin{tikzcd}
c_{00}\arrow[r, "\alpha"]\arrow[d, \vertcolor]\arrow[rd, phantom, "\sigma"] & c_{10}\arrow[d, \vertcolor]\\
c_{01}\arrow[r, "\alpha'"{swap}] & c_{11}
\end{tikzcd}$$ 
there exists a unique dotted lift $\tilde{\sigma}$ as indicated (this lift includes the objects $\alpha_!(d_{00})$ and $\alpha'_!(d_{01})$). 

\item For every vertical arrow $\beta\colon c_0\vto c_1$ in $\dcat{D}$ and $d_1\in p^{-1}(c_1)$, there exists a vertically $p$-cartesian arrow $\beta^*(d_1)\vto d_1$ covering $\alpha$.
\end{enumerate}
Let us now fix a square $\sigma$ in $\dcat{C}$ together with an object $d_{01}\in p^{-1}(c_{01})$. A combination of (a) and (b) then implies that $\sigma$ admits a unique lift $\tilde{\sigma}$ of the form
$$\begin{tikzcd}
\beta^*(d_{01})\arrow[r, dotted]\arrow[d, \vertcolor, dotted]\arrow[rd, phantom, "\tilde{\sigma}"] & \alpha_!\beta^*(d_{01})\arrow[d, \vertcolor, dotted]\\
d_{01}\arrow[r, dotted] & \alpha'_!(d_{01})
\end{tikzcd}\quad\longmapsto\quad \begin{tikzcd}
c_{00}\arrow[r, "\alpha"]\arrow[d, \vertcolor, "\beta"{swap}]\arrow[rd, phantom, "\sigma"] & c_{10}\arrow[d, \vertcolor, "\beta'"]\\
c_{01}\arrow[r, "\alpha'"{swap}] & c_{11}
\end{tikzcd}$$ 
which is vertically $p$-cartesian, in the sense that any other square factors uniquely over $\tilde{\sigma}$ in the vertical direction (Remark \ref{rem:behaviour on squares}). The resulting vertical arrow $\alpha_!\beta^*(d_{01})\rt \alpha'_!(d_{01})$ is not necessarily $p$-cartesian and decomposes as
$$\begin{tikzcd}
\alpha_!\beta^*(d_{01})\arrow[r, \vertcolor, "h_{\sigma}"] & \beta'^*\alpha'_!(d_{01})\arrow[r, \vertcolor] & \alpha'_!(d_{01}).
\end{tikzcd}$$
The first map $h_{\sigma}$ is contained in the fiber $p^{-1}(c_{10})$ and provides a natural comparison between the two ways to change fibers over the square $\sigma$, from ``horizontally after vertically'' to ``vertically after horizontally''. 

To illustrate Definition \ref{def:fibrations}, let us consider the following example:
\begin{proposition}\label{prop:orthocart}
Let $\cat{C}$ and $\cat{D}$ be categories and let $p=(p_1, p_2)\colon \cat{X}\rt \cat{C}\times \cat{D}$ be a functor such that:
\begin{enumerate}[label=(\roman*)]
\item\label{it:ortho1} $p_1$ is a cocartesian fibration and all $p_1$-cocartesian arrows in $\cat{X}$ map to equivalences in $\cat{D}$.

\item\label{it:ortho2} $p_2$ is a cartesian fibration and all $p_2$-cartesian arrows in $\cat{X}$ map to equivalences in $\cat{C}$.
\end{enumerate} 
Associated to $p$ is a map of double categories $p'\colon \dcat{X}\rt \cat{C}\boxtimes\cat{D}$, where $\dcat{X}(m, n)$ is the space of diagrams
\begin{equation}\label{diag:orthofib}\begin{tikzcd}
{[m]\times [n]}\arrow[r, "\sigma"]\arrow[rd, "\alpha\times \beta"{swap}] & \cat{X}\arrow[d, "p"]\\
& \cat{C}\times \cat{D}
\end{tikzcd}\end{equation}
where $\alpha\colon [m]\rt \cat{C}, \beta\colon [n]\rt \cat{D}$ and $\sigma$ sends each $[m]\times \{j\}$ to $p_1$-cocartesian arrows in $\cat{X}$. Then $p'$ is a (left, cart)-fibration of double categories.
\end{proposition}
Functors $p\colon \cat{X}\rt \cat{C}\times \cat{D}$ satisfying conditions \ref{it:ortho1} and \ref{it:ortho2} have been studied in detail in \cite{heb20, hau21}, where they are called \emph{local orthofibrations}. Let us point out the close analogy between the discussion preceding Proposition \ref{prop:orthocart} and the discussion in \cite[Construction 2.3.5]{hau21}.
\begin{proof}
To see condition \ref{it:hor left}, consider the pullback square of categories
$$\begin{tikzcd}
\Fun'([n], \cat{X})\arrow[r, "q"]\arrow[d] & \cat{C}\times \Map_{\Cat}([n], \cat{D})\arrow[d, "\Delta\times \iota"]\\
\Fun([n], \cat{X})\arrow[r] & \Fun([n], \cat{C})\times \Fun([n], \cat{D}) 
\end{tikzcd}$$
where $\Delta$ takes the constant diagram and $\iota$ is the inclusion of the space of objects. Since $(p_1, p_2)$ is a local orthofibration, the induced map 
$$(p'_1, p'_2)\colon \Fun([n], \cat{X})\rt \Fun([n], \cat{C})\times \Fun([n], \cat{D})
$$
is a local orthofibration as well: a natural transformation in $\Fun([n], \cat{X})$ is $p'_1$-cocartesian if and only if it is given pointwise by $p_1$-cocartesian maps in $\cat{X}$, and similarly for $p'_2$-cartesian natural transformations (this follows e.g.\ from Lemma \ref{lem:pre-cocart stable}(3)). In particular, the base change $q\colon \Fun'([n], \cat{X})\rt \cat{C}\times \Map_{\Cat}([n], \cat{D})$ is a cocartesian fibration. 

Unraveling the definitions, one sees that the space of maps $[m]\rt \Fun'([n], \cat{X})$ is equivalent to the space of diagrams $\sigma\colon [m]\times [n]\rt \cat{X}$ of the form \eqref{diag:orthofib}. Furthermore, $[m]\rt \Fun'([n], \cat{X})$ sends each arrow to a $q$-cocartesian arrow if and only if $\sigma$ sends each $[m]\times \{i\}$ to $p_1$-cocartesian arrows in $\cat{X}$. It follows that there is a factorization
$$\begin{tikzcd}
\dcat{X}(-, n)\arrow[r, hookrightarrow] & \Fun'([n], \cat{X})\arrow[r, "q"] & \cat{C}\times \Map([n], \cat{D})=\big(\cat{C}\boxtimes \cat{D}\big)(-, n)
\end{tikzcd}$$
where the first map is the inclusion of the wide subcategory spanned by the $q$-cocartesian arrows. In particular, this shows that each $\dcat{X}(-, n)\rt \big(\cat{C}\boxtimes \cat{D}\big)(-, n)$ is a left fibration, as desired.

For condition \ref{it:vert cart}, it suffices to observe that the map $\dcat{X}(0, -)\rt \big(\cat{C}\boxtimes \cat{D}\big)(0, -)$ coincides with top horizontal map in the pullback diagram
$$\begin{tikzcd}
\dcat{X}(0, -)\arrow[r]\arrow[d] & \core_0(\cat{C})\times \cat{D}\arrow[d]\\
\cat{X}\arrow[r] & \cat{C}\times \cat{D}.
\end{tikzcd}$$
This map is a cartesian fibration by condition \ref{it:ortho2} (cf.\ \cite[Observation 2.3.2]{hau21}).
\end{proof}

\begin{remark}\label{rem:(left,cart)-straightened}
This remark only serves to motivate Definition \ref{def:fibrations}, and will not be used in the rest of the text (making it more precise would require a form of straightening). 

Elaborating on the discussion following Definition \ref{def:fibrations}, one can informally think of a (left, cart)-fibration $p\colon \dcat{D}\rt \dcat{C}$ as a certain type of diagram of categories indexed by $\dcat{C}$. Indeed, by Remark \ref{rem:basic properties of leftcart fibs}, the fibers of $p$ are all categories (in the vertical direction). Furthermore, each horizontal arrow $\alpha\colon c_0\rt c_1$ induces a map of categories $\alpha_!\colon\dcat{D}_{c_0}\rt \dcat{D}_{c_1}$ and each vertical arrow $\beta\colon c_0\vto c_1$ induces a map of categories $\beta^*\colon \dcat{D}_{c_1}\rt \dcat{D}_{c_0}$. Finally, each diagram $\sigma\colon [1, 1]\rt \dcat{C}$ gives rise to a square of categories and functors commuting up to a natural transformation
$$\begin{tikzcd}
\dcat{D}_{c_{00}}\arrow[r, "\alpha_!"] \arrow[rd, Rightarrow, "h_\sigma", start anchor={[xshift=2pt, yshift=-2pt]}, end anchor={[xshift=-2pt, yshift=2pt]}] & \dcat{D}_{c_{10}}\\
\dcat{D}_{c_{01}}\arrow[r, "\alpha'_!"{swap}]\arrow[u, "\beta^*"] & \dcat{D}_{c_{11}}.\arrow[u, "\beta'^*"{swap}]
\end{tikzcd}$$
In other words, a (left, cart)-fibration $p\colon \dcat{D}\rt \dcat{C}$ should arise as the unstraightening of a diagram of double categories $\dcat{C}^{2-\op}\rt\mathbb{C}\cat{at}^{\mm{oplax}}$, where $\mathbb{C}\cat{at}^{\mm{oplax}}$ is a certain double category whose objects are categories, horizontal and vertical morphisms are functors and squares are oplax squares
$$\begin{tikzcd}
\cat{D}_{00}\arrow[d, \vertcolor]\arrow[r] & \cat{D}_{01}\arrow[d, \vertcolor]\\
\cat{D}_{10}\arrow[r]\arrow[ru, Rightarrow, start anchor={[xshift=2pt, yshift=2pt]}, end anchor={[xshift=-2pt, yshift=-2pt]}] & \cat{D}_{11}.
\end{tikzcd}$$
\end{remark}
\begin{lemma}\label{lem:equivalence fiberwise}
Consider a strong map of (left, cart)-fibrations over $\dcat{C}$
$$\begin{tikzcd}
\dcat{D}\arrow[rr, "f"]\arrow[rd, "p"{swap}] & & \dcat{E}\arrow[ld, "q"]\\
& \dcat{C}.
\end{tikzcd}$$
Then $f$ is an equivalence if and only if for any object $c\in \dcat{C}$, the induced map between the fibers $\dcat{D}_c\rt \dcat{E}_c$ is an equivalence of (vertical) categories.
\end{lemma}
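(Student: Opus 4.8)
The forward implication is immediate: an equivalence of double categories over $\dcat{C}$ restricts to an equivalence on every fiber. The plan for the converse is to propagate the fiberwise hypothesis through the horizontal Segal condition together with condition \ref{it:vert cart} of Definition \ref{def:fibrations}; the vertical Segal structure will not be needed directly.

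First I would pin down what the fibers are. Pulling $p$ back along an object $\ast\rt\dcat{C}$ yields a (left, cart)-fibration over $\ast$, which by Remark \ref{rem:basic properties of leftcart fibs} is constant in the horizontal direction; hence $\dcat{D}_c$ is nothing but the vertical category $\dcat{D}_c(0,-)$, i.e.\ the fiber of the cartesian fibration $\dcat{D}(0,-)\rt\dcat{C}(0,-)$ over the object $c\in\dcat{C}(0,0)$, and likewise for $\dcat{E}$. Since $f$ is a strong map, the functor $f_0\colon\dcat{D}(0,-)\rt\dcat{E}(0,-)$ lies over $\dcat{C}(0,-)$ and preserves cartesian arrows (Definition \ref{def:fibrations}), and by hypothesis it is an equivalence on all fibers over objects. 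I would then invoke the classical recognition criterion that a functor between cartesian fibrations over a fixed base that preserves cartesian arrows and is fiberwise an equivalence is itself an equivalence. Its straightening-free proof is the usual one: $f_0$ is an equivalence on spaces of objects since it lies over $\dcat{C}(0,0)$ and is fiberwise an equivalence there, and it is fully faithful because, over each $g\colon px\rt py$ in $\dcat{C}(0,-)$, cartesian transport identifies the fiber over $g$ of $\Map_{\dcat{D}(0,-)}(x,y)\rt\Map_{\dcat{C}(0,-)}(px,py)$ with $\Map_{\dcat{D}_{px}}(x,g^{*}y)$, on which $f_0$ is an equivalence. Thus $f_0$ is an equivalence of categories.

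Next I would lift this to horizontal degree one. Condition \ref{it:vert cart} of Definition \ref{def:fibrations} provides a cartesian square exhibiting $\dcat{D}([1],-)\simeq\dcat{D}(\{0\},-)\times_{\dcat{C}(\{0\},-)}\dcat{C}([1],-)$, and similarly for $\dcat{E}$; since $f$ lies over $\dcat{C}$, the universal property identifies $f_1\colon\dcat{D}([1],-)\rt\dcat{E}([1],-)$ with the base change of $f_0$ along $\dcat{C}([1],-)\rt\dcat{C}(\{0\},-)$, so $f_1$ is again an equivalence. Finally, the horizontal Segal condition gives $\dcat{D}(n,-)\simeq\dcat{D}(1,-)\times_{\dcat{D}(0,-)}\cdots\times_{\dcat{D}(0,-)}\dcat{D}(1,-)$, and the analogous decomposition for $\dcat{E}$, compatibly with $f$; as $f_0$ and $f_1$ are equivalences, so is $\dcat{D}(n,-)\rt\dcat{E}(n,-)$ for every $n\geq 0$. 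This is exactly a levelwise equivalence of bisimplicial spaces, hence $f$ is an equivalence in $\DCat$.

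The argument is mostly bookkeeping: the only genuinely non-formal ingredient is the recognition criterion for equivalences of cartesian fibrations used in the second step. I would expect the main point requiring care to be the identification of the ``fibers'' in the statement with the fibers over \emph{objects} of the cartesian fibration $\dcat{D}(0,-)\rt\dcat{C}(0,-)$ (rather than with some a priori larger pullback); everything downstream is formal.
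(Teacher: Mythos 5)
Your proposal is correct and follows essentially the same route as the paper: reduce via the horizontal left-fibration/Segal conditions to the statement that $f_0\colon \dcat{D}(0,-)\rt\dcat{E}(0,-)$ is an equivalence, then apply the fiberwise criterion for maps of cartesian fibrations (which the paper simply cites as \cite[Corollary 2.4.4.4]{lur09} rather than re-proving). The extra detail you supply — identifying $\dcat{D}_c$ with the fiber of $\dcat{D}(0,-)\rt\dcat{C}(0,-)$ over $c$ and sketching the proof of the criterion — is accurate but not a different argument.
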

\begin{proof}
Since $p$ and $q$ are left fibrations in the horizontal direction, the map $f$ is an equivalence if and only if the map $f_0\colon \dcat{D}(0, -)\rt \dcat{E}(0, -)$ is an equivalence of cartesian fibrations over $\dcat{C}(0, -)$; in turn, the map of cartesian fibrations $f_0$ is an equivalence if and only if it induces an equivalence on fibers \cite[Corollary 2.4.4.4]{lur09}.
\end{proof}
\begin{variant}\label{var:fibrations}
Taking opposites in the horizontal or vertical direction, and exchanging the vertical and horizontal direction, one obtains seven more types of fibrations. For example, a map $\dcat{D}\rt \dcat{C}$ is a (cocart, right)-fibration if $\dcat{D}(-, 0)\rt \dcat{C}(-, 0)$ is a cocartesian fibration and each $\dcat{D}(m, -)\rt \dcat{C}(m, -)$ is a right fibration. Strong maps between such fibrations are always required to preserve (co)cartesian arrows, either in the horizontal or vertical direction.
\end{variant}
For example, one can also associate a (cocart, right)-fibration to a two-variable fibration $(p_1, p_2)\colon \cat{X}\rt \cat{C}\times \cat{D}$ as in Proposition \ref{prop:orthocart}, by considering maps $\sigma$ as in \eqref{diag:orthofib} that send each $\{i\}\times [n]$ to $p_2$-cartesian arrows in $\cat{X}$.
\begin{remark}\label{rem:(cocart,right)-straightened}
Similarly to Remark \ref{rem:(left,cart)-straightened}, one can informally think of a (cocart, right)-fibration $p\colon \dcat{D}\rt \dcat{C}$ as a certain diagram of categories indexed by $\dcat{C}$. Indeed, in this case all fibers of $p$ are categories in the \emph{horizontal} direction (and constant in the vertical direction) and again each horizontal arrow $\alpha\colon c_0\rt c_1$ induces a map of categories $\alpha_!\colon \dcat{D}_{c_0}\rt \dcat{D}_{c_1}$ and each vertical arrow $\beta\colon c_0\vto c_1$ induces a functor $\beta^*\colon \dcat{D}_{c_1}\rt \dcat{D}_{c_0}$.

For the behaviour on a square $\sigma\colon [1, 1]\rt \dcat{C}$, we can repeat the analysis following Definition \ref{def:fibrations}: for each such square and a lift $d_{10}\in p^{-1}(c_{10})$, there exists a lift to a square 
$$\begin{tikzcd}
\beta^*(d_{01})\arrow[d, \vertcolor]\arrow[r] & \alpha_!\beta^*(d_{01})\arrow[d, \vertcolor]\\
d_{10}\arrow[r] & \alpha_!(d_{10})
\end{tikzcd}$$
in $\dcat{D}$ which is $p$-cocartesian in the horizontal direction. The left vertical arrow is $p$-cartesian but the right vertical arrow is not in general, so that one again obtains a natural map $h_\sigma\colon \alpha_!\beta^*(d_{01})\rt \beta'^*\alpha_!(d_{01})$. 

In other words, one can think of a (cocart, right)-fibration $p\colon \dcat{D}\rt \dcat{C}$ as the unstraightening of a map of double categories $\dcat{C}^{2-\op}\rt \mathbb{C}\cat{at}^{\mm{oplax}}$, where $\mathbb{C}\cat{at}^{\mm{oplax}}$ is the double category described informally in Remark \ref{rem:(left,cart)-straightened}.
\end{remark}

\subsection{The reflection of a (left, cart)-fibration}\label{sec:reflection pointwise}
We will now turn to the main combinatorial construction of this text: we will associate to each (left, cart)-fibration $p\colon \dcat{D}\rt \dcat{C}$ a (cocart, right)-fibration $\Psi^\perp(\dcat{D})\rt \dcat{C}$, which we will refer to as the \emph{reflection} of $p$. In Section \ref{sec:reflecting}, we will study the functoriality of this construction in more detail.

Before we begin, note that at a heuristic level, Remark \ref{rem:(left,cart)-straightened} and Remark \ref{rem:(cocart,right)-straightened} show that (left, cart)-fibrations and (cocart, right)-fibrations over a double category $\dcat{C}$ encode the same kind of data: both should be considered as unstraightened versions of the datum of a map of double categories
$$\begin{tikzcd}
\dcat{C}^{2-\op}\arrow[r] & \mathbb{C}\cat{at}^{\mm{oplax}}.
\end{tikzcd}$$
One should therefore be able to straighten a (left, cart)-fibration to such a map of double categories and then unstraighten it to obtain a (cocart, right)-fibration. Instead of making this more precise, we will give a purely combinatorial description of (what should be) the composite functor $\Psi^\perp$, inspired by the explicit formula for the Grothendieck construction and the combinatorics appearing in \cite{heb20}. Informally, the functor $\Psi^\perp$ will first remove the vertical arrows in each fiber of $p\colon \dcat{D}\rt \dcat{C}$, so that only the cartesian vertical arrows remain, and then replaces them by fiberwise horizontal arrows instead (which can be composed with the cocartesian horizontal arrows already present in $\dcat{D}$).

Let us start by considering the following construction of a double category out of a (left, cart)-fibration, which will appear repeatedly in the text:
\begin{lemma}\label{lem:reflection of left is dcat}
Let $p\colon \dcat{D}\rt \dcat{C}$ be a map of double categories such that each $\dcat{D}(-, n)\rt \dcat{C}(-, n)$ is a left fibration. Consider the bisimplicial space $X$ whose space of $(m, n)$-simplices is given by the space of commuting squares
\begin{equation}\label{diag:simplex in reflection}\begin{tikzcd}
\Ar[m]\times [0, n]\arrow[r]\arrow[d, "\pi_\mm{hor}\times \mm{id}"{swap}] & \dcat{D}\arrow[d, "p"]\\
{[m, n]}\arrow[r] & \dcat{C}.
\end{tikzcd}\end{equation}
Then $X$ is a double category.
\end{lemma}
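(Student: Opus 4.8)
The plan is to check that $X$ is a double category by verifying the complete Segal conditions in the two simplicial directions separately: that $X(-,n)$ is a complete Segal space for each $n$, and that $X(m,-)$ is one for each $m$. Recall that being a complete Segal space is a property stable under limits of simplicial spaces — the Segal condition and condition \ref{it:completeness} each ask that a map of spaces obtained from the simplicial space by a finite (resp.\ small) limit be an equivalence, and such conditions pass to limits; in particular any pullback of complete Segal spaces is one. The basic input is the following formula. Since $(-)\times [0,n]$ preserves colimits of bisimplicial spaces (cartesian closedness, Proposition \ref{prop:dcat cart closed}) and the cosimplicial double category $[m]\mapsto\Ar[m]$ extends to a colimit-preserving functor $K\mapsto\Ar[K]$ from simplicial sets with $(\Ar[K])_{p,q}=K_{p+q+1}$, unwinding \eqref{diag:simplex in reflection} shows that for every simplicial set $K$
$$\Map_{s\sS}\big(K,X(-,n)\big)\simeq\Map_{\cat{dCat}}\big(K\boxtimes [n],\dcat{C}\big)\times_{\Map_{\cat{dCat}}(\Ar[K]\times [0,n],\dcat{C})}\Map_{\cat{dCat}}\big(\Ar[K]\times [0,n],\dcat{D}\big),$$
where $K\boxtimes [n]$ is the bisimplicial set $([p],[q])\mapsto K_p\times\Map_{\Del}([q],[n])$ and the maps are induced by $p$ and by the projection $\Ar[K]\times [0,n]\rt K\boxtimes [n]$ (which is $\pi_{\mm{hor}}\times\mm{id}$ for $K=\Delta[m]$).

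\emph{Vertical direction.} Read the formula at $K=\Delta[m]$ through the internal mapping objects of Proposition \ref{prop:dcat cart closed}: $\Map_{\cat{dCat}}([m,n],\dcat{C})=(\dcat{C}^{[m,0]})(0,n)$, and likewise $\Map_{\cat{dCat}}(\Ar[m]\times [0,n],\dcat{C})=(\dcat{C}^{\Ar[m]})(0,n)$ and $\Map_{\cat{dCat}}(\Ar[m]\times [0,n],\dcat{D})=(\dcat{D}^{\Ar[m]})(0,n)$. As $[m,0]$ and $\Ar[m]$ are double categories, so are $\dcat{C}^{[m,0]}$, $\dcat{C}^{\Ar[m]}$ and $\dcat{D}^{\Ar[m]}$, hence their zeroth vertical slices $(0,-)$ are complete Segal spaces. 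Therefore $X(m,-)$ is the pullback $(\dcat{C}^{[m,0]})(0,-)\times_{(\dcat{C}^{\Ar[m]})(0,-)}(\dcat{D}^{\Ar[m]})(0,-)$ of complete Segal spaces, and is one.

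\emph{Horizontal direction.} Fix $n$. A simplicial space is a complete Segal space exactly when it is local with respect to the spine inclusions $\mathrm{Sp}[m]\hookrightarrow\Delta[m]$ ($m\geq 2$) and the map $H\to\Delta[0]$ of condition \ref{it:completeness}. By the formula above and stability under limits it suffices to show, for each such $f\colon A\rt B$, that $\Map_{s\sS}(B,X(-,n))\rt\Map_{s\sS}(A,X(-,n))$ is an equivalence. Both sides fiber over $\Map_{\cat{dCat}}(B\boxtimes [n],\dcat{C})$ resp.\ $\Map_{\cat{dCat}}(A\boxtimes [n],\dcat{C})$, and since $(-)\boxtimes [n]$ is colimit-preserving and carries each $f$ to an equivalence of double categories (for $\mathrm{Sp}[m]\hookrightarrow\Delta[m]$ because $[m,n]$ is the iterated pushout $[1,n]\sqcup_{[0,n]}\cdots\sqcup_{[0,n]}[1,n]$; for $H\to\Delta[0]$ because $(-)\boxtimes[n]$ is compatible with the localization to $1$-categories and $H\to\Delta[0]$ is a categorical equivalence), those base maps are equivalences and we reduce to the claim on fibers. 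The fiber over $g\colon B\boxtimes[n]\rt\dcat{C}$ is the space of sections of the base change $g^{*}\dcat{D}\rt B\boxtimes [n]$ — again a left fibration in the horizontal direction — along $\Ar[B]\times [0,n]\rt B\boxtimes [n]$, and likewise for $A$, the comparison being restriction along $\Ar[f]\times\mm{id}_{[0,n]}$. Since horizontal left fibrations have contractible spaces of lifts against \emph{horizontally left anodyne} maps — the saturation of pushout-products of initial-vertex inclusions $\{0\}\hookrightarrow\Delta[j]$ in the first variable with boundary inclusions $\partial\Delta[k]\hookrightarrow\Delta[k]$ in the second — and this class is closed under pushout-product with monomorphisms, it suffices to prove that $\Ar[f]\colon\Ar[A]\rt\Ar[B]$ is horizontally left anodyne. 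For $f=H\to\Delta[0]$ one has $\Ar[B]=\ast$; a horizontal left fibration over $\ast$ is a double category $cst_h(E')$ constant in the horizontal direction with $E'$ a complete Segal space, and $\Map_{\cat{dCat}}(\Ar[H],cst_h(E'))\simeq\Map_{s\sS}(\pi_0^h\Ar[H],E')$ with $\pi_0^h$ the realization of the first variable; one checks via décalage (extra degeneracies) that $\pi_0^h\Ar[H]\simeq H$, so completeness of $E'$ gives $\Map_{s\sS}(H,E')\simeq E'_0=\Map_{\cat{dCat}}(\ast,cst_h(E'))$. The remaining case $f=\mathrm{Sp}[m]\hookrightarrow\Delta[m]$ is the crux: $\Ar[f]$ is the inclusion $\mathrm{sp}\,\Ar[m]:=\Ar[1]\sqcup_{\Ar[0]}\cdots\sqcup_{\Ar[0]}\Ar[1]\hookrightarrow\Ar[m]$ of the spine of the arrow double category, and I would prove it horizontally left anodyne by exhibiting a finite filtration of $\Ar[m]$ starting from $\mathrm{sp}\,\Ar[m]$, each stage a pushout of a horizontally left anodyne generator — adjoining in a suitable order the missing objects $(a\leq b)$, the horizontal arrows out of them, the vertical arrows, and the square cells — arranged so that the required horn is present at each step. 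This is the combinatorial heart, mirroring the ordinary Grothendieck construction.

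The main obstacle is precisely this last step. Note that $\mathrm{sp}\,\Ar[m]\hookrightarrow\Ar[m]$ is genuinely \emph{not} an equivalence of double categories — $\Ar[m]$ contains cells absent from its spine, beginning with the ``long'' object $(0\leq m)$ and everything incident to it — so there is no soft argument: one must use the defining lifting property of left fibrations to see that these extra cells are uniquely and coherently filled. Everything else (the vertical direction, the reduction along the generators of the complete Segal localization, and the case $H\to\Delta[0]$) is formal, given cartesian closedness of $\DCat$ and the standard calculus of left fibrations.
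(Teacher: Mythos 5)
Your vertical-direction argument is exactly the paper's (identify $X(m,-)$ with $\bigl(\dcat{D}^{\Ar[m]}\times_{\dcat{C}^{\Ar[m]}}\dcat{C}^{[m,0]}\bigr)(0,-)$ via cartesian closedness), and your reduction of the horizontal direction to locality against spine inclusions and $H\to\Delta[0]$, together with the d\'ecalage computation $|\Ar[K]|_{\mm{hor}}\simeq K$ handling completeness, is a reasonable reformulation. But the proof is not complete: the entire content of the horizontal Segal condition has been pushed into the claim that $\mathrm{sp}\,\Ar[m]\hookrightarrow\Ar[m]$ is ``horizontally left anodyne,'' and you explicitly do not prove this --- you only announce that a suitable cell-by-cell filtration should exist. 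That claim is the lemma; without the filtration (and a verification that each stage really is a pushout of a generating anodyne map in the right order) nothing has been shown. Moreover, the surrounding framework is itself asserted rather than established: in this paper a horizontal left fibration is \emph{defined} by the pullback condition of Definition \ref{def:fibrations}\ref{it:vert cart}, not by a lifting property against a saturated class of pushout-products of initial-vertex inclusions with boundary inclusions, so the equivalence of these two descriptions (a marked/covariant model structure for bisimplicial spaces) would need to be set up before the anodyne calculus can be invoked.

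For comparison, the paper avoids this machinery entirely: it unwinds the Segal map $X(m,0)\to X(\{0\le 1\},0)\times_{X(\{1\})}X(\{1\le\dots\le m\},0)$ into a single concrete unique-extension problem, namely extending $\Ar[0\le 1]\amalg_{\{11\}}\Ar[1\le\dots\le m]\to\dcat{D}$ over a fixed $[m,0]\to\dcat{C}$ to all of $\Ar[m]$, and observes that the missing data (the top strip of the staircase) is a simplex of $\dcat{D}(-,1)$ determined uniquely by its initial vertex and its image in $\dcat{C}(-,1)$ --- which is precisely the defining condition of a horizontal left fibration, applied once. If you want to salvage your route, the honest fix is either to carry out the filtration of $\Ar[m]$ from its spine in full (and justify the lifting characterization of left fibrations you are using), or to replace that step by the paper's direct argument, which needs no anodyne calculus.
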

\begin{remark}\label{rem:refl heuristic}
Informally, $X$ is a double category with the same objects as $\dcat{D}$, whose squares are given by diagrams in $\dcat{D}$ of the form
$$\begin{tikzcd}
{}\arrow[d, \vertcolor]\arrow[r] & {} \arrow[d, \vertcolor]\arrow[r, \vertcolor] & {}\arrow[d, \vertcolor]\\
{}\arrow[r] & {}\arrow[r, \vertcolor] & {}
\end{tikzcd}$$
where the right square is a \emph{commuting} square of vertical arrows in which the top and bottom arrow are sent to degenerate arrows in $\dcat{C}$. In other words, we keep the same vertical arrows but replace the horizontal arrows by formal composites $\beta\circ\alpha\colon \cdot\rt \cdot\vto \cdot$ of a horizontal and a fiberwise vertical arrow. Diagrams of the above form have an evident vertical composition. For the composition of horizontal arrows, consider the concatenation of two formal composites
$$\begin{tikzcd}
\cdot \arrow[r, "\alpha_1"] & \cdot\arrow[r, "\beta_1", \vertcolor] & \cdot\arrow[r, "\alpha_2"] & \cdot \arrow[r, "\beta_2", \vertcolor] & \cdot
\end{tikzcd}$$
Then $\beta_1$ and $\alpha_2$ fit into an essentially unique $(1, 1)$-cell of $\dcat{D}$ that covers the vertically degenerate (1, 1)-cell $p(\alpha_2)$ in $\dcat{C}$. The other two sides of this square give a formal composite $\beta'_1\circ \alpha'_2\colon \cdot \rt\cdot  \vto\cdot$. The composed horizontal arrow in $X$ is then given by the formal composite $(\beta_2\beta'_1)\circ (\alpha'_2\alpha_1)$.
\end{remark}
\begin{proof}
To see that $X$ is a category in the vertical direction, we use that $\DCat$ is cartesian closed (Proposition \ref{prop:dcat cart closed}): the simplicial space $X(m, -)$ can then be identified with the category $\big(\dcat{D}^{\Ar[m]}\times_{\dcat{C}^{\Ar[m]}} \dcat{C}^{[m, 0]}\big)(0, -)$.

To see that each $X(-, n)$ is a category, it suffices to treat the case $n=0$: indeed, the general case then follows by considering the functor $\dcat{D}^{[0, n]}\rt \dcat{C}^{[0, n]}$, which is still a left fibration in the horizontal direction.
Unraveling the definitions shows that the Segal condition for $X(-, 0)$ translates into the unique lifting problem
$$\begin{tikzcd}
\Ar[0\leq 1]\amalg_{\{11\}} \Ar[1\leq  \dots\leq m]  \arrow[d]\arrow[rr] & & \dcat{D}\arrow[d, "\pi"]\\
\Ar[0\leq \dots\leq m]\arrow[r] \arrow[rru, dashed, end anchor={[xshift=-0.5ex, yshift=-0.8ex]}] & {[m, 0]}\arrow[r] & \dcat{C}.
\end{tikzcd}$$
In other words, given a solid diagram in $\dcat{D}$ of the form
$$\begin{tikzcd}
x_{00}\arrow[r] & x_{01}\arrow[d, \vertcolor]\arrow[r, dashed] & x_{02}\arrow[r, dashed]\arrow[d, \vertcolor, dashed] & \dots\arrow[d, \vertcolor, dashed ]\arrow[r, dashed] & x_{0m}\arrow[d, \vertcolor, dashed] \\
& x_{11}\arrow[r] & x_{12}\arrow[d, \vertcolor] \arrow[r] & \dots \arrow[d, \vertcolor]\arrow[r] & x_{1m}\arrow[d, \vertcolor]\\
& & x_{22} \arrow[r] & \dots\arrow[d, \vertcolor]\arrow[r] & x_{2m}\arrow[d, \vertcolor]\\
& & & \ddots \arrow[r] & \vdots \arrow[d, \vertcolor]\\
& & & & x_{mm} 
\end{tikzcd}$$
whose image in $\dcat{C}$ has a fixed dashed extension (i.e.\ including the top squares), there exists a contractible space of dashed extensions as indicated. This follows immediately from the fact that each $\dcat{D}(-, n)\rt \dcat{C}(-, n)$ is a left fibration of categories.

For completeness of $X(-, 0)$, recall the simplicial set $H=\Delta[3]/\sim$ in the definition \ref{it:completeness} of completeness, and note that a map
$$\begin{tikzcd}
H=\Delta[3]/\sim\arrow[r] & X(-, 0)
\end{tikzcd}$$
corresponds to a diagram in $\dcat{D}$ of the form
$$\begin{tikzcd}
x_{00}\arrow[r] & x_{01}\arrow[d, \vertcolor]\arrow[r] & x_{02}\arrow[r]\arrow[d, \vertcolor] & x_{03}\arrow[d, \vertcolor]\\
& x_{11}\arrow[r] & x_{12}\arrow[d, \vertcolor] \arrow[r] & x_{13}\arrow[d, \vertcolor]\\
& & x_{22} \arrow[r] & x_{23}\arrow[d, \vertcolor]\\
& & & x_{33} 
\end{tikzcd}$$
in which $x_{00}\rt x_{02}$ and $x_{02} \vto{} x_{22}$ are degenerate and $x_{11}\rt x_{13}$ and $x_{13}\vto{} x_{33}$ are degenerate, and whose image in $\dcat{C}$ consists only of degenerate arrows (since $\dcat{C}$ is complete). We need to show that the above diagram is degenerate both in the horizontal and the vertical direction. Using that each $\dcat{D}(-, n)\rt \dcat{C}(-, n)$ is a left fibration, one sees that the above diagram is degenerate in the horizontal direction. Furthermore, the rightmost column is degenerate in the vertical direction, since both $x_{03}\vto{}x_{23}$ and $x_{13}\vto x_{33}$ are degenerate and $\dcat{D}(0, -)$ is a category. 
\end{proof}
\begin{definition}\label{def:reflection}
Let $p\colon \dcat{D}\rt \dcat{C}$ be a (left, cart)-fibration and let $X$ be the double category from Lemma \ref{lem:reflection of left is dcat}. We will denote by $\Psi^\perp(\dcat{D})\subseteq X$ the bisimplicial subspace whose $(m, n)$-simplices correspond to diagrams \eqref{diag:simplex in reflection} with the following property: the top map \mbox{$\Ar[m]\times [0, n]\rt \dcat{D}$} sends each $\{ii\}\times [0, n]$ into the subspace of cartesian vertical arrows of $\dcat{D}$. 
We will refer to the evident projection $\Psi^\perp(p)\colon \Psi^\perp(\dcat{D})\rt \dcat{C}$ as the \emph{reflection} of $p$.
\end{definition}
In terms of the heuristic description of $X$ from Remark \ref{rem:refl heuristic}, one can identify $\Psi^\perp(\dcat{D})\subseteq X$ with the largest double subcategory of $X$ with the property that any vertical morphism $[0, 1]\to\Psi^\perp(\dcat{D})$ corresponds to a $p$-cartesian vertical arrow of $\dcat{D}$. For the reason behind the terminology, see Proposition \ref{prop:why its called reflection}.
\begin{proposition}\label{prop:reflection is cocart-left}
Let $p\colon \dcat{D}\rt \dcat{C}$ be a (left, cart)-fibration. Then the reflection $\Psi^\perp(\dcat{D})\rt \dcat{C}$ is a (cocart, right)-fibration between double categories.
\end{proposition}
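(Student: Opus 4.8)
The plan is to verify directly the two conditions defining a (cocart, right)-fibration (Variant \ref{var:fibrations}) for the projection $\Psi^{\perp}(p)\colon \dcat{E}:=\Psi^{\perp}(\dcat{D})\rt\dcat{C}$: that (a) each map of simplicial spaces $\dcat{E}(m,-)\rt\dcat{C}(m,-)$ satisfies the right-fibration condition, and (b) $\dcat{E}(-,0)\rt\dcat{C}(-,0)$ is a cocartesian fibration. Before that, one should note that $\Psi^{\perp}(\dcat{D})$ really is a sub-bisimplicial-space of the double category $X$ of Lemma \ref{lem:reflection of left is dcat}: the condition that $\{ii\}\times [0, n]$ be sent into cartesian vertical arrows is preserved by all face and degeneracy maps in both simplicial directions, since faces of a chain of cartesian arrows are again cartesian and identities are cartesian. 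Granting (a) and (b), the rest is formal: condition (a) says precisely that $\dcat{E}(m, [n])\xrightarrow{\sim}\dcat{E}(m,\{n\})\times_{\dcat{C}(m,\{n\})}\dcat{C}(m, [n])$ for all $n\geq 0$; since $\dcat{E}(m,\{n\})=\dcat{E}(m,0)=X(m,0)$, letting $m$ vary gives $\dcat{E}(-,n)\simeq X(-,0)\times_{\dcat{C}(-,0)}\dcat{C}(-,n)$, which is a category (a pullback of categories along functors, using that $X(-,0)$ is one by Lemma \ref{lem:reflection of left is dcat}), so $\dcat{E}$ is a double category. Moreover $\dcat{E}(m,-)$ is then a right fibration over the category $\dcat{C}(m,-)$, and each $\dcat{E}(-,n)\rt\dcat{C}(-,n)$ is the base change of $\dcat{E}(-,0)\rt\dcat{C}(-,0)$, hence cocartesian by (b). So $\Psi^{\perp}(\dcat{D})\rt\dcat{C}$ is a (cocart, right)-fibration of double categories.

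For (b): an edge of $\dcat{E}(-,0)$ lying over a horizontal arrow $f\colon c_{0}\rt c_{1}$ of $\dcat{C}$ is a pair $(h,v)$ consisting of a horizontal arrow $h\colon x_{00}\rt x_{01}$ of $\dcat{D}$ over $f$ together with a fiberwise vertical arrow $v\colon x_{01}\vto x_{11}$ over $\mathrm{id}_{c_{1}}$. Because each $\dcat{D}(-,k)\rt\dcat{C}(-,k)$ is a left fibration, every object $x_{00}$ over $c_{0}$ admits a (contractibly unique) horizontal lift $h$ of $f$ with source $x_{00}$, so $\big(h,\mathrm{id}_{x_{01}}\big)$ is a candidate cocartesian lift of $f$; thus $\dcat{E}(-,0)\rt\dcat{C}(-,0)$ has enough candidate lifts, and it remains to see that $\big(h,\mathrm{id}_{x_{01}}\big)$ — and more generally $(h,v)$ with $v$ an equivalence — is cocartesian. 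Unwinding the defining horn-filling condition, a filler is a diagram $\Ar[2]\rt\dcat{D}$ over a prescribed $2$-simplex of $\dcat{C}$ whose first face recovers $(h,\mathrm{id})$; since a horizontal simplex of $\dcat{D}$ at any vertical level is determined by its initial vertex and its image in $\dcat{C}$ (this is the left fibration condition for $\dcat{D}(-,k)\rt\dcat{C}(-,k)$), every entry of such a diagram is then forced, so the space of fillers is contractible and $(h,\mathrm{id})$ is cocartesian.

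For (a): fix $m$. One must show that, given a map $\tau\colon [m, n]\rt\dcat{C}$ and a lift $\tilde{\sigma}_{n}\colon \Ar[m]\times\{n\}\rt\dcat{D}$ of $\tau\circ(\pi_{\mm{hor}}\times\mathrm{id})$ along $p$ over the terminal vertical vertex, there is a contractible space of extensions to a full diagram \eqref{diag:simplex in reflection}, i.e.\ of maps $\tilde{\sigma}\colon \Ar[m]\times [0, n]\rt\dcat{D}$ over $\tau$ sending each $\{ii\}\times [0, n]$ into cartesian vertical arrows. I would build $\tilde{\sigma}$ in two steps. First, along each diagonal column $\{ii\}\times [0, n]$ — which $\tau$ maps into the vertical category $\dcat{C}(0,-)$ — choose the lift to $\dcat{D}(0,-)$ consisting of $p$-cartesian vertical arrows and ending at the prescribed object $\tilde{\sigma}_{n}(ii,n)$; since $\dcat{D}(0,-)\rt\dcat{C}(0,-)$ is a cartesian fibration, the space of such lifts is contractible. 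This defines $\tilde{\sigma}$ consistently on the sub-bisimplicial-space $A:=\big(\Ar[m]\times\{n\}\big)\cup\bigcup_{i}\big(\{ii\}\times [0, n]\big)$ of $B:=\Ar[m]\times [0, n]$, the pieces overlapping only in the objects $(ii,n)$. Second, extend $\tilde{\sigma}$ from $A$ to $B$, which can be done in exactly one way: $B$ is obtained from $A$ by attaching, level by level in the $[0,n]$-direction, the horizontal rows $(ii,k)\rt(i,i{+}1,k)\rt\dots\rt(im,k)$ of $\Ar[m]$ along their initial objects $(ii,k)\in A$, together with the vertical arrows and square $2$-cells of $\Ar[m]$ joining consecutive rows; and every simplex of $\dcat{D}$ mapping out of such a configuration is, by the left fibration condition for the relevant $\dcat{D}(-,k)\rt\dcat{C}(-,k)$, uniquely determined by its image in $\dcat{C}$ together with its restriction to the already-defined part. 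The extension does not touch the diagonal columns, so it lands in $\dcat{E}(m,n)$; composing the two contractible choices proves (a).

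The hard part is the second step of (a): turning the informal statement ``$B$ is built from $A$ by horizontal initial-vertex inclusions'' into a precise cell-by-cell induction requires careful bookkeeping of how the horizontal rows of $\Ar[m]$, the vertical arrows joining them, and the square $2$-cells interact — a lengthy but routine argument, parallel to (and a mild elaboration of) the Segal- and completeness-checks in the proof of Lemma \ref{lem:reflection of left is dcat}. The same bookkeeping underlies the ``every entry is forced'' claim in step (b).
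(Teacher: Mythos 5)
Your overall strategy is essentially the paper's: realize $\Psi^\perp(\dcat{D})$ inside the double category $X$ of Lemma \ref{lem:reflection of left is dcat}, produce cocartesian lifts of the form ``horizontal lift followed by an identity'', and verify the vertical right-fibration condition by first taking cartesian lifts along the diagonal columns and then filling in the rest. However, both of your key uniqueness arguments contain the same genuine gap: you attribute the contractibility of the space of fillers entirely to the horizontal left-fibration condition, and that condition does not determine the final, purely vertical arrow in either lifting problem. In (b), for a filler $\Ar[2]\rt\dcat{D}$ the left-fibration property does force the horizontal rows and the square joining rows $0$ and $1$ (hence the arrow $x_{02}\vto x_{12}$), but the remaining vertical arrow $x_{12}\vto x_{22}$ is not a face of any horizontally nondegenerate cell of $\Ar[2]$; it is constrained only by the requirement that it compose with $x_{02}\vto x_{12}$ to the prescribed $x_{02}\vto x_{22}$, and this factorization problem has a contractible space of solutions precisely because $x_{02}\vto x_{12}$ is an equivalence, which in turn uses that the vertical component $v$ of your candidate lift is an equivalence. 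As written, your argument never invokes this hypothesis and would therefore show that \emph{every} horizontal arrow of $\Psi^\perp(\dcat{D})$ is cocartesian, i.e.\ that $\Psi^\perp(\dcat{D})(-,0)\rt\dcat{C}(-,0)$ is a left fibration --- false whenever the fibers of $p$ contain non-invertible vertical arrows.

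The same issue occurs in step (a): already for $m=n=1$, after the left fibration has produced the square with left edge $x_{00,0}\vto x_{00,1}$ (hence $x_{01,0}$ and $x_{01,0}\vto x_{01,1}$), the remaining vertical arrow $x_{01,0}\vto x_{11,0}$ and the triangle it bounds are not faces of any horizontally nondegenerate square, so no left-fibration condition touches them; they are determined only because the diagonal lift $x_{11,0}\vto x_{11,1}$ is \emph{cartesian}, by factoring the composite $x_{01,0}\vto x_{01,1}\vto x_{11,1}$ through it. Your deferred ``routine bookkeeping'' also omits exactly the cells where this interaction happens, namely the mixed squares of $\Ar[m]\times[0,n]$ and the $[0,n]$-direction arrows away from the diagonal columns and the last slice. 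Note finally that the paper avoids the general $(m,n)$ induction altogether: it first establishes that $\Psi^\perp(\dcat{D})$ is a double category (via Lemma \ref{lem:reflection of left is dcat} and a full-subcategory argument) and then uses the horizontal Segal condition to reduce the right-fibration check to $m\in\{0,1\}$; that reduction is not available to you as stated, since you derive the Segal conditions from (a).
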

\begin{proof}
We verify each of the conditions, starting with the fact that $\Psi^\perp(\dcat{D})$ is a double category.

\subsubsection*{Segal conditions}
Recall the double category $X$ from Lemma \ref{lem:reflection of left is dcat}. Since each $\Psi^\perp(\dcat{D})(-, n)\subseteq X(-, n)$ is a full subcategory of the category $X(-, n)$, it follows that $\Psi^\perp(\dcat{D})(-, n)$ is a category (i.e.\ complete Segal space) itself.
In the other direction, note that there is a pullback square of simplicial spaces
$$\begin{tikzcd}
\Psi^\perp(\dcat{D})([m], -)\arrow[r, hook]\arrow[d] & X([m], -)\arrow[d]\\
\prod_{i=0}^m \Psi^\perp(\dcat{D})(\{i\}, -)\arrow[r, hook] & \prod_{i=0}^m X(\{i\},-).
\end{tikzcd}$$
Since $X([m], -)$ is a category by Lemma \ref{lem:reflection of left is dcat}, it suffices to verify that $\Psi^\perp(\dcat{D})(0, -)\hooklongrightarrow X(0, -)$ is a subcategory. But this can simply be identified with the inclusion of the wide subcategory of $\dcat{D}(0, -)$ whose morphisms are cartesian morphisms.

\subsubsection*{Vertical right fibration}
Next, let us verify that each $\Psi^\perp(\dcat{D})(m, -)\rt \dcat{C}(m, -)$ is a right fibration. By the horizontal Segal condition, it suffices to treat the case $m=0$ and $m=1$. Unraveling the definitions, we have to verify that there are contractible spaces of diagonal lifts
$$\begin{tikzcd}
\{1\}\arrow[d]\arrow[r] & \dcat{D}\arrow[d, "\pi"] & & \Ar[1]\times \{1\}\arrow[rr]\arrow[d] & & \dcat{D}\arrow[d, "\pi"]\\
{[0, 1]}\arrow[r]\arrow[ru, dashed] & \dcat{C} & & \Ar[1]\times [0, 1]\arrow[r]\arrow[rru, dashed, end anchor={[xshift=-0.5ex, yshift=-0.8ex]}] & {[1, 1]} \arrow[r] & \dcat{C}
\end{tikzcd}$$
with the following property: the vertical arrow in $[0, 1]$, respectively the vertical arrows in $\{00, 11\}\times [0, 1]$, are sent to cartesian vertical arrows in $\dcat{D}$. For the left square, there is a unique such lift since $\dcat{D}(0, -)\rt \dcat{C}(0, -)$ is a cartesian fibration. For the right square, we have to verify that for any solid diagram
$$\begin{tikzcd}[column sep=0.9pc, row sep=0.8pc]
& x_{00, 0}\arrow[dl, \vertcolor, dashed, "\circ" marking]\arrow[rr, dashed] & & x_{01, 0}\arrow[dl,\vertcolor,dashed]\arrow[dd, dashed, \vertcolor]\\
x_{00, 1}\arrow[rr] & & x_{01, 1}\arrow[dd,\vertcolor] & & {} \arrow[rr] & & {} & \dcat{D}\arrow[ddd, end anchor={[yshift=1.5ex]}, start anchor={[yshift=-1.5ex]}]\\
& \arrow[d, end anchor={[yshift=-2ex]}, xshift=-2ex, bend right, mapsto] & & x_{11, 0}\arrow[ld, \vertcolor, dashed, "\circ" marking]\\
&{}  & x_{11, 1}\\
& c_{0, 0}\arrow[rr]\arrow[ld, \vertcolor] & & c_{1, 0}\arrow[ld, \vertcolor] & \arrow[rr] & & {} &  \dcat{C}\\
c_{0, 1}\arrow[rr] & & c_{1, 1}
\end{tikzcd}$$
there exist unique objects $x_{00, 0}, x_{01, 0}, x_{11, 0}$ together with a dashed extension as indicated, such that the two arrows $x_{00, 0}\vto x_{00, 1}$ and $x_{11, 0}\vto x_{11, 1}$ are cartesian. To see this, first take these two arrows to be the (unique) cartesian lifts of their image in $\dcat{C}$. Then the top square can be filled uniquely using that $\dcat{D}\rt \dcat{C}$ is a left fibration in the horizontal direction. The right square (living entirely in the vertical direction) can then be filled uniquely since $x_{11, 0}\vto x_{11, 1}$ was cartesian.

\subsubsection*{Horizontal cocartesian fibration}
Note that the space $\Psi^\perp(\dcat{D})(1, 0)$ of horizontal arrows in $\Psi^\perp(\dcat{D})$ is the space of diagrams in $\dcat{D}$ of the form
$$\begin{tikzcd}[sep=small]
x\arrow[r] & y\arrow[d, \vertcolor]\\
& z
\end{tikzcd}$$
covering a horizontal arrow in $\dcat{C}$. Let us say that a horizontal arrow in $\Psi^\perp(\dcat{D})$ is \emph{marked} if the corresponding vertical map $y\vto z$ is an equivalence in $\dcat{D}$. We claim that all marked horizontal arrows in $\Psi^\perp(\dcat{D})$ are $\Psi^\perp(p)$-cocartesian. In other words, we have to find a unique diagonal lift for every diagram of categories
$$\begin{tikzcd}
\Lambda^0[2]\arrow[r]\arrow[d] & \Psi^\perp(\dcat{D})(-, 0)\arrow[d, "\Psi^\perp(p)"]\\
{[2]}\arrow[r]\arrow[ru, dashed] & \dcat{C}(-, 0)
\end{tikzcd}$$ 
where the top map sends $0\rt 1$ to a marked arrow. Unraveling the definitions, this corresponds to a solid diagram in $\dcat{D}$ covering a horizontal $2$-simplex in $\dcat{C}$ 
$$\begin{tikzcd}
x_{00}\arrow[r]\arrow[rr, bend left] & x_{01}\arrow[d, \vertcolor, "\sim"{swap}] \arrow[r, dashed] & x_{02}\arrow[dd, bend left, \vertcolor]\arrow[d, dashed, \vertcolor]\\
& x_{11}\arrow[r, dashed] & x_{12}\arrow[d, dashed, \vertcolor]\\
& & x_{22}
\end{tikzcd}$$
where the map $x_{01}\vto x_{11}$ is an equivalence, of which one has to find a unique dashed extension as indicated. To find the desired unique extension, using that $\dcat{D}\rt \dcat{C}$ is a left fibration in the horizontal direction, it follows that there is a unique way to extend the square making the top triangle (living purely in the horizontal direction) commute. Since $x_{01} \vto x_{11}$ was an equivalence, the resulting vertical arrow $x_{02} \vto x_{12}$ is an equivalence as well. It then follows that there is a unique vertical map $x_{12}\vto x_{22}$ with a filling of the right triangle (purely in the vertical direction). We conclude that all marked arrows in $\Psi^\perp(\dcat{D})$ are cocartesian. Finally, for every horizontal arrow $f\colon c\rt c'$ in $\dcat{C}$ and a lift $x\in \Psi^\perp(\dcat{D})_c$, there exist a marked lift $\tilde{f}\colon x\rt x'$, using that $\dcat{D}(-, 0)\rt \dcat{C}(-, 0)$ was a left fibration: indeed, one can take $\tilde{f}$ to correspond to the diagram $x\rt f_!x{\color{\vertcolor} \, = \,} f_!x$ in $\dcat{D}$.
\end{proof}

\section{Reflecting fibrations of double categories}
\label{sec:reflecting}
This section provides a more detailed analysis of the reflection of (left, cart)-fibrations between double categories (Definition \ref{def:reflection}). More precisely, our goal will be to prove our main technical result:
\begin{theorem}\label{thm:main theorem}
There is an equivalence of cartesian fibrations
$$\begin{tikzcd}[column sep=1.5pc]
\Psi^{\perp}\colon \cat{Fib}^{\mm{left, cart}}\arrow[rr, "\sim"{below}, yshift=1ex]\arrow[rd, "\mm{codom}"{swap}] & & \cat{Fib}^{\mm{cocart, right}}\colon \Psi^{\top}\arrow[ld, "\mm{codom}"]\arrow[ll, yshift=-1ex]\\
& \DCat
\end{tikzcd}$$
between the categories of (left, cart)-fibrations and (cocart, right)-fibrations. In particular, for any double category $\dcat{C}$ this restricts to an equivalence $\cat{Fib}^{\mm{left, cart}}(\dcat{C})\simeq \cat{Fib}^{\mm{cocart, right}}(\dcat{C})$.
\end{theorem}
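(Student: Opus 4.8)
The plan is to promote the pointwise assignment of Definition~\ref{def:reflection} to a functor over $\DCat$, to introduce an explicit candidate inverse $\Psi^\top$ by the mirror construction, and then to prove that the two composites are naturally equivalent to the identity by reducing everything to a statement about fibres via Lemma~\ref{lem:equivalence fiberwise}.

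\emph{Functoriality.} The bisimplicial space of squares \eqref{diag:simplex in reflection} depends covariantly on $\dcat{D}$ (by postcomposition) and on $\dcat{C}$ (with the evident compatibility), so $(\dcat{D}\to\dcat{C})\mapsto (X\to\dcat{C})$ extends to a functor out of $\Fun([1],\DCat)$. Restricting to the subspace $\Psi^\perp(\dcat{D})\subseteq X$ is compatible with \emph{strong} maps, since these preserve cartesian vertical arrows, which is exactly the extra condition cutting out $\Psi^\perp(\dcat{D})$; and the resulting functor lands in $\cat{Fib}^{\mm{cocart, right}}$ and preserves strong maps there, because by the proof of Proposition~\ref{prop:reflection is cocart-left} the horizontal cocartesian arrows of $\Psi^\perp(\dcat{D})$ are exactly the ``marked'' ones (those whose vertical component is an equivalence), and this class is visibly preserved. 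Finally, by Remark~\ref{rem:basic properties of leftcart fibs} the two codomain projections to $\DCat$ are cartesian fibrations whose cartesian arrows are the base-change squares, and since \eqref{diag:simplex in reflection} commutes with the pullback $\dcat{D}\times_{\dcat{C}}\dcat{C}'$, the functor $\Psi^\perp$ strictly commutes with base change. Hence $\Psi^\perp$ is a morphism of cartesian fibrations over $\DCat$.

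\emph{The inverse.} The involutions $(-)^{\op}$, $(-)^{2\text{-}\op}$, $(-)^{\mm{rev}}$ of $\DCat$ permute the eight fibration types of Variant~\ref{var:fibrations}; concretely $\sigma := (-)^{\mm{rev}}\circ(-)^{\op}\circ(-)^{2\text{-}\op}$ is an involution carrying $\cat{Fib}^{\mm{left, cart}}(\dcat{C})$ onto $\cat{Fib}^{\mm{cocart, right}}(\dcat{C}^{\sigma})$ and back. One may therefore either transport $\Psi^\perp$ along $\sigma$ to obtain $\Psi^\top\colon\cat{Fib}^{\mm{cocart, right}}\to\cat{Fib}^{\mm{left, cart}}$ over $\DCat$, or --- more symmetrically --- describe $\Psi^\top(\dcat{E})$ directly as the bisimplicial subspace whose $(m,n)$-cells are the squares $[m,0]\times\Tw[n]\to\dcat{E}$ over $[m,n]\to\dcat{C}$ carrying each $[m,0]\times\{jj\}$ into the horizontal cocartesian arrows of $\dcat{E}$. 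Running the previous paragraph in the mirrored situation (via Proposition~\ref{prop:reflection is cocart-left} and its variant) shows that $\Psi^\top$ too is a morphism of cartesian fibrations over $\DCat$.

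\emph{Mutual inverse.} It remains to produce natural equivalences $\mm{id}\Rightarrow\Psi^\top\Psi^\perp$ and $\Psi^\perp\Psi^\top\Rightarrow\mm{id}$. The canonical maps of double categories relating the staircases $\Ar[m]$ and $\Tw[n]$ to the grids $[m,n]$ (the projections $\pi_{\mm{hor}},\pi_{\mm{vert}}$ and the diagonal inclusions) induce a natural comparison map $\dcat{D}\to\Psi^\top\Psi^\perp(\dcat{D})$ over $\dcat{C}$, which one checks is a strong map of (left,\,cart)-fibrations; by Lemma~\ref{lem:equivalence fiberwise} it then suffices to verify that it is an equivalence on each fibre $\dcat{D}_c$. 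The key point is that $\Psi^\perp$ acts on fibres by exchanging the two directions: $\dcat{D}_c$ is constant in the horizontal direction (Remark~\ref{rem:basic properties of leftcart fibs}), so a vertical arrow of $\dcat{D}_c$ is $p$-cartesian iff it is an equivalence, and unwinding \eqref{diag:simplex in reflection} over $\{c\}$ identifies $\Psi^\perp(\dcat{D})_c(m,0)$ with $\Map_{\DCat}(\Ar[m],\dcat{D}_c)\simeq \dcat{D}_c(0,m)$, i.e.\ $\Psi^\perp(\dcat{D})_c\simeq\dcat{D}_c^{\mm{rev}}$; dually $\Psi^\top(\dcat{E})_c\simeq\dcat{E}_c^{\mm{rev}}$, so the comparison map becomes the identity on $\dcat{D}_c$. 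The second natural equivalence is entirely symmetric, using the variant of Lemma~\ref{lem:equivalence fiberwise} for (cocart,\,right)-fibrations. As all of this lies over $\DCat$, we obtain the asserted equivalence of cartesian fibrations, which restricted over a fixed $\dcat{C}$ gives $\cat{Fib}^{\mm{left, cart}}(\dcat{C})\simeq\cat{Fib}^{\mm{cocart, right}}(\dcat{C})$.

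\emph{Main obstacle.} I expect the delicate step to be the coherence in the last part: upgrading the fibrewise identifications $\dcat{D}\simeq\Psi^\top\Psi^\perp(\dcat{D})$ to an honest natural transformation of functors between $\infty$-categories --- that is, producing the map $\dcat{D}\to\Psi^\top\Psi^\perp(\dcat{D})$ functorially in $\dcat{D}$ and compatibly with varying $\dcat{C}$. This forces one to pin down a workable model for $\Psi^\top\Psi^\perp$ (in terms of the internal mapping double categories of Proposition~\ref{prop:dcat cart closed}) and to verify both the ``strong map'' property and the fibrewise claim through a somewhat involved unwinding of \eqref{diag:simplex in reflection}.
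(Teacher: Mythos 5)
Your overall architecture matches the paper's: make $\Psi^\perp$ functorial via the explicit shapes $\Ar[m]\times[0,n]$, define $\Psi^\top$ by the mirrored construction, and check invertibility fibrewise using that $\Psi^\perp$ exchanges the two directions on fibres (this last point is exactly Proposition \ref{prop:why its called reflection}). The functoriality step is more delicate than your sketch suggests --- since $\cat{Fib}^{\mm{left, cart}}$ is a non-full subcategory of $\Fun([1],\DCat)$, the paper encodes the cartesianness constraints by markings and realizes $\Psi^\perp$ as the right adjoint of a colimit-preserving functor on marked bisimplicial spaces (Construction \ref{con:constructing marked functors}) --- but your outline is compatible with that. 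Two smaller slips: the paper's $\Psi^\top$ uses $[m,0]\times\Ar[n]$, not $[m,0]\times\Tw[n]$ (the twisted-arrow shape only enters for the variant $\Psi^\dagger$ aimed at a different fibration type), and Lemma \ref{lem:equivalence fiberwise} is applied to maps that preserve cartesian vertical arrows, so the ``strong map'' verification you defer is genuinely needed.

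The genuine gap is in the ``mutual inverse'' step. There is no direct natural transformation $\dcat{D}\to\Psi^\top\Psi^\perp(\dcat{D})$ induced by the projections and diagonal inclusions. Unwinding the composite, a vertical cell of $\Psi^\top\Psi^\perp(\dcat{D})$ over $c_0\vto c_1$ is a string $d_0\to d_0'\vto d_0''\vto d_1$ in which the last arrow is required to be a \emph{cartesian} lift of $c_0\vto c_1$. To send a vertical arrow $d_0\vto d_1$ of $\dcat{D}$ to such a cell one must factor it as a fibrewise arrow followed by a cartesian arrow; this factorization exists and is unique up to contractible choice, but it is not obtained by precomposition along any map of shapes over $[m,n]$ (the only candidate, the structure map $T[m,n]\to[m,n]$, fails to respect the markings that encode the cartesianness requirement), so no strictly natural comparison map is available. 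This is precisely why the paper replaces the direct map by the zigzag $\mm{id}\rightarrow\Psi_A\leftarrow\Psi_B\rightarrow\Psi_T\simeq\Psi^\top\circ\Psi^\perp$ of Section \ref{sec:equivalence}: the auxiliary functor $\Psi_B$ parametrizes the chosen factorizations, the backwards arrow $\eta^*$ is shown to be an equivalence via a relative right Kan extension argument (Proposition \ref{prop:adding fluff}), and the identification of $\Psi^\top\circ\Psi^\perp$ with a single kernel functor $\Psi_T$ is a separate computation with colimits of shapes (Lemma \ref{lem:composition via T}). You correctly flag this coherence issue as the main obstacle, but the proposal contains no mechanism to overcome it, and the claimed comparison map does not exist as stated; the fibrewise identification, while correct, cannot be assembled into a natural equivalence without some such device.
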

This should not be unexpected: in light of Remark \ref{rem:(left,cart)-straightened} and Remark \ref{rem:(cocart,right)-straightened}, both (left, cart)-fibrations and (cocart, right)-fibrations should be equivalent to functors of double categories $\dcat{C}^{2-\op}\rt \mathbb{C}\cat{at}^{\mathrm{oplax}}$ under some form of straightening.

At the level of objects, the functor $\Psi^\perp$ is given by Definition \ref{def:reflection}. The functor $\Psi^{\top}$ is defined analogously, taking opposites and reversing the roles of the horizontal and vertical directions. To extend $\Psi^\perp$ and $\Psi^\top$ to functors, it will be convenient to describe the $\infty$-categories of (left, cart)-fibrations and (cocart, right)-fibrations in terms of marked bisimplicial spaces (Section \ref{sec:marked stuff}). This allows one to describe the functor $\Psi^\perp$ very explicitly in terms of combinatorial data (Section \ref{sec:functoriality}).
Finally, we will provide explicit natural equivalences exhibiting $\Psi^\perp$ and $\Psi^\top$ as mutual inverses (Section \ref{sec:equivalence}). We will do this by explicitly writing down a zigzag of equivalences connecting their composition to the identity.

\begin{remark}
In model categorical terms, both functors $\Psi^\perp$ and $\Psi^\top$ arise as right Quillen functors between the arrow categories of marked bisimplicial spaces. In particular, all of our arguments can be reformulated without difficulties into such model-categorical terms. 
\end{remark}

\subsection{Marked bisimplicial spaces}\label{sec:marked stuff}
For technical reasons, it will be convenient to give a 
description of the category $\cat{Fib}^{\mm{left, cart}}$ using markings, analogous to the model for cartesian fibrations using marked simplicial sets in \cite{lur09}.
\begin{notation}\label{not:marked}
Let $\Del^{\times 2}_{\downarrow}$ denote the category obtained from $\Del^{\times 2}$ by freely adding a factorization of the vertical degeneracy map $[0, 1]\rt [0, 0]$ as follows:
$$\begin{tikzcd}[column sep=1.4pc, row sep=0.9pc]
{[0, 1]}\arrow[rr]\arrow[rd] & & {[0, 0]}\\
& {[0, 1]^\sharp}.\arrow[ru]
\end{tikzcd}$$
A presheaf $X\colon (\Del^{\times 2}_{\downarrow})^{\op}\rt \sS$ is said to be a \emph{vertically marked bisimplicial space} if the map $X([0, 1]^\sharp)\rt X(0, 1)$ is a subspace inclusion. We will write $\bisAn_{\downarrow}\subseteq \Fun(\Del^{\times 2, \op}_{\downarrow}, \sS)$ for the full subcategory spanned by the vertically marked bisimplicial spaces, and typically use the same symbol for a vertically marked bisimplicial space and its underlying bisimplicial space. 
\end{notation}
\begin{remark}\label{rem:forgetting marking}
The forgetful functor $\bisAn_{\downarrow}\rt \bisAn$ induces a subspace inclusion
$$\begin{tikzcd}
\Map_{\bisAn_{\downarrow}}(X, Y)\arrow[r, hook] & \Map_{\bisAn}(X, Y)
\end{tikzcd}$$
whose image consists of the path components of maps of bisimplicial spaces $f\colon X\rt Y$ with the property that $X(0, 1)\rt Y(0, 1)$ preserves the subspace of marked vertical arrows.
\end{remark}
\begin{definition}
A \emph{vertically marked double category} is a vertically marked bisimplicial space whose underlying bisimplicial space is a double category. We will write $\DCat_{\downarrow}\subseteq \bisAn_{\downarrow}$ for the full subcategory of vertically marked double categories.

Likewise, we write $\DCat_{\to}\subseteq \bisAn_{\to}$ for the full subcategory of \emph{horizontally marked bisimplicial spaces} spanned by the \emph{horizontally marked double categories}.
\end{definition}
\begin{notation}
As usual, the forgetful functor $\DCat_{\downarrow}\rt \DCat$ admits a fully faithful right adjoint, sending a double category $\dcat{C}$ to the vertically marked double category $\dcat{C}^{\sharp}$ where all vertical arrows are marked. We will write also write $\dcat{C}^{\sharp}$ for $\dcat{C}$ with all \emph{horizontal} arrows marked; it will always be clear from the context whether we mark horizontal or vertical arrows. 
\end{notation}
\begin{definition}\label{def:global category of fibs}
Consider the following full subcategories of $\Fun\big([1], \bisAn_{\downarrow}\big)$:
\begin{enumerate}
\item Let $\Fun^\sharp\big([1], \bisAn_{\downarrow}\big)$ be spanned by the maps $Y\rt X$ where $X$ is maximally marked, i.e.\ every $[0, 1]$-simplex of $X$ is marked.
\item Let $\Fun^\sharp\big([1], \DCat_{\downarrow}\big)$ be spanned by the maps $\dcat{D}\rt \dcat{C}^\sharp$ between marked double categories, with $\dcat{C}^\sharp$ maximally marked.
\item Let $\cat{Fib}^{\mm{left, cart}}$ be spanned by the maps $p\colon \dcat{D}^\natural\rt \dcat{C}^\sharp$ between marked double categories, whose underlying map of double categories is a (left, cart)-fibration and where a vertical arrow in $\dcat{D}^\natural$ is marked if and only if it is $p$-cartesian.
\end{enumerate}
The codomain projections (forgetting the maximal marking) then define a diagram of cartesian fibrations and maps preserving cartesian arrows
$$\begin{tikzcd}
\cat{Fib}^{\mm{left, cart}}\arrow[d, "\mm{codom}"{swap}]\arrow[r, hook] & \Fun^\sharp\big([1], \DCat_{\downarrow}\big)\arrow[d, "\mm{codom}"]\arrow[r, hook] & \Fun^\sharp\big([1], \bisAn_{\downarrow}\big)\arrow[d, "\mm{codom}"]\\
\DCat\arrow[r, "="] & \DCat\arrow[r, hook] & \bisAn.
\end{tikzcd}$$
In a similar way, one can define fully faithful inclusions
$$\begin{tikzcd}
\cat{Fib}^{\mm{cocart, right}}\arrow[d, "\mm{codom}"{swap}]\arrow[r, hook] & \Fun^\sharp\big([1], \DCat_{\to}\big)\arrow[d, "\mm{codom}"]\arrow[r, hook] & \Fun^\sharp\big([1], \bisAn_{\to}\big)\arrow[d, "\mm{codom}"]\\
\DCat\arrow[r, "="] & \DCat\arrow[r, hook] & \bisAn.
 \end{tikzcd}$$
\end{definition}
\begin{lemma}
Forgetting the marking defines a functor $\cat{Fib}^{\mm{left, cart}}\rt \Fun\big([1], \DCat)$ which is an equivalence onto the subcategory whose objects are (left, cart)-fibrations and whose morphisms are strong morphisms (Definition \ref{def:fibrations}).
\end{lemma}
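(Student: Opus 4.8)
The goal is to show that the forgetful functor from $\cat{Fib}^{\mm{left, cart}}$ (with its natural marking, where vertical arrows are marked iff $p$-cartesian) to $\Fun([1], \DCat)$ is an equivalence onto the subcategory of (left, cart)-fibrations and strong morphisms. I will first check essential surjectivity onto this subcategory: every (left, cart)-fibration $p\colon \dcat{D}\rt \dcat{C}$ admits a canonical marking, namely $\dcat{D}^\natural$ with the $p$-cartesian vertical arrows marked (using condition \ref{it:vert cart}, which says $\dcat{D}(-, n) \rt \dcat{C}(-, n)$ is a left fibration, and that $\dcat{D}(0,-) \rt \dcat{C}(0,-)$ is a cartesian fibration so cartesian lifts exist); the codomain is maximally marked $\dcat{C}^\sharp$, which indeed lands in $\cat{Fib}^{\mm{left, cart}}$ by definition. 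Conversely every object of $\cat{Fib}^{\mm{left, cart}}$ has this shape, so on objects the functor is a bijection onto (left, cart)-fibrations.

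Next I would verify the functor is fully faithful onto the prescribed subcategory of morphisms. A morphism in $\cat{Fib}^{\mm{left, cart}}$ is a commuting square of marked double categories $\dcat{D}^\natural \rt \dcat{D}'^\natural$ over $\dcat{C}^\sharp \rt \dcat{C}'^\sharp$. By Remark \ref{rem:forgetting marking}, the space of maps of marked bisimplicial spaces $\dcat{D}^\natural \rt \dcat{D}'^\natural$ is the subspace of the space of maps of underlying bisimplicial spaces consisting of those $f$ that carry marked vertical $1$-simplices to marked ones; but the marked vertical $1$-simplices are precisely the $p$-cartesian, resp.\ $p'$-cartesian, vertical arrows. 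Hence a map of marked objects is exactly a map of underlying double categories commuting over the base and sending $p$-cartesian vertical arrows to $p'$-cartesian vertical arrows — that is, a strong morphism in the sense of Definition \ref{def:fibrations} (bearing in mind Variant \ref{var:fibrations}: strong maps between (left, cart)-fibrations are required to preserve cartesian \emph{vertical} arrows, matching exactly this condition). The condition on the base map is automatic since $\dcat{C}^\sharp$ is maximally marked, so every map of double categories into it lifts uniquely to a map of marked double categories. This gives a subspace inclusion on mapping spaces with image exactly the strong morphisms, i.e.\ full faithfulness onto the named subcategory.

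Finally, combining the two, the forgetful functor is an equivalence from $\cat{Fib}^{\mm{left, cart}}$ onto the (non-full) subcategory of $\Fun([1], \DCat)$ spanned by (left, cart)-fibrations and strong maps. I expect no serious obstacle here: the only subtle point is the bookkeeping of exactly which subspace of morphisms we land on, which is handled cleanly by Remark \ref{rem:forgetting marking} together with the observation that the marking on the domain is rigidly determined (the $p$-cartesian arrows form a subspace, by the essential uniqueness of cartesian lifts in a cartesian fibration, so $\dcat{D}^\natural$ is a well-defined vertically marked bisimplicial space and the marking is not extra data). The main thing to get right is that "preserves marked vertical arrows" unwinds to "preserves $p$-cartesian vertical arrows" and not, say, to a condition in the horizontal direction — which is exactly what the definition of $\Del^{\times 2}_\downarrow$ (adding a factorization only of the \emph{vertical} degeneracy $[0,1] \rt [0,0]$) guarantees.
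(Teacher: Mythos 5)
Your proposal is correct and follows essentially the same route as the paper: the paper's (one-sentence) proof likewise reduces everything to Remark \ref{rem:forgetting marking}, which identifies mapping spaces of marked objects with the subspaces of marking-preserving maps, and these are exactly the strong morphisms. Your additional remarks on essential surjectivity and on the marking being determined (not extra data) are correct and only make explicit what the paper leaves implicit; the one tiny slip is citing Variant \ref{var:fibrations} for strong maps of (left, cart)-fibrations, which are in fact already defined in Definition \ref{def:fibrations} itself.
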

\begin{proof}
By Remark \ref{rem:forgetting marking}, the forgetful functor $\Fun\big([1], \DCat_{\downarrow})\rt \Fun\big([1], \DCat)$ induces inclusions of path components on mapping spaces, whose images consist of those commuting squares of double categories such that all maps preserve marked arrows. The result follows immediately from this.
\end{proof}
\begin{remark}
The full subcategory $\cat{Fib}^{\mm{left, cart}}\hooklongrightarrow \Fun\big([1], \bisAn_{\downarrow}\big)$ can be obtained as a left Bousfield localization (and in particular is a presentable category). One can also present $\cat{Fib}^{\mm{left, cart}}$ in terms of model $(1, 1)$-categories, as a left Bousfield localization of the injective model structure on functors $\Del^{\times 2, \op}_{\downarrow}\rt \sSet$ (where $\sSet$ carries the Kan--Quillen model structure). All arguments appearing below can be carried out in this setting as well.
\end{remark}

\begin{construction}\label{con:constructing marked functors}
Let us fix the following data:
\begin{enumerate}[label=(\alph*)]
\item\label{it:kernel} for each $m, n\geq 0$, a map of vertically marked double categories $S[m, n]\rt [m, n]^\sharp$, natural in the sense that it defines a functor fitting into a commuting triangle
$$\begin{tikzcd}
& \Fun^\sharp\big([1], \bisAn_{\downarrow}\big)\arrow[d, "\mm{codom}"]\\
\Del^{\times 2}\arrow[r, hook]\arrow[ru, "S"] & \bisAn.
\end{tikzcd}$$
\item\label{it:extra marking} in the vertically marked double category $S[1, 0]$, an \emph{additional} set of path components of marked vertical edges. We will write $S[1, 0]^+$ for the resulting vertically marked double category.
\end{enumerate}
Associated to this data is a natural map of cartesian fibrations
\begin{equation}\label{diag:restriction along kernel}
\begin{tikzcd}
\Fun^\sharp\big([1], \bisAn_{\downarrow}\big)\arrow[rr, "\Psi_S"]\arrow[rd, "\mm{codom}"{swap}] & & \Fun^\sharp\big([1], \bisAn_{\to}\big)\arrow[ld, "\mm{codom}"]\\
& \bisAn.
\end{tikzcd}\end{equation}
To construct this diagram, observe that the codomain projection admits a left adjoint $L\colon \bisAn\rt \Fun^\sharp([1], \bisAn_{\downarrow})$, sending $\dcat{C}$ to $\emptyset\rt \dcat{C}^\sharp$; the same holds in the horizontally marked case. Now consider the commuting squares
$$
\begin{tikzcd}[column sep=2pc]
{[1]^{\op}\times \Del^{\times 2}_{\to}}\arrow[r, "\sigma"] & \Fun^\sharp\big([1], \bisAn_{\downarrow}\big) & \Fun\big([1], \mm{PSh}(\Del_{\to}^{\times 2})\big)\arrow[r, "\Phi_S"] & \Fun^\sharp\big([1], \bisAn_{\downarrow}\big)\\
\{1\}\times \Del^{\times 2}\arrow[r, hook, "h"]\arrow[u, hook] & \bisAn\arrow[u, "L"{swap}] & \bisAn\arrow[r, "="]\arrow[u, "L"] & \bisAn\arrow[u, "L"{swap}] .
\end{tikzcd}$$
In the left square, $h$ is the Yoneda embedding and $\sigma$ is the functor sending
$$
\big(1, [m, n]\big)\mapsto \big(\emptyset\rt [m, n]^\sharp\big) \qquad\qquad \big(0, [m, n]\big)\mapsto \big(S[m, n]\rt [m, n]^\sharp\big).
$$
Furthermore, $\sigma$ sends $(0, [1, 0]^\sharp)$ to $S[1, 0]^+\rt [1, 0]^\sharp$ and $(1, [1, 0]^\sharp)$ to $\emptyset \rt [1, 0]^\sharp$. This induces the commuting square of left adjoint functors on the right, using that $h$ and $\sigma$ induce unique colimit-preserving functors out of the corresponding presheaf categories \cite{dug01b}, \cite[Theorem 5.1.5.6]{lur09}. 

Let us write $\Psi_S$ for the right adjoint to $\Phi_S$. Explicitly, let $p\colon Y\rt X^\sharp$ be a map of vertically marked bisimplicial spaces whose target is maximally marked. Then $\Psi_S(p)\colon \Psi_S(Y)\rt X^\sharp$ can be described as follows: the underlying bisimplicial space of $\Psi_S(Y)$ has $(m, n)$-simplices determined by
\begin{equation}\label{diag:simplices in restriction}\begin{tikzcd}
& \Psi_S(Y)\arrow[d, "\Psi_S(p)"]\\
{[m, n]}\arrow[r, "\alpha"]\arrow[ru, dotted] & X
\end{tikzcd}
\qquad \Longleftrightarrow \qquad
\begin{tikzcd} S[m, n]\arrow[r, dotted]\arrow[d] & Y\arrow[d, "p"]\\ {[m, n]^\sharp}\arrow[r, "\alpha"] & X^\sharp
\end{tikzcd}\end{equation}
Furthermore the space of marked $[1, 0]$-simplices in $\Psi_S(Y)$ is the subspace consisting of those maps $S[1, 0]\rt Y$ that also preserve the additional marked arrows from \ref{it:extra marking}. In particular, this implies that $\Psi_S$ sends vertically marked bisimplicial spaces into horizontally marked bisimplicial spaces (i.e.\ marked edges form a subspace). We therefore obtain the desired commuting diagram \eqref{diag:restriction along kernel}. Furthermore, it follows immediately from Equation \eqref{diag:simplices in restriction} that $\Psi_K$ preserves cartesian morphisms: for every $Y\rt X^\sharp$ and $X'\rt X$, we have an equivalence
$$\begin{tikzcd}
\Psi_S(Y\times_{X^\sharp} X'^\sharp)\arrow[r, "\sim"] & \Psi_S(Y)\times_{X^\sharp} X'^\sharp.
\end{tikzcd}$$
\end{construction}

\subsection{Construction of the functors}\label{sec:functoriality}
We will use Construction \ref{con:constructing marked functors} to define the functor $\Psi^\perp$, as follows:
\begin{construction}\label{con:functor K}
For any $[m, n]\in \Del^{\times 2}$, consider the natural map of vertically marked double categories
$$\begin{tikzcd}[column sep=2.8pc]
{K[m, n]=\Ar[m]\times [0, n]} \arrow[r, "\pi_{\mm{hor}}\times \mm{id}"] & {[m, 0]\times [0, n]^\sharp=[m, n]^\sharp}.
\end{tikzcd}$$
Here $K[m, n]$ comes equipped with the marking where for each $ii\in \Ar[m]$ (cf.\ Example \ref{ex:arrow dcat}), all vertical arrows in $\{ii\}\times [0, n]$ are marked. Furthermore, $K[1, 0]^+$ is the double category $\Ar[1]$, where in addition the vertical arrow $01\vto 11$ is marked.
\end{construction}
Using this, Construction \ref{con:constructing marked functors} produces a functor over $\bisAn$
$$\begin{tikzcd}
\Psi_K\colon \Fun^\sharp\big([1], \bisAn_{\downarrow}\big)\arrow[r] & \Fun^\sharp\big([1], \bisAn_{\to}\big).
\end{tikzcd}$$
\begin{proposition}\label{prop:reflection functor}
Let $p\colon \dcat{D}^\natural\rt \dcat{C}^\sharp$ be a (left, cart)-fibration with its natural marking by the $p$-cartesian vertical arrows. Then $\Psi_K(p)\colon \Psi_K(\dcat{D})\rt \dcat{C}^\sharp$ coincides with the (cocart, right)-fibration $\Psi^\perp(p)$ from Definition \ref{def:reflection}, with its natural horizontal marking. Consequently, $\Psi_K$ restricts to a map of cartesian fibrations that we will denote by
$$\begin{tikzcd}
\cat{Fib}^{\mm{left, cart}}\arrow[rr, "\Psi^\perp"]\arrow[rd, "\mm{codom}"{swap}, start anchor={[xshift=-3ex]}] & & \cat{Fib}^{\mm{cocart, right}}\arrow[ld, "\mm{codom}", start anchor={[xshift=-2ex]}]\\
& \DCat.
\end{tikzcd}$$
\end{proposition}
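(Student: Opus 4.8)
The plan is to unwind Construction \ref{con:constructing marked functors} in the special case $S=K$ given by Construction \ref{con:functor K}, and to check level by level that the output is the reflection of Definition \ref{def:reflection}. First I would describe the $(m,n)$-simplices. By the characterization \eqref{diag:simplices in restriction}, an $(m,n)$-simplex of $\Psi_K(\dcat{D})$ lying over a map $\alpha\colon[m,n]\rt\dcat{C}$ is a lift of $\alpha$ to a commuting square of vertically marked bisimplicial spaces whose upper-left corner is $K[m,n]=\Ar[m]\times[0,n]$ and whose upper-right corner is $\dcat{D}^\natural$. Since $\dcat{C}^\sharp$ is maximally marked, such a square is precisely a commuting square of bisimplicial spaces as in \eqref{diag:simplex in reflection} together with the extra requirement that its top map be a map of marked objects; unwinding the marking on $K[m,n]$ from Construction \ref{con:functor K}, this says exactly that the map $\Ar[m]\times[0,n]\rt\dcat{D}$ carries each $\{(i,i)\}\times[0,n]$ into the subspace of $p$-cartesian vertical arrows. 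This is verbatim the condition that cuts out $\Psi^\perp(\dcat{D})(m,n)$ inside the double category $X$ of Lemma \ref{lem:reflection of left is dcat}. Naturality in Construction \ref{con:constructing marked functors} makes the identification compatible with all simplicial structure maps and with the projections to $\dcat{C}$, so $\Psi_K(p)$ and $\Psi^\perp(p)$ have the same underlying map of bisimplicial spaces over $\dcat{C}$.

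It then remains to match the horizontal markings. Using $\Ar[0]=[0,0]$, a $[1,0]$-simplex of $\Psi_K(\dcat{D})$ is a diagram $x\rt y\vto z$ in $\dcat{D}$ lying over a horizontal arrow $f\colon c\rt c'$ of $\dcat{C}$, and by Construction \ref{con:constructing marked functors} it is marked precisely when the associated map $K[1,0]^+\rt\dcat{D}^\natural$ preserves markings, i.e.\ when the vertical arrow $y\vto z$ is $p$-cartesian. As the base $[1,0]$ has only degenerate vertical arrows, $y\vto z$ lies over the identity of $c'$, so this is equivalent to $y\vto z$ being an equivalence in $\dcat{D}_{c'}$. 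Hence the marked $[1,0]$-simplices of $\Psi_K(\dcat{D})$ are exactly the horizontal arrows called marked in the proof of Proposition \ref{prop:reflection is cocart-left}; that proof shows these are $\Psi^\perp(p)$-cocartesian and that each horizontal arrow of $\dcat{C}$ admits such a marked lift at each object. To conclude that the marking is \emph{all} of the cocartesian arrows, I would note that the marked $[1,0]$-simplices form a union of path components of $\Psi^\perp(\dcat{D})(1,0)$, while any two cocartesian lifts of a fixed $f$ with a fixed source $x$ are related by an equivalence in the fiber over $c'$ and hence lie in the same path component of $\Psi^\perp(\dcat{D})(1,0)$ as the marked lift produced above; so every cocartesian lift of $f$ at $x$ is marked. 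Thus the marking of $\Psi_K(\dcat{D})$ is exactly the cocartesian marking, and $\Psi_K(p)$ coincides with $\Psi^\perp(p)$ carrying its natural horizontal marking, regarded as an object of $\cat{Fib}^{\mm{cocart, right}}$.

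For the final clause, Construction \ref{con:constructing marked functors} already provides $\Psi_K$ as a functor $\Fun^\sharp\big([1],\bisAn_{\downarrow}\big)\rt\Fun^\sharp\big([1],\bisAn_{\to}\big)$ over $\bisAn$ that preserves cartesian morphisms; by Proposition \ref{prop:reflection is cocart-left} together with the two preceding paragraphs, it sends every object of the full subcategory $\cat{Fib}^{\mm{left, cart}}$ to an object of the full subcategory $\cat{Fib}^{\mm{cocart, right}}$, so it restricts to the asserted map of cartesian fibrations over $\DCat$. The only genuinely nontrivial point is the identification of markings, and within it the implication that every cocartesian arrow of $\Psi^\perp(p)$ is marked (Proposition \ref{prop:reflection is cocart-left} only gives the converse); the bijection on simplices and the functoriality statement fall out formally once Construction \ref{con:constructing marked functors} has been unwound.
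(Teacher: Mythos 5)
Your proof is correct and follows essentially the same route as the paper: unwind Construction \ref{con:constructing marked functors} for $S=K$ to identify the underlying bisimplicial space with $\Psi^\perp(\dcat{D})$, observe that a marked $[1,0]$-simplex is one whose vertical arrow $y\vto z$ is cartesian over a degenerate arrow and hence an equivalence, and invoke Proposition \ref{prop:reflection is cocart-left}. Your closing argument that \emph{every} cocartesian lift lies in the same path component as a marked one is a worthwhile addition, since the cited proof only explicitly establishes that marked arrows are cocartesian and that marked lifts exist.
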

\begin{proof}
Unraveling the definition, one sees that the map of bisimplicial spaces $\Psi_K(\dcat{D})\rt \dcat{C}$ is precisely given by the reflection $\Psi^\perp(p)\colon \Psi^\perp(\dcat{D})\rt \dcat{C}$ from Definition \ref{def:reflection}; in particular, it is a (cocart, right)-fibration by Proposition \ref{prop:reflection is cocart-left}.

It remains to identify the marked horizontal arrows in $\Psi_K(\dcat{D}^{\natural})$; these correspond to diagrams $x\rt y\vto z$ in $\dcat{D}$ with the property that $y\vto z$ is a marked arrow covering an equivalence in $\dcat{C}$. This just means that $y\vto z$ is an equivalence, so that the marked horizontal arrows coincide with the marked horizontal arrows introduced in the proof of Proposition \ref{prop:reflection is cocart-left}. We have already seen there that these marked arrows are precisely the cocartesian horizontal arrows.
\end{proof}
In particular, restricting to the fibers over a double category $\dcat{C}$, we obtain a functor $\Psi^\perp\colon \cat{Fib}^{\mm{left, cart}}(\dcat{C})\rt \cat{Fib}^{\mm{cocart, right}}(\dcat{C})$ that we will refer to as the \emph{reflection functor}. To motivate the terminology, let us investigate more precisely the behaviour of $\Psi^\perp$ when $\dcat{C}$ is a space:
\begin{proposition}\label{prop:why its called reflection}
Let $S$ be a space and let $\Cat/S$ be the category of categories over $S$. Consider the endofunctor of $\Cat/S$ given by the composite
$$\begin{tikzcd}[column sep=2.7pc]
\cat{Cat}/S \arrow[r, "\sim"{swap}, "{[0]\boxtimes(-)}"] & \cat{Fib}^{\mm{left, cart}}(S)\arrow[r, "\Psi^\perp"] & \cat{Fib}^{\mm{right, cocart}}(S)  & \cat{Cat}/S\arrow[l, "\sim", "{(-)\boxtimes [0]}"{swap}]
\end{tikzcd}$$
where the left equivalence sends a category $\cat{C}$ to the (left, cart)-fibration $[0]\boxtimes \cat{C}\rt [0]\boxtimes S=S$ and the right equivalence to the (cocart, right)-fibration $\cat{C}\boxtimes [0]\rt S\boxtimes [0]=S$ (see Example \ref{ex:generating dcat} and Remark \ref{rem:basic properties of leftcart fibs}). Then the above endofunctor of $\cat{Cat}/S$ is homotopic to the identity.
\end{proposition}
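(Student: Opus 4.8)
The plan is to construct a natural equivalence between the two functors $\Cat/S\to\cat{Fib}^{\mm{cocart, right}}(S)$ that send $\cat{C}$ to $\Psi^\perp([0]\boxtimes\cat{C})$ and to $\cat{C}\boxtimes[0]$ respectively; postcomposing with the inverse of the equivalence $(-)\boxtimes[0]$ will then give the statement. I will use throughout that $S$ is a space: every functor to a space is a cartesian fibration, the $p$-cartesian arrows of any map over $[0]\boxtimes S$ are exactly the equivalences, and a left fibration over $S$ has a space-valued total object (its arrows are cocartesian over equivalences, hence equivalences). The first two facts show that a strong map of (left, cart)-fibrations over $[0]\boxtimes S$ is merely a map of double categories, while the third, together with Remark~\ref{rem:basic properties of leftcart fibs}, shows that every (left, cart)-fibration over $[0]\boxtimes S$ is constant in the horizontal direction; this is why $[0]\boxtimes(-)$ (and dually $(-)\boxtimes[0]$) is an equivalence as claimed. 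Dually, $\cat{C}\boxtimes[0]$ is constant in the vertical direction by inspection, and so is $\Psi^\perp([0]\boxtimes\cat{C})$: it is a (cocart, right)-fibration over $[0]\boxtimes S$ by Proposition~\ref{prop:reflection is cocart-left}, hence vertically constant by the dual of the previous remark. So it will be enough to compare the columns $\Psi^\perp([0]\boxtimes\cat{C})(-, 0)$ and $\cat{C}=(\cat{C}\boxtimes[0])(-, 0)$ in $\Cat/S$.

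The essential input I will need is a description of how $\Ar[m]$ maps into a double category that is constant in the horizontal direction. The claim is that $\pi_{\mm{vert}}\colon\Ar[m]\to[0, m]=[0]\boxtimes[m]$ becomes an equivalence after localizing (``realizing'') in the horizontal direction: for each fixed $q$, the simplicial space $[p]\mapsto\Ar[m](p, q)=\Map_{\cat{Cat}}([q]\star[p], [m])$ carries an extra degeneracy, induced by the surjection $[q]\star[p+1]\to[q]\star[p]$ collapsing the last two vertices of the $[p]$-block, so that its realization is the augmentation $\Map_{\cat{Cat}}([q], [m])$, naturally in $q$. Since $[0]\boxtimes\cat{C}$ is local for horizontal equivalences (using cartesian closedness of double categories, Proposition~\ref{prop:dcat cart closed}), this will yield a natural equivalence $\Map_{\cat{dCat}}(\Ar[m], [0]\boxtimes\cat{C})\simeq\Map_{\cat{Cat}}([m], \cat{C})=\cat{C}(m)$.

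With this at hand I would write the comparison down directly. Define $\Theta_{\cat{C}}\colon\cat{C}\boxtimes[0]\to\Psi^\perp([0]\boxtimes\cat{C})$ by sending an $(m, n)$-simplex $\alpha\colon[m]\to\cat{C}$ of $\cat{C}\boxtimes[0]$ to the square of the form~\eqref{diag:simplex in reflection} (with $\dcat{D}=[0]\boxtimes\cat{C}$ and $\dcat{C}=[0]\boxtimes S$) whose upper edge is the composite
$$\begin{tikzcd}
\Ar[m]\times[0, n]\arrow[r] & \Ar[m]\arrow[r, "\pi_{\mm{vert}}"] & {[0, m]=[0]\boxtimes[m]}\arrow[r, "{[0]\boxtimes\alpha}"] & {[0]\boxtimes\cat{C},}
\end{tikzcd}$$
together with its structure map to $[0]\boxtimes S$, which exists and is unique up to contractible choice since $S$ is a space. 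This square lies in the subobject $\Psi^\perp([0]\boxtimes\cat{C})$ of Definition~\ref{def:reflection}, because each diagonal chain $\{ii\}\times[0, n]$ is carried to a constant, hence degenerate, hence $p$-cartesian, family of vertical arrows. The assignment $\Theta_{\cat{C}}$ is manifestly natural in $\cat{C}$ and in $[m, n]$, and by the previous paragraph it is an equivalence on each bidegree $(m, 0)$; being a map between vertically constant double categories, it is therefore an equivalence, compatibly with the projections to $[0]\boxtimes S$. Transporting $\Theta$ through the two equivalences $[0]\boxtimes(-)$ and $(-)\boxtimes[0]$ will identify the composite endofunctor with the identity.

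I expect the extra-degeneracy computation for $\Ar[m]$ in the second paragraph to be the only substantial point; everything else is bookkeeping made routine by the hypothesis that $S$ is a space, which trivializes both the maps into $[0]\boxtimes S$ and the cartesian and fibration conditions appearing above.
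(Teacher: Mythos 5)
Your construction is the same as the paper's: you restrict along the composite $\Ar[m]\times[0,n]\to\Ar[m]\xrightarrow{\pi_{\mm{vert}}}[0,m]$ to produce the comparison map $\cat{C}\boxtimes[0]\to\Psi^\perp([0]\boxtimes\cat{C})$, observe that the marking/cartesianness condition is automatic because the diagonals go to degenerate vertical arrows, and reduce to showing that $\pi_{\mm{vert}}$ induces an equivalence on maps into a horizontally constant double category. The only difference is in that last verification: the paper invokes vertical constancy plus the horizontal complete Segal condition to reduce to bidegrees $(0,0)$ and $(1,0)$, whereas you compute all bidegrees $(m,0)$ at once by realizing $[p]\mapsto\Map_{\cat{Cat}}([q]\star[p],[m])$ — a harmless variation.

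One slip worth fixing: the map you name as the extra degeneracy, namely $[q]\star[p+1]\to[q]\star[p]$ collapsing the last two vertices of the $[p+1]$-block, is just the ordinary top degeneracy $s_p$ and carries no information about the augmentation. The correct extra degeneracy is induced by the surjection $[q]\star[p+1]\to[q]\star[p]$ that identifies the last vertex of $[q]$ with the first vertex of $[p+1]$; on chains it prepends the value $\beta(q)$ to the $[p]$-block. Equivalently, for fixed $\beta\colon[q]\to[m]$ the simplicial set in question is the nerve of the poset $\{j\in[m]:j\geq\beta(q)\}$, which is contractible because it has a least element. With that correction the argument goes through.
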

In particular, at the level of fibers the reflection functor $\Psi^\perp$ simply exchanges the horizontal and vertical direction.
\begin{proof}
Consider the natural map $K[m, n]=\Ar[m]\times [0, n]\rt \Ar[m]\rto{\pi_\mm{ver}} [0, m]$, where the second map is the vertical projection (see Example \ref{ex:arrow dcat}). This map sends the marked vertical arrows in $K[m, n]$ from Construction \ref{con:functor K} to degenerate edges. For each $\dcat{D}\in \cat{Fib}^{\mm{left, cart}}$, we therefore obtain a natural transformation of double categories, given in degree $(m, n)$ by
$$\begin{tikzcd}
\Map_{\DCat}([0, m], \dcat{D})\arrow[r] & \Map_{\DCat_{\downarrow}}(K[m, n], \dcat{D})=\Psi_K(\dcat{D})(m, n).
\end{tikzcd}$$
Taking $\dcat{D}=[0]\boxtimes \cat{C}$, this produces the desired map $\cat{C}\boxtimes [0]\rt \Psi^\perp([0]\boxtimes \cat{C})$. To see that this is an equivalence, note that the domain and codomain are both constant in the vertical direction and complete Segal spaces in the horizontal direction. In degree $(0, 0)$, the above is an equivalence since $K[0, 0]\rt [0, 0]$ is an isomorphism. In degree $(1, 0)$, it suffices to note that the map $\pi_\mm{ver}\colon \Ar[1]\rt [0, 1]$ induces an equivalence on mapping spaces into a double category of the form $[0]\boxtimes \cat{C}$.
\end{proof}
One can obtain variants of the functor $\Psi^\perp$ for different kinds of fibrations, by taking opposites in the horizontal or vertical direction and exchanging horizontal and vertical directions.
\begin{variant}\label{var:other reflections}
Let us introduce the following two variants of the functor $\Psi^\perp$.
\begin{enumerate}
\item There is a functor $\Psi^\top\colon \cat{Fib}^{\mm{cocart, right}}\rt \cat{Fib}^{\mm{left, cart}}$, obtained by conjugating $\Psi^\perp$ with the functor sending $\dcat{C}\mapsto (\dcat{C}^\mm{rev})^{(1, 2)-\op}$. Unraveling the definitions, one sees that $\Psi^\top=\Psi_{K'}$ is defined as in Construction \ref{con:constructing marked functors} (with the roles of horizontal and vertical reversed), but this time using the maps
$$\begin{tikzcd}[column sep=3pc]
{K'[m, n]=[m, 0]\times \Ar[n]}\arrow[r, "\mm{id}\times \pi_{\mm{ver}}"] & {[m, 0]\times [0, n]=[m, n]. }
\end{tikzcd}$$
Here we mark the horizontal arrows in $K'[m, n]$ contained in $[m, 0]\times \{ii\}$ for each $i\in [n]$. Furthermore, $K'([0, 1]^\sharp)$ is given by $\Ar[1]$ where we mark all horizontal arrows.

\item There is another functor $\Psi^{\dagger}\colon \cat{Fib}^{\mm{cocart, left}}\rt \cat{Fib}^{\mm{left, cocart}}$, obtained by conjugating $\Psi^\perp$ with the functor sending $\dcat{C}\mapsto (\dcat{C}^{\mm{rev}})^{1-\op}$. Unraveling the definitions, one sees that $\Psi^\dagger=\Psi_L$ is defined as in Construction \ref{con:functor K}, but the role of the vertically marked double category $K[m, n]$ is now played by
$$\begin{tikzcd}[column sep=3pc]
{L[m, n]=[m, 0]\times \Tw\big([n]^{\op}\big)}\arrow[r, "\mm{id}\times \pi_{\mm{ver}}"] & {[m, 0]\times [0, n]=[m, n].}
\end{tikzcd}$$
In $L[m, n]$, we mark the horizontal arrows in each $[m, 0]\times \{ii\}$. Furthermore, $L([1, 0]^\sharp)$ is given by $\Tw([1]^{\op})$ with all horizontal arrows marked.
\end{enumerate}
\end{variant}
\begin{remark}
Proposition \ref{prop:why its called reflection} holds for the functor $\Psi^\top$ as well: indeed, the functor $(-)^\mm{rev}$ commutes with the functor $\dcat{C}\mapsto (\dcat{C}^{\mm{rev}})^{(1, 2)-\op}$. In contrast, it does not apply to the functor $\Psi^\dagger$: instead, the resulting endofunctor of $\cat{Cat}$ takes opposite categories. 
\end{remark}

\subsection{Proof of Theorem \ref{thm:main theorem}}\label{sec:equivalence}
Consider the functors from Proposition \ref{prop:reflection functor} and Variant \ref{var:other reflections}
$$\begin{tikzcd}
\Psi^\perp\colon \cat{Fib}^{\mm{left, cart}}\arrow[r] &  \cat{Fib}^{\mm{cocart, right}}, & \Psi^\top\colon \cat{Fib}^{\mm{cocart, right}}\arrow[r] & \cat{Fib}^{\mm{left, cart}}.
\end{tikzcd}$$
We will show that these are mutually inverse, up to homotopy. More precisely, we will prove that for every (left, cart)-fibration $\dcat{D}\rt \dcat{C}$, there is a natural zig-zag of equivalences between (left, cart)-fibrations over $\dcat{C}$
$$\begin{tikzcd}[column sep=1.3pc]
\dcat{D}\arrow[rrrd]\arrow[rr, "\zeta^*"] & & \Psi_A(\dcat{D}) \arrow[rd] & & \Psi_B(\dcat{D})\arrow[ld]\arrow[ll, "\eta^*"{above}]\arrow[rr, "\theta^*"] & & \Psi^\top(\Psi^\perp(\dcat{D}))\arrow[llld]\\
& & & \dcat{C}.
\end{tikzcd}$$
To motivate the precise combinatorial constructions that will follow, let us already disclose that a typical (1, 1)-cell in each of these four double categories will correspond to a diagram in $\dcat{D}$ of the following form:
$$\begin{tikzcd}[column sep=1.9pc]
                       &              &  &                        &                        &              &  &                        &                        &              &  & {} \arrow[d, "\sim"{swap}] \arrow[r] & {} \arrow[d, "\sim"{swap}] \arrow[r, \vertcolor, "\sim"] & {} \arrow[d, "\sim"] \\
                       &              &  &                        &                        &              &  & {} \arrow[d, \vertcolor, rightarrowtail] \arrow[r] & {} \arrow[d, \vertcolor, rightarrowtail] \arrow[r, \vertcolor, "\sim"] & {} \arrow[d, \vertcolor, rightarrowtail] &  & {} \arrow[d, \vertcolor, rightarrowtail] \arrow[r] & {} \arrow[d, \vertcolor, rightarrowtail] \arrow[r, \vertcolor, "\sim"] & {} \arrow[d, \vertcolor, rightarrowtail] \\
{} \arrow[d, \vertcolor] \arrow[r] & {} \arrow[d, \vertcolor] \arrow[rr, shorten=3.5ex, mapsto, bend left=30, yshift=-1.2pc, "\zeta^*"{yshift=2pt}] &  & {} \arrow[d, \vertcolor] \arrow[r] & {} \arrow[r, \vertcolor, "\sim"] \arrow[d, \vertcolor] & {} \arrow[d, \vertcolor] &  & \arrow[ll, shorten=3.5ex, mapsto, bend right=30, yshift=-1.2pc, "\eta^*"{swap, yshift=2pt}] {} \arrow[d, \vertcolor, "\circ"{marking}] \arrow[r] & {} \arrow[r, \vertcolor, rightarrowtail] \arrow[d, \vertcolor, "f"] & {} \arrow[d, \vertcolor, "\circ"{marking}] \arrow[rr, shorten=3.5ex, mapsto, bend left=30, yshift=-1.2pc, "\theta^*"{yshift=2pt}] &  & {} \arrow[d, \vertcolor, "\circ"{marking}] \arrow[r] & {} \arrow[d, \vertcolor] \arrow[r, \vertcolor, rightarrowtail] & {} \arrow[d, \vertcolor, "\circ"{marking}] \\
{} \arrow[r]           & {}           &  & {} \arrow[r]           & {} \arrow[r, \vertcolor, "\sim"]           & {}           &  & {} \arrow[r]           & {} \arrow[r, \vertcolor, "\sim"]           & {}           &  & {} \arrow[r]           & {} \arrow[r, \vertcolor, "\sim"]           & {}          
\end{tikzcd}$$
Here $\begin{tikzcd}[cramped, column sep=1.5pc] {}\arrow[r, \vertcolor, "\circ"{marking}] & {}\end{tikzcd}$ denotes a $p$-cartesian vertical arrow in $\dcat{D}$ and $\begin{tikzcd}[cramped, column sep=1.5pc] {}\arrow[r, rightarrowtail, \vertcolor] & {}\end{tikzcd}$ denotes a map that is sent to a degenerate arrow in $\dcat{C}$. The rightmost diagram can be obtained by explicitly unraveling the definition of the composite functor $\Psi^{\top}\circ\Psi^{\perp}$. The maps $\zeta^*$ and $\theta^*$ will then be the evident equivalences which simply add degenerate squares on the right, resp.\ on the top. 

The map $\eta^*$ takes vertical compositions and has an inverse given informally as follows (cf.\ the proof of Proposition \ref{prop:reflection is cocart-left}). First, one decomposes the left and right vertical maps into fiberwise maps followed by $p$-cartesian maps. The middle column is then uniquely determined by the left column and the right half of the diagram is determined uniquely because the right vertical map was $p$-cartesian. Notice that the map $f$ need not be $p$-cartesian; this is why we need to add a degenerate square using $\zeta^*$ before we can decompose the left and right vertical arrow.

Let us now start by defining the endofunctors $\Psi_A, \Psi_B\colon \cat{Fib}^{\mm{left, cart}}\rt \cat{Fib}^{\mm{left, cart}}$ more precisely, using (the vertically marked version of) Construction \ref{con:constructing marked functors}.
\begin{construction}\label{con:functor B}
For each $[m, n]\in \Del^{\times 2}$, let 
$$\begin{tikzcd}[column sep=3pc]
A[m, n]=\Ar[m]\times [0, n]\arrow[r, "\pi_{\mm{hor}}\times\mm{id}"] & {[m, n]}
\end{tikzcd}$$
In $A[m, n]$, we mark all vertical arrows contained in $\Ar[m]\times \{i\}$, for all $i\in [n]$. Note that this marking is different from Construction \ref{con:functor K}. In addition, we define $A([0, 1])^+=[0, 1]^\sharp$.

Likewise, for each $[m, n]\in \Del^{\times 2}$, let us denote
$$\begin{tikzcd}[column sep=3.2pc]
B[m, n]=\Ar[m]\times \big([0]\boxtimes \Fun([1], [n])\big)\arrow[r, "\pi_{\mm{hor}}\times\mm{dom}"] & {[m, 0]\times [0, n]=[m, n]}
\end{tikzcd}$$
In $B[m, n]=\Ar[m]\times \big([0]\boxtimes \Fun([1], [n])\big)$, we mark the following vertical arrows:
\begin{enumerate}[label=(\alph*)]
\item\label{it:Bmark 1} for each object $aa\in \Ar[m]$ with $0\leq a\leq m$, we mark all edges in $\{aa\}\boxtimes \Fun([1], [n])$ of the form $ik\rt jk$.

\item\label{it:Bmark 2} for each $ii\in \Fun([1], [n])$ with $0\leq i\leq n$, we mark all vertical arrows in $\Ar[m]\times \{ii\}$.
\end{enumerate}
In addition, define $B([0, 1])^+$ to be $[0]\boxtimes \Fun([1], [1])$, where all vertical arrows are marked.

Note that these vertically marked double categories are related by natural transformations
$$\begin{tikzcd}[column sep=3.7pc, row sep=2pc]
{[m, n]}\ar[rd, "="{swap}] & A[m, n]\arrow[l, "\zeta"{swap}]\arrow[d, "\pi_\mm{hor}\times \mm{id}"] \arrow[r, "\eta"] & B[m, n]\arrow[ld, "\pi_\mm{hor}\times \mm{dom}"]\\
& {[m, n]^{\sharp}}
\end{tikzcd}$$
Here $\zeta\colon \Ar[m]\times [0, n]\rt [m, n]$ just arises from the horizontal projection $\Ar[m]\rt [m, 0]$. This sends the vertical arrows within each $\Ar[m]\times \{i\}$ to degenerate arrows and hence preserves markings. The map $\eta\colon \Ar[m]\times [0, n]\rt \Ar[m]\times \big([0]\boxtimes \Fun([1], [n])\big)$ arises from the map $\mm{cst}\colon [n]\rt \Fun([1], [n])$ sending each $i\in [n]$ to the constant arrow $i\leq i$.

Furthermore, the maps $\zeta$ and $\eta$ induce natural maps $[0, 1]^\sharp \lt A[0, 1]^+\rt B[0, 1]^+$ preserving the additional marked vertical arrows. Using the naturality of Construction \ref{con:constructing marked functors}, we obtain a zig-zag of natural transformations between functors $\Fun^\sharp([1], \bisAn_{\downarrow})\rt \Fun^\sharp([1], \bisAn_{\downarrow})$ of the form
$$\begin{tikzcd}[column sep=2.5pc]
\mm{id}\arrow[r, "\zeta^*"] & \Psi_A & \Psi_B.\arrow[l, "\eta^*"{swap}]
\end{tikzcd}$$
\end{construction}
\begin{proposition}\label{prop:adding fluff}
Let $p\colon \dcat{D}^\natural\rt \dcat{C}^\sharp$ be a (left, cart)-fibration. Then the natural transformations $\zeta^*$ and $\eta^*$ above are equivalences of marked bisimplicial spaces. In particular, $\Psi_A(\dcat{D}^\natural)\rt \dcat{C}^\sharp$ and $\Psi_B(\dcat{D}^\natural)\rt \dcat{C}^\sharp$ are (left, cart)-fibrations of double categories.
\end{proposition}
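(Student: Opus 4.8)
The plan is to verify that $\zeta^*$ and $\eta^*$ become equivalences of vertically marked bisimplicial spaces after evaluation at the (left, cart)-fibration $p$; the concluding ``in particular'' is then automatic, since $\cat{Fib}^{\mm{left, cart}}$ is a \emph{full} subcategory of $\Fun^\sharp\big([1], \bisAn_\downarrow\big)$ (Definition \ref{def:global category of fibs}) that contains $\dcat D^\natural\rt \dcat C^\sharp$ and is therefore closed under the equivalences $\zeta^*$, $\eta^*$. Checking that $\zeta^*$ and $\eta^*$ are equivalences amounts to checking that they are levelwise equivalences of the underlying bisimplicial spaces together with the observation that they induce equivalences on the subspaces of marked $[0,1]$-simplices; moreover, since $\zeta$ and $\eta$ live over $\dcat C$, one may fix an $\alpha\colon [m,n]\rt \dcat C$ and argue over each $\alpha$ separately.

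For $\zeta^*$: the crucial point is that $\zeta=\pi_{\mm{hor}}\times \mm{id}$ collapses every marked vertical arrow of $A[m,n]$ (a vertical arrow of some $\Ar[m]\times\{i\}$) to a degenerate vertical arrow of $[m,n]$. Hence, in any $(m,n)$-simplex of $\Psi_A(\dcat D)$ over $\alpha$, these marked arrows are sent to $p$-cartesian vertical arrows of $\dcat D$ lying over identities of $\dcat C$, and are therefore \emph{equivalences} in $\dcat D$. So $\Psi_A(\dcat D)(m,n)_\alpha$ is exactly the space of maps $A[m,n]\rt \dcat D$ over $\alpha\circ\zeta$ that invert the marked vertical arrows. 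Now the diagonal $[m,0]\hooklongrightarrow \Ar[m]$, $b\mapsto bb$, exhibits $[m,0]$ as the localization of the double category $\Ar[m]$ at all of its vertical arrows — a property of $\Ar[m]$ that one checks directly, using the section $b\mapsto bb$ of $\pi_{\mm{hor}}$ and the chains of vertical morphisms $ab\rt bb$ that contract $\Ar[m]$ onto its diagonal. Since localization commutes with the product by $[0,n]$, the map $\zeta$ exhibits $[m,n]$ as the localization of $A[m,n]$ at its marked vertical arrows, and the universal property of localization then identifies $\dcat D(m,n)_\alpha$ with $\Psi_A(\dcat D)(m,n)_\alpha$ via precomposition along $\zeta$ — which is exactly $\zeta^*$. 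On $[0,1]$-simplices $\zeta$ is an isomorphism, so the marked parts match.

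For $\eta^*$: using that $\DCat$ is cartesian closed (Proposition \ref{prop:dcat cart closed}), one identifies $\Psi_A(\dcat D)(m,-)$ and $\Psi_B(\dcat D)(m,-)$ with spaces of functors out of $[0,n]$, resp.\ out of $[0]\boxtimes\Fun([1],[n])$, into the vertical category $\dcat D^{\Ar[m]}(0,-)$ subject to the relevant marking conditions, with $\eta^*$ given by restriction along the diagonal $\mm{cst}\colon [0,n]\hooklongrightarrow [0]\boxtimes\Fun([1],[n])$. Unwinding Construction \ref{con:functor B}, the condition defining $\Psi_B$ requires a functor $F\colon \Fun([1],[n])\rt \dcat D^{\Ar[m]}(0,-)$ (over the appropriate pullback of $\alpha$) to send the codomain-constant arrows $[i,k]\rt [j,k]$, $i\leq j\leq k$, to cartesian arrows of the cartesian fibration $q\colon \dcat D^{\Ar[m]}(0,-)\rt \dcat C^{\Ar[m]}(0,-)$ — here one uses that the cotensor $(-)^{\Ar[m]}$ takes (left, cart)-fibrations to (left, cart)-fibrations, so that $q$ is indeed a cartesian fibration — plus a diagonal-level condition matching precisely the $\Psi_A$-condition. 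The combinatorial input is that $\mm{cst}\colon [n]\hooklongrightarrow\Fun([1],[n])$ is right adjoint to the codomain projection and that every comma category $\big([i,j]\downarrow \mm{cst}\big)$ has an initial object; consequently, for the cartesian fibration $q$, the $q$-relative right Kan extension along $\mm{cst}$ exists, is fully faithful, and has essential image exactly the functors (over the appropriate base) that send codomain-constant arrows to $q$-cartesian arrows. This produces the inverse to $\eta^*$ over each $\alpha$; the case $(m,n)=(0,1)$ also settles the compatibility with markings.

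I expect the main obstacle to be this last step: making the $q$-relative right Kan extension argument precise, checking that its essential image is as claimed, and — most delicately — verifying that all auxiliary data survive and that the construction is natural in $[m,n]$, so that the fiberwise equivalences really assemble into maps of marked bisimplicial spaces. (The auxiliary conditions — that the Kan extension still lies over $\alpha\circ(\pi_{\mm{hor}}\times\mm{dom})$, that it satisfies the ``family (b)'' cartesian conditions, that it lands in the correct full subcategory — follow from standard manipulations with cartesian arrows, e.g.\ the fact that if $g$ and $g\circ f$ are cartesian then so is $f$.) An alternative route, parallel to the treatment of $\Psi^\perp$, would be to first establish directly that $\Psi_A(\dcat D)$ and $\Psi_B(\dcat D)$ are (left, cart)-fibrations and then reduce, via Lemma \ref{lem:equivalence fiberwise}, to the identification of their fibers (which are the original fibers, by Proposition \ref{prop:why its called reflection}).
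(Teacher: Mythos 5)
Your $\eta^*$ argument rests on the same key idea as the paper's proof — that the $\Psi_B$-condition on the slices indexed by the diagonal objects of $\Ar[m]$ is equivalent to being a relative right Kan extension along $\mm{cst}\colon [n]\hooklongrightarrow \Fun([1],[n])$, using the initial objects $ij\rt jj$ of the comma categories — but you package it as a single Kan-extension statement in the cotensored fibration $q\colon \dcat{D}^{\Ar[m]}(0,-)\rt \dcat{C}^{\Ar[m]}(0,-)$, and that is exactly where the gap you flag sits. The marking of Construction \ref{con:functor B} only constrains the components of $F(ik)\rt F(jk)$ at the \emph{diagonal} objects $aa\in\Ar[m]$ (for $m=1$: the $x$- and $z$-slices, not the $y$-slice). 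So for your reformulation to be correct you must prove that an arrow of $\dcat{D}^{\Ar[m]}(0,-)$ is $q$-cartesian if and only if its components at the diagonal objects are $p$-cartesian; merely knowing that $(-)^{\Ar[m]}$ preserves (left, cart)-fibrations does not give this. That identification is true, but proving it is precisely the content of the paper's explicit steps that your outline absorbs silently: the $y$-slice is \emph{determined} by the $x$-slice via the horizontal left-fibration property (so no cartesianness condition on it is needed or available), and the $y\vto z$ transformation then extends uniquely from the diagonal because $z$ is a relative right Kan extension. Until that lemma about $q$-cartesian arrows is supplied (it is essentially the ``vertical right fibration'' computation in the proof of Proposition \ref{prop:reflection is cocart-left}), the $\eta^*$ argument is incomplete at its crux.

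For $\zeta^*$, your route via ``$\zeta$ exhibits $[m,n]$ as the localization of $A[m,n]$ at the marked vertical arrows'' is a legitimate repackaging of the paper's argument (which instead checks the map on marked vertical arrows and $(1,1)$-simplices directly, using completeness of $\dcat{D}$), and the reduction to a fixed $\alpha$ and the use of cartesian closedness of $\DCat$ to pass from $\Ar[m]$ to $A[m,n]$ are fine. But your stated justification of the localization claim is off: $b\mapsto bb$ is \emph{not} a section of $\pi_{\mm{hor}}$ as a map of double categories (there are no horizontal arrows $bb\rt (b{+}1)(b{+}1)$ in $\Ar[m]$); the unique section is the top row $b\mapsto 0b$, and the contraction uses the vertical arrows $0b\vto ib$. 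The localization statement itself is correct and, once proved, carries the same content (Segal conditions plus completeness) as the paper's check; just be aware that proving it is not shorter than the direct verification. Your handling of the ``in particular'' clause and the suggested alternative via Lemma \ref{lem:equivalence fiberwise} are both fine.
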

\begin{proof}
Recall from the general description \eqref{diag:simplices in restriction} that the space of $(m, n)$-simplices in $\Psi_A(\dcat{D}^\natural)$ is given by the space of diagrams of vertically marked double categories
$$\begin{tikzcd}
A[m, n]\arrow[d]\arrow[r] & \dcat{D}^\natural\arrow[d, "p"]\\
{[m, n]^\sharp}\arrow[r] & \dcat{C}^\sharp
\end{tikzcd}$$ 
and likewise for $\Psi_B(\dcat{D}^\natural)$. The maps $\zeta^*$ and $\eta^*$ simply arise by restriction along the natural maps $\zeta\colon A[m, n]\rt [m, n]$ and $\eta\colon A[m, n]\rt B[m, n]$.

\subsubsection*{$\zeta^*$ is an equivalence}
Note that $\Psi_A(\dcat{D}^\natural)\hooklongrightarrow X$ is a bisimplicial subspace of the bisimplicial space $X$ from Lemma \ref{lem:reflection of left is dcat}. Indeed, the space of maps $[m, n]\rt \Psi_A(\dcat{D}^\natural)$ is the subspace of those diagrams \eqref{diag:simplex in reflection} such that $\Ar[m]\times [0, n]\rt \dcat{D}$ sends the vertical arrows in each $\Ar[m]\times \{i\}$ with $0\leq i\leq n$ to equivalences.

\sloppy By Lemma \ref{lem:reflection of left is dcat}, $X$ is double category and one sees that each simplicial subspace \mbox{$\Psi_A(\dcat{D}^\natural)(m, -)\hooklongrightarrow X(m, -)$} is the inclusion of a full subcategory. In the horizontal direction, the proof of Lemma \ref{lem:reflection of left is dcat} shows that the simplicial subspace $\Psi_A(\dcat{D}^\natural)(-, n)\hooklongrightarrow X(-, n)$ satisfies the Segal conditions and is an equivalence on spaces of objects. This implies that $\Psi_A(\dcat{D}^\natural)(-, n)$ is a category, so that $\Psi_A(\dcat{D}^\natural)$ is a double category.

Let us now show that the map of marked double categories $\zeta^*\colon \dcat{D}\rt \Psi_A(\dcat{D}^\sharp)$ is an equivalence. Since $\zeta$ induces an isomorphism $A[0, 1]^+\rto{\sim} [0, 1]^\sharp$, $\zeta^*$ is an equivalence on marked vertical arrows. It then remains to verify that the map on $(1,1)$-simplices is an equivalence. Unraveling the definitions, a $(1,1)$-simplex in $\Psi_A(\dcat{D}^\natural)$ is a diagram in $\dcat{D}$
$$\begin{tikzcd}[sep=small, column sep=1.5pc]
x_0\arrow[r]\arrow[d, \vertcolor] & y_0\arrow[r, \vertcolor, "\sim"]\arrow[d, \vertcolor] & z_0\arrow[d, \vertcolor]\\
x_1\arrow[r] & y_1\arrow[r, \vertcolor, "\sim"{swap}] & z_1
\end{tikzcd}$$
where the right square covers a vertical arrow in $\dcat{C}$. The map $\zeta^*\colon \dcat{D}(1, 1)\rt \Psi_A(\dcat{D})(1, 1)$ is simply the inclusion of the diagrams of the above form where the right square is degenerate; this is an equivalence by completeness of $\dcat{D}$. 

\subsubsection*{$\eta^*$ is an equivalence}
Let us again consider the double category $X$ from Lemma \ref{lem:reflection of left is dcat}. For each fixed $n$, consider the mapping double category $X^{[0]\boxtimes \Fun([1], [n])}$ and note that there is a diagram of simplicial spaces
$$\begin{tikzcd}
\Psi_B(\dcat{D}^\natural)(-, n)\arrow[r, hook] & P\arrow[r, hook]\arrow[d] & X^{[0]\boxtimes \Fun([1], [n])}\big(-, 0\big)\arrow[d]\\
& \prod_{i=0}^n \Psi_A(\dcat{D}^\natural)(-, 0)\arrow[r, hook] & \prod_{i=0}^n X^{\{ii\}}\big(-, 0\big)
\end{tikzcd}$$
where $P$ denotes the pullback. Since the bottom map is the inclusion of a wide subcategory (as we have seen above), $P$ is a wide subcategory of $X^{[0]\boxtimes \Fun([1], [n])}\big(-, 0\big)$. Unraveling the definitions, we then see that the inclusion $\Psi_B(\dcat{D}^\natural)(-, n)\hooklongrightarrow P$ is the inclusion of a full subcategory: it is the inclusion of the full subcategory on those diagrams $[0]\boxtimes \Fun([1], [n])\rt X$ such that each vertical arrow $ik\rt jk$ in $\Fun([1], [n])$ is sent to a cartesian arrow in $X(0, -)\simeq \dcat{D}(0, -)$. 

It follows that $\Psi_B(\dcat{D}^\natural)$ satisfies the (complete) Segal conditions in the horizontal direction. It therefore suffices to verify that $\eta^*\colon \Psi_B(\dcat{D}^\natural)(1, -)\rt \Psi_A(\dcat{D})(1, -)$ is an equivalence of simplicial spaces. 
Unraveling the definitions, a $(1, n)$-simplex in $\Psi_B(\dcat{D}^\natural)$ is given by a diagram of the form
\begin{equation}\label{diag:huge diagram}
\begin{tikzcd}[column sep=0.6pc, row sep=0.47pc]
                                  &                                                      & x_{00} \arrow[ld] \arrow[rrr, \vertcolor] \arrow[rrrddd, dotted, \vertcolor] &                                   &                                                                      & x_{01} \arrow[ld] \arrow[rrr, \vertcolor] \arrow[ddd, dotted, \vertcolor, "\circ"{marking}]                    &                                  &                                                                           & \dots\vphantom{x} \arrow[ld] \arrow[rrr, \vertcolor] \arrow[ddd, dotted, \vertcolor, "\circ"{marking}]                    &                    &                               & x_{0n} \arrow[ld] \arrow[ddd, \vertcolor, "\circ"{marking}] \\
                                  & y_{00} \arrow[ld, \vertcolor, "\sim"{swap}] \arrow[rrr, \vertcolor] \arrow[rrrddd, dotted, \vertcolor] &                                                      &                                   & y_{01} \arrow[ld, \vertcolor] \arrow[rrr, \vertcolor] \arrow[ddd, dotted, \vertcolor]                    &                                                                      &                                  & \dots\vphantom{x} \arrow[ld, \vertcolor] \arrow[rrr, \vertcolor] \arrow[ddd, dotted, \vertcolor]                          &                                                                     &                    & y_{0n} \arrow[ld, \vertcolor] \arrow[ddd, \vertcolor] &                               \\
z_{00} \arrow[rrr, \vertcolor] \arrow[rrrddd, \vertcolor] &                                                      &                                                      & z_{01} \arrow[rrr, \vertcolor] \arrow[ddd, \vertcolor, "\circ"{marking}]    &                                                                      &                                                                      & \dots\vphantom{x} \arrow[rrr, \vertcolor] \arrow[ddd, \vertcolor, "\circ"{marking}]    &                                                                           &                                                                     & z_{0n} \arrow[ddd, \vertcolor, "\circ"{marking}] &                               &                               \\
                                  &                                                      &                                                      &                                   &                                                                      & x_{11} \arrow[ld, dotted] \arrow[rrr, dotted, \vertcolor] \arrow[rrrddd, dotted, \vertcolor] &                                  &                                                                           & \dots\vphantom{x} \arrow[ld, dotted] \arrow[rrr, dotted, \vertcolor] \arrow[ddd, dotted, \vertcolor, "\circ"{marking}]    &                    &                               & x_{1n} \arrow[ld] \arrow[ddd, \vertcolor, "\circ"{marking}] \\
                                  &                                                      &                                                      &                                   & y_{11} \arrow[ld, dotted, \vertcolor, "\sim"{swap}] \arrow[rrr, dotted, \vertcolor] \arrow[rrrddd, dotted, \vertcolor] &                                                                      &                                  & \phantom{x} \arrow[ld, dotted, \vertcolor] \arrow[rrr, dotted, \vertcolor] \arrow[ddd, dotted, \vertcolor]    &                                                                     &                    & y_{1n} \arrow[ld, \vertcolor] \arrow[ddd, \vertcolor] &                               \\
                                  &                                                      &                                                      & z_{11} \arrow[rrr, \vertcolor] \arrow[rrrddd, \vertcolor] &                                                                      &                                                                      & \dots\vphantom{x} \arrow[rrr, \vertcolor] \arrow[ddd, \vertcolor, "\circ"{marking}]    &                                                                           &                                                                     & z_{1n} \arrow[ddd, \vertcolor, "\circ"{marking}] &                               &                               \\
                                  &                                                      &                                                      &                                   &                                                                      &                                                                      &                                  &                                                                           & \dots\vphantom{x} \arrow[ld, dotted] \arrow[rrr, dotted, \vertcolor] \arrow[rrrddd, dotted, \vertcolor] &                    &                               & \dots\vphantom{x} \arrow[ld] \arrow[ddd, \vertcolor, "\circ"{marking}]  \\
                                  &                                                      &                                                      &                                   &                                                                      &                                                                      &                                  & \phantom{x} \arrow[ld, dotted, \vertcolor, "\sim"] \arrow[rrr, dotted, \vertcolor] \arrow[rrrddd, dotted, \vertcolor] &                                                                     &                    & \dots\vphantom{x} \arrow[ld, \vertcolor] \arrow[ddd, \vertcolor]  &                               \\
                                  &                                                      &                                                      &                                   &                                                                      &                                                                      & \dots\vphantom{x} \arrow[rrr, \vertcolor] \arrow[rrrddd, \vertcolor] &                                                                           &                                                                     & \dots\vphantom{x} \arrow[ddd, \vertcolor, "\circ"{marking}]  &                               &                               \\
                                  &                                                      &                                                      &                                   &                                                                      &                                                                      &                                  &                                                                           &                                                                     &                    &                               & x_{nn} \arrow[ld]             \\
                                  &                                                      &                                                      &                                   &                                                                      &                                                                      &                                  &                                                                           &                                                                     &                    & y_{nn} \arrow[ld, \vertcolor, "\sim"]             &                               \\
                                  &                                                      &                                                      &                                   &                                                                      &                                                                      &                                  &                                                                           &                                                                     & z_{nn}             &                               &                              
\end{tikzcd}
\end{equation}
Here each of the slices labeled by $x, y$ and $z$ is a diagram in $\dcat{D}(0, -)$ parametrized by $\Fun([1], [n])$, while the maps $x_{ij}\rt y_{ij}\vto z_{ij}$ define a map $\Ar[1]\rt \dcat{D}$. Furthermore, the condition that $B[1, n]=\Ar[1]\times \Big([0]\boxtimes \Fun([1], [n])\Big)\rt \dcat{D}$ preserves marked arrows translates into the following conditions:
\begin{enumerate}[label=(\alph*)]
\item\label{it:diagonal equiv} The maps $y_{ii}\vto z_{ii}$ are equivalences for all $0\leq i\leq n$, as indicated.

\item\label{it:vertical arrows cartesian} All downward maps in the $x$-slice and the $z$-slice are $p$-cartesian, indicated by $\begin{tikzcd}[cramped, column sep=1.5pc] {}\arrow[r, "\circ"{marking}, \vertcolor] & {}\end{tikzcd}$.
\end{enumerate}
The map $\eta^*\colon \Psi_B(\dcat{D}^\natural)(1, n)\rt \Psi_A(\dcat{D}^{\natural})(1, n)$ restricts the above diagram to the diagonal face, spanned by all $x_{ii}, y_{ii}$ and $z_{ii}$. We have to prove that there is a unique way to extend the above diagram (with properties \ref{it:diagonal equiv} and \ref{it:vertical arrows cartesian}) from this diagonal face (covering the map $[m, n]\rt\dcat{C}$).
Let us first look at the $x$- and $z$-slices, both of which correspond to diagrams 
$$\begin{tikzcd}
\Fun([1], [n])\arrow[r, "{x, z}"]\arrow[d, "\mm{dom}"{swap}] & \dcat{D}(0, -)\arrow[d, "p"]\\
{[n]}\arrow[r] & \dcat{C}(0, -).
\end{tikzcd}$$
Let $\mm{cst}\colon [n]\hooklongrightarrow \Fun([1], [n])$ be the inclusion of the diagonal. For every $ij\in \Fun([1], [n])$, the comma category $ij/\mm{cst}$ admits an initial object, corresponding to the map $ij\rt jj$. Using this, one sees that condition \ref{it:vertical arrows cartesian} is equivalent to assertion that the two diagrams $x, z\colon \Fun([1], [n])\rt \dcat{D}$ are relative right Kan extensions of their restriction to the diagonal.

Consequently, starting from the diagonal face there is a unique way to fill the $x$- and $z$-slices in \eqref{diag:huge diagram} so that condition \ref{it:vertical arrows cartesian} holds \cite[Proposition 4.3.2.1]{lur09}. Using that $\dcat{D}\rt \dcat{C}$ is a left fibration in the horizontal direction, there is then a unique extension to a horizontal transformation between the $x$ and $y$-slices $(x\Rightarrow y)\colon [1]\boxtimes \Fun([1], [n])\rt \dcat{D}$. Finally, the natural transformation $y_{ii}\vto z_{ii}$ on the diagonal then extends uniquely to a natural transformation $y_{ij}\vto z_{ij}$, since the target is the relative right Kan extension of its restriction to the diagonal. We conclude that $\eta^*\colon \Psi_B(\dcat{D}^\sharp)\rt \Psi_A(\dcat{D}^\sharp)$ is an equivalence on the underlying bisimplicial spaces. Finally, it identifies the marking since $A[0, 1]^+\simeq B[0, 1]^+\simeq [0, 1]^\sharp$.
\end{proof}
Our next goal is to describe the composite functor $\Psi^\top\circ \Psi^\perp$ in terms of Construction \ref{con:constructing marked functors}.
\begin{construction}\label{con:functor T}
For each $m, n\geq 0$, let $I[m, n]$ denote the $(1, 1)$-category whose objects are quadruples of maps of linear orders
\begin{equation}\label{eq:abcd}
[a]\rto{\alpha} [m] \qquad \qquad [b]\star [a]\rto{\beta} [n] \qquad\qquad [t]\star [s]\rto{\gamma} [a]\qquad\qquad [t]\rto{\delta} [b].
\end{equation}
Given two such quadruples, a map between them is given by a quadruple of maps 
$$
[a]\rt [a'], \qquad [b]\rt [b'], \qquad [s]\rt [s'], \qquad [t]\rt [t']
$$
that intertwines $(\alpha, \beta, \gamma, \delta)$ and $(\alpha', \beta', \gamma', \delta')$. Note that $I[m, n]$ depends functorially on $[m, n]\in \Del^{\times 2}$, by postcomposing $\alpha$ and $\beta$. 

Now consider the natural map of bisimplicial spaces (in fact, sets)
$$\begin{tikzcd}
\pi\colon T[m, n]=\colim\limits_{I[m, n]} \, [s, t]\arrow[r] & {[m, n]}
\end{tikzcd}$$
defined as follows: to each $[s, t]\rt T[m, n]$ determined by a tuple $(\alpha, \beta, \gamma, \delta)$, we associate the map $\sigma\boxtimes \tau_0\colon [s, t]\rt [m, n]$ determined by 
$$
\sigma\colon [s] \rt [t]\star [s] \rto{\gamma} [a]\rto{\alpha} [m] \qquad \qquad \qquad \tau_0\colon [t]\rto{\delta} [b]\rt [b]\star[a]\rto{\beta} [n].
$$
We view $T[m, n]$ as a marked bisimplicial space by marking those $(0, 1)$-simplices corresponding to a quadruple $(\alpha, \beta, \gamma, \delta)$ (with $s=0, t=1$) such that either:
\begin{enumerate}[label=(\alph*)]
\item the map $\gamma\colon [1]\star [0]\rt [a]$ factors over a $0$-simplex of $[a]$.
\item the map $\beta\colon [b]\star[a]\rt [n]$ factors over a $0$-simplex of $[n]$.
\end{enumerate}
For every map $[m, n]\rt [m', n']$, the induced map $T[m, n]\rt T[m', n']$ preserves such marked arrows. 

Finally, let $T[0, 1]^+$ denote $T[0, 1]$, where \emph{all} vertical arrows are marked.
\end{construction}
\begin{lemma}\label{lem:composition via T}
There is a natural equivalence $\Psi_T\simeq \Psi_{K'}\circ \Psi_K$. 
\end{lemma}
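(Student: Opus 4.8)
The plan is to deduce the lemma from a general ``composition principle'' for the functors produced by Construction \ref{con:constructing marked functors}, together with a combinatorial identification.

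\textbf{Step 1: a composition principle.} For a datum $S$ as in Construction \ref{con:constructing marked functors}, the pointwise description \eqref{diag:simplices in restriction} exhibits $\Psi_S$ as a right adjoint. Concretely, writing an arbitrary marked bisimplicial space $W$ over $X$ as a colimit of representables $W=\colim_{[u,v]\to W}[u,v]$, applying \eqref{diag:simplices in restriction} termwise, and inserting $S[1,0]^+$ over the marked $[1,0]$-cells of $W$, one obtains a natural identification $\Map_{/X^\sharp}\big(W,\,\Psi_S(Y)\big)\simeq\Map_{/X^\sharp}\big(\colim_{[u,v]\to W}S[u,v],\,Y\big)$. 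Taking $W=S'[m,n]$ for a second datum $S'$ -- in the mode with horizontally marked source, as for $\Psi^\top$ -- then yields a natural equivalence $\Psi_{S'}\circ\Psi_S\simeq\Psi_{S''}$ of functors over $\bisAn$, where $S''$ is the datum (itself in the mixed mode with vertically marked source and vertically marked output, as for $T$) whose underlying double category is $S''[m,n]=\colim_{[u,v]\to S'[m,n]}S[u,v]$, the colimit over the category of simplices of $S'[m,n]$ with $S[1,0]^+$ inserted over its horizontally marked cells, whose structure map is induced from $S[u,v]\to[u,v]\to S'[m,n]\to[m,n]$, and whose special (output) edge $S''[0,1]^+$ is built in the same way from $S'[0,1]^+$. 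Since $K$, $K'$ and $T$ are $0$-truncated (presheaves of sets), all of these colimits are ordinary $1$-categorical colimits, so no coherence issue arises.

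\textbf{Step 2: identifying $S''$ with $T$ for $S=K$, $S'=K'$.} Here $K'[m,n]=[m,0]\times\Ar[n]$, so by Examples \ref{ex:generating dcat} and \ref{ex:arrow dcat} a $(u,v)$-simplex of $K'[m,n]$ is a pair $\alpha\colon[u]\to[m]$, $\beta\colon[v]\star[u]\to[n]$, with image $(\alpha,\beta|_{[v]})$ in $[m,n]$; substituting $K[u,v]=\Ar[u]\times[0,v]$, a $(s,t)$-simplex of $K[u,v]$ is a pair $\gamma\colon[t]\star[s]\to[u]$, $\delta\colon[t]\to[v]$, whose image in $[m,n]$ is $\sigma\boxtimes\tau_1$ with $\sigma\colon[s]\hookrightarrow[t]\star[s]\rto{\gamma}[u]\rto{\alpha}[m]$ and $\tau_1\colon[t]\rto{\delta}[v]\hookrightarrow[v]\star[u]\rto{\beta}[n]$. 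Applying the density presentation once more inside each $K[u,v]$, the colimit $\colim_{[u,v]\to K'[m,n]}K[u,v]$ becomes $\colim\,[s,t]$ over the category of pairs (a simplex $[u,v]\to K'[m,n]$, a simplex $[s,t]\to K[u,v]$) with the evident intertwining morphisms; writing $[a]=[u]$ and $[b]=[v]$, this indexing category is exactly $I[m,n]$ of Construction \ref{con:functor T}, and the induced map to $[m,n]$ is exactly $\pi$. Hence $S''[m,n]=T[m,n]$ naturally in $[m,n]$. It remains to match markings. By Step 1, a vertical edge of $S''[m,n]$ is marked precisely when it is marked either (i) inside one of the pieces $K[u,v]$ -- i.e.\ it lies in a subcategory $\{(i,i)\}\times[0,v]$, so its $\Ar[u]$-coordinate $\gamma$ is constant, which is condition (a) of Construction \ref{con:functor T} -- or (ii) as the extra-marked vertical edge of a copy of $K[1,0]^+$ inserted over a horizontally marked cell of $K'[m,n]$, and such cells are exactly those lying in $[m,0]\times\{(i,i)\}$, i.e.\ those whose $\Ar[n]$-coordinate $\beta$ is constant, which is condition (b). Finally one checks that the ``all vertical edges marked'' decoration $T[0,1]^+$ agrees, as marking data on the output, with the one produced from $K'([0,1]^\sharp)$ by the substitution. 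Combining the two steps gives the natural equivalence $\Psi_T\simeq\Psi_{K'}\circ\Psi_K$.

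The composition principle of Step 1 is formal, so I expect the real work to lie entirely in Step 2 and, within it, in the marking bookkeeping: recognizing that the vertical marking of $T[m,n]$ has the two distinct origins above -- one intrinsic to the inner pieces $K[u,v]$, one inserted through $K[1,0]^+$ over the horizontally marked cells of $K'[m,n]$ -- which is exactly what the ``either/or'' of conditions (a)--(b) records; verifying that the maximal decoration $T[0,1]^+$ and the substitution decoration impose the same condition on targets (the extra edges marked by $T[0,1]^+$ being forced to be marked anyway, e.g.\ as degeneracies or composites of substitution-marked ones); and fixing the join-orientation conventions, since the excerpt writes $\gamma\colon[s]\star[t]\to[a]$ in \eqref{eq:abcd} but $\gamma\colon[0]\star[1]\to[a]$ in condition (a) (and similarly for $\beta$), so some care with opposites is needed for the identification $S''[m,n]\cong T[m,n]$ to match the stated presentation literally.
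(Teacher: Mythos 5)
Your proposal is correct and follows essentially the same route as the paper: the paper also passes to the left adjoints $\Phi_K$, $\Phi_{K'}$, uses that $\Phi_K\circ\Phi_{K'}$ preserves colimits and is determined by its values on the generators $[m,n]\to[m,n]^\sharp$ and $[0,1]^\sharp\to[0,1]^\sharp$, computes $\Phi_K(K'[m,n])$ as the iterated colimit $\colim_{\alpha,\beta}\colim_{\gamma,\delta}[s,t]=T[m,n]$, and checks that the markings (including the $[0,1]^+$ decoration) agree before concluding by adjunction. Your ``composition principle'' is just this adjunction argument phrased via mapping spaces, and your marking bookkeeping in Step 2 matches what the paper leaves to ``unraveling the definitions.''
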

\begin{proof}
Recall from Construction \ref{con:constructing marked functors} that $\Psi_{K'}$ arises as the right adjoint to a functor $\Phi_{K'}\colon \Fun^\sharp\big([1], \bisAn_{\downarrow}\big)\rt \Fun^\sharp\big([1], \bisAn_{\to}\big)$. The functor $\Phi_{K'}$ is uniquely determined by the fact that it preserves colimits and sends $\emptyset\rt [m, n]^\sharp$ to $\emptyset\rt [m, n]^\sharp$ and
$$
\begin{tikzcd} {[m, n]}\arrow[d] \\ {[m, n]^\sharp}\end{tikzcd}\longmapsto \begin{tikzcd} {K'[m, n]}\arrow[d] \\ {[m, n]^\sharp}\end{tikzcd}\qquad\qquad\qquad\qquad \begin{tikzcd} {[0, 1]^\sharp}\arrow[d] \\ {[0, 1]^\sharp}\end{tikzcd}\longmapsto \begin{tikzcd} {K'[1, 0]^+}\arrow[d] \\ {[0, 1]^\sharp}\end{tikzcd}
$$
Here $K'[m, n]$ was described in Variant \ref{var:other reflections}. Using Example \ref{ex:arrow dcat}, we can write $K'[m, n]$ as a (canonical) colimit of bisimplices 
$$
K'[m, n]=\colim_{\substack{\alpha\colon [a]\to [m]\\ \beta\colon [b]\star [a]\to [n]}} [a, b].
$$
Unraveling the definitions, the marked $(1, 0)$-simplices of $K'[m, n]$ correspond precisely to tuples of $\alpha\colon [1]\rt [m]$ and $\beta\colon [0]\star [1]\rt [n]$ where $\beta$ is degenerate. Furthermore, all $(1, 0)$-simplices in $K'[0, 1]$ are marked.

We can apply the same reasoning to the left adjoint $\Phi_K\circ \Phi_{K'}$: this functor preserves colimits and sends $\emptyset\rt [m, n]^\sharp$ to $\emptyset\rt [m, n]^\sharp$. It is therefore uniquely determined by its values on $[m, n]\rt [m, n]^\sharp$ and $[0, 1]^\sharp\rt [0, 1]^\sharp$. The value on $[m, n]$ is given by
$$
\Phi_K\big(K'[m, n]\big)=\colim_{\substack{\alpha\colon [a]\to [m]\\ \beta\colon [b]\star [a]\to [n]}} K[a, b]\simeq \colim_{\substack{\alpha\colon [a]\to [m]\\ \beta\colon [b]\star [a]\to [n]}} \Big(\colim_{\substack{\gamma\colon [t]\star [s]\to [a]\\ \delta\colon [t]\to [b]}} [s, t]\Big).
$$
In other words, it is precisely given by $T[m, n]$; unraveling the definitions shows that the marking coincides with that of Construction \ref{con:functor T}. Likewise, $\Phi_K\big(K'[0, 1]^+\big)$ can be identified with $T[0, 1]$ where all vertical arrows are marked. It follows that $\Phi_K\circ \Phi_{K'}\simeq \Phi_T$, so that by adjunction $\Psi_{T}\simeq \Psi_{K'}\circ\Psi_K$.
\end{proof}
\begin{proof}[Proof (of Theorem \ref{thm:main theorem})]
By Proposition \ref{prop:reflection functor} (and Variant \ref{var:other reflections}), we have functors
$$
\Psi^\perp\colon \cat{Fib}^{\mm{left, cart}}\rt \cat{Fib}^{\mm{cocart, right}}\qquad \text{and} \qquad \Psi^\top\colon \cat{Fib}^{\mm{cocart, right}}\rt \cat{Fib}^{\mm{left, cart}}.
$$
To see that they are mutually inverse, it suffices to prove that $\Psi^{\top}\circ \Psi^{\perp}\simeq \mm{id}$. Indeed, in this case, conjugating by the functor $\dcat{C}\mapsto (\dcat{C}^\mm{rev})^{(1, 2)-\op}$ provides the desired equivalence $\Psi^\perp\circ\Psi^\top\simeq \mm{id}$.
We construct the desired natural equivalence as a zig-zag
$$\begin{tikzcd}
\mm{id}\arrow[r, "\sim"{swap}, "\zeta^*"] & \Psi_A & \Psi_B\arrow[l, "\sim", "\eta^*"{swap}]\arrow[r, "\theta^*"] & \Psi_T\simeq \Psi^\top\circ \Psi^\perp.
\end{tikzcd}$$
Here the first two equivalences follow from Proposition \ref{prop:adding fluff} and the last identification follows from Lemma \ref{lem:composition via T}. The remaining natural transformation $\theta^*$ is induced by a natural map $\theta\colon T[m, n]\rt B[m, n]$ of vertically marked bisimplicial spaces over $[m, n]$. This map is defined as follows: for every $(s, t)$-simplex of $T[m, n]$, determined by a quadruple $(\alpha, \beta, \gamma, \delta)$ as in \eqref{eq:abcd}, consider the map
$$\begin{tikzcd}
\tau_1\colon [t]\arrow[r] & {[t]\star [s]}\arrow[r, "\gamma"] & {[a]}\arrow[r] & {[b]\star [a]}\arrow[r, "\beta"] & {[n]}.
\end{tikzcd}$$
Clearly $\tau_0(i)\leq \tau_1(i)$ for all $i$, where $\tau_0$ is defined in Construction \ref{con:functor T}. We therefore obtain a map $(\tau_0\leq \tau_1)\colon [s, t]\rt [0]\boxtimes \Fun([1], [n])$. Likewise, note that $\alpha\circ\gamma$ defines a map $\alpha\circ\gamma\colon [s, t]\rt \Ar[m]$ (cf.\ Example \ref{ex:arrow dcat}). Combining these two, we obtain a natural map
$$\begin{tikzcd}
\big(\alpha\circ\gamma, \tau_0\leq \tau_1\big)\colon [s, t]\arrow[r] & \Ar[m]\times \Big([0]\boxtimes \Fun([1], [n])\Big)=B[m, n]
\end{tikzcd}$$
for every $(s, t)$-simplex in $T[m, n]$. This defines the desired natural map of bisimplicial spaces $\theta\colon T[m, n]\rt B[m, n]$, compatible with the projections onto $[m, n]$. Notice that $\theta$ sends a marked $(0, 1)$-simplex in $T[m, n]$ (Construction \ref{con:functor T}) to a marked $(0, 1)$-simplex in $B[m, n]$ (Construction \ref{con:functor B}):
\begin{enumerate}[label=(\alph*)]
\item Given a marked $(0, 1)$-simplex corresponding to $(\alpha, \beta, \gamma, \delta)$ with $\gamma\colon [1]\star [0]\rt [a]$ constant, both $\alpha\circ\gamma$ and $\tau_1$ are constant. Consequently, its image in $B[m, n]$ is a marked $(0, 1)$-simplex as in part \ref{it:Bmark 1} of Construction \ref{con:functor B}.

\item Given a marked $(0, 1)$-simplex corresponding to $(\alpha, \beta, \gamma, \delta)$ with $\beta\colon [b]\star [a]\rt [n]$ constant, both $\tau_0$ and $\tau_1$ are constant with the same value. Consequently, the image in $B[m, n]$ is a marked $(0, 1)$-simplex as in part \ref{it:Bmark 2} of Construction \ref{con:functor B}.
\end{enumerate} 
Finally, note that in both $T[0, 1]^\sharp$ and $B[0, 1]^\sharp$, all vertical arrows are marked, so $\theta$ induces a natural map $T[0, 1]^\sharp\rt B[0, 1]^\sharp$ as well. 

For any (left, cart)-fibration $p\colon \dcat{D}^\natural\rt \dcat{C}^\sharp$, restriction along $\theta$ therefore defines a natural map of marked bisimplicial spaces
$$\begin{tikzcd}
\Psi_B(\dcat{D}^\natural)\arrow[rr, "\theta^*"]\arrow[rd] & & \Psi_T(\dcat{D}^\natural)\arrow[ld]\\
& \dcat{C}^\sharp.
\end{tikzcd}$$
Since both downwards pointing maps are (left, cart)-fibrations (with marked arrows being the cartesian arrows), it suffices to verify that $\theta^*$ induces equivalences between spaces of vertices and vertical arrows. On vertices, note that $T[0, 0]\simeq B[0, 0]\simeq [0, 0]$, so that the spaces of objects of $\Psi_B(\dcat{D}^\natural)$ and $\Psi_T(\dcat{D}^\natural)$ are both equivalent to the space of objects of $\dcat{D}$.

Unraveling the definitions, a vertical arrow in $\Psi_T(\dcat{D}^\natural)\simeq \Psi^\top\big(\Psi^\perp(\dcat{D}^\natural)\big)$ covering a vertical arrow $c_0\vto c_1$ in $\dcat{C}^\sharp$ is given by a diagram in $\dcat{D}$ of the form
$$\begin{tikzcd}
d_0\arrow[r] & d'_0\arrow[r, \vertcolor] & d''_0 \arrow[r, \vertcolor] & d_1.
\end{tikzcd}$$
Here first two maps live in the fiber over $c$ and the vertical arrow $d''_0\vto d_1$ is a cartesian lift of $c_0\vto c_1$. Since $\dcat{D}\rt \dcat{C}$ was a horizontal left fibration, this means that the arrow $d_0\rt d_0'$ is an equivalence.

On the other hand, a vertical arrow in $\Psi_B(\dcat{D}^\natural)$ is a sequence $d'_0\vto d''_0 \vto d_1$ of vertical maps where the first map lives in the fiber over $c$ and $d''_0\vto d_1$ is cartesian. The map $\theta^*$ can be identified with the map sending this sequence to $d_0 = d'_0\vto d''_0 \vto d_1$; it is then an equivalence by completeness of $\dcat{D}$.
\end{proof}

\section{Recollections on higher categories}\label{sec:higher cats}
In the remainder of the text, we will use Theorem \ref{thm:main theorem} to establish a version of straightening and unstraightening for cocartesian fibrations of $d$-categories. To facilitate this, we will start in this section by recalling some preliminary results on $d$-categories; our default model for $d$-categories will be in terms of iterated Segal spaces \cite{bar21}. Section \ref{sec:point set} provides a reminder on point-set models for $d$-categories; its main purpose is to allow us to construct explicit examples of higher categories (such as the $(d+1)$-category of $d$-categories) from enriched categories.

Sections \ref{sec:copresheaves} and \ref{sec:corep} describe the theory of copresheaves (of $d$-categories) on $(d+1)$-categories; this is mostly a recollection of the work of Boavida \cite{boa18}. For our purposes, it will be convenient to reformulate the results of loc.\ cit.\ as Proposition \ref{prop:space representability}: this provides a natural equivalence between the \emph{external} definition of copresheaves over a $(d+1)$-category $\cat{C}$, in terms of certain fibrations of $(d+1)$-fold simplicial spaces $X\rt \cat{C}$, and the \emph{internal} definition of copresheaves, as maps of $(d+1)$-categories $\cat{C}\rt \hcat{Cat}_{d}$.

\subsection{Higher categories via Segal spaces}
Recall the iterated Segal space model for $d$-categories \cite{bar21}:
\begin{definition}\label{def:d-cat}
A \emph{d-category} $\cat{C}$ is a $d$-fold category with the property that 
for each $\vec{n}_{k-1}$ in $\Del^{\times k-1}$, the $(d-k)$-fold category $\cat{C}(\vec{n}_{k-1}, 0, -, \dots, -)$ is a space, i.e.\ constant. We will write $\dCat\hooklongrightarrow \dfCat$ for the full subcategory spanned by the $d$-categories.
\end{definition}
\begin{remark}\label{rem:completeness}
The above definition of a $d$-category is a priori more restrictive than the usual one from \cite{bar21}: we ask all $\cat{C}(\vec{n}_{k-1}, -, \vec{n}_{d-k})$ to be complete, while this is usually only required if $\vec{n}_{d-k}=\vec{0}_{d-k}$. It follows from \cite[Lemma 2.8]{joh17} that the two definitions coincide, essentially by an Eckmann--Hilton type argument.
\end{remark}
If $\cat{C}$ is a $d$-category, recall that the (constant $(d-1)$-fold simplicial) space $\cat{C}(0)$ describes the \emph{space of objects} of $\cat{C}$. Given two objects $x_0, x_1\in \cat{C}$, we will write
$$
\Map_{\cat{C}}(x_0, x_1)= \cat{C}(1)\times_{\cat{C}(\{0\})\times \cat{C}(\{1\})} \{(x_0, x_1)\}
$$
for the \emph{mapping $(d-1)$-category} between them. For any triple of objects $x_0, x_1, x_2$, there is a composition map, determined by the commuting square
$$\begin{tikzcd}
\Map_{\cat{C}}(x_1, x_2)\times \Map_{\cat{C}}(x_0, x_1) \arrow[r, "\circ"] & \Map_{\cat{C}}(x_0, x_2) \\
\cat{C}(2)\times_{\cat{C}(\{0\})\times \cat{C}(\{1\})\times\cat{C}(\{2\})} \{(x_0, x_1, x_2)\}\arrow[u, "\sim"]\arrow[r, "d_1"] & \cat{C}(1)\times_{\cat{C}(\{0\})\times \cat{C}(\{1\})} \{(x_0, x_2)\}\arrow[u, "="{swap}]
\end{tikzcd}$$
Here the left vertical map is an equivalence by the Segal conditions.

To efficiently deal with the additional constancy conditions appearing in Definition \ref{def:d-cat}, let us introduce the following notation:
\begin{notation}
For a category $\cat{I}$, let us denote by $\lambda\colon \Del\times \cat{I}\rt \Del_{\cat{I}}$ the ($(\infty, 1)$-categorical) localization of $\Del\times \cat{I}$ at the morphisms in $\{0\}\times \cat{I}$. Recall that restriction and Kan extension determine adjoint pairs
$$\begin{tikzcd}[column sep=1.6pc]
\Fun\big((\Del\times \cat{I})^{\op}, \sS\big)\arrow[r, yshift=1ex, "\lambda_!"] \arrow[r, hookleftarrow, yshift=-1ex, "\lambda^*"{swap}] & \Fun(\Del_{\cat{I}}^{\op}, \sS)\arrow[r, hookrightarrow, yshift=1ex, "\lambda^*"] & \Fun\big((\Del\times \cat{I})^{\op}, \sS\big)\arrow[l, yshift=-1ex, "\lambda_*"]
\end{tikzcd}$$
that exhibit the category of presheaves on $\Del_{\cat{I}}$ both as a localization and a colocalization of the category of presheaves on $\Del\times \cat{I}$.
\end{notation}
\begin{lemma}\label{lem:simplex localization}
Let $\cat{I}$ be a $(1, 1)$-category with a terminal object $t$ and let $\lambda\colon \Del\times \cat{I}\rt \Del_{\cat{I}}$ denote the localization at the morphisms in $\{0\}\times \cat{I}$. Then the following assertions hold:
\begin{enumerate}
\item $\Del_{\cat{I}}$ is a $(1, 1)$-category with a terminal object.
\item For any $X\colon \big(\Del\times \cat{I}\big)^{\op}\rt \sS$, consider the natural map of presheaves on $\Del\times \cat{I}$
$$\begin{tikzcd}
X\times_{i_*i^*X} i_*\big(X(0, t)\big)\arrow[r] & X
\end{tikzcd}$$
where $i_*$ denote the right Kan extension along $i\colon \{0\}\times \cat{I}\hookrightarrow \Del\times \cat{I}$ and $i^*X\rt X(0, t)$ denotes the natural map from the presheaf $X(0, -)$ on $\cat{I}$ to the constant presheaf with value $X(0, t)$. This map exhibits the counit of the adjoint pair $(\lambda^*, \lambda_*)$.
\end{enumerate}
\end{lemma}
In fact, in this case $\Del_{\cat{I}}$ can be identified with a (non-full) subcategory of $\Del\wr \cat{I}$, given by the image of the functor $\delta\colon \Del\times \cat{I}\rt \Del\wr \cat{I}$ from \cite{ber07}.
\begin{proof}
For each $([m], i)\in \Del\times \cat{I}$, let $h_{[m], i}=\Map\big(-, ([m], i)\big)$ be the representable presheaf. For assertion (1), consider the following (homotopy) pushout of (space-valued) presheaves
$$\begin{tikzcd}
\coprod_{[0]\to [m]} h_{[0], i}\arrow[r]\arrow[d] & h_{[m], i}\arrow[d]\\
\coprod_{[0]\to [m]} h_{[0], t}\arrow[r] & F.
\end{tikzcd}$$
Here the top map is given pointwise by the inclusion of the subset of maps $([n], j)\rt ([m], i)$ such that the map $[n]\rt [m]$ factors over $[0]$. Using this, it follows immediately that the presheaf $F$ takes values in sets and sends maps in $\{0\}\times \cat{I}$ to isomorphisms. Consequently, $F\simeq \lambda^*G$ for some set-valued presheaf $G$ on $\Del_{\cat{I}}$. 

On the other hand, the left Kan extension functor $\lambda_!$ sends the left vertical map to an equivalence, so that we find $h_{\lambda([m], i)}\simeq \lambda_!h_{[m], i}\simeq \lambda_!F$. We therefore find that the representable presheaf $h_{\lambda([m], i)}$ is equivalent to the set-valued presheaf $\lambda_!\lambda^*G\simeq G$, so that $\Del_{\cat{I}}$ is a $(1, 1)$-category. In the special case where $[m]=0$, we see that $F$ is the terminal presheaf, so that the image of $([0], i)$ defines a terminal object in $\Del_{\cat{I}}$.

For (2), let us abbreviate $Y=X\times_{i_*i^*X} i_*\big(X(0, t)\big)$. Since $i$ is fully faithful, $i^*i_*\simeq \mm{id}$ and the restriction of $Y$ along $i\colon \{0\}\times \cat{I}\rt \Del\times\cat{I}$ coincides with the constant presheaf with value $X(0, t)$. This means that $Y$ is contained in the essential image of $\lambda^*$, or equivalently, that $Y$ is a $\lambda^*\lambda_*$-colocal object.

It therefore remains to verify that the projection $Y\rt X$ is a colocal equivalence, i.e.\ that it is sent to an equivalence by $\lambda_*$. Since $Y\rt X$ is the base change of the map $i_*(X(0, t))\rt i_*i^*X$, it suffices to to verify that $\lambda_*i_*(X(0, t))\rt \lambda_*i_*i^*X$ is an equivalence. By part (1), the functor $\lambda i$ decomposes as $\cat{I}\rt \ast\rt \Del_{\cat{I}}$, where the second functor is the inclusion of the terminal object. The result then follows from the fact that the natural map
$$\begin{tikzcd}
X(0, t)\simeq \lim_{\cat{I}^{\op}} X(0, t)\arrow[r] & \lim_{\cat{I}^{\op}} i^*X
\end{tikzcd}$$
is an equivalence since $t$ was the terminal object of $\cat{I}$.
\end{proof}
\begin{notation}\label{not:localization of delta-d}
We inductively define $\Del_d$ by $\Del_0=\ast$ and $\Del_{d}=\Del_{\Del_{d-1}}$, so that there is composition of localization functors of $(1, 1)$-categories
\begin{equation}\label{diag:sequence of localizations}\begin{tikzcd}
\delta_d\colon \Del^{\times d}\arrow[r] & \Del^{\times d-2}\times \Del_2\arrow[r] & \dots \arrow[r] & \Del\times \Del_{d-1}\arrow[r] & \Del_d.
\end{tikzcd}\end{equation}
By construction, restriction along $\delta_d$ determines a fully faithful inclusion
$$\begin{tikzcd}
\delta_d^*\colon \Fun(\Del_d^{\op}, \sS)\arrow[r, hook] & \Fun(\Del^{\times d, \op}, \sS)
\end{tikzcd}$$
whose essential image consists of $d$-fold simplicial spaces $X$ such that each $(d-k)$-fold simplicial space $X(\vec{n}_{k-1}, 0, -, \dots, -)$ is constant. We will systematically omit $\delta_d^*$ and simply identify presheaves on $\Del_d$ with $d$-fold simplicial spaces using this inclusion. Likewise, we can identify presheaves on $\Del_d$ with simplicial object in presheaves on $\Del_{d-1}$ which are constant in simplicial degree $0$, etcetera.
\end{notation}
In these terms, we have a pullback square of fully faithful functors
$$\begin{tikzcd}
\dCat\arrow[r, hook]\arrow[d, hook] & \Cat^{\otimes d}\arrow[d, hook] \\
\Fun(\Del_d^{\op}, \sS)\arrow[r, hook, "\delta_d^*"] & \Fun(\Del^{\times d, \op}, \sS).
\end{tikzcd}$$
Since the bottom and the right functors are right adjoint functors between presentable categories, the inclusion $\Cat_d\rt \Cat^{\otimes d}$ admits a left adjoint. It also admits a right adjoint, by the following observation:
\begin{lemma}\label{lem:dcat in d-fold cat}
Let $r_d\colon \Fun(\Del^{\times d, \op}, \sS)\rt \Fun(\Del_d, \sS)$ be the right adjoint to $\delta_d^*$, taking right Kan extension along $\delta_d\colon \Del^{\times d, \op}\rt \Del_d^{\op}$. Then $r_d$ maps the full subcategory $\Cat^{\otimes d}$ to $\Cat_d$.
\end{lemma}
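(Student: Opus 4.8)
The plan is to induct on $d$; for $d\le 1$ the functor $\delta_d$ is an isomorphism and $\Cat^{\otimes d}=\Cat_d$, so there is nothing to prove. For the inductive step I would factor the localization as $\delta_d=\psi\circ(\mm{id}_{\Del}\times\delta_{d-1})$, where $\psi\colon\Del\times\Del_{d-1}\rt\Del_d=\Del_{\Del_{d-1}}$ is the localization at the morphisms in $\{[0]\}\times\Del_{d-1}$; accordingly $r_d\simeq\psi_*\circ R$, where $R$ applies $r_{d-1}$ in each simplicial degree and $\psi_*$ is right adjoint to the fully faithful restriction $\psi^*$, whose essential image consists of the simplicial objects in $\Fun(\Del_{d-1}^{\op},\sS)$ that are constant in degree $0$. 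Viewing $X\in\Cat^{\otimes d}$ as a complete Segal object $X_\bullet\colon\Del^{\op}\rt\Cat^{\otimes d-1}$, the first step is short: by the inductive hypothesis $R(X)_n=r_{d-1}(X_n)$ lies in $\Cat_{d-1}$ for every $n$, and since $r_{d-1}$ is a right adjoint it preserves the finite limits occurring in the Segal and completeness conditions, so $R(X)_\bullet$ is again a complete Segal object, now valued in $\Cat_{d-1}$. This reduces the lemma to the assertion that $\psi_*$ carries a complete Segal object $Y_\bullet$ in $\Cat_{d-1}$ into $\Cat_d$.

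The main work is to make $\psi_*$ explicit on such a $Y_\bullet$. Here I would use the preceding lemma and the remark following it: $\Del_{d-1}$ has a terminal object $\ast_{d-1}$, and $\Del_d$ is the image of $\delta\colon\Del\times\Del_{d-1}\rt\Del\wr\Del_{d-1}$, so the morphisms of $\Del_d$ out of $\delta(m,\vec k)$ can be computed inside the wreath product. A direct combinatorial computation there --- using that a monotone $[m]\rt[n]$ labels an edge of $[n]$ precisely when it is non-constant across that edge, and that maps to $\ast_{d-1}$ are unique --- should show that all objects $\delta_d([0],-)$ are the terminal object of $\Del_d$, so that $\psi^*h^{\Del_d}_{\delta_d([0],-)}$ is the terminal presheaf, and that for $n\ge 1$ one obtains a pushout of presheaves on $\Del\times\Del_{d-1}$,
\[
\psi^*h^{\Del_d}_{\delta_d([n],\vec l)}\;\simeq\;\big(h^{\Del}_{[n]}\boxtimes h^{\Del_{d-1}}_{\vec l}\big)\;\amalg_{\,\mm{sk}_0(h^{\Del}_{[n]})\boxtimes h^{\Del_{d-1}}_{\vec l}}\;\big(\mm{sk}_0(h^{\Del}_{[n]})\boxtimes\ast\big),
\]
where $\mm{sk}_0(h^{\Del}_{[n]})=\coprod_{n+1}h^{\Del}_{[0]}$ is the inclusion of the $n{+}1$ vertices and $\boxtimes$ is the exterior product of presheaves. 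Applying $\Map(-,Y)$ and Yoneda then gives the shape of $\psi_* Y$: its presheaf of objects $(\psi_* Y)(\delta_d([0],-))$ is the space $\core_0 Y_0=Y_0(\ast_{d-1})$ (a constant presheaf on $\Del_{d-1}$), while $(\psi_* Y)(\delta_d([n],-))\simeq Y_n\times_{Y_0^{\times(n+1)}}(\mm{cst}_{\core_0 Y_0})^{\times(n+1)}$ in $\Fun(\Del_{d-1}^{\op},\sS)$, the map $Y_n\rt Y_0^{\times(n+1)}$ being the tuple of $n{+}1$ vertices.

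From this the remaining verifications are routine. Constancy in degree $0$ is automatic, since $\psi_* Y$ lies in the image of $\psi^*$. The Segal conditions in the first variable follow from those for $Y_\bullet$ by rewriting $Y_n\simeq Y_1\times_{Y_0}\cdots\times_{Y_0}Y_1$ inside the displayed pullback --- a formal rearrangement of iterated fibre products. Completeness in the first variable follows from completeness of $Y_\bullet$ together with the fact that the structure map $\core_0 Y_0=Y_0(\ast_{d-1})\rt Y_0(\vec l)$ is a monomorphism for each $\vec l$ (its composite with the maps induced by the vertices $\ast_{d-1}\rt\vec l$ is the diagonal of $Y_0(\ast_{d-1})$), which makes the completeness-comparison map of $(\psi_* Y)(-,\vec l)$ an equivalence. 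Finally, each $(\psi_* Y)(\delta_d([n],-))$ lies in $\Cat_{d-1}$ because it is a finite limit --- formed in $\Fun(\Del_{d-1}^{\op},\sS)$ --- of $Y_n,Y_0\in\Cat_{d-1}$ and the space $\core_0 Y_0$, and $\Cat_{d-1}$ is closed under limits. Together these give $\psi_* Y\in\Cat_d$ and complete the induction.

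I expect the main obstacle to be the middle paragraph: obtaining the workable description of $\psi_*$ on complete Segal objects, which hinges on the combinatorial identification of the morphisms of $\Del_d$ inside $\Del\wr\Del_{d-1}$ and the resulting pushout formula for the pulled-back representables $\psi^*h^{\Del_d}_{\delta_d(-)}$. Once that is in place, everything else is formal manipulation of (co)limits and the closure of $\Cat_{d-1}$ under limits.
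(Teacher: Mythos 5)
Your proposal follows the paper's proof essentially verbatim: compute $r_d$ in two stages (apply $r_{d-1}$ levelwise, then right Kan extend along $\Del\times\Del_{d-1}\rt\Del_d$), identify the second stage via the pushout description of $\psi^*h_{\delta_d([n],\vec l)}$ as the pullback $Y_n\times_{Y_0^{\times(n+1)}}(\mm{cst}_{\core_0 Y_0})^{\times(n+1)}$ --- precisely the paper's formula $X'(n)\times_{i_*(X'(0))}i_*(\mm{cst})$ --- and then verify the complete Segal conditions, which the paper leaves to the reader. One small caveat: the monomorphy of $\core_0 Y_0\rt Y_0(\vec l)$ that your completeness check uses does not follow from the retraction you cite in parentheses (a retract inclusion of spaces need not be $(-1)$-truncated); it follows instead from completeness of $Y_0$, which identifies each degeneracy as the inclusion of the path components of equivalences.
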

\begin{proof}
We proceed by induction, the case $d=1$ being clear. Let us now take $d>1$ and assume that $r_{d-1}$ sends $(d-1)$-fold categories to $d$-categories. Suppose that $X\colon \Del^{\times d, \op}\rt \sS$ is a $d$-fold category and factor the map $\delta_d$ as
$$\begin{tikzcd}[column sep=3pc]
\Del^{\times d}\arrow[r, "\mm{id}\times \delta_{d-1}"] & \Del\times\Del_{d-1}\arrow[r, "\lambda"] &  \Del_{\Del_{d-1}}=\Del_d.
\end{tikzcd}$$ 
The right Kan extension along the functor $\mm{id}\times \delta_{d-1}$ sends $X$ to the simplicial object $X'\colon \Del^{\op}\rt \Fun(\Del_{d-1}^{\op}, \sS)$ given by $X'(n)=r_{d-1}(X(n))$. Each $X'(n)$ is then a $(d-1)$-category by inductive hypothesis and the simplicial object $X'$ satisfies the complete Segal conditions since $r_{d-1}$ is a right adjoint.

Next, we compute $r_d(X)=\lambda_*(X')$ using Lemma \ref{lem:simplex localization}(2). Denoting by $i\colon \{0\}\times \Del_{d-1}\hookrightarrow \Del\times \Del_{d-1}$ the inclusion, we have that
$$
r_d(X)=X'\times_{i_*i^*X'} i_*(X'(\vec{0}_d)),
$$
at least after precomposing with the localization $\lambda\colon \Del\times\Del_{d-1}\rt \Del_d$. The value of this fiber product at $[m]\in \Del$ is given by the presheaf on $\Del_{d-1}$ of the form
$$
r_d(X)(m, -) = X'(m, -)\times_{X'(0, -)^{\times m+1}} X'\big(0, \vec{0}_{d-1}\big)^{\times m+1}.
$$
Here the map $X'(m, -)\rt X'(0, -)^{\times m+1}$ arises from the vertex inclusions $\{k\}\hookrightarrow [m]$ for all $k\in [m]$. This implies that $r_d(X)$ satisfies the Segal condition in the first simplicial direction (as the spine inclusions induce bijections on objects). Furthermore, each $r_d(X)(m)$ is a $(d-1)$-category, since all terms in the above pullback diagram are $(d-1)$-categories.

It remains to verify the completeness condition. By Remark \ref{rem:completeness}, it suffices to verify the completeness condition after setting the last variables equal to $\vec{0}_{d-1}$. But in that case the above formula shows that $r_d(X)(-, \vec{0}_{d-1})\simeq X'(-, \vec{0}_{d-1})$, for which we already verified that it was a complete Segal space.
\end{proof}
%
\begin{remark}
In fact, one sees that for any $d$-fold category $\dcat{C}$, the counit map $r_d(\dcat{C})\hooklongrightarrow \dcat{C}$ is a $d$-fold simplicial subspace, given in degree $(m_1, \dots, m_d)$ by the path components $x$ satisfying (inductively) the following two conditions:
\begin{enumerate}
\item $x$ is contained in $r_{d-1}(\dcat{C}(m_1))\hooklongrightarrow \dcat{C}(m_1)$.
\item For each $i=0, \dots, m_1$, the image of $x$ in $\dcat{C}(\{i\}, m_2, \dots, m_d)$ is contained in the path components $\dcat{C}(\{i\}, 0, \dots, 0)\subseteq \dcat{C}(\{i\}, m_2, \dots, m_d)$.
\end{enumerate}
\end{remark}
\begin{corollary}\label{cor:internal hom}
The category $\Cat_d$ is cartesian closed. For any two $d$-categories $\cat{C}$ and $\cat{D}$, the internal mapping object $\Fun_d(\cat{C}, \cat{D})$ in $\Cat_d$ is the simplicial subspace of the internal mapping object $\cat{D}^{\cat{C}}$ in $d$-fold simplicial spaces given by
$$
\Fun_d(\cat{C}, \cat{D})=r_d\big(\cat{D}^{\cat{C}}\big)\subseteq \cat{D}^{\cat{C}}.
$$
\end{corollary}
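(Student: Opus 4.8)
The plan is to deduce the statement formally from Proposition~\ref{prop:dcat cart closed}, Lemma~\ref{lem:dcat in d-fold cat} and the remark following it. First, I would observe that the fully faithful inclusion $\dCat \hookrightarrow \Cat^{\otimes d}$ preserves finite products: the constancy conditions of Definition~\ref{def:d-cat} are visibly stable under finite products of $d$-fold categories, since a finite product of constant $(d-k)$-fold simplicial spaces is again constant. Hence for $d$-categories the product computed in $\Cat^{\otimes d}$ already lies in $\dCat$ and agrees with the product there.

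Next I would identify the restriction of $r_d$ as the right adjoint to the inclusion $\dCat \hookrightarrow \Cat^{\otimes d}$. By the pullback square of fully faithful functors preceding Lemma~\ref{lem:dcat in d-fold cat}, a $d$-fold category lies in $\dCat$ precisely when its underlying $d$-fold simplicial space belongs to the essential image of $\delta_d^*$; since $\delta_d^* \dashv r_d$ on presheaf categories and $r_d$ carries $\Cat^{\otimes d}$ into $\dCat$ by Lemma~\ref{lem:dcat in d-fold cat}, for $\cat{A} \in \dCat$ and $X \in \Cat^{\otimes d}$ one obtains natural equivalences
\begin{align*}
\Map_{\Cat^{\otimes d}}(\cat{A}, X) &\simeq \Map_{\Fun(\Del^{\times d, \op}, \sS)}(\cat{A}, X) \\
&\simeq \Map_{\Fun(\Del_d^{\op}, \sS)}(\cat{A}, r_d X) \simeq \Map_{\dCat}(\cat{A}, r_d X),
\end{align*}
using fullness of $\Cat^{\otimes d} \hookrightarrow \Fun(\Del^{\times d, \op}, \sS)$, the adjunction $\delta_d^* \dashv r_d$, and fullness of $\dCat \hookrightarrow \Fun(\Del_d^{\op}, \sS)$.

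Now fix $d$-categories $\cat{C}$ and $\cat{D}$. For every $\cat{A} \in \dCat$ I would chain these observations with the cartesian closure of $\Cat^{\otimes d}$:
\begin{align*}
\Map_{\Cat_d}(\cat{A} \times \cat{C}, \cat{D}) &\simeq \Map_{\Cat^{\otimes d}}(\cat{A} \times \cat{C}, \cat{D}) \simeq \Map_{\Cat^{\otimes d}}(\cat{A}, \cat{D}^{\cat{C}}) \\
&\simeq \Map_{\Cat_d}\big(\cat{A}, r_d(\cat{D}^{\cat{C}})\big),
\end{align*}
where the first equivalence uses the first two paragraphs (fullness of $\dCat \hookrightarrow \Cat^{\otimes d}$ and preservation of products), the second uses Proposition~\ref{prop:dcat cart closed} so that $\cat{D}^{\cat{C}}$ is a $d$-fold category and is the internal hom in $\Cat^{\otimes d}$, and the third is the adjunction just established together with the fact that $r_d(\cat{D}^{\cat{C}}) \in \dCat$ by Lemma~\ref{lem:dcat in d-fold cat}. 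These equivalences are natural in $\cat{A}$ and in $\cat{D}$, so by the Yoneda lemma the functor $-\times\cat{C}$ admits a right adjoint $\cat{D} \mapsto r_d(\cat{D}^{\cat{C}})$; this shows $\Cat_d$ is cartesian closed with $\Fun_d(\cat{C}, \cat{D}) \simeq r_d(\cat{D}^{\cat{C}})$. Finally, since $\cat{D}^{\cat{C}}$ is itself a $d$-fold category, the remark following Lemma~\ref{lem:dcat in d-fold cat} identifies the counit $r_d(\cat{D}^{\cat{C}}) \to \cat{D}^{\cat{C}}$ with a $d$-fold simplicial subspace inclusion, yielding the claimed description $\Fun_d(\cat{C}, \cat{D}) \subseteq \cat{D}^{\cat{C}}$.

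There is no serious obstacle here; the argument is a formal diagram chase. The one point meriting a little attention is the identification in the second paragraph of the restriction of $r_d$ as a genuine right adjoint to the \emph{full} inclusion $\dCat \hookrightarrow \Cat^{\otimes d}$ (rather than merely a functor that happens to land in $\dCat$), which is exactly where Lemma~\ref{lem:dcat in d-fold cat} is used; everything else is bookkeeping with adjunctions already assembled in this section.
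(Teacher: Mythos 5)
Your argument is correct and is precisely the deduction the paper intends: the corollary is stated without proof as an immediate consequence of Proposition \ref{prop:dcat cart closed} (cartesian closedness of $d$-fold categories), Lemma \ref{lem:dcat in d-fold cat} (that $r_d$ restricts to a right adjoint of $\dCat\hookrightarrow \Cat^{\otimes d}$), and the remark identifying the counit as a simplicial subspace. Your write-up simply makes the adjunction bookkeeping explicit, and all of its steps check out.
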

\begin{example}\label{ex:interval}
Let $\cat{A}$ be a $(d-1)$-category. We define $[1]_{\cat{A}}$ to be the $d$-category associated to the $d$-fold category $[1]\boxtimes \cat{A}$ by the left adjoint of Lemma \ref{lem:dcat in d-fold cat}. In other words, maps of $d$-categories $[1]_{\cat{A}}\rt \cat{C}$ are equivalent to maps of $(d-1)$-categories $\cat{A}\rt \cat{C}(1)$. Explicitly, $[1]_{\cat{A}}$ is the $d$-fold simplicial space $[1]\boxtimes \cat{A}\amalg_{\{0, 1\}\boxtimes \cat{A}} \{0, 1\}$, i.e.\ it is a $d$-category with two objects $0, 1$ with trivial endomorphisms and $\Map_{[1]_{\cat{A}}}(0, 1)=\cat{A}$.

If $\cat{C}$ is a $d$-category, then the $d$-category 
$$
\Fun_d\big([1]_{\cat{A}}, \cat{C}\big)\simeq r_d\big(\cat{C}^{[1]\boxtimes \cat{A}}\big)
$$
can be described as follows. Its space of objects is given by the space of maps $\alpha\colon [1]_{\cat{A}}\rt \cat{C}$, i.e.\ of tuples $\alpha_0, \alpha_1\in \cat{C}$ and $\alpha\colon \cat{A}\rt \Map_{\cat{C}}(\alpha_0, \alpha_1)$. Unraveling the definitions, one sees that the $(d-1)$-category of maps between two such $\alpha, \beta\colon [1]_{\cat{A}}\rt \cat{C}$ is given by the pullback
$$\begin{tikzcd}
\Map_{\Fun_d([1]_{\cat{A}}, \cat{C})}(\alpha, \beta)\arrow[r]\arrow[d] & \Map_{\cat{C}}(\alpha_0, \beta_0)\arrow[d, "\beta_*"]\\
 \Map_{\cat{C}}(\alpha_1, \beta_1)\arrow[r, "\alpha^*"] &  \Fun_{d-1}\big(\cat{A}, \Map_{\cat{C}}(\alpha_0, \beta_1)\big).
\end{tikzcd}$$
\end{example}

\begin{remark}\label{rem:(d-1)-cat vs d-cat}
There is a fully faithful functor $\iota\colon \Cat_{d-1}\hooklongrightarrow \Cat_d$, sending a $(d-1)$-category $\cat{C}$ to the $d$-fold simplicial space $\iota(\cat{C})(m_1, \dots, m_d)=\cat{C}(m_1, \dots, m_{d-1})$ which is constant in the last variable. This functor has a left adjoint $|-|$ sending a $d$-fold category $\cat{C}$ to the $(d-1)$-fold category 
$$
|\cat{C}|(m_1, \dots, m_{d-1})=\colim_{m_d\in \Del^{\op}}\cat{C}(m_1, \dots, m_d)
$$
and a right adjoint $\core_{d-1}$ given by $\core_{d-1}(\cat{C})(m_1, \dots, m_{d-1})=\cat{C}(m_1, \dots, m_{d-1}, 0)$, which we will refer to as the \emph{$(d-1)$-core}.
\end{remark}

\subsection{Point-set models}\label{sec:point set}
To produce examples of $d$-categories, it will be more useful to think of $d$-categories as categories \emph{enriched} in $(d-1)$-categories. This is probably best done using the theory of enriched $\infty$-categories \cite{gep15, hin20}. We will take a more rigid, model-categorical approach instead (which has the advantage of not relying on Lurie's (un)straightening equivalence). Recall from Notation \ref{not:localization of delta-d} how we view presheaves on $\Del_d$ as $d$-fold simplicial spaces using the localizations $\Del^{\times d}\rt \Del\times \Del_{d-1}\rt \Del_d$.

\begin{definition}\label{def:categorical algebra}
A functor $X\colon \Del_d\rt \sS$ is said to be a \emph{$d$-categorical algebra} if it satisfies the following conditions:
\begin{enumerate}
\item Each $X(n)\colon \Del_{d-1}\rt \sS$ is a $(d-1)$-category (and $X(0)$ is a space).

\item $X$ satisfies the Segal condition, i.e.\ $X(n)\rt X(1)\times_{X(0)} \dots\times_{X(0)} X(1)$ is an equivalence.
\end{enumerate}
\end{definition}
\begin{remark}\label{rem:dwyer-kan equivalence}
The category of $d$-categories can be identified with the full subcategory of the category of $d$-categorical algebras, on those $d$-categorical algebras that are furthermore complete \cite{gep15, lur09g}. The inclusion of $d$-categories into $d$-categorical algebras admits a left adjoint $L_\mm{DK}$, which localizes at the class of \emph{Dwyer--Kan equivalences}, i.e.\ maps $Y\rt X$ with the following two properties (see loc.\ cit.):
\begin{enumerate}
\item \emph{fully faithfulness}: $Y(1)\rt X(1)\times_{X(0)^{\times 2}} Y(0)^{\times 2}$ is an equivalence of $(d-1)$-categories.

\item \emph{essential surjectivity}: the induced functor on homotopy categories $\ho(Y)\rt \ho(X)$ is essentially surjective.
\end{enumerate}
In particular, for each $d$-categorical algebra $X$, the map $X\rt L_\mm{DK}(X)$ is a Dwyer--Kan equivalence. In fact, the explicit construction shows that $X\rt L_\mm{DK}(X)$ is not just essentially surjective, but that $X(0)\rt L_\mm{DK}(X)(0)$ is already surjective on $\pi_0$ \cite[Proposition 1.2.27]{lur09g}.
\end{remark}
\begin{definition}
Recall that there is a localization functor $\sSet\rt \sS$ inverting the weak equivalences for the Kan--Quillen model structure. We will say that a functor $X\colon \Del_d^{\op}\rt \sSet$ is a \emph{$d$-category model} (resp.\ $d$-categorical algebra model) if the composed functor $\Del_d^{\op}\rt \sSet\rt \sS$ is a $d$-category (resp.\ $d$-categorical algebra).
\end{definition}
\begin{construction}\label{con:d-cat model structure}
The \emph{$d$-categorical model structure} on $\mm{sPSh}(\Del_d)=\Fun(\Del_d^{\op}, \sSet)$ is the left proper, combinatorial model structure characterized uniquely by the following properties \cite[Proposition 1.5.4]{lur09g}:
\begin{enumerate}
\item The cofibrations are the monomorphisms, and in particular every object is cofibrant.

\item An object $X$ is fibrant if and only if it is fibrant in the injective model structure and it is a $d$-category model.

\item A weak equivalence between $d$-category models is a levelwise weak equivalence of simplicial sets (in the Kan--Quillen model structure).
\end{enumerate}
We will write $\CompSegS_d$ for the full subcategory of $\mm{sPSh}(\Del_d)$ on the injectively fibrant $d$-category models. 
\end{construction}
\begin{remark}
The $d$-categorical model structure is a left Bousfield localization of the injective model structure on $\mm{sPSh}(\Del_d)$. Since the latter is a model for the $\infty$-category of functors $\Del_d^{\op}\rt \sS$ \cite[Proposition A.3.4.13]{lur09}, it follows immediately that $\dCat$ is equivalent to the localization of $\CompSegS_d$ at the $d$-categorical (equivalently: pointwise) weak equivalences.
\end{remark}
\begin{remark}\label{rem:internal hom model}
The injective model structure on $\mm{sPSh}(\Del_d)$ is a monoidal model structure for the cartesian product \cite[Proposition 2.9]{rez09}, which presents the cartesian monoidal structure on $\Fun(\Del_d^{\op}, \sS)$. Corollary \ref{cor:internal hom} shows that $\Fun(\Del_d^{\op}, \sS)\leftrightarrows \dCat$ is a monoidal localization for the cartesian product; this implies that the $d$-categorical model structure on $\mm{sPSh}(\Del_d)$ is monoidal as well. In particular, this implies that the category $\CompSegS_d$ is cartesian closed.
\end{remark}
\begin{construction}\label{con:catalg}
Let $\SegS_d\subseteq \mm{sPSh}(\Del_d)\subseteq \Fun\big(\Del^{\op}, \mm{sPSh}(\Del_{d-1})\big)$ denote the full subcategory spanned by the functors $X\colon \Del^{\op}\rt \CompSegS_{d-1}$ that are $d$-categorical algebra models, i.e.\ explicitly:
\begin{itemize}
\item Each $X(n)\colon \Del_{d-1}^{\op}\rt \sSet$ is an injectively fibrant $(d-1)$-category model and $X(0)$ is constant on a Kan complex.
\item $X$ satisfies the Segal conditions in the first simplicial variable.
\end{itemize}
We will say that a map $Y\rt X$ is a Dwyer--Kan equivalence if the corresponding map of categorical algebras is a Dwyer--Kan equivalence as in Remark \ref{rem:dwyer-kan equivalence}. Furthermore, we will say that a map $Y\rt X$ is an \emph{isofibration} if:
\begin{enumerate}
\item Each $Y(n)\rt X(n)$ is a fibration in $\CompSegS_{d-1}$, i.e.\ an injective fibration of simplicial presheaves on $\Del_{d-1}$.

\item The induced map on homotopy categories $\ho(Y)\rt \ho(X)$ is an isofibration.
\end{enumerate}
With these classes of maps, $\SegS_d$ becomes a category of fibrant objects as in Appendix \ref{sec:fib obj}. A map $Y\rt X$ is an \emph{acyclic fibration} if and only if each $Y(n)\rt X(n)$ is an injective fibration, $Y(0)\rt X(0)$ is surjective on $\pi_0$ and $Y(1)\rt X(1)\times_{X(0)^\times 2} Y(0)^{\times 2}$ is a weak equivalence. To factor a map $Y\rt X$ in $\SegS_{d}$ into a weak equivalence followed by a fibration, one can simply factor it into a weak equivalence followed by a fibration with respect to the $d$-categorical model structure on $\mm{sPSh}(\Del_d)$; such fibrations are isofibrations by the right lifting property against $\{0\}\rt H$, where $H$ is the simplicial set from Section \ref{sec:dcat}.
\end{construction}
\begin{construction}
Let us write $\cat{Cat}(\CompSegS_{d-1})$ for the $(1, 1)$-category of categories (strictly) enriched over the cartesian monoidal category $\CompSegS_{d-1}$. There is a fully faithful functor $\rN\colon \Cat(\CompSegS_{d-1})\hooklongrightarrow \SegS_d$ given by the \emph{nerve}: it sends an enriched category $\scat{C}$ to the simplicial diagram in $\CompSegS_{d-1}$ given by
$$
\rN(\scat{C})(n) = \coprod_{c_0, \dots, c_n} \Map_{\scat{C}}(c_{n-1}, c_n)\times \dots\times \Map_{\scat{C}}(c_0, c_1).
$$
In particular, $\rN(\scat{C})(0)$ is simply the set of objects of $\scat{C}$. We will say that a map of enriched categories is a Dwyer--Kan equivalence, resp.\ an isofibration, if its image under the nerve functor is such. This makes $\cat{Cat}(\CompSegS_{d-1})$ a category of fibrant objects.

More precisely, the category of categories enriched in $\mm{sPSh}(\Del_{d-1})$ carries a model structure \cite[Proposition A.3.2.4]{lur09}, since the $(d-1)$-categorical model structure on $\mm{sPSh}(\Del_{d-1})$ is excellent in the sense of \cite[Definition A.3.2.16]{lur09}. It follows from \cite[Theorem A.3.2.24]{lur09} that $\Cat(\CompSegS_{d-1})$ (with the above classes of Dwyer--Kan equivalences and isofibrations) is the category of fibrant objects associated to this model category.
\end{construction}
\begin{proposition}\label{prop:comparing models}
The three categories of fibrant objects considered  above fit into a diagram of fully faithful inclusions
$$\begin{tikzcd}
\cat{Cat}(\CompSegS_{d-1})\arrow[r, hook, "\rN"]  & \SegS_d \arrow[r, hookleftarrow] & \CompSegS_d
\end{tikzcd}$$
that preserve fibrations and weak equivalences. The induced functors on localizations are equivalences, so that each of these relative categories is a model for $\cat{Cat}_d$.
\end{proposition}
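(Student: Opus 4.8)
The plan is to treat the two displayed functors one at a time. In each case I would verify full faithfulness and the preservation of fibrations and weak equivalences directly from the definitions in Section~\ref{sec:point set}, and then identify the functor induced on localizations with (a model of) the identity of $\dCat$, exploiting that all three relative categories model $\dCat$. Throughout, ``localization'' means localization at the Dwyer--Kan equivalences, which on $\CompSegS_d$ agree with the levelwise weak equivalences.

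\textbf{The inclusion $\CompSegS_d\hooklongrightarrow\SegS_d$.} One first checks that a complete $d$-category model $X$ lies in $\SegS_d$: its value $X(n)$ at each $[n]\in\Del$ is an injectively fibrant $(d-1)$-category model, $X$ satisfies the Segal condition, and $X(0)$ is constant on a Kan complex. As both sides are full subcategories of $\mm{sPSh}(\Del_d)$, the inclusion is fully faithful. Since the $d$-categorical model structure is a left Bousfield localization of the injective one, a fibration (resp.\ weak equivalence) of $\CompSegS_d$ is an injective fibration (resp.\ levelwise weak equivalence) between fibrant objects; evaluating at each $[n]$ yields the first condition defining an isofibration (resp.\ a Dwyer--Kan equivalence) in $\SegS_d$, and completeness of $X$ and $Y$ supplies the condition on homotopy categories: an equivalence in $Y(1)$ with a chosen lift of its source lifts to an equivalence in $X(1)$, because completeness identifies the subspaces of equivalences in $X(1)$ and $Y(1)$ with $X(0)$ and $Y(0)$. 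Hence the inclusion is a map of categories of fibrant objects.

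For the induced functor on localizations, recall that $\CompSegS_d$, localized at its weak equivalences, is $\dCat$. The key input is that, by the explicit description of the $d$-categorical model structure \cite{rez01, lur09g}, a map of $d$-categorical algebra models is a weak equivalence of that model structure if and only if it is a Dwyer--Kan equivalence; in particular, between the (fibrant) objects of $\CompSegS_d$ the Dwyer--Kan equivalences coincide with the levelwise weak equivalences. Now factor each $X\in\SegS_d$ as a weak equivalence followed by a fibration for the $d$-categorical model structure, as in Construction~\ref{con:catalg}; this is functorial and produces a Dwyer--Kan equivalence $X\rt\widehat{X}$ with $\widehat{X}\in\CompSegS_d$. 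Combined with the previous sentence, this shows that the inclusion $\CompSegS_d\hooklongrightarrow\SegS_d$ induces an equivalence on localizations, so that $\SegS_d$ models $\dCat$.

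\textbf{The nerve $\rN\colon\cat{Cat}(\CompSegS_{d-1})\hooklongrightarrow\SegS_d$.} Full faithfulness is the standard fact that a map between nerves of enriched categories that is compatible with the simplicial structure (equivalently, with composition and units) is the nerve of an enriched functor. Preservation of fibrations and weak equivalences is automatic, since isofibrations and Dwyer--Kan equivalences of $\cat{Cat}(\CompSegS_{d-1})$ were \emph{defined} in Construction~\ref{con:catalg} as the maps whose nerve is such. It then remains to see $\rN$ induces an equivalence on localizations; since $\rN$ is already homotopically fully faithful, this reduces to the rigidification statement that every $d$-categorical algebra model is Dwyer--Kan equivalent to the nerve of a strictly $\CompSegS_{d-1}$-enriched category. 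I expect this to be the main obstacle: it is the statement that $\rN$ underlies a Quillen equivalence between the model category of $\CompSegS_{d-1}$-enriched categories and the Segal-type presentation of $\dCat$, which follows by induction on $d$ from \cite[Theorem A.3.2.24]{lur09} applied to the excellent model category $\mm{sPSh}(\Del_{d-1})$, together with the identification of $\dCat$ with the $\infty$-category of categories enriched in $(d-1)$-categories \cite{lur09g, gep15, hin20}. With all three relative categories thereby identified with $\dCat$, and all three functors shown to preserve weak equivalences, the induced functors on localizations are the evident equivalences, as claimed.
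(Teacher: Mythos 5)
Your treatment of the inclusion $\CompSegS_d\hookrightarrow\SegS_d$ is essentially the paper's: fibrant replacement in the $d$-categorical model structure supplies a homotopy inverse, and Dwyer--Kan equivalences between complete objects coincide with levelwise equivalences. That half is fine.

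The gap is in the nerve functor, and it is twofold. First, you pass from the strict ($1$-categorical) full faithfulness of $\rN$ to the claim that it is ``homotopically fully faithful''; these are different statements, and strict full faithfulness of a functor of relative categories does not by itself imply that the induced functor on localizations is fully faithful (derived mapping spaces need not be preserved). Second, the rigidification statement to which you reduce --- that $\rN$ underlies a Quillen equivalence between $\Cat(\mm{sPSh}(\Del_{d-1}))$ and a Segal-type presentation of $\dCat$ --- is attributed to \cite[Theorem A.3.2.24]{lur09}. That theorem only establishes and describes the model structure on categories enriched over an excellent model category; it contains no comparison with Segal objects. The comparison you actually need is the content of \cite[\S 2]{lur09g}: the nerve is a right Quillen equivalence onto Segal \emph{pre}categories (Theorem 2.2.16 there), and the functor $\mm{UnPre}$ is a left Quillen equivalence from precategories to complete Segal objects, both preserving all weak equivalences; the paper's proof chains exactly these results to identify $L_\mm{DK}\circ\rN$ with an equivalence-inducing composite, which settles both essential surjectivity and homotopical full faithfulness at once. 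Your remaining appeal to ``the identification of $\dCat$ with the $\infty$-category of categories enriched in $(d-1)$-categories'' via \cite{gep15, hin20} is essentially a restatement of the proposition being proved, so as written the key step is not established.
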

\begin{proof}
The right inclusion $\CompSegS_d\rt \SegS_{d}$ has a homotopy inverse $L_\mm{DK}$, given by taking a fibrant replacement of a $d$-categorical algebra model in the $d$-categorical model structure. The resulting objects are then related by a Dwyer--Kan equivalence.

Furthermore, the composite functor $L_\mm{DK}\circ \rN\colon \cat{Cat}(\CompSegS_{d-1})\rt \CompSegS_d$ can be identified with the composite
$$\begin{tikzcd}[column sep=3pc]
\cat{Cat}(\mm{sPSh}(\Del_{d-1}))\arrow[r, "\rN"] & \cat{Seg}_{\mm{sPSh}(\Del_{d-1})}\arrow[r, "\mm{UnPre}"] & \mm{sPSh}(\Del_{d})\arrow[r, "(-)^\mm{fib}"] & \CompSegS_d
\end{tikzcd}$$
where $\cat{Seg}_{\mm{sPSh}(\Del_{d-1})}$ is the category of Segal precategories from \cite[\S 2]{lur09g}.
Here the first functor $\rN$ is a right Quillen equivalence preserving weak equivalences \cite[Theorem 2.2.16, Remark 2.2.19]{lur09g}. The functor $\mm{UnPre}$ is a left Quillen equivalence preserving weak equivalences by \cite[Proposition 2.3.1, 2.3.9, Lemma 2.3.14]{lur09g}: this uses the fact that the subcategory inclusion $\mm{sPSh}(\Del_{d})\hooklongrightarrow \Fun\big(\Del^{\op}, \mm{sPSh}(\Del_{d-1})\big)$ is a Quillen equivalence between the $d$-categorical model structure and the `complete Segal model structure' of loc.\ cit., which detects cofibrations and weak equivalences. The last functor takes a fibrant replacement, so it follows that $L_\mm{DK}\circ \rN$ induces an equivalence on localizations.
\end{proof}
\begin{remark}
Let us write $\widehat{\rN}\colon \cat{Cat}(\CompSegS_{d-1})\rt \dCat$ for the composite functor $L_\mm{DK}\circ N$. By Remark \ref{rem:dwyer-kan equivalence}, $\widehat{\rN}(\scat{C})$ is a $d$-category which comes with a Dwyer--Kan equivalence $\rN(\scat{C})\rt \widehat{\rN}(\scat{C})$ from the nerve of $\scat{C}$; in particular, every object of $\widehat{\rN}(\scat{C})$ is homotopic to an object from $\scat{C}$ and for any two objects in $\scat{C}$, $\Map_{\scat{C}}(c_0, c_1)$ is a model for the $(d-1)$-category of maps between their images in $\widehat{\rN}(\scat{C})$.
\end{remark}

\begin{definition}\label{def:d+1cat of dcats}
The (large) \emph{$(d+1)$-category $\mb{Cat}_{d}$ of $d$-categories} is given by $\widehat{\rN}(\CompSegS_{d})$, where $\CompSegS_{d}$ is enriched over itself as in Remark \ref{rem:internal hom model}.
\end{definition}
\begin{remark}
Note that the $1$-category underlying the $(d+1)$-category $\mb{Cat}_{d}$ arises from the simplicially enriched category of fibrant-cofibrant objects in the $d$-categorical model structure; in particular, it is equivalent to $\dCat$ \cite[Lemma A.3.6.17]{lur09}.

It follows from \cite{hau15} that our definition of $\mb{Cat}_d$ (via point-set models) coincides with the definition of $\mb{Cat}_d$ as the $d$-category-enriched $\infty$-category associated to $\dCat$, viewed as enriched over itself using the cartesian closed structure.
\end{remark}

\subsection{Copresheaves}\label{sec:copresheaves}
If $\cat{C}$ is a $(d+1)$-category, we define the $(d+1)$-category of \emph{copresheaves} (of $d$-categories) of $\cat{C}$ to be the functor $(d+1)$-category $\Fun_{d+1}(\cat{C}, \mb{Cat}_{d})$, i.e.\ as the internal mapping object in the category of (large) $(d+1)$-categories. This $(d+1)$-category can be presented explicitly in terms of the enriched categories model for $(d+1)$-categories:
\begin{proposition}[{\cite[Proposition A.3.4.13]{lur09}}]\label{prop:mapping enriched categories}
Let $\scat{C}$ be a small $\CompSegS_{d}$-enriched category and denote by $\Fun_{\enr}\big(\scat{C}, \CompSegS_{d}\big)^\circ$ the $\CompSegS_{d}$-enriched category of enriched functors $\scat{C}\rt \mm{sPSh}(\Del_d)$ that are fibrant-cofibrant in the projective model structure (for the $d$-categorical model structure on simplicial presheaves). Then the map
$$\begin{tikzcd}
\mm{ev}\colon \widehat{\rN}\Big(\Fun_{\enr}\big(\scat{C}, \CompSegS_{d}\big)^\circ\Big)\times \widehat{\rN}\big(\scat{C}\big)\arrow[r] & \widehat{\rN}(\CompSegS_{d})
\end{tikzcd}$$
exhibits $\widehat{\rN}(\Fun_{\enr}\big(\scat{C}, \CompSegS_{d}\big)^\circ)$ as the internal mapping object in the category of (large) $(d+1)$-categories. 
\end{proposition}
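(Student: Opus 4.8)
The plan is to deduce this from Lurie's comparison between enriched model categories and enriched $\infty$-categories \cite[\S A.3.3--A.3.4]{lur09}, taking the base to be $\mm{sPSh}(\Del_d)$ equipped with the $d$-categorical model structure. First I would record that this model structure is \emph{excellent} in the sense of \cite{lur09}: it is combinatorial and left proper with the monomorphisms as cofibrations (Construction \ref{con:d-cat model structure}), it is a monoidal model structure for the cartesian product (Remark \ref{rem:internal hom model}), and the remaining invertibility axioms for weak equivalences follow from those of the Kan--Quillen model structure. By \cite[Proposition A.3.2.4]{lur09} the category $\cat{Cat}(\mm{sPSh}(\Del_d))$ of small enriched categories then carries a model structure whose fibrant objects form $\cat{Cat}(\CompSegS_d)$, and by Proposition \ref{prop:comparing models} (with $d$ replaced by $d+1$) the functor $\widehat{\rN}=L_\mm{DK}\circ\rN$ exhibits $\Cat_{d+1}$ as its localization at Dwyer--Kan equivalences; together with \cite{hau15} this identifies $\Cat_{d+1}$ with the $\infty$-category of categories enriched in $\mb{Cat}_d$. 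Under this translation the statement is precisely \cite[Proposition A.3.4.13]{lur09}.

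For a more self-contained argument I would proceed as follows. Since $\Cat_{d+1}$ is cartesian closed (Corollary \ref{cor:internal hom}, with $d$ replaced by $d+1$), the internal mapping object $\Fun_{d+1}(\widehat{\rN}\scat{C},\mb{Cat}_d)$ exists, and $\mm{ev}$ is adjoint to a comparison map
$$\Phi\colon\widehat{\rN}\big(\Fun_{\enr}(\scat{C},\CompSegS_d)^\circ\big)\rt\Fun_{d+1}\big(\widehat{\rN}\scat{C},\mb{Cat}_d\big).$$
It suffices to prove that $\Phi$ is an equivalence, and by the Yoneda lemma it is enough to show that for every $(d+1)$-category $\cat{A}$ composition with $\Phi$ — equivalently, via the tensor--hom adjunction, precomposition with $\mm{ev}$ — yields an equivalence of spaces
$$\Map_{\Cat_{d+1}}\big(\cat{A},\widehat{\rN}(\Fun_{\enr}(\scat{C},\CompSegS_d)^\circ)\big)\xrightarrow{\ \sim\ }\Map_{\Cat_{d+1}}\big(\cat{A}\times\widehat{\rN}\scat{C},\mb{Cat}_d\big).$$
Write $\cat{A}\simeq\widehat{\rN}\scat{A}$ for a small $\CompSegS_d$-enriched category $\scat{A}$, which is possible by Proposition \ref{prop:comparing models}. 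Since the nerve $\rN$ preserves finite products on the nose and Dwyer--Kan equivalences are stable under $(-)\times\scat{A}$ (so $L_\mm{DK}$, and hence $\widehat{\rN}$, preserves finite products), the target becomes $\Map_{\Cat_{d+1}}(\widehat{\rN}(\scat{A}\times\scat{C}),\mb{Cat}_d)$.

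It then remains to compute both sides as derived mapping spaces in the category of fibrant objects $\cat{Cat}(\CompSegS_d)$ and to match them. For a small $\CompSegS_d$-enriched category $\scat{D}$, the space $\Map_{\Cat_{d+1}}(\widehat{\rN}\scat{D},\mb{Cat}_d)$ is computed, after a cofibrant replacement $\scat{D}^{\mm{cof}}\rt\scat{D}$ in $\cat{Cat}(\CompSegS_d)$, by the underlying space of the $(d+1)$-category $\widehat{\rN}(\Fun_{\enr}(\scat{D}^{\mm{cof}},\CompSegS_d)^\circ)$ of projectively fibrant--cofibrant enriched functors \cite[\S A.3.3--A.3.4]{lur09}; for $\scat{D}=\scat{C}$ this recovers the $(d+1)$-category in the statement. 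Applying this with $\scat{D}=\scat{A}\times\scat{C}$ and using the enriched currying isomorphism $\Fun_{\enr}(\scat{A}\times\scat{C},\mm{sPSh}(\Del_d))\cong\Fun_{\enr}(\scat{A},\Fun_{\enr}(\scat{C},\mm{sPSh}(\Del_d)))$, which is compatible with the projective model structures, identifies $\Map_{\Cat_{d+1}}(\widehat{\rN}(\scat{A}\times\scat{C}),\mb{Cat}_d)$ with $\Map_{\Cat_{d+1}}(\widehat{\rN}\scat{A},\widehat{\rN}(\Fun_{\enr}(\scat{C},\CompSegS_d)^\circ))$; tracing through the identifications shows the resulting equivalence is exactly precomposition with $\mm{ev}$, which is what we wanted.

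The hard part is not this bookkeeping but the model-categorical input it rests on: that the projective model structure on enriched functors into $\mm{sPSh}(\Del_d)$ exists and is again excellent, and that the currying adjunction above is a Quillen-equivalence-level statement, so that fibrant--cofibrant functor objects genuinely match up after cofibrant replacement. This is precisely the content of \cite[\S A.3.3--A.3.4]{lur09}, and the only thing we must supply is the verification that the $d$-categorical model structure on $\mm{sPSh}(\Del_d)$ satisfies the excellence hypotheses, which was arranged in Section \ref{sec:point set}.
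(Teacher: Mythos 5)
Your first paragraph is exactly what the paper does: the proposition is stated with the header ``[Proposition A.3.4.13]'' and is proved in the paper purely by appeal to Lurie's result, with the excellence of the $d$-categorical model structure on $\mm{sPSh}(\Del_d)$ being the only input the paper supplies (it is asserted in Section \ref{sec:point set}, just as you verify it). The additional self-contained Yoneda/currying argument you sketch is a reasonable expansion of what that citation packages up, but it is not needed and is not what the paper records.
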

In particular, using the point-set model of $\CompSegS_d$-enriched categories, a copresheaf $\cat{C}\rt \hcat{Cat}_d$ can be modeled by an enriched functor $\scat{C}\rt \CompSegS_d$. In addition to this `internal' definition of copresheaves, one can also describe copresheaves externally in terms of Segal objects, following \cite{boa18} (see also \cite{ras21} for an in-depth discussion):
\begin{definition}\label{def:lfib}
Let $\cat{C}$ be a $(d+1)$-categorical algebra. A map of $(d+1)$-fold simplicial spaces $X\rt \cat{C}$ is said to be a \emph{Segal copresheaf} if the following two conditions hold:
\begin{enumerate}
\item The $d$-fold simplicial space $X(0)$ is a $d$-category.
\item For each $[n]\in \Del$, there is a pullback square of $d$-categories
$$\begin{tikzcd}
X(n)\arrow[r]\arrow[d] & X(\{0\})\arrow[d]\\
\cat{C}(n)\arrow[r] & \cat{C}(\{0\}).
\end{tikzcd}$$
\end{enumerate}
Let us write $\SegCoPSh_{d}\subseteq \Fun\big([1]\times \Del^{\times d+1, \op}, \sS\big)$ for the full subcategory spanned by the Segal copresheaves $X\rt \cat{C}$ where $\cat{C}$ is a $(d+1)$-category. The codomain projection $\pi\colon \SegCoPSh_{d}\rt \cat{Cat}_{d+1}$ is a cartesian fibration, since the pullback of a Segal copresheaf $X\rt \cat{C}$ along a map of $(d+1)$-categories $\cat{C}'\rt \cat{C}$ is again a Segal copresheaf. We will write $\SegCoPSh_{d}(\cat{C})$ for the fiber over $\cat{C}$.
\end{definition}
\begin{warning}\label{war:lfib}
When $d=0$, the domain of a Segal copresheaf over a $1$-category is itself a $1$-category \cite[Corollary 1.19]{boa18}; in this case, Segal copresheaves can be identified with left fibrations (or, in the terminology of Section \ref{sec:cocart fib}, \emph{$0$-cocartesian fibrations}). Note that for $d\geq 1$, the domain of a Segal copresheaf $X\rt \cat{C}$ over a $(d+1)$-category is typically \emph{not} itself a $(d+1)$-category: if the domain were a $(d+1)$-category, then all the fibers of the fibration would be spaces.
\end{warning}
In the remainder of this section, we will recall the work of Boavida \cite{boa18}, which shows that the category of Segal copresheaves $\SegCoPSh_{d}(\cat{C})$ coincides with the $1$-category underlying $\Fun_{d+1}(\cat{C}, \mb{Cat}_{d})$. We will make use of point-set models to compare to Proposition \ref{prop:mapping enriched categories}:
\begin{construction}\label{cons:enriched functor fibration}
\sloppy Consider the functor $\Cat(\CompSegS_{d})^{\op}\rt \cat{FibCat}$ sending each $\CompSegS_{d}$-enriched category $\scat{C}$ to the category $\Fun_{\enr}(\scat{C}, \CompSegS_{d})$ of enriched functors. Note that this is the full subcategory of fibrant objects in the projective model structure on \mbox{$\Fun\big(\scat{C}, \mm{sPSh}(\Del_{d})\big)$}.
We will write
$$\begin{tikzcd}
\EnrFun_d\arrow[r] & \cat{Cat}(\CompSegS_{d})
\end{tikzcd}$$
for its Grothendieck construction. Explicitly, an object $(\scat{C}, F)\in\EnrFun_d$ is a $\CompSegS_{d}$-enriched category $\scat{C}$ together with an enriched functor $F\colon \scat{C}\rt \CompSegS_{d}$. 
\end{construction}
\begin{lemma}\label{lem:enrfun is ho cart fib}
The map $\EnrFun_d\rt \cat{Cat}(\CompSegS_{d})$ is a homotopy cartesian fibration between categories of fibrant objects, in the sense of Definition \ref{def:homotopically cart fib}.
\end{lemma}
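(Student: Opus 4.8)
The plan is to reduce everything to one fact: for any enriched functor $f\colon \scat{C}\rt \scat{D}$ between $\CompSegS_d$-enriched categories, restriction $f^*\colon \Fun\big(\scat{D}, \mm{sPSh}(\Del_d)\big)\rt \Fun\big(\scat{C}, \mm{sPSh}(\Del_d)\big)$ is a right Quillen functor for the projective model structures (these exist, and in them fibrations and weak equivalences are detected objectwise, because the $d$-categorical model structure on $\mm{sPSh}(\Del_d)$ is excellent; see \cite[\S A.3.2--A.3.3]{lur09}). Consequently $f^*$ carries the category of fibrant objects $\Fun_{\enr}(\scat{D}, \CompSegS_d)$ into $\Fun_{\enr}(\scat{C}, \CompSegS_d)$, preserves (acyclic) fibrations, and preserves weak equivalences between fibrant objects; and since restriction is strictly functorial, $\scat{C}\mapsto \Fun_{\enr}(\scat{C}, \CompSegS_d)$ is an honest functor $\Cat(\CompSegS_d)^{\op}\rt \cat{FibCat}$ valued in exact functors, whose Grothendieck construction is $\EnrFun_d$.

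First I would spell out the componentwise structure of $\EnrFun_d$: an object is a pair $(\scat{C}, F)$, a morphism $(\scat{C}, F)\rt (\scat{D}, G)$ is a pair $(f, \phi)$ with $f\colon \scat{C}\rt \scat{D}$ and $\phi\colon F\rt f^*G$ in $\Fun_{\enr}(\scat{C}, \CompSegS_d)$, such a morphism is a fibration exactly when $f$ is an isofibration and $\phi$ is a projective fibration, and a weak equivalence exactly when $f$ is a Dwyer--Kan equivalence and $\phi$ is an objectwise weak equivalence. Pullbacks along fibrations exist and are computed componentwise (using that $f^*$, being a right adjoint, preserves them), path objects and factorizations are built by factoring $f$ and $\phi$ separately, and every object is fibrant; this makes $\EnrFun_d$ a category of fibrant objects, and $p$ (which simply forgets $\phi$) exact, since preservation of the terminal object, of fibrations, of acyclic fibrations, and of pullbacks along fibrations is immediate from this description.

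Next I would identify the homotopy $p$-cartesian morphisms as those $(f,\phi)$ for which $\phi\colon F\rt f^*G$ is an objectwise weak equivalence, and verify the structural properties this class must satisfy according to Definition \ref{def:homotopically cart fib}: closure under composition, the relevant two-out-of-three and closure under weak equivalence in the arrow category, and stability under the base change along fibrations supplied by exactness of $p$. Each of these reduces, via the componentwise picture, to a statement about objectwise weak equivalences of enriched functors together with the facts that $f^*$ preserves objectwise weak equivalences between fibrant objects and that $(gf)^* = f^*g^*$ on the nose. For the existence of lifts I would observe that, given any object $(\scat{D}, G)$ and any morphism $f\colon \scat{C}\rt \scat{D}$ in $\Cat(\CompSegS_d)$, the object $f^*G$ is fibrant and $(f, \mm{id})\colon (\scat{C}, f^*G)\rt (\scat{D}, G)$ is a homotopy $p$-cartesian lift of $f$; if Definition \ref{def:homotopically cart fib} requires that lifts of fibrations be chosen to be fibrations, I would instead factor $\mm{id}\colon f^*G\rt f^*G$ as an objectwise weak equivalence followed by a projective fibration, which remains homotopy $p$-cartesian.

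I expect the main obstacle to be organizational rather than substantive: matching the precise list of axioms in Definition \ref{def:homotopically cart fib} against the componentwise structure of $\EnrFun_d$, and in particular confirming that $\EnrFun_d$ genuinely is a category of fibrant objects with factorizations compatible with $p$. The only non-formal ingredient is that $f^*$ is right Quillen for the projective $d$-categorical model structure on enriched functor categories, which rests on the excellence of the $d$-categorical model structure on $\mm{sPSh}(\Del_d)$ \cite[\S A.3.2--A.3.3]{lur09}; everything else is bookkeeping.
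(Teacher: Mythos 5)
Your proposal gets the strict componentwise structure of $\EnrFun_d$ right (objects, morphisms, fibrations, weak equivalences, pullbacks along fibrations), but it verifies a guessed-at list of axioms rather than what Definition \ref{def:homotopically cart fib} actually requires, namely conditions \ref{it:homotopy invariance} and \ref{it:factorization} of Lemma \ref{lem:fibcat grothendieck}. Condition \ref{it:homotopy invariance} asks that for every Dwyer--Kan equivalence $f\colon \scat{C}\rt \scat{D}$ of enriched categories, restriction $f^*\colon \Fun_{\enr}(\scat{D}, \CompSegS_d)\rt \Fun_{\enr}(\scat{C}, \CompSegS_d)$ induces an equivalence on localizations. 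Knowing that $f^*$ is right Quillen for the projective model structures is nowhere near enough for this; the real input is that $(f_!, f^*)$ is a Quillen \emph{equivalence} whenever $f$ is a weak equivalence of enriched categories, which is \cite[Proposition A.3.3.8]{lur09}. Your proposal never invokes this, so your closing claim that the only non-formal ingredient is right Quillen-ness of $f^*$ is incorrect.

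The second gap is the factorization condition \ref{it:factorization}, which your phrase ``factorizations are built by factoring $f$ and $\phi$ separately'' glosses over. Given $(f,\phi)\colon (\scat{C}, F)\rt (\scat{D}, G)$ and a factorization $f = p\circ i$ through $\scat{C}'$, the intermediate object of the factorization must live in the fiber over $\scat{C}'$, whereas $\phi\colon F\rt f^*G$ lives over $\scat{C}$; one cannot simply factor $\phi$ inside $\Fun_{\enr}(\scat{C}, \CompSegS_d)$. The paper's route (Remark \ref{rem:factorization model cat case}) is to take $i$ to be a trivial cofibration of enriched categories, pass to the adjoint map $i_!F\rt p^*G$, factor that over $\scat{C}'$, and then check that the resulting map $F\rt i^*i_!F\rt i^*Z$ is a weak equivalence. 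This last step needs the additional fact that $i_!$ preserves \emph{all} weak equivalences (because it is also left Quillen for the injective model structures, by \cite[Remark A.2.9.27, Proposition A.3.3.9]{lur09}), combined with the Quillen equivalence from the previous paragraph. Without these two inputs the argument does not close; the closure properties of homotopy cartesian morphisms and the existence of cartesian lifts that you verify instead are consequences of the definition, not a substitute for it.
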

\begin{proof}
We verify conditions \ref{it:homotopy invariance} and \ref{it:factorization} of Lemma \ref{lem:fibcat grothendieck}. For \ref{it:homotopy invariance}, note that for each weak equivalence $f\colon \scat{C}\rt \scat{D}$, $f^*\colon \Fun_{\enr}(\scat{D}, \CompSegS_{d})\rt \Fun_{\enr}(\scat{C}, \CompSegS_{d})$ is induced by a Quillen equivalence \cite[Proposition A.3.3.8]{lur09}. 
For \ref{it:factorization}, we use Remark \ref{rem:factorization model cat case}: we can factor any $f\colon \scat{C}\rt \scat{D}$ into a trivial cofibration $i\colon \scat{C}\rt \scat{C}'$ (in the model structure on enriched categories) followed by a fibration $p\colon \scat{C}'\twoheadrightarrow \scat{D}$. Then $i_!\colon \Fun_{\enr}(\scat{C}, \mm{sPSh}(\Del_d))\leftrightarrows \Fun_{\enr}(\scat{C}', \mm{sPSh}(\Del_d))\colon i^*$ is a Quillen equivalence where $i_!$ preserves all weak equivalences. Indeed, this follows from the fact that $i_!$ is left Quillen for the injective model structure as well, as a consequence of \cite[Remark A.2.9.27, Proposition A.3.3.9]{lur09}.
\end{proof}
\begin{construction}
Let $X\colon \Del^{\op}\rt \CompSegS_{d}$ be an object of $\SegS_{d+1}$, i.e.\ $X$ satisfies the Segal conditions and $X(0)\in \mm{sPSh}(\Del_{d})$ is constant on a simplicial set. The category $\mm{sPSh}(\Del\times \Del_{d})_{/X}$ can be endowed with the \emph{projective covariant model structure}, in which:
\begin{itemize}
\item trivial fibrations are maps $Y\rt Y'$ (over $X$) such that each $Y(n)\rt Y'(n)$ is an injective trivial fibration in $\mm{sPSh}(\Del_{d})$.
\item fibrant objects are $Y\rt X$ such that each $Y(n)\rt X(n)$ is an injective fibration between injectively fibrant $(d-1)$-category models, such that \mbox{$Y(n)\rt X(n)\times_{X(\{0\})}^h Y(\{0\})$} is a weak equivalence.
\end{itemize}
The identity functor determines a Quillen equivalence to the \emph{injective covariant model structure}, defined similarly but as a left Bousfield localization of the injective model structure on $\mm{sPSh}(\Del\times\Del_d)_{/X}$.

We will write $\SegCoPShproj_d(X)$ for the category of fibrant objects in the projective covariant model structure. This determines a (pseudo-)functor $\SegS_{d+1}^{\op}\rt \cat{FibCat}$ sending $X\mapsto \SegCoPShproj_d(X)$, where $\SegS_{d+1}$ is a category of fibrant objects as in Construction \ref{con:catalg}. Let us write $\SegCoPShproj_d\rt \SegS_{d+1}$ for its Grothendieck construction.
\end{construction}
\begin{lemma}\label{lem:left fib proj}
The map $\SegCoPShproj_d\rt \SegS_{d+1}$ is a homotopy cartesian fibration between categories of fibrant objects, in the sense of Definition \ref{def:homotopically cart fib}.
\end{lemma}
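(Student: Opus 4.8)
The plan is to follow the proof of Lemma~\ref{lem:enrfun is ho cart fib} essentially verbatim, verifying conditions \ref{it:homotopy invariance} and \ref{it:factorization} of Lemma~\ref{lem:fibcat grothendieck} for the (pseudo-)functor $X\mapsto \SegCoPShproj_d(X)$. Here $\SegCoPShproj_d(X)$ is the category of fibrant objects of the projective covariant model structure on $\mm{sPSh}(\Del\times\Del_d)_{/X}$, and a map $f\colon X\rt Y$ in $\SegS_{d+1}$ acts by the base-change functor $f^*$, which is right Quillen for any $f$; in particular $f^*$ preserves the relevant fibrant objects, so the functor is well-defined into categories of fibrant objects, and each $\SegCoPShproj_d(X)$ carries the factorizations (weak equivalence followed by fibration) computed inside the ambient projective covariant model category.

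First I would check condition \ref{it:homotopy invariance}: for a Dwyer--Kan equivalence $f\colon X\rt Y$ in $\SegS_{d+1}$, the adjunction $f_!\dashv f^*$ is a Quillen equivalence between the projective covariant model structures (equivalently, between the injective covariant model structures, which are Quillen-equivalent via the identity). This is the Segal-object analogue of the invariance of the covariant model structure under categorical equivalences of the base. I would deduce it from \cite{boa18}: first replace $X$ and $Y$ by their Rezk completions, using that the covariant model structure over a $(d+1)$-categorical algebra model agrees (up to Quillen equivalence) with the one over its completion, and that $f$ becomes an equivalence of $(d+1)$-categories after completion; then invoke that base change along an equivalence of $(d+1)$-categories is a covariant Quillen equivalence.

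For condition \ref{it:factorization} I would argue as in Lemma~\ref{lem:enrfun is ho cart fib} via Remark~\ref{rem:factorization model cat case}: factor $f\colon X\rt Y$ into a trivial cofibration $i\colon X\rt X'$ followed by a fibration $p\colon X'\twoheadrightarrow Y$ for the model structure underlying $\SegS_{d+1}$ (Construction~\ref{con:catalg}), and observe that $i_!\colon \mm{sPSh}(\Del\times\Del_d)_{/X}\leftrightarrows \mm{sPSh}(\Del\times\Del_d)_{/X'}\colon i^*$ is a Quillen equivalence of projective covariant model structures (by the case just treated) in which $i_!$, being left Quillen for any $i$ and all objects being cofibrant, preserves all weak equivalences. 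The hard part will be condition \ref{it:homotopy invariance}: although the requisite invariance of covariant fibrations over Segal objects is morally contained in \cite{boa18}, spelling out the reduction to complete objects and the comparison of covariant model structures along a Rezk completion requires some care. Once that is in place, the rest is a routine transcription of the argument for $\EnrFun_d$.
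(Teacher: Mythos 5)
Your overall strategy matches the paper's: both verify conditions \ref{it:homotopy invariance} and \ref{it:factorization} of Lemma \ref{lem:fibcat grothendieck}, and your treatment of \ref{it:factorization} (factor $f$ as a trivial cofibration followed by a fibration and use that $f_!$ preserves all weak equivalences, being left Quillen for the injective covariant structure, whose weak equivalences agree with the projective covariant ones and in which every object is cofibrant) is exactly the paper's argument via Remark \ref{rem:factorization model cat case}. The one place you diverge is condition \ref{it:homotopy invariance}: the paper invokes \cite[Proposition 5.5]{boa18} directly, which gives invariance of the covariant model structures under arbitrary Dwyer--Kan equivalences of the base (the derived unit and counit are checked pointwise in $\Del_d$), so no reduction to complete objects is needed. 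Your proposed detour through Rezk completion is workable in principle but buys nothing here, and it carries a mild circularity: the claim that base change along the completion map $X\rt \widehat{X}$ is a covariant Quillen equivalence is itself an instance of invariance under Dwyer--Kan equivalences (the completion map being one), so that step would still require an independent argument rather than following from the complete case. Once you locate the correct statement in \cite{boa18}, the detour disappears and the proof becomes the routine transcription of Lemma \ref{lem:enrfun is ho cart fib} that you anticipated.
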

\begin{proof}
We verify conditions \ref{it:homotopy invariance} and \ref{it:factorization} of Lemma \ref{lem:fibcat grothendieck}: for each Dwyer--Kan equivalence $f\colon X\rt X'$, there is an adjoint pair $f_!\colon \mm{sPSh}(\Del\times \Del_{d})_{/X}\leftrightarrows \mm{sPSh}(\Del\times \Del_{d})_{/X'}\colon f^*$. This is both a Quillen pair for the projective covariant and injective covariant model structure; in particular $f_!$ preserves all weak equivalences. It now follows from \cite[Proposition 5.5]{boa18} that $(f_!, f^*)$ is a Quillen equivalence: indeed, the result from loc.\ cit.\ shows that the derived unit and counit are already equivalences pointwise in $\Del_{d}$. This immediately gives condition \ref{it:homotopy invariance}, while \ref{it:factorization} follows from Remark \ref{rem:factorization model cat case}.
\end{proof}
\begin{construction}
Finally, for each injectively fibrant $(d+1)$-category model $X$, let $\SegCoPShinj_d(X)\subseteq \mm{sPSh}(\Del\times \Del_{d})_{/X}$ denote the full subcategory of fibrant objects in the injective covariant model structure. This determines a (pseudo-)functor $\CompSegS_{d+1}^{\op}\rt \cat{FibCat}$, where $\CompSegS_{d+1}$ is a category of fibrant objects as in Construction \ref{con:d-cat model structure}. We will write $\SegCoPShinj_d\rt \CompSegS_{d+1}$ for its Grothendieck construction. 

The same proof as in Lemma \ref{lem:left fib proj} shows that $\SegCoPShinj_d\rt \CompSegS_{d+1}$ is a homotopy cartesian fibration between categories of fibrant objects. In fact, unraveling the definitions shows that $\SegCoPShinj_d$ can be identified with the category of fibrant objects for a model structure on $\Fun\big([1], \mm{sPSh}(\Del\times \Del_{d})\big)$: indeed, one can take a left Bousfield localization of the injective model structure whose fibrant objects are injective fibrations $Y\rt X$ that model Segal copresheaves in the sense of Definition \ref{def:lfib}. In particular, the localization of $\SegCoPShinj_d\rt \CompSegS_{d+1}$ is precisely the functor $\SegCoPSh_d\rt \cat{Cat}_{d+1}$ from Definition \ref{def:lfib}.
\end{construction}
\begin{theorem}[{Rectification of Segal copresheaves, cf.\ \cite[Theorem A]{boa18}}]\label{thm:rectification}
For each $d\geq 0$, there is a commuting diagram of categories of fibrant objects
$$\begin{tikzcd}
\EnrFun_d\arrow[r, "\rN"]\arrow[d] & \SegCoPShproj_d\arrow[d] \arrow[r, hookleftarrow] & \SegCoPShinj_d\arrow[d]\\
\Cat(\CompSegS_{d})\arrow[r, "\rN"] & \SegS_{d+1}\arrow[r, hookleftarrow] & \CompSegS_{d+1}
\end{tikzcd}$$
in which the vertical functors are homotopy cartesian fibrations and the horizontal functors induce equivalences upon localization.
\end{theorem}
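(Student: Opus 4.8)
The plan is to deduce the statement from the three preceding lemmas, Proposition~\ref{prop:comparing models}, and Boavida's rectification theorem, organized by the formalism of homotopy cartesian fibrations between categories of fibrant objects from Appendix~\ref{sec:fib obj}. That the three vertical functors are homotopy cartesian fibrations has already been established: for $\EnrFun_d\rt\Cat(\CompSegS_d)$ this is Lemma~\ref{lem:enrfun is ho cart fib}, for $\SegCoPShproj_d\rt\SegS_{d+1}$ it is Lemma~\ref{lem:left fib proj}, and for $\SegCoPShinj_d\rt\CompSegS_{d+1}$ it follows by the same argument, as noted before the theorem. The bottom row induces equivalences on localizations by Proposition~\ref{prop:comparing models}.

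Next I would check that the two squares commute. For the left square this comes down to the observation that the nerve of an enriched functor $F\colon\scat{C}\rt\CompSegS_d$ is naturally a covariant fibration over $\rN(\scat{C})$: concretely, the simplicial object
$$\rN(\scat{C}, F)(n)=\coprod_{c_0, \dots, c_n} F(c_0)\times\Map_{\scat{C}}(c_0, c_1)\times\dots\times\Map_{\scat{C}}(c_{n-1}, c_n)$$
maps to $\rN(\scat{C})(n)$ by an injective fibration (since $F$ takes injectively fibrant values), and it satisfies the Segal condition of the projective covariant model structure strictly, because $\rN(\scat{C})(\{0\})$ is a discrete set of objects, so the relevant pullback is already a homotopy pullback. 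For the right square one uses that evaluation at each $[n]\in\Del$ is right Quillen for the injective model structures, so that an injective covariant fibration is in particular a projective covariant fibration, compatibly with the inclusion $\CompSegS_{d+1}\hooklongrightarrow\SegS_{d+1}$.

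It then remains to prove that the two horizontal functors in the top row induce equivalences on localizations. Each is a morphism of homotopy cartesian fibrations lying over a morphism of the bottom row, which is a localization equivalence by Proposition~\ref{prop:comparing models}; by the criterion for such morphisms from Appendix~\ref{sec:fib obj} it therefore suffices to check that the induced maps on fibres are localization equivalences. For the right inclusion the fibre over $X\in\CompSegS_{d+1}$ is $\SegCoPShinj_d(X)\hooklongrightarrow\SegCoPShproj_d(X)$, which is a localization equivalence since the identity is a Quillen equivalence between the injective and projective covariant model structures on $\mm{sPSh}(\Del\times\Del_d)_{/X}$, so that both categories of fibrant objects present the same $\infty$-category. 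For the nerve the fibre over $\scat{C}$ is $\rN\colon\Fun_{\enr}(\scat{C}, \CompSegS_d)\rt\SegCoPShproj_d(\rN(\scat{C}))$, and that this is a localization equivalence is the content of Boavida's rectification theorem \cite[Theorem~1.39]{boa18}, applied to Segal objects in $\mm{sPSh}(\Del_d)$.

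The main obstacle will be precisely this last fibrewise comparison: one must match our point-set conventions (the enriched-functor model $\Fun_{\enr}(\scat{C},\CompSegS_d)$ on one side, the projective covariant model structure on the other) to the hypotheses of \cite[Theorem~1.39]{boa18}, checking in particular that the comparison is compatible in each simplicial degree of $\Del_d$ and transports correctly through the chain of Quillen equivalences of Proposition~\ref{prop:comparing models}.
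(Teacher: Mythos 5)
Your proposal is correct and follows essentially the same route as the paper: reduce via the homotopy-cartesian-fibration criterion of Appendix \ref{sec:fib obj} (together with the fact that a map of cartesian fibrations over an equivalence is an equivalence iff it is so fiberwise) to the fibers, where the right square is handled by the injective/projective covariant Quillen equivalence and the left square by Boavida's \cite[Theorem 1.39]{boa18}. The extra verification of commutativity of the squares is harmless but not needed, since the functors are constructed so that the squares commute by definition.
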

\begin{proof}
The bottom horizontal functors induce equivalences on localizations by Proposition \ref{prop:comparing models}. By Proposition \ref{prop:localizing homotopy cartesian fibrations} and \cite[Corollary 2.4.4.4]{lur09}, the top horizontal functors induce equivalences on localizations as soon as the induced functors between fibers have this property. For the right square, this is immediate: the maps on fibers are induced by the Quillen equivalence between the injective and projective covariant model structure.

For the left square, one has to verify that for any enriched category $\scat{C}$, the functor $\rN\colon \Fun_{\enr}(\scat{C}, \CompSegS_{d})\rt \SegCoPShproj_d(\rN(\scat{C}))$ induces an equivalence on localizations. This functor sends an enriched functor $F\colon\scat{C}\rt \CompSegS_{d}$ to the obvious left fibration
$$\begin{tikzcd}
\rN(F)(n) = \coprod\limits_{c_0, \dots, c_n} \Map_{\scat{C}}(c_{n-1}, c_n)\times \dots\times \Map_{\scat{C}}(c_0, c_1)\times F(c_0)\arrow[r] & \rN(\scat{C})(n).
\end{tikzcd}$$
Note that this functor arises from a right Quillen functor $\rN\colon \Fun_{\enr}(\scat{C}, \mm{sPSh}(\Del_d))\rt \mm{sPSh}(\Del\times \Del_d)_{/\rN(\scat{C})}$ between the projective model structure on enriched functors and the projective covariant model structure on $\mm{sPSh}(\Del\times \Del_d)_{/\rN(\scat{C})}$. It follows from \cite[Theorem A]{boa18} that the derived unit and counit maps are pointwise weak equivalences in $\Del_d$. Consequently, $\rN$ is a right Quillen equivalence (even when $\mm{sPSh}(\Del_d)$ is endowed with the injective, rather than $d$-categorical model structure) and the result follows.
\end{proof}
\begin{corollary}[{cf.\ \cite[Theorem A]{boa18}}]\label{cor:left fibs are copresheaves}
For each $(d+1)$-category $\cat{C}$, there is an equivalence of $1$-categories
$$\begin{tikzcd}
\core_1\Fun_{d+1}(\cat{C}, \mb{Cat}_{d})\arrow[r, "\sim"] & \SegCoPSh_d(\cat{C}).
\end{tikzcd}$$
\end{corollary}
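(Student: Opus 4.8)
The plan is to obtain the equivalence by localizing the diagram of Theorem~\ref{thm:rectification} and restricting to a single fibre. By Proposition~\ref{prop:comparing models} the given $(d+1)$-category $\cat{C}$ may be presented by a $\CompSegS_d$-enriched category $\scat{C}$, so that $\cat{C}\simeq\widehat{\rN}(\scat{C})$, and the corresponding objects of $\SegS_{d+1}$ and $\CompSegS_{d+1}$ are $\rN(\scat{C})$ and a fibrant replacement thereof.

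First I would localize the whole commuting diagram of Theorem~\ref{thm:rectification}. Since the vertical functors are homotopy cartesian fibrations between categories of fibrant objects, Proposition~\ref{prop:localizing homotopy cartesian fibrations} turns it into a commuting diagram of cartesian fibrations of $\infty$-categories in which the bottom row becomes $\Cat_{d+1}\simeq\Cat_{d+1}\simeq\Cat_{d+1}$ (by Proposition~\ref{prop:comparing models}), the top horizontal maps become equivalences over these identifications (by Theorem~\ref{thm:rectification}, since the original diagram commutes), and the fibre of each localized fibration over an object is the localization of the corresponding fibre upstairs. The right-hand vertical localizes, by the construction preceding Theorem~\ref{thm:rectification}, to the cartesian fibration $\SegCoPSh_d\to\Cat_{d+1}$ of Definition~\ref{def:lfib}, whose fibre over $\cat{C}$ is $\SegCoPSh_d(\cat{C})$.

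Second I would compute the fibre over $\cat{C}$ of the localization of $\EnrFun_d\to\Cat(\CompSegS_d)$. By the previous step this fibre is the localization of $\Fun_{\enr}(\scat{C},\CompSegS_d)$ at the pointwise weak equivalences. As the localization of a category of fibrant objects agrees with that of its subcategory of fibrant--cofibrant objects, this is the $1$-category underlying $\widehat{\rN}\big(\Fun_{\enr}(\scat{C},\CompSegS_d)^\circ\big)$, which by Proposition~\ref{prop:mapping enriched categories} is $\Fun_{d+1}(\cat{C},\mb{Cat}_d)$; hence the fibre is $\core_1\Fun_{d+1}(\cat{C},\mb{Cat}_d)$. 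Chaining the three fibrewise equivalences produced above then yields the desired equivalence $\SegCoPSh_d(\cat{C})\simeq\core_1\Fun_{d+1}(\cat{C},\mb{Cat}_d)$.

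I expect the main obstacle to be the bookkeeping needed to commute localization with passage to a fixed fibre and to check independence of the chosen point-set model of $\cat{C}$; this is precisely the role of Proposition~\ref{prop:localizing homotopy cartesian fibrations}, which relies on the homotopy-invariance clause in the definition of a homotopy cartesian fibration together with \cite[Corollary~2.4.4.4]{lur09}. The remaining ingredients --- identifying $L(\CompSegS_{d+1})$ and $L(\SegS_{d+1})$ with $\Cat_{d+1}$, and the insensitivity of the localization of $\Fun_{\enr}(\scat{C},\CompSegS_d)$ to cofibrancy --- are routine.
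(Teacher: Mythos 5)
Your proposal is correct and follows essentially the same route as the paper: both arguments combine Theorem~\ref{thm:rectification} with Proposition~\ref{prop:localizing homotopy cartesian fibrations} to identify $\SegCoPSh_d(\cat{C})$ with the localization of the fiber $\Fun_{\enr}(\scat{C},\CompSegS_d)$ of $\EnrFun_d\rt\Cat(\CompSegS_d)$, and then invoke Proposition~\ref{prop:mapping enriched categories} to recognize that localization as $\core_1\Fun_{d+1}(\cat{C},\mb{Cat}_d)$. Your additional remark about passing between fibrant and fibrant--cofibrant objects is a harmless refinement of the same argument.
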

\begin{proof}
It follows from Theorem \ref{thm:rectification} that the functor $\SegCoPSh_d\rt \cat{Cat}_{d+1}$ is modeled by $\pi\colon \EnrFun_d\rt \cat{Cat}(\CompSegS_{d})$. Proposition \ref{prop:localizing homotopy cartesian fibrations} now implies that for any enriched category $\scat{C}$, the localization of the fiber $\pi^{-1}(\scat{C})$ is a model for the fiber $\SegCoPSh_d(\cat{C})$ over the corresponding $(d+1)$-category. The fiber $\pi^{-1}(\scat{C})$ is precisely the category of enriched functors $\scat{C}\rt \CompSegS_{d}$, whose localization models $\core_1\Fun_{d+1}(\cat{C}, \mb{Cat}_{d})$ by Proposition \ref{prop:mapping enriched categories}.
\end{proof}
\begin{remark}\label{rem:straightening low degrees}
For $d=0$, this gives a version of (un)straightening for left fibrations over $1$-categories. For $d>0$, viewing $\cat{C}$ as a presheaf on $\Del\times\Del_d$, the above result essentially corresponds to (un)straightening for left fibrations, applied pointwise in $\Del_d$.
\end{remark}
\begin{corollary}\label{cor:straightening low degrees}
Let $\cat{C}$ be a $1$-category, viewed as a $2$-category. Then there is an equivalence
$$\begin{tikzcd}
\Fun(\cat{C}, \Cat_1)\arrow[r, "\sim"] & \SegCoPSh_1(\cat{C})\arrow[r, "\Psi^\perp", "\sim"{swap}] & \Cocart_1(\cat{C})
\end{tikzcd}$$
to the category of cocartesian fibrations over $\cat{C}$ and maps preserving cocartesian morphisms.
\end{corollary}
\begin{proof}
We can combine Corollary \ref{cor:left fibs are copresheaves} and Theorem \ref{thm:main theorem}, using that there are obvious equivalences $\SegCoPSh_1(\cat{C})\simeq \cat{Fib}^{\mm{left, cart}}(\cat{C}\boxtimes [0])$ and $\Cocart_1(\cat{C})\simeq \cat{Fib}^{\mm{cocart, right}}(\cat{C}\boxtimes[0])$.
\end{proof}
\begin{example}\label{ex:mapping space functor}
Let $\cat{C}$ be a $(d+1)$-category and let $\mm{Tw}(\cat{C})\colon \Del^{\op}\rt \Cat_{d}$ be the functor given by $\mm{Tw}(\cat{C})(n) = \cat{C}\big([n]^{\op}\star [n]\big)$. Restriction along the inclusions $[n]^{\op}\hooklongrightarrow [n]^{\op}\star [n]\hookleftarrow [n]$ induces a natural map of $(d+1)$-fold simplicial spaces
$$\begin{tikzcd}
\mm{Tw}(\cat{C})\arrow[r] & \cat{C}^{\op}\times \cat{C}.
\end{tikzcd}$$
The Segal conditions on $\cat{C}$ imply that this is a Segal copresheaf. By Corollary \ref{cor:left fibs are copresheaves}, this corresponds to a $(d+1)$-functor that we will denote by
$$\begin{tikzcd}
\Map_{\cat{C}}\colon \cat{C}^{\op}\times \cat{C}\rt \mb{Cat}_{d}.
\end{tikzcd}$$
This functor is easily described using point set models: if $\scat{C}$ is an enriched category, then the above map of simplicial spaces is modeled by the object
$\mm{Tw}(\rN(\scat{C}))\rt \rN(\scat{C})^{\op}\times \rN(\scat{C})$ in $\modcat{coPSh}^{\Seg, \mm{proj}}(\rN(\scat{C}))$. Unraveling the definitions, this left fibration is exactly the image under $N\colon \Fun_{\enr}(\scat{C}^{\op}\times \scat{C}, \CompSegS_{d})\rt \cat{coPSh}^{\Seg, \mm{proj}}(\rN(\scat{C}))$ of the (strict) mapping space functor $\Map_{\scat{C}}(-, -)\colon \scat{C}^{\op}\times \scat{C}\rt \CompSegS_{d}$.
\end{example}
\begin{example}\label{ex:representable}
Consider a map in $\SegCoPSh_d$ corresponding to a commuting square
\begin{equation}\label{eq:representing}\begin{tikzcd}
\ast\arrow[d]\arrow[r, "x"] & X\arrow[d]\\
\ast\arrow[r, "c"] & \cat{C}.
\end{tikzcd}\end{equation}
We will say that $x$ is a \emph{representation} of $X$ if it defines a cocartesian arrow in $\SegCoPSh_d$. Note that if we model $\cat{C}$ by an enriched category $\scat{C}$, then $c^*\colon \Fun_{\enr}(\scat{C}, \CompSegS_d)\rt \CompSegS_d$ admits a left adjoint $c_!$ preserving trivial fibrations. It follows from Proposition \ref{prop:localizing homotopy cartesian fibrations}(4) that the map $\ast\rt \Map_{\scat{C}}(c, -)$ in $\EnrFun_d$ defines such a cocartesian arrow in $\SegCoPSh$. Consequently, \eqref{eq:representing} defines a representation of $X$ if and only if it induces an equivalence $\Map_{\cat{C}}(c, -)\simeq X$. 
\end{example}
We will use the language of Example \ref{ex:representable} to partially address the functoriality of Corollary \ref{cor:left fibs are copresheaves}:
\begin{notation}
For a regular uncountable cardinal $\kappa$, let us write $\SegCoPSh_d(\kappa)$ for the category of Segal copresheaves $X\rt \cat{C}$ whose fibers are essentially $\kappa$-small. The codomain projection $\SegCoPSh_d(\kappa)\rt \Cat_{d+1}$ is a cartesian fibration. Taking the subcategory with only cartesian morphisms, we obtain a right fibration $\pi\colon \SegCoPSh_d(\kappa)^{\cart}\rt \Cat_{d+1}$.

Note that there is a canonical element $u\in \SegCoPSh_d(\kappa)^{\cart}$, corresponding under the equivalence of Corollary \ref{cor:left fibs are copresheaves} to the identity functor $\hcat{Cat}_d(\kappa)\rt \hcat{Cat}_d(\kappa)$ on the small $(d+1)$-category of $\kappa$-small $d$-categories.
\end{notation}
\begin{proposition}\label{prop:space representability}
The element $u\colon \ast\rt \SegCoPSh_d(\kappa)$ defines a representation of the right fibration $\SegCoPSh_d(\kappa)^{\cart}\rt \cat{Cat}_{d+1}$. In other words, for every $(d+1)$-category $\cat{C}$, there is a \emph{$1$-functorial} equivalence between the space of $(d+1)$-functors $\cat{C}\rt \hcat{Cat}_d(\kappa)$ and the space of Segal copresheaves $X\rt \cat{C}$ with $\kappa$-small fibers.
\end{proposition}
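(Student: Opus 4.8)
The plan is to read Proposition~\ref{prop:space representability} as a representability criterion for the right fibration $\pi\colon \SegCoPSh_{d}(\kappa)^{\cart}\rt \cat{Cat}_{d+1}$, and to verify it fibrewise. First note that the fibre of $\pi$ over a $(d+1)$-category $\cat{C}$ is the maximal subspace $\core_0\SegCoPSh_{d}(\kappa)(\cat{C})$ of the $1$-category of $\kappa$-small Segal copresheaves over $\cat{C}$, since a vertical morphism between Segal copresheaves over $\cat{C}$ is $\pi$-cartesian exactly when it is an equivalence. The element $u$, lying over $\hcat{Cat}_{d}(\kappa)$, induces a map of right fibrations over $\cat{Cat}_{d+1}$ from the forgetful right fibration $\cat{Cat}_{d+1}/\hcat{Cat}_{d}(\kappa)$ --- which represents the presheaf $\cat{C}\mapsto \Map_{\cat{Cat}_{d+1}}(\cat{C}, \hcat{Cat}_{d}(\kappa))$ with universal element $\mm{id}_{\hcat{Cat}_{d}(\kappa)}$ --- to $\SegCoPSh_{d}(\kappa)^{\cart}$, sending $f\colon \cat{C}\rt \hcat{Cat}_{d}(\kappa)$ to $f^{*}u$. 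By \cite[Corollary 2.4.4.4]{lur09}, applied to right fibrations, this map is an equivalence as soon as it is a fibrewise equivalence; and since it carries $\mm{id}_{\hcat{Cat}_{d}(\kappa)}$ to $u$ by the very definition of $u$, establishing this is precisely establishing that $u$ is a representation of $\pi$, i.e.\ the statement of the Proposition. So everything reduces to producing, naturally in $\cat{C}$, an equivalence of spaces between the space of $(d+1)$-functors $\cat{C}\rt \hcat{Cat}_{d}(\kappa)$ and $\core_0\SegCoPSh_{d}(\kappa)(\cat{C})$.

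Such an equivalence is already available pointwise from Corollary~\ref{cor:left fibs are copresheaves}: the equivalence of $1$-categories $\SegCoPSh_{d}(\cat{C})\simeq \core_1\Fun_{d+1}(\cat{C}, \hcat{Cat}_{d})$ carries the full subcategory of copresheaves with $\kappa$-small fibres onto the full subcategory $\core_1\Fun_{d+1}(\cat{C}, \hcat{Cat}_{d}(\kappa))$ of functors factoring through $\hcat{Cat}_{d}(\kappa)$ (the fibres of a Segal copresheaf over $\cat{C}$ being the pointwise values of the corresponding functor), hence restricts to an equivalence of maximal subspaces. The substantive point is that this family of equivalences is \emph{natural} in $\cat{C}$ --- compatible with restriction along $\cat{C}'\rt \cat{C}$ --- and is exactly the one induced by $u$. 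This is not new input but is contained in the proof of Corollary~\ref{cor:left fibs are copresheaves}, which factors through Theorem~\ref{thm:rectification}: the latter presents both the cartesian fibration $\SegCoPSh_{d}\rt \cat{Cat}_{d+1}$ and the cartesian fibration over $\cat{Cat}_{d+1}$ with fibres $\core_1\Fun_{d+1}(\cat{C}, \hcat{Cat}_{d})$ (the latter identification of fibres being Proposition~\ref{prop:mapping enriched categories}) as localizations of the single homotopy cartesian fibration $\EnrFun_{d}\rt \cat{Cat}(\CompSegS_{d})$. By the localization result for homotopy cartesian fibrations (Proposition~\ref{prop:localizing homotopy cartesian fibrations}) together with \cite[Corollary 2.4.4.4]{lur09}, the two localizations are equivalent \emph{over} $\cat{Cat}_{d+1}$; and since the base-change functors of $\EnrFun_{d}\rt \cat{Cat}(\CompSegS_{d})$ are restriction of enriched functors, which localize to the restriction functors on $\Fun_{d+1}(-, \hcat{Cat}_{d})$, the second localization straightens to $\cat{C}\mapsto \core_1\Fun_{d+1}(\cat{C}, \hcat{Cat}_{d})$ with its usual functoriality. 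Thus Corollary~\ref{cor:left fibs are copresheaves} upgrades to an equivalence $\SegCoPSh_{d}(-)\simeq \core_1\Fun_{d+1}(-, \hcat{Cat}_{d})$ of functors $\cat{Cat}_{d+1}^{\op}\rt \cat{Cat}_{1}$, i.e.\ of cartesian fibrations over $\cat{Cat}_{d+1}$.

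To conclude, I would restrict this equivalence of cartesian fibrations to the full subcategories with $\kappa$-small fibres --- matching $\SegCoPSh_{d}(\kappa)$ with the cartesian fibration classifying $\core_1\Fun_{d+1}(-, \hcat{Cat}_{d}(\kappa))$ --- and then to the wide subcategories of cartesian morphisms, obtaining an equivalence of right fibrations over $\cat{Cat}_{d+1}$ between $\SegCoPSh_{d}(\kappa)^{\cart}$ and $\cat{Cat}_{d+1}/\hcat{Cat}_{d}(\kappa)$. Since a map of right fibrations $\cat{Cat}_{d+1}/\hcat{Cat}_{d}(\kappa)\rt \SegCoPSh_{d}(\kappa)^{\cart}$ over $\cat{Cat}_{d+1}$ is determined up to contractible choice by the image of $\mm{id}_{\hcat{Cat}_{d}(\kappa)}$, and both the inverse of this equivalence and the map $f\mapsto f^{*}u$ of the first paragraph send $\mm{id}_{\hcat{Cat}_{d}(\kappa)}$ to $u$ (for the inverse, by construction of $u$ as the image of $\mm{id}_{\hcat{Cat}_{d}(\kappa)}$ under Corollary~\ref{cor:left fibs are copresheaves}), the two are homotopic; hence the map of the first paragraph is an equivalence, which is the assertion. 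I expect the only real obstacle to be the bookkeeping in the middle step: one has to be sure that Theorem~\ref{thm:rectification} genuinely produces equivalences of cartesian fibrations (not merely fibrewise equivalences) and that, on the enriched-functor side, base change computes restriction of $(d+1)$-functors, so that the localized fibration carries the expected functoriality. Granting this, the remainder is formal manipulation of right fibrations and the representability criterion.
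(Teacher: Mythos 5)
Your reduction in the first and last paragraphs is sound and matches the general shape of the argument: by the Yoneda lemma for right fibrations, $u$ is a representation if and only if the induced map $\cat{Cat}_{d+1}/\hcat{Cat}_d(\kappa)\rt \SegCoPSh_d(\kappa)^{\cart}$ is a fibrewise equivalence. The gap is in the middle step, and it is exactly where the content of the proposition lives. Theorem \ref{thm:rectification} identifies $\SegCoPSh_d\rt\cat{Cat}_{d+1}$ with the localization of $\EnrFun_d\rt \cat{Cat}(\CompSegS_d)$, and Proposition \ref{prop:mapping enriched categories} identifies each fibre of that localization with $\core_1\Fun_{d+1}(\cat{C},\hcat{Cat}_d)$; but nothing in the paper up to that point presents the unstraightening of $\cat{C}\mapsto \core_1\Fun_{d+1}(\cat{C},\hcat{Cat}_d)$ as a localization of $\EnrFun_d$, so there is no second cartesian fibration to which \cite[Corollary 2.4.4.4]{lur09} could be applied. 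Knowing that the fibres agree and that the base-change functors are ``restriction'' up to homotopy does not determine a cartesian fibration: one needs a coherent comparison map, and the only available way to produce one is via a universal element --- which is precisely the assertion being proved. So the sentence ``the second localization straightens to $\cat{C}\mapsto \core_1\Fun_{d+1}(\cat{C}, \hcat{Cat}_{d})$ with its usual functoriality'' begs the question.

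Concretely, what has to be shown is that $u$ is a terminal object of $\SegCoPSh_d(\kappa)^{\cart}$ (equivalently, by Lemma \ref{lem:representable left fib} applied to the opposite, that the relevant mapping spaces out of an arbitrary $(\scat{C},F)$ into $u$ are contractible), and this does not follow formally from the pointwise equivalence of Corollary \ref{cor:left fibs are copresheaves}. The paper's proof carries out this computation at the point-set level: it models $\SegCoPSh_d(\kappa)^{\cart}$ as the localization of the category $\EnrFun_d(\kappa)^{\mm{hocart}}$ of homotopy cartesian morphisms, replaces $u$ by the simplicial resolution $u_{\lambda,\bullet}=(\scat{U}_{\lambda,n},p_0)$ built from strings of trivial fibrations of $\lambda$-small objects, and identifies the resulting mapping space with the classifying space of a category of weak equivalences under $F$, which is contractible because it has an initial object (using the comparison of the subcategories of trivial fibrations and of weak equivalences from \cite{lan17}). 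Some argument of this kind --- an actual verification that the space of cartesian morphisms from a given Segal copresheaf to $u$ is contractible --- is the missing ingredient in your proposal.
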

\begin{remark}
For later purposes, we will mostly be interested in this result in the large setting, with $\kappa$ being the supremum of all small cardinals: in that case it provides a $1$-functorial equivalence between the spaces of $(d+1)$-functors $\cat{C}\rt \hcat{Cat}_d$ and Segal copresheaves $X\rt \cat{C}$ with small fibers over a large $(d+1)$-category $\cat{C}$.
\end{remark}

The proof uses the following well-known criterion for being a representation (cf.\ \cite[Lemma 1.31]{boa18}, \cite[Lemma 2.3.6]{kaz14}, \cite[Theorem 5.25]{ras21}):
\begin{lemma}\label{lem:representable left fib}
Let $\cat{C}$ be a $(d+1)$-category and consider an arrow in $\SegCoPSh_d$ of the form \eqref{eq:representing}. Then $x$ defines a representation of $p\colon X\rt \cat{C}$ if and only if it determines an initial object in each $1$-category $X(-, \vec{n}_{d})$.
\end{lemma}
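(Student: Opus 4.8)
The plan is to reduce the statement, via the point-set description of representations in Example~\ref{ex:representable}, to the standard classification of left fibrations of $1$-categories with an initial object, applied in the first simplicial direction with all other directions held fixed.

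First I would make the relevant copresheaf explicit. Combining Example~\ref{ex:mapping space functor} with restriction along $c\colon \ast\rt\cat{C}^{\op}$, one identifies the Segal copresheaf $\Map_{\cat{C}}(c,-)$ with the coslice copresheaf $\cat{C}_{c/}$, whose $n$-th $d$-fold simplicial space is $\{c\}\times_{\cat{C}(0)}\cat{C}\big([0]\star[n]\big)$ and whose distinguished point is $\mm{id}_c\in\cat{C}_{c/}(0)=\{c\}\times_{\cat{C}(0)}\cat{C}([1])$. By Example~\ref{ex:representable}, applied both to $X$ and to $\cat{C}_{c/}$ itself (using that $c^{*}$ has a left adjoint preserving trivial fibrations), there is a canonical comparison map of Segal copresheaves $f\colon\cat{C}_{c/}\simeq\Map_{\cat{C}}(c,-)\rt X$ over $\cat{C}$ carrying $\mm{id}_c$ to $x$, and $x$ is a representation of $X$ if and only if $f$ is an equivalence. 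Thus the lemma asserts precisely that $f$ is an equivalence exactly when $x$ is initial in each $1$-category $X(-,\vec{n}_{d})$.

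The main step is to analyse $f$ one multidegree $\vec{n}_{d}\in\Del^{\times d}$ at a time. Since $X\rt\cat{C}$ is a Segal copresheaf and $\cat{C}(\{0\},\vec{n}_{d})=\cat{C}(0)$, the Segal condition in the first direction gives $X(m,\vec{n}_{d})\simeq\cat{C}(m,\vec{n}_{d})\times_{\cat{C}(0)}X(0,\vec{n}_{d})$ with $X(0,\vec{n}_{d})$ a space; hence $X(-,\vec{n}_{d})\rt\cat{C}(-,\vec{n}_{d})$ is a left fibration of $1$-categories, and likewise $\cat{C}_{c/}(-,\vec{n}_{d})$ is the representable left fibration $\cat{C}(-,\vec{n}_{d})_{c/}$, whose fibre over $c'$ is $\Map_{\cat{C}(-,\vec{n}_{d})}(c,c')$ and in which $\mm{id}_c$ is initial. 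Unravelling the construction, $f(-,\vec{n}_{d})$ is the map of left fibrations over $\cat{C}(-,\vec{n}_{d})$ classified by the object $x$ lying over $c$. By \cite[Corollary~2.4.4.4]{lur09}, it is an equivalence as soon as it is a fibrewise equivalence; since it carries the initial object $\mm{id}_c$ to $x$, this holds precisely when the fibre functor of $X(-,\vec{n}_{d})$ is corepresented at $c$ by $x$, i.e.\ when $x$ is an initial object of $X(-,\vec{n}_{d})$. Running this over all $\vec{n}_{d}$ gives both implications: if $x$ is initial in each $X(-,\vec{n}_{d})$, then $f$ is a levelwise, hence an actual, equivalence of $(d+1)$-fold simplicial spaces, so $x$ is a representation; conversely, if $x$ is a representation then $X\simeq\cat{C}_{c/}$ compatibly with $x\mapsto\mm{id}_c$, and $x$ is therefore initial in each $X(-,\vec{n}_{d})\simeq\cat{C}(-,\vec{n}_{d})_{c/}$.

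I expect no substantive difficulty. The points requiring care are the bookkeeping identifications: that restricting the twisted-arrow copresheaf of Example~\ref{ex:mapping space functor} to $c$ produces the coslice with the asserted distinguished point, and that $f$ genuinely restricts, in each multidegree, to the left-fibration comparison map classified by $x$ (which uses the naturality of the representation $\mm{id}_c$ of $\cat{C}_{c/}$ across all simplicial directions). The only real obstacle is organisational: phrasing the "one direction at a time" reduction cleanly enough to legitimately invoke the $1$-categorical classification of left fibrations in the first variable.
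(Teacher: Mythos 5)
Your argument is correct, and it follows the same overall strategy as the paper -- reduce degree-wise in $\Del^{\times d}$ to a statement about left fibrations over $1$-categories -- but the two proofs execute the reduction and the base case differently. The paper stays entirely inside the formalism of cocartesian arrows in $\SegCoPSh_d$: it uses the fully faithful inclusion $\SegCoPSh_d\hooklongrightarrow \Fun(\Del_d, \SegCoPSh_0)$ (which has a left adjoint and preserves cartesian arrows, hence detects cocartesian ones) to reduce to $d=0$, and then handles the $d=0$ case by factoring $\ast\rt X\rt \cat{C}$ as a composite of two cocartesian arrows in $\SegCoPSh_0$ -- the first cocartesian because $x$ is initial (it represents the terminal copresheaf on $X$), the second cocartesian for formal reasons. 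You instead externalize: you build the global comparison map $f\colon \Map_{\cat{C}}(c,-)\simeq \cat{C}_{c/}\rt X$ from Example \ref{ex:representable}, identify its restriction in each multidegree with the map of left fibrations over $\cat{C}(-,\vec{n}_d)$ classified by $x$, and invoke the classical corepresentability criterion for left fibrations with an initial object together with the fibrewise-equivalence criterion \cite[Corollary 2.4.4.4]{lur09}. Your route requires the extra bookkeeping of identifying the corepresentable Segal copresheaf with the coslice in every multidegree (which does check out, via the Segal condition $\cat{C}([0]\star[n])\simeq \cat{C}(1)\times_{\cat{C}(0)}\cat{C}(n)$ and constancy of $\cat{C}$ in degree $0$), and it leans on the known $d=0$ classification that the paper instead reproves in two lines; what it buys is a more concrete picture of what the representing object looks like. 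Both are complete proofs.
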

\begin{proof}
Assuming $x$ defines a representation of $X\rt \cat{C}$, we can model $\cat{C}$ by an enriched category and identify $x$ with the canonical map $\mm{id}_c\colon \ast \rt N(\Map_{\scat{C}}(c, -))$. One easily verifies that $\mm{id}_c$ defines an initial object in each degree $\vec{n}_{d}$.

Conversely, suppose that $x$ defines an initial object in each $X(-, \vec{n}_d)$. Note that there is a fully faithful inclusion $\SegCoPSh_d\hooklongrightarrow \Fun(\Del_d, \SegCoPSh_0)$ which admits a left adjoint and preserves cartesian arrows. This implies that \eqref{eq:representing} defines a cocartesian arrow in $\SegCoPSh_d$ as soon as it does in $\Fun(\Del_d, \SegCoPSh_0)$. It therefore suffices to verify that each $\ast\rt X(-, \vec{n}_d)$ defines a cocartesian arrow in the category of Segal copresheaves over $1$-categories. This reduces everything to the case $d=0$, where we can consider the diagram
$$\begin{tikzcd}
\ast\arrow[r, "x"]\arrow[d] & X\arrow[d, "\mm{id}"]\arrow[r, "\mm{id}"] & X\arrow[d, "p"]\\
\ast\arrow[r, "x"] & X\arrow[r, "p"] & \cat{C}.
\end{tikzcd}$$
One easily verifies that the right square defines a cocartesian arrow in the category $\SegCoPSh_0$ of left fibrations. On the other hand, since $x\in X$ is an initial object, the left square represents the terminal copresheaf on $X$, i.e.\ it also defines a cocartesian arrow in $\SegCoPSh_0$. The composite is then a cocartesian arrow as well, as desired.
\end{proof}
\begin{proof}[Proof of Proposition \ref{prop:space representability}]
By (the opposite of) Lemma \ref{lem:representable left fib}, it suffices to show that $u$ defines a terminal object in $\SegCoPSh_d(\kappa)^{\cart}$. 

Let us write $\EnrFun_d(\kappa)\subseteq \EnrFun_d$ for the subcategory of tuples $(\scat{C}, F)$ consisting of a small $\CompSegS_d$-enriched category and an enriched functor $F\colon \scat{C}\rt \CompSegS_d$ whose values are homotopy equivalent to $\kappa$-small objects. Lemma \ref{lem:enrfun is ho cart fib} and Theorem \ref{thm:rectification} imply that $\EnrFun_d(\kappa)\rt \Cat(\CompSegS_d)$ is a homotopically cartesian fibration between categories of fibrant objects, whose localization models $\SegCoPSh_d(\kappa)\rt \Cat_{d+1}$. The object $u\in \SegCoPSh_d(\kappa)$ simply corresponds to the tuple $u=(\CompSegS_d(\kappa), \mm{id})$ consisting of the enriched category of $\kappa$-small $d$-category models, together with the identity (or more precisely, with the fully faithful inclusion $\CompSegS_d(\kappa)\hooklongrightarrow \CompSegS_d$).

Proposition \ref{prop:localizing homotopy cartesian fibrations} now implies that $\SegCoPSh_d(\kappa)^{\cart}$  arises as the localization of the $(1, 1)$-category $\cat{H}=\EnrFun_d(\kappa)^{\mm{hocart}}$ of such tuples $(\scat{C}, F)$, with homotopy cartesian maps between them (see Appendix \ref{sec:fib obj} for more details). We have to show that $\Map_{\cat{H}[W^{-1}]}(-, u)$ is the terminal (very large) presheaf. To this end, let $\psi\colon \cat{H}\rt \cat{H}[W^{-1}]$ denote the localization and consider the adjoint pair between categories of very large presheaves
$$\begin{tikzcd}
\psi_!\colon \Fun\big(\cat{H}, \sS^{\text{very large}}\big)\arrow[r, yshift=1ex]\arrow[r, hookleftarrow, yshift=-1ex] & \Fun\big(\cat{H}[W^{-1}], \sS^{\text{very large}}\big)\colon \psi^*.
\end{tikzcd}$$
Here $\psi^*$ is fully faithful with essential image consisting of presheaves sending $W$ to equivalences. We have to show that $\psi_!(u)\simeq \ast$ (where we omit the Yoneda embedding from the notation).

To see this, consider for each small cardinal $\lambda\geq \kappa$ the full $\CompSegS_{d}$-enriched subcategory $\scat{U}_{\lambda, n}\subseteq \Fun([n], \CompSegS_d)$ spanned by sequences of trivial fibrations $A_0\twoheadrightarrow \dots \twoheadrightarrow A_n$ where each $A_i$ is $\lambda$-small and homotopy equivalent to a $\kappa$-small object. Each $\scat{U}_{\lambda, n}$ comes with a canonical enriched functor $p_0\colon \scat{U}_{\lambda, n}\rt \CompSegS_{d}$ sending $(A_0\rt\dots\rt A_n)$ to $A_0$. Let us write $u_{\lambda, n}=(\scat{U}_{\lambda, n}, p_0)$ for the corresponding object in $\EnrFun_d(\kappa)$ and note that $u_{\kappa, 0}$ coincides with $(\CompSegS_{d}(\kappa), \mm{id})$. For each $\lambda\leq \lambda'$, the evident map $u_{\lambda, n}\rt u_{\lambda', n}$ is a weak equivalence and each $u_{\lambda, \bullet}\colon \Del^{\op}\rt \EnrFun_d(\kappa)$ is a simplicial object where each structure map is a weak equivalence; in particular, this defines a simplicial diagram in $\cat{H}$. Taking the (large) filtered colimit over all small cardinals $\lambda$, we then obtain a map of presheaves
\begin{equation}\label{diag:local replacement}\begin{tikzcd}
\Map_{\cat{H}}(-, u)\arrow[r] & \colim_{\lambda}\colim_{\Del^{\op}}\Map_{\cat{H}}(-,  u_{\lambda, \bullet})
\end{tikzcd}\end{equation}
which induces an equivalence upon applying $\psi_!$. It therefore remains to verify that the target is the terminal presheaf (so that its image under $\psi_!$ is terminal as well).

Now fix an object $(\scat{C}, F)\in \cat{H}$ and for each $\lambda$ consider the space 
$$
X_\lambda=\colim_{\Del^{\op}}\Map_{\cat{H}}\big((\scat{C}, F), u_{\lambda, \bullet}\big).
$$
It suffices to verify that this space is contractible for large $\lambda$; taking the (filtered) colimit over $\lambda$ then shows that the target of \eqref{diag:local replacement} is terminal.

Unraveling the definitions, $X_\lambda$ is the classifying space of the (1, 1)-category whose objects are weak equivalences $F\rt F'$ of functors $\scat{C}\rt \CompSegS_d$, where $F'$ takes $\lambda$-small values, and where a morphism is a (pointwise) trivial fibration $F'\twoheadrightarrow F''$ of functors under $F$. 
For large enough $\lambda$ (in particular, so that $F$ takes $\lambda$-small values), the under-category $\cat{M}_\lambda= F/\Fun_{\enr}\big(\scat{C}, \CompSegS_d(\lambda)\big)$ has the structure of a category of fibrant objects in the sense of Brown. The space $X_\lambda$ is then a path component in the classifying space of the wide subcategory $\mm{wf}\cat{M}_\lambda\subseteq \cat{M}_\lambda$ spanned by the trivial fibrations. By \cite[Lemma 14 and 15]{lan17}, the classifying space of $\mm{wf}\cat{M}_\lambda$ is equivalent to that of the wide subcategory $\mm{w}\cat{M}_\lambda\subseteq \cat{M}_\lambda$ spanned by the weak equivalences. Restricting to the correct path component, this implies that for large $\lambda$, $X_\lambda$ is equivalent to the classifying space of the (1, 1)-category whose objects are weak equivalences $F\rt F'$ of enriched functors with $\lambda$-small values, with weak equivalences $F'\rt F''$ as morphisms. This latter category has an initial object ($F$ itself) and is hence contractible.
\end{proof}
\begin{variant}\label{var:maps of presheaves}
Associated to the cartesian fibration $\SegCoPSh_d(\kappa)\rt \cat{Cat}_{d+1}$ is a cartesian fibration of $1$-categories
$$\begin{tikzcd}
\cat{X}=\Fun\big([1], \SegCoPSh_d(\kappa)\big)\times_{\Fun([1], \cat{Cat}_{d+1})} \cat{Cat}_{d+1}\arrow[r] & \cat{Cat}_{d+1}.
\end{tikzcd}$$
The same argument as Theorem \ref{thm:rectification} shows that $p\colon \cat{X}\rt \cat{Cat}_{d+1}$ can be obtained as the localization of a homotopy cartesian fibration $\modcat{X}\rt \Cat(\CompSegS_{d})$ between categories of fibrant objects; here $\modcat{X}$ is the Grothendieck construction of the functor sending a $\CompSegS_{d}$-enriched category $\hcat{C}$ to the full subcategory of $\Fun_{\enr}([1]\times \hcat{C}, \CompSegS_{d})$ on the homotopically $\kappa$-small objects. Repeating the argument from Proposition \ref{prop:space representability} then shows that the underlying right fibration $\cat{X}^{\cart}\rt \cat{Cat}_{d+1}$ is representable by $\Fun_{d+1}([1], \hcat{Cat}_{d}(\kappa))$.
\end{variant}

\subsection{More on representability}\label{sec:corep}
Let $\cat{C}$ be a $(d+1)$-category and consider the $(d+1)$-functor
\begin{equation}\label{diag:yoneda}\begin{tikzcd}
h\colon \cat{C}^{\op}\arrow[r] & \Fun_{d+1}\big(\cat{C}, \hcat{Cat}_{d}\big)
\end{tikzcd}\end{equation}
adjoint to the $(d+1)$-functor $\Map_{\cat{C}}\colon \cat{C}^{\op}\times \cat{C}\rt \hcat{Cat}_{d}$ defined in Example \ref{ex:mapping space functor}. If $\cat{C}$ arises from an enriched category $\scat{C}$, then Proposition \ref{prop:mapping enriched categories} provides an explicit model for $h$: it arises from the usual (strictly) enriched Yoneda embedding
$$\begin{tikzcd}
h\colon \scat{C}^{\op}\arrow[r] & \Fun\big(\scat{C}, \CompSegS_{d}\big)^{\circ}; & c\arrow[r, mapsto] & \Map_{\scat{C}}(c, -).
\end{tikzcd}$$
Since this is (strictly) fully faithful, the functor \eqref{diag:yoneda} is fully faithful as well. An $\infty$-categorical proof of this can also be found in \cite{hin20}.
\begin{definition}
Let $\cat{C}$ be a $(d+1)$-category. A functor $F\colon \cat{C}\rt \mb{Cat}_{d}$ is said to be \emph{representable} if it is contained in the essential image of \eqref{diag:yoneda}.
\end{definition}
\begin{remark}\label{rem:representable Segal copresheaf}
Let $F\colon \cat{C}\rt \mb{Cat}_d$ be a functor and let $p\colon X\rt \cat{C}$ be the corresponding Segal copresheaf. From the above presentation in terms of enriched categories, one sees that $F$ is representable if and only if $p$ admits a representation in the sense of Example \ref{ex:representable}. By Lemma \ref{lem:representable left fib}, this is equivalent to the $1$-category $X(-, \vec{0}_d)$ admitting an initial object and each $X(-, \vec{0}_d)\rt X(-, \vec{n}_{d})$ preserving the initial object. The representing object is the image of this initial object in $\cat{C}$.
\end{remark}
\begin{example}\label{ex:Yoneda lemma}
For each object $c$ of a $(d+1)$-category $\cat{C}$, consider the $(d+1)$-functor 
$$\begin{tikzcd}
\mm{ev}_c\colon \Fun_{d+1}(\cat{C}, \hcat{Cat}_d)\arrow[r] & \Fun_{d+1}(\ast, \hcat{Cat}_d)=\hcat{Cat}_d
\end{tikzcd}$$
given by restriction along $c\colon \ast\rt \cat{C}$. Then the functor $\mm{ev}_c$ is corepresentable by an object of $\Fun_{d+1}(\cat{C}, \hcat{Cat}_d)$, which in turn is given by the corepresentable functor $h_c$. Indeed, this is a special case of the enriched Yoneda lemma, proven in the $\infty$-categorical setting in \cite[6.2.7]{hin20}. In terms of strict models, one can also deduce this from the strict version of the enriched Yoneda lemma: choosing a strictly enriched model $\scat{C}$ for $\cat{C}$, one can model $\mm{ev}_c$ by the strictly enriched functor $\mm{ev}_c\colon \Fun_\mm{enr}(\scat{C}, \scat{Cat}_d)^{\circ}\rt \CompSegS_d$ evaluating a strictly enriched functor at the object $c\in \scat{C}$. By the classical enriched Yoneda lemma, this functor is corepresented by $h_c=\Map_{\scat{C}}(c, -)$.
\end{example}
Recall that $\mb{Cat}_{d+1}$ has a homotopy $(2, 2)$-category, obtained by taking the homotopy $(1, 1)$-categories of all mapping $(d+1)$-categories $\Fun_{d+1}(\cat{C}, \cat{D})$. An \emph{adjunction} between two $(d+1)$-categories is simply an adjunction in this homotopy $(2, 2)$-category \cite{rie16}. We then have the usual criterion for a $(d+1)$-functor $g\colon \cat{C}\rt \cat{D}$ admitting a left adjoint:
\begin{proposition}\label{prop:adj via rep}
Let $g\colon \cat{C}\rt \cat{D}$ be a map of $(d+1)$-categories. Then $g$ admits a left adjoint if and only if for each object $d\in\cat{D}$, the functor $\Map_{\cat{D}}(d, g(-))\colon \cat{C}\rt \mb{Cat}_{d}$ is representable.
\end{proposition}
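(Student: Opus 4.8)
The plan is to run the Yoneda argument, using the two facts supplied above: the embedding $h_{\cat{C}}\colon\cat{C}^{\op}\hooklongrightarrow\Fun_{d+1}(\cat{C},\hcat{Cat}_d)$ of \eqref{diag:yoneda} is fully faithful, and $(d+1)$-categories are cartesian closed (Corollary \ref{cor:internal hom}, one categorical dimension up). The easy implication is ``only if'': if $f\colon\cat{D}\rt\cat{C}$ is a left adjoint of $g$, then for each $d\in\cat{D}$ the adjunction yields a natural equivalence of $d$-categories $\Map_{\cat{C}}(f(d),c)\simeq\Map_{\cat{D}}(d,g(c))$, natural in $c\in\cat{C}$ (one uses here that a $2$-categorical adjunction between $(d+1)$-categories induces genuine equivalences on mapping $d$-categories; see \cite{rie16}). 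Hence $\Map_{\cat{D}}(d,g(-))\simeq h_{\cat{C}}(f(d))$ is representable.

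For the converse, suppose each $\Map_{\cat{D}}(d,g(-))$ is representable. Observe that $d\mapsto\Map_{\cat{D}}(d,g(-))$ is the value on objects of the $(d+1)$-functor $G:=g^{*}\circ h_{\cat{D}}\colon\cat{D}^{\op}\rt\Fun_{d+1}(\cat{C},\hcat{Cat}_d)$, where $h_{\cat{D}}$ is the Yoneda embedding of $\cat{D}$ and $g^{*}$ is restriction along $g$. By full faithfulness, $h_{\cat{C}}$ corestricts to an equivalence onto the full subcategory $\cat{E}\subseteq\Fun_{d+1}(\cat{C},\hcat{Cat}_d)$ spanned by the representable functors, and by hypothesis $G$ factors through $\cat{E}$. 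Composing $G$ with a quasi-inverse of $h_{\cat{C}}\colon\cat{C}^{\op}\xrightarrow{\,\sim\,}\cat{E}$ produces a $(d+1)$-functor $f\colon\cat{D}\rt\cat{C}$ together with an equivalence $h_{\cat{C}}\circ f^{\op}\simeq G$ of functors $\cat{D}^{\op}\rt\Fun_{d+1}(\cat{C},\hcat{Cat}_d)$; uncurrying along the exponential law turns this into a natural equivalence $E\colon\Map_{\cat{C}}(f(-),-)\xrightarrow{\,\sim\,}\Map_{\cat{D}}(-,g(-))$ in $\Fun_{d+1}(\cat{D}^{\op}\times\cat{C},\hcat{Cat}_d)$.

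Finally I would upgrade $E$ to an adjunction in the homotopy $(2,2)$-category. Restricting $E$ along the functor $\cat{D}^{\op}\times\cat{D}\rt\cat{D}^{\op}\times\cat{C}$ that is the identity on the first factor and $f$ on the second, and precomposing with the map $\Map_{\cat{D}}(d',d)\rt\Map_{\cat{C}}(f(d'),f(d))$ induced by $f$, one obtains a natural transformation $\Map_{\cat{D}}(-,-)\Rightarrow\Map_{\cat{D}}(-,gf(-))$ of functors $\cat{D}^{\op}\times\cat{D}\rt\hcat{Cat}_d$; by full faithfulness of the (covariant) Yoneda embedding of $\cat{D}$, this corresponds to a natural transformation $\eta\colon\mm{id}_{\cat{D}}\Rightarrow g\circ f$. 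By construction $\phi\mapsto g(\phi)\circ\eta_{d'}$ recovers $E$ on each $\Map_{\cat{C}}(f(d'),c)\rt\Map_{\cat{D}}(d',g(c))$, hence is an equivalence of $d$-categories, so by the hom-wise criterion for adjunctions (again \cite{rie16}) $\eta$ exhibits $f$ as a left adjoint of $g$. I expect the one genuine obstacle to be exactly this passage from the merely \emph{pointwise} representability hypothesis to an honest functor $f$ equipped with a coherent unit: that is precisely what full faithfulness of the Yoneda embedding \eqref{diag:yoneda} is there to provide, and once $f$ has been built the adjunction data is bookkeeping. The remaining steps — the reduction to full faithfulness of $h_{\cat{C}}$, the identification of the relevant functor as $g^{*}\circ h_{\cat{D}}$, and the currying — are formal.
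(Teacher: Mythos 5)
Your ``only if'' direction and the first half of your converse are fine, and they follow a genuinely different route from the paper: you work entirely with the $(d+1)$-categorical Yoneda embedding \eqref{diag:yoneda}, whereas the paper strictifies. Concretely, for the converse the paper models $g^{\op}$ by a cofibration of $\CompSegS_{d-1}$-enriched categories, embeds everything via the \emph{strict} enriched Yoneda embedding into the full subcategories $\Fun_{\enr}(-,\mm{sPSh}(\Del_{d-1}))^{\mm{rep}}$ of homotopy-representable enriched functors, and observes that the strict adjunction $g_!\dashv g^*$ restricts there; since the vertical Yoneda functors are Dwyer--Kan equivalences and the localization to $\ho_{(2,2)}(\hcat{Cat}_d)$ is a $2$-functor, a strict enriched adjunction descends to the required adjunction. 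Your construction of $f$ (by factoring $g^*\circ h_{\cat{D}}$ through the representables) and of the unit $\eta$ (by Yoneda from the equivalence $E$) is a legitimate alternative to the first part of this.

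The gap is in your last step. Having produced $f$, $\eta$, and the pointwise equivalences $g(-)\circ\eta_{d'}\colon \Map_{\cat{C}}(f(d'),c)\rt\Map_{\cat{D}}(d',g(c))$, you still must exhibit an adjunction in the homotopy $(2,2)$-category, i.e.\ produce a counit $\epsilon$ and verify the triangle identities as identities of \emph{$2$-cells}, that is, of homotopy classes of natural transformations in $\ho_{(1,1)}\Fun_{d+1}(\cat{C},\cat{D})$. The natural verification of, say, $(g\epsilon)(\eta g)=\mm{id}_g$ is pointwise at each $c\in\cat{C}$, but a natural transformation of $(d+1)$-functors is not determined by its components, so the pointwise identity does not imply the identity of $2$-cells. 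This is exactly the ``unit criterion'' for adjunctions of $(d+1)$-categories, and it is essentially the content of the proposition itself; the criterion you cite from \cite{rie16} is formulated in terms of the hom-\emph{categories} of the homotopy $2$-category (quantified over all test objects), not in terms of the internal mapping $d$-categories at global elements $d,c$, and bridging the two is not formal. So ``the adjunction data is bookkeeping'' is precisely where the remaining work lies; the paper's strictification via enriched functor categories is what supplies it (the strict adjunction satisfies the triangle identities on the nose, so nothing needs to be checked after localizing).
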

For example, the map $\cat{C}\rt \ast$ admits a left adjoint if and only if $\cat{C}$ admits an initial object, i.e.\ an object $\emptyset$ such that $\Map_{\cat{C}}(\emptyset, c)\simeq \ast$ for all $c\in \cat{C}$.
\begin{proof}
As a preliminary observation, let us recall that any $2$-functor preserves adjunctions. We are going to apply this in two situations:
\begin{enumerate}[label=(\alph*)]
\item\label{it:adj to psh} 
to the $2$-functor $\ho_{(2, 2)}(\mb{Cat}_d)^{\op}\rt \Cat_{(1, 1)}$ sending $\cat{C}\longmapsto \ho_{(1, 1)}\Fun_d(\cat{C}, \mb{Cat}_{d-1})$, and (2-)morphisms to the corresponding restriction functors and natural transformations.

\item\label{it:adj from strict}
to the localization functor $\cat{Cat}\big(\mm{sPSh}(\Del_{d-1})\big)\rt \ho_{(2, 2)}(\mb{Cat}_d)$ sending each (strictly) \hbox{$\mm{sPSh}(\Del_{d-1})$-enriched category} to the corresponding $d$-category (viewed as an object in the homotopy $(2, 2)$-category of $d$-categories). Note that this is indeed a $2$-functor, since it is naturally tensored over $\Cat_{(1, 1)}$ via the cartesian product.
\end{enumerate}
Let us now start by assuming that $g$ admits a left adjoint $f$. Applying the 2-functor \ref{it:adj to psh} to this adjoint pair, we obtain an adjoint pair
$$\begin{tikzcd}
g^*\colon \ho_{(1, 1)}\Fun_d\big(\cat{D}, \mb{Cat}_{d-1}\big)\arrow[r, yshift=1ex] & \ho_{(1, 1)}\Fun_d\big(\cat{C}, \mb{Cat}_{d-1}\big)\colon f^*.\arrow[l, yshift=-1ex]
\end{tikzcd}$$
We can model $f\colon \cat{D}\rt \cat{C}$ by a functor of $\CompSegS_{d-1}$-enriched categories $f\colon \scat{D}\rt \scat{C}$. The functor $f^*$ then arises from the right Quillen functor 
$$
f^*\colon \Fun_{\enr}\big(\scat{C}, \mm{sPSh}(\Del_d)\big)\rt \Fun_{\enr}\big(\scat{D}, \mm{sPSh}(\Del_d)\big).
$$
In particular, the left adjoint $g^*$ (on homotopy $(1, 1)$-categories) is naturally isomorphic to the left derived functor of $f_!\colon \Fun_{\enr}\big(\scat{D}, \mm{sPSh}(\Del_d)\big)\rt \Fun_{\enr}\big(\scat{C}, \mm{sPSh}(\Del_d)\big)$. Since the left derived functor of $f_!$ preserves representable copresheaves, it follows that $g^*$ preserves representable copresheaves as well; this is precisely the assertion that $\Map_{\cat{D}}(d, g(-))$ is representable for all $d\in \cat{D}$.

For the converse, suppose that each $\Map_{\cat{D}}(d, g(-))\colon \cat{C}\rt \mb{Cat}_{d-1}$ is representable. To prove that $g$ admits a left adjoint in $\ho_{(2, 2)}(\mb{Cat}_d)$, it will suffice to find a map $\scat{C}^{\op}\rt \scat{D}^{\op}$ in $\cat{Cat}\big(\mm{sPSh}(\Del_{d-1})\big)$ which admits a (strict) right adjoint and whose image under the functor \ref{it:adj from strict} is equivalent to the \emph{opposite} of $g\colon \cat{C}\rt \cat{D}$. 

To provide the desired model for $g$, let us choose any cofibration $\gamma\colon \scat{C}_0\rt \scat{D}_0$ between $\CompSegS_{d-1}$-enriched categories that models $g\colon \cat{C}\rt \cat{D}$. This induces a Quillen pair of (in particular) simplicial model categories
$$\begin{tikzcd}
\gamma_!\colon \Fun_{\enr}\big(\scat{C}_0, \mm{sPSh}(\Del_{d-1})\big)\arrow[r, yshift=1ex] & \Fun_{\enr}\big(\scat{D}_0, \mm{sPSh}(\Del_{d-1})\big)\colon \gamma^*\arrow[l, yshift=-1ex]
\end{tikzcd}$$
given by restriction and left Kan extension. Let us now define
$$
\scat{C}^{\op}\subseteq \Fun_{\enr}\big(\scat{C}_0, \mm{sPSh}(\Del_{d-1})\big) \qquad \qquad\text{and}\qquad\qquad \scat{D}^{\op}\subseteq \Fun_{\enr}\big(\scat{D}_0, \mm{sPSh}(\Del_{d-1})\big)
$$
to be the full (enriched) subcategories of all enriched functors that admit a simplicial homotopy equivalence to a corepresentable functor. Since $\gamma_!$ preserves simplicial homotopy equivalences and corepresentable functors, we obtain a commuting square of categories enriched over $\mm{sPSh}(\Del_{d-1})$
$$\begin{tikzcd}
\scat{C}_0^{\op}\arrow[r, "\gamma^{\op}"]\arrow[d, hook, "h"{swap}] & \scat{D}_0^{\op}\arrow[d, hook, "h"]\\
\scat{C}^{\op}\arrow[r, "\gamma_!"] & \scat{D}^{\op}
\end{tikzcd}$$
where the vertical functors $h$ are the Yoneda embeddings. By construction, both of these vertical functors are Dwyer--Kan equivalences of enriched categories, so that $\gamma_!\colon \scat{C}^{\op}\rt \scat{D}^{\op}$ also maps to $g^{\op}$ under the localization functor \ref{it:adj from strict}. Let us point that this would not necessarily be the case if one replaced ``simplicial homotopy equivalence'' by ``weak equivalence'' in the definition of $\scat{C}^{\op}$ (unless one imposes further fibrancy and cofibrancy conditions, which need not be preserved by both $\gamma_!$ and $\gamma^*$).

We claim that the restriction functor $\gamma^*$ also restricts to a functor $\gamma^*\colon \scat{D}^{\op}\rt \scat{C}^{\op}$. Indeed, since $\gamma^*$ preserves simplicial homotopy equivalences, it suffices to verify that $\gamma^*$ sends each corepresentable functor to an object in $\scat{C}^{\op}$. In other words, we have to show that for each $d\in \scat{D}_0$, the enriched functor $\Map_{\scat{D}_0}(d, \gamma(-))$ admits a simplicial homotopy equivalence to a corepresentable object. This enriched functor is a model for $\Map_{\cat{D}}(d, g(-))$ and is hence weakly equivalent to a corepresentable object. Since $\Map_{\scat{D}_0}(d, \gamma(-))$ is projectively fibrant-cofibrant \cite[Proposition A.3.3.9]{lur09}, it then admits a simplicial homotopy equivalence to a corepresentable as well. We conclude that $\gamma_!\colon \scat{C}^{\op}\rt \scat{D}^{\op}$ admits a strict right adjoint $\gamma^*$, so that its image $g^{\op}$ in $\ho_{(2, 2)}(\mb{Cat}_d)$ admits a right adjoint as well, as desired.
\end{proof}
\begin{corollary}\label{cor:reflective localization stable under pullback}
Let $g\colon \cat{C}\rt \cat{D}$ be a map of $d$-categories which admits a fully faithful left adjoint. If $q\colon \cat{D}'\rt \cat{D}$ is any functor, then the base change $g'\colon \cat{C}\times_{\cat{D}} \cat{D}'\rt \cat{D}'$ admits a fully faithful left adjoint as well.
\end{corollary}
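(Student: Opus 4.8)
The plan is to apply Proposition~\ref{prop:adj via rep} (in categorical dimension $d$ rather than $d+1$) to the projection $g'\colon \cat{C}'\rt \cat{D}'$, where $\cat{C}'=\cat{C}\times_{\cat{D}}\cat{D}'$, and then to identify the resulting left adjoint explicitly enough to verify fully faithfulness by hand. Write $f\colon \cat{D}\rt \cat{C}$ for the fully faithful left adjoint of $g$, so that the unit $\mm{id}_{\cat{D}}\rt gf$ is an equivalence. The one formal ingredient I will use twice is that for every $a\in\cat{D}$ and $c\in\cat{C}$ the map $\Map_{\cat{C}}(f(a),c)\rt \Map_{\cat{D}}(gf(a),g(c))$ induced by $g$ is an equivalence of $(d-1)$-categories: composing it with the equivalence $\eta_a^{*}$ (precomposition with the unit) recovers the adjunction equivalence $\Map_{\cat{C}}(f(a),c)\simeq \Map_{\cat{D}}(a,g(c))$, and $\eta_a^*$ is an equivalence since $f$ is fully faithful.

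Next I would record how mapping $(d-1)$-categories behave in the pullback. Since $\dCat$ is closed under limits inside $d$-fold simplicial spaces, the pullback $\cat{C}'$ is computed levelwise, and so for objects $(c_0,y_0),(c_1,y_1)$ of $\cat{C}'$ (hence $g(c_i)\simeq q(y_i)$) one has
\[
\Map_{\cat{C}'}\big((c_0,y_0),(c_1,y_1)\big)\;\simeq\;\Map_{\cat{C}}(c_0,c_1)\times_{\Map_{\cat{D}}(g(c_0),g(c_1))}\Map_{\cat{D}'}(y_0,y_1),
\]
where the right leg is $\Map_{\cat{D}'}(y_0,y_1)\rt \Map_{\cat{D}}(q(y_0),q(y_1))\simeq \Map_{\cat{D}}(g(c_0),g(c_1))$. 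Now fix $x\in\cat{D}'$. The object $(f(q(x)),x)$ lies in $\cat{C}'$ because $g(f(q(x)))\simeq q(x)$ via the unit. Feeding it into the formula above, the left leg $\Map_{\cat{C}}(f(q(x)),c)\rt \Map_{\cat{D}}(gf(q(x)),g(c))$ is an equivalence by the fact recorded in the first paragraph, so the pullback collapses to the other factor and produces a natural equivalence
\[
\Map_{\cat{C}'}\big((f(q(x)),x),(c,y)\big)\;\simeq\;\Map_{\cat{D}'}(x,y)\;=\;\Map_{\cat{D}'}\big(x,g'(c,y)\big).
\]
Thus $\Map_{\cat{D}'}(x,g'(-))\colon \cat{C}'\rt \hcat{Cat}_{d-1}$ is representable for every $x$, and Proposition~\ref{prop:adj via rep} gives a left adjoint $f'$ of $g'$; uniqueness of representing objects forces $f'(x)\simeq (f(q(x)),x)$, so on morphisms $f'$ sends $\alpha\colon x\rt y$ to the pair $(f(q(\alpha)),\alpha)$.

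It then remains to check that $f'$ is fully faithful. Applying the mapping-space formula once more,
\[
\Map_{\cat{C}'}(f'x,f'y)\;\simeq\;\Map_{\cat{C}}(f(q x),f(q y))\times_{\Map_{\cat{D}}(gf(qx),gf(qy))}\Map_{\cat{D}'}(x,y),
\]
and the left leg is again an equivalence (the same fact from the first paragraph, with $a=qx$, $c=f(qy)$). Hence the pullback collapses to $\Map_{\cat{D}'}(x,y)$, and unravelling identifies the resulting comparison with the map on mapping $(d-1)$-categories induced by $f'$; so $f'$ is fully faithful, which is the assertion.

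I expect the main obstacle to be conceptual rather than computational: an adjunction does \emph{not} base change in general (a Beck--Chevalley condition would be needed), so the argument must genuinely exploit fully faithfulness of $f$ (equivalently $gf\simeq \mm{id}$) in two places --- to produce the object $(f(q(x)),x)$ of the pullback at all, and to see that the relevant leg of the pullback of mapping $(d-1)$-categories is invertible. The only real work is the bookkeeping: checking that $\dCat\hookrightarrow$ $d$-fold simplicial spaces preserves limits (so that mapping $(d-1)$-categories of a pullback are pullbacks of mapping $(d-1)$-categories), and tracking the triangle identities so that all identifications stay compatible with the $\hcat{Cat}_{d-1}$-valued functor structure needed to invoke Proposition~\ref{prop:adj via rep}. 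This is routine but must be done with care.
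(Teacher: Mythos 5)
Your proposal is correct and follows the paper's own (much terser) argument: apply Proposition~\ref{prop:adj via rep} and observe that $(f(q(x)),x)$ represents $\Map_{\cat{D}'}(x,g'(-))$, which is exactly what the paper does. The only difference is that you spell out the pullback formula for mapping $(d-1)$-categories and check fully faithfulness explicitly (the paper leaves this implicit; note it also follows immediately from your description of the representing object, since the unit at $x$ is then the identity of $x=g'(f(q(x)),x)$).
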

\begin{proof}
For $d'\in \cat{D}'$, let $c\in \cat{C}$ be a representing object for $\Map_{\cat{D}}(q(d'), g(-))$. Then $(c, d')\in \cat{C}\times_{\cat{D}} \cat{D}'$ provides the desired representing object for $\Map_{\cat{D}'}(d', g'(-))$.
\end{proof}
\begin{remark}\label{rem:limits of adjointable}
Using a similar argument, one can show the following two assertions:
\begin{enumerate}
\item Consider a pullback diagram $\Lambda^2[2]\rt \Fun([1], \cat{Cat}_d)$ sending each $i$ to a functor $g_i\colon \cat{C}_i\rt \cat{D}_i$ admitting a fully faithful left adjoint. If the functors $\cat{C}_0\rt \cat{C}_2\lt \cat{C}_1$ preserve the essential images of these left adjoints, then the pullback $g\colon \cat{C}_0\times_{\cat{C}_2}\cat{C}_1\rt \cat{D}_0\times_{\cat{D}_2}\cat{D}_1$ admits a fully faithful left adjoint as well.

\item Consider a tower $\mathbb{N}^{\op}\rt \Fun([1], \cat{Cat}_d)$ sending each $i$ to a functor $g_n\colon \cat{C}_n\rt \cat{D}_n$ admitting a fully faithful left adjoint. If the functors $\cat{C}_{n+1}\rt \cat{C}_n$ preserve the essential images of these left adjoints, then the map $\lim_n \cat{C}_n\rt \lim_n\cat{D}_n$ admits a fully faithful left adjoint.
\end{enumerate}
Indeed, in each of these cases consider an object $(d_i)\in \lim \cat{D}_i$ in the limit. For each $i$, there exists an inverse image $c_i\in g_i^{-1}(d_i)$ such that $\Map_{\cat{C}_i}(c_i, -)\rt \Map_{\cat{D}_i}(d_i, g_i(-))$ is an equivalence. The fact that the functors $\cat{C}_i\rt \cat{C}_j$ preserve the essential images of the fully faithful left adjoints implies that we can (inductively) lift this tuple of objects $c_i$ to an object $(c_i)\in \lim\cat{C}_i$ in the limit. This object maps to $(d_i)\in \lim \cat{D}_i$ and has the desired universal property. 
\end{remark}

\subsection{Tensoring}
Let us conclude by recalling that questions about higher categories can often be reduced to questions about their underlying $1$-categories when their enriched structure arises from a (co)tensoring:
\begin{definition}\label{def:tensored cat}
Let $\cat{C}$ be a $(d+1)$-category and consider the $(d+1)$-functor
$$\begin{tikzcd}
\mb{Cat}^{\op}_{d}\times \cat{C}^{\op}\arrow[r] & \Fun_{d+1}(\cat{C}, \mb{Cat}_{d}); \quad (K, c)\arrow[r, mapsto] & \Fun_{d}\big(K, \Map_{\cat{C}}(c, -)\big).
\end{tikzcd}$$
Then $\cat{C}$ is said to be \emph{tensored} over $\mb{Cat}_{d}$ if this takes values in representable functors. In this case, we will write $\otimes\colon \mb{Cat}_{d}\times \cat{C}\rt \cat{C}$ for the induced functor.
\end{definition}
\begin{remark}
In other words, $K\otimes c$ is the unique object of $\cat{C}$ with a natural equivalence 
$$
\Map_{\cat{C}}(K\otimes c, -)\simeq \Fun_{d}(K, \Map_{\cat{C}}(c, -)).
$$
This universal property implies that there are equivalences $K\otimes (L\otimes c)\simeq (K\times L)\otimes c$. Furthermore, note that the identity on $K\otimes c$ induces a natural map of $d$-categories $K\rt \Map_{\cat{C}}(c, K\otimes c)$.
\end{remark}
\begin{definition}\label{def:tensored functor}
Let $f\colon \cat{C}\rt \cat{D}$ be a map between two $(d+1)$-categories which are tensored over $\hcat{Cat}_{d}$. For each $c\in \cat{C}$ and $K\in \hcat{Cat}_{d}$, the composite map 
$$
K\rt \Map_{\cat{C}}(c, K\otimes c)\rt \Map_{\cat{D}}(f(c), f(K\otimes c))$$
determines a natural map $\mu\colon K\otimes f(c)\rt f(K\otimes c)$. We will say that $f$ is \emph{tensored} over $\Cat_{d}$ if all of these maps $\mu$ are equivalences.
\end{definition} 
\begin{remark}
Likewise, a $(d+1)$-category $\cat{C}$ is \emph{cotensored} over $\Cat_{d}$ if $\cat{C}^{\op}$ is tensored over $\Cat_{d}$; in this case we will write $c^K$ for the cotensoring of $c\in \cat{C}$ with $K\in \Cat_{d}$. A functor $f\colon \cat{C}\rt \cat{D}$ is cotensored if all natural maps $\mu\colon f(c^K)\rt f(c)^K$ are equivalences.
\end{remark}
\begin{example}\label{ex:tensoring copresheaves}
The $(d+1)$-category $\hcat{Cat}_{d}$ is evidently tensored and cotensored over $\hcat{Cat}_{d}$ itself. Likewise, every functor $(d+1)$-category $\Fun_{d+1}(\cat{C}, \hcat{Cat}_{d})$ is tensored and cotensored over $\hcat{Cat}_{d}$, with (co)tensoring computed pointwise. This follows conceptually from the description of enriched functor $\infty$-categories by Hinich \cite[Section 6]{hin20}. Less elegantly, one can see this by working with the strict model $\scat{M}^\circ=\Fun_{\enr}\big(\scat{C}, \CompSegS_{d}\big)^\circ$ provided Proposition \ref{prop:mapping enriched categories}. Indeed, for each $K\in \CompSegS_d$ and a fibrant-cofibrant enriched functor $F$, there is a natural isomorphism
$$
\Map_{\CompSegS_d}\big(K, \Map_{\scat{M}^{\circ}}(F, -)\big)\cong \Map_{\scat{M}^{\circ}}\big(F\times \mm{cst}(K), -\big).
$$
so that the pointwise product $F\times \mm{cst}(K)$ with the constant functor determines a (strict) tensoring on $\scat{M}^{\circ}$. Dually, let $Q(F^K)\in \scat{M}^{\circ}$ be a projectively cofibrant replacement of the pointwise fibrant enriched functor $F^K(-)=\Map_{\CompSegS_d}(K, F(-))$. Then there is a natural equivalence
$$\begin{tikzcd}
 \Map_{\scat{M}^{\circ}}\big(-, Q(F^{K})\big) \arrow[r, "\sim"] & \Map_{\CompSegS_d}\big(K, \Map_{\scat{M}^{\circ}}(-, F)\big)
\end{tikzcd}$$
which exhibits $Q(F^K)$ as a cotensoring.
\end{example}
\begin{lemma}\label{lem:tensored equivalence}
Let $f\colon \cat{C}\rt \cat{D}$ be a tensored functor between $(d+1)$-categories that are tensored over $\hcat{Cat}_{d}$. If the underlying functor of $1$-categories is an equivalence, then $f$ is an equivalence of $(d+1)$-categories as well.
\end{lemma}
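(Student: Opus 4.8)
The plan is to show $f$ is essentially surjective and fully faithful, deducing fully faithfulness by probing the induced maps on mapping $d$-categories with an arbitrary $d$-category via the tensoring.

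First I would dispose of essential surjectivity. The homotopy $1$-category $\ho(\cat{C})$ depends only on the $1$-core: the space of objects of a mapping $d$-category $\Map_{\cat{C}}(x,y)$ is $\Map_{\core_1\cat{C}}(x,y)$, so the hom-sets of $\ho(\cat{C})$ are $\pi_0\Map_{\core_1\cat{C}}(c_0,c_1)=\pi_0\Map_{\ho(\core_1\cat{C})}(c_0,c_1)$. Hence $\ho(f)=\ho(\core_1 f)$ is an equivalence of $1$-categories, so $f$ is essentially surjective. Since a map of $(d+1)$-categories is an equivalence as soon as it is essentially surjective and fully faithful, it remains to prove that for all $c_0,c_1\in\cat{C}$ the functor $f_*\colon\Map_{\cat{C}}(c_0,c_1)\rt\Map_{\cat{D}}(fc_0,fc_1)$ is an equivalence of $d$-categories.

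For this I would use that the Yoneda embedding of the $1$-category $\Cat_d$ is conservative, so a map of $d$-categories is an equivalence iff $\Map_{\Cat_d}(A,-)$ sends it to an equivalence of spaces for every $d$-category $A$. Fix such an $A$. Applying the space-of-objects functor to the natural equivalence of $d$-categories $\Fun_d(A,\Map_{\cat{C}}(c_0,-))\simeq\Map_{\cat{C}}(A\otimes c_0,-)$ from Definition \ref{def:tensored cat} (and using that the space of objects of $\Fun_d(A,\cat{E})$ is $\Map_{\Cat_d}(A,\cat{E})$ by Corollary \ref{cor:internal hom}, while the space of objects of $\Map_{\cat{C}}(x,y)$ is $\Map_{\core_1\cat{C}}(x,y)$), I obtain a natural equivalence
$$\Map_{\Cat_d}\big(A,\Map_{\cat{C}}(c_0,c_1)\big)\;\simeq\;\Map_{\core_1\cat{C}}(A\otimes c_0,c_1),$$
and likewise $\Map_{\Cat_d}(A,\Map_{\cat{D}}(fc_0,fc_1))\simeq\Map_{\core_1\cat{D}}(A\otimes fc_0,fc_1)$. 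Under these identifications, $\Map_{\Cat_d}(A,f_*)$ corresponds — by unwinding the definition of the structure map $\mu\colon A\otimes f(c_0)\rt f(A\otimes c_0)$ of a tensored functor (Definition \ref{def:tensored functor}) — to the composite
$$\Map_{\core_1\cat{C}}(A\otimes c_0,c_1)\xrightarrow{\ \core_1 f\ }\Map_{\core_1\cat{D}}\big(f(A\otimes c_0),fc_1\big)\xrightarrow{\ \mu^*\ }\Map_{\core_1\cat{D}}(A\otimes fc_0,fc_1).$$
The first map is an equivalence because $\core_1 f$ is an equivalence of $1$-categories, and the second because $\mu$ is an equivalence by the hypothesis that $f$ is tensored. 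Hence $\Map_{\Cat_d}(A,f_*)$ is an equivalence for every $A$, so $f_*$ is an equivalence of $d$-categories; combined with essential surjectivity this gives that $f$ is an equivalence of $(d+1)$-categories.

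The main obstacle is the compatibility claim in the last step: that $\Map_{\Cat_d}(A,f_*)$ really agrees, under the two tensoring identifications, with the displayed composite involving $\core_1 f$ and $\mu^*$. This is a diagram chase amounting to a re-reading of Definition \ref{def:tensored functor}, where $\mu$ is built precisely from the image under $f$ of the canonical map $A\rt\Map_{\cat{C}}(c_0,A\otimes c_0)$; one must check that the square relating the "internal" ($\Fun_d$) and "external" ($\otimes$) descriptions of the tensoring commutes naturally in $c_1$ and compatibly with $f$. Everything else — essential surjectivity, the Yoneda reduction, and the two displayed equivalences — is formal.
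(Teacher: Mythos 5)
Your proof is correct and follows essentially the same route as the paper: reduce fully faithfulness to the underlying $1$-categories by probing the mapping $d$-categories with an arbitrary $K\in\Cat_d$ and using the tensoring adjunction $\Map_{\Cat_d}(K,\Map_{\cat{C}}(c_0,c_1))\simeq\Map_{\core_1\cat{C}}(K\otimes c_0,c_1)$, then handle essential surjectivity separately. You are in fact slightly more explicit than the paper about the compatibility of this identification with the structure map $\mu$ of the tensored functor, which the paper passes over with "can be identified with."
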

\begin{proof}
Note that for any $K\in \Cat_{d}$, the map between mapping spaces in $\Cat_{d}$
$$\begin{tikzcd}
\Map_{\Cat_d}(K, \Map_{\cat{C}}(c, d)))\arrow[r] & \Map_{\Cat_d}(K, \Map_{\cat{D}}(f(c), f(d)))
\end{tikzcd}$$
can be identified with $\Map_{\core_1(\cat{C})}(K\otimes c, d)\rt \Map_{\core_1(\cat{D})}(K\otimes f(c), d)$. The latter is an equivalence since $f$ is fully faithful on the underlying $1$-categories, so that $f$ is a fully faithful functor between $(d+1)$-categories as well. It is essentially surjective since it is essentially surjective on the underlying $1$-categories.
\end{proof}
\begin{proposition}\label{prop:representable from cotensored}
Let $\cat{C}$ be a $(d+1)$-category that is tensored over $\mb{Cat}_{d}$ and let $F\colon \cat{C}^{\op}\rt \mb{Cat}_d$ be a $(d+1)$-functor. If $F$ is cotensored and the underlying $1$-functor 
$$\begin{tikzcd}[column sep=3pc]
\core_1(\cat{C})^{\op}\arrow[r] & \Cat_d\arrow[r, "\core_0"] & \sS
\end{tikzcd}$$ 
is representable, then $F$ is representable.
\end{proposition}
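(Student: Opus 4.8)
The plan is to exhibit an explicit representing object, build the corresponding comparison map, and then promote a pointwise equivalence on underlying spaces to a pointwise equivalence on underlying $d$-categories by exploiting the (co)tensoring.

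First I would use the hypothesis on the underlying $1$-functor: pick an object $c\in\core_1(\cat{C})$ representing $\core_0\circ F\colon\core_1(\cat{C})^{\op}\rt\sS$, so that there is an equivalence of $1$-functors $\alpha\colon\Map_{\core_1(\cat{C})}(-,c)\rto{\sim}\core_0\circ F$. Since $\core_1(\cat{C})$ and $\cat{C}$ have the same objects, $c$ is an object of $\cat{C}$; let $h(c)=\Map_{\cat{C}}(-,c)\colon\cat{C}^{\op}\rt\hcat{Cat}_d$ be the functor it corepresents, i.e.\ the image of $c$ under the (fully faithful) Yoneda embedding \eqref{diag:yoneda} associated with $\cat{C}^{\op}$. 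By the $(d+1)$-categorical Yoneda lemma (see \cite{hin20}), maps $h(c)\rt F$ in $\mc{P}:=\Fun_{d+1}(\cat{C}^{\op},\hcat{Cat}_d)$ are classified by the space $\core_0(F(c))$, and I would take $\eta\colon h(c)\rt F$ to be the map classified by $\xi:=\alpha_c(\mm{id}_c)\in\core_0(F(c))$. Unwinding the Yoneda correspondence, $\core_0(\eta_x)$ sends $f\in\Map_{\core_1(\cat{C})}(x,c)$ to $F(f)(\xi)$; naturality of $\alpha$ along $f$ gives $F(f)(\xi)=\alpha_x(f^*\mm{id}_c)=\alpha_x(f)$, so that $\core_0(\eta_x)=\alpha_x$ is an equivalence for every $x\in\cat{C}$.

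It then remains to upgrade this to the statement that each component $\eta_x\colon\Map_{\cat{C}}(x,c)\rt F(x)$ is an equivalence of $d$-categories, not merely on underlying spaces. Here the (co)tensoring enters. On one hand $F$ is cotensored, so for every $K\in\hcat{Cat}_d$ the cotensor structure map is an equivalence $F(K\otimes x)\rto{\sim}\Fun_d(K,F(x))$ --- note the cotensor of $x\in\cat{C}^{\op}$ with $K$ is the tensor $K\otimes x$ in $\cat{C}$, which exists since $\cat{C}$ is tensored over $\hcat{Cat}_d$. On the other hand, the universal property of the tensoring, $\Map_{\cat{C}}(K\otimes x,-)\simeq\Fun_d(K,\Map_{\cat{C}}(x,-))$, shows that $h(c)$ is cotensored as well, with $h(c)(K\otimes x)\rto{\sim}\Fun_d(K,h(c)(x))$. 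Since the cotensor structure maps of Definition \ref{def:tensored functor} are natural in the functor, the natural transformation $\eta$ is compatible with them, so that $\Fun_d(K,\eta_x)$ is identified with $\eta_{K\otimes x}$; applying $\core_0$, the map $\Map_{\Cat_d}(K,\eta_x)$ is identified with $\core_0(\eta_{K\otimes x})$, which is an equivalence by the previous paragraph. As this holds for all $K\in\hcat{Cat}_d$, the Yoneda lemma in $\hcat{Cat}_d$ shows that $\eta_x$ is an equivalence of $d$-categories. Hence $\eta$ is an equivalence in $\mc{P}$ and $F\simeq h(c)$ is representable.

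The step I expect to be the main obstacle is the compatibility of the comparison map $\eta$ with the cotensor structure maps used in the last paragraph, i.e.\ the commutativity of the squares relating $h(c)(K\otimes x)\rt F(K\otimes x)$ and $\Fun_d(K,h(c)(x))\rt\Fun_d(K,F(x))$. This reduces to the naturality of the maps $\mu$ of Definition \ref{def:tensored functor} with respect to morphisms of cotensored functors $\cat{C}^{\op}\rt\hcat{Cat}_d$, which is best organized by observing that $\mu$ is built solely from the (functor-independent) canonical map $K\rt\Map_{\cat{C}}(x,K\otimes x)$ followed by application of the functor, hence underlies a natural transformation between functors defined on the full subcategory of cotensored functors. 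Everything else amounts to two invocations of the Yoneda lemma --- once in $\Fun_{d+1}(\cat{C}^{\op},\hcat{Cat}_d)$ to produce $\eta$, and once in $\hcat{Cat}_d$ to detect that it is a levelwise equivalence.
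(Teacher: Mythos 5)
Your proposal is correct and follows essentially the same route as the paper: produce the comparison map $\eta\colon \Map_{\cat{C}}(-,c)\rt F$ from the element of $\core_0(F(c))$ classifying the equivalence of underlying $1$-functors via the enriched Yoneda lemma, then use the commuting square comparing $\Map_{\cat{C}}(K\otimes x, c)\rt F(K\otimes x)$ with $\Fun_d(K,\Map_{\cat{C}}(x,c))\rt \Fun_d(K,F(x))$ (whose vertical maps are equivalences by the tensoring of $\cat{C}$ and the cotensoredness of $F$) to promote the pointwise equivalence on underlying spaces to an equivalence of $d$-categories by Yoneda in $\hcat{Cat}_d$. The compatibility issue you flag is real but mild, and your resolution --- that $\mu$ is built from the functor-independent unit $K\rt\Map_{\cat{C}}(x,K\otimes x)$ and is therefore natural in the functor --- is exactly what justifies the commuting square the paper writes down without comment.
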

\begin{proof}
Let $x_u\in \core_0(F(c_u))$ be the element corresponding (by the Yoneda lemma) to the natural equivalence of $1$-functors $\Map_{\cat{C}}(-, c_u)\rto{\sim} \core_0(F)$. Viewing $x_u$ as an element in $F(c_u)$, the enriched Yoneda lemma provides a natural transformation of $(d+1)$-functors $\Map_{\cat{C}}(-, c_u)\rt F$. To see that this is an equivalence, note that for any $K\in \mb{Cat}_{d}$ and $d\in \cat{C}$, there is a commuting square
$$\begin{tikzcd}[column sep=4.7pc]
\Map_{\cat{C}}(K\otimes d, c_u)\arrow[d, "\sim"{swap}]\arrow[r, "x_u"]  & F(K\otimes d)\arrow[d, "\mu", "\sim"{swap}]\\
\Fun_{d}(K, \Map_{\cat{C}}(d, c_u))\arrow[r, "{\Fun_{d}(K, x_u)}"]  & \Fun_{d}(K, F(d))
\end{tikzcd}$$
The vertical functors are equivalences because $F$ is cotensored. Since the top map induces an equivalence on underlying spaces, it follows that the bottom map induces an equivalence on the underlying spaces for all $K\in \mb{Cat}_{d}$. By the Yoneda lemma, this implies that the map $x_u\colon \Map_{\cat{C}}(d, c_u)\rt F(d)$ is an equivalence.
\end{proof}
\begin{remark}\label{rem:less than full tensoring}
Lemma \ref{lem:tensored equivalence} and Proposition \ref{prop:representable from cotensored} do not require a full tensoring over $\hcat{Cat}_d$. For instance, being (co)tensored over any full subcategory containing $\Theta_d$ (and closed under cartesian products) suffices as well.
\end{remark}

\section{Cocartesian fibrations of higher categories}\label{sec:cocart fib}
In this section we discuss the higher categorical analogues of cocartesian fibrations whose fibers are given by $d$-categories. We will refer to such fibrations as \emph{$d$-cocartesian fibrations}. Most importantly, we show (Section \ref{sec:cat of cocart fibs}) that $d$-cocartesian fibrations between \hbox{$(d+1)$}-categories form themselves the domain of a $(d+1)$-cartesian fibration between $(d+2)$-categories $\hcat{Cocart}_{d}\rt \hcat{Cat}_{d+1}$.

\subsection{Pre-cocartesian fibrations}\label{sec:pre-cocart}
Let us start with the following preliminary definition, which reduces to that of a cocartesian fibration when $d=0$ \cite{aya17, rie21}:
\begin{definition}\label{def:pre-cocart}
Let $p\colon \cat{D}\rt \cat{C}$ be a map of $(d+1)$-categories. Then $p$ is said to be a \emph{pre-cocartesian fibration} if the map of $(d+1)$-categories
\begin{equation}\label{diag:precocart condition}\begin{tikzcd}
\Fun_{d+1}([1], \cat{D})\arrow[r] & \Fun_{d+1}(\{0\}, \cat{D})\times_{\Fun_{d+1}(\{0\}, \cat{C})} \Fun_{d+1}([1], \cat{C}).
\end{tikzcd}\end{equation}
is a right adjoint and the unit of the adjunction is an equivalence. A $1$-morphism in $\cat{D}$ is called \emph{(p-)cocartesian} if it is contained in the essential image of the left adjoint. 
\end{definition}
\begin{remark}\label{rem:pre-cart}
Dually, $p$ is said to be a \emph{pre-cartesian fibration} if the map  
$$\begin{tikzcd}
\Fun_{d+1}([1], \cat{D})\arrow[r] & \Fun_{d+1}(\{1\}, \cat{D})\times_{\Fun_{d+1}(\{1\}, \cat{C})} \Fun_{d+1}([1], \cat{C})
\end{tikzcd}$$
is a left adjoint and the counit of the adjunction is an equivalence. A $1$-morphism in $\cat{D}$ is called \emph{(p-)cartesian} if it is contained in the essential image of the right adjoint.
\end{remark}
This notion of pre-cocartesian fibration coincides with the 2-categorical notion of an abstract cocartesian fibration from Street \cite{str80} and Riehl--Verity \cite{rie21} (interpreted in the $2$-category of $(d+1)$-categories). In particular, we have the following properties:
\begin{lemma}\label{lem:pre-cocart stable}
The class of pre-cocartesian fibrations has the following stability properties:
\begin{enumerate}
\item If $q\colon \cat{E}\rt \cat{D}$ and $p\colon \cat{D}\rt \cat{C}$ are pre-cocartesian fibrations, then $pq\colon \cat{E}\rt \cat{C}$ is a pre-cocartesian fibration as well.

\item If $p\colon \cat{D}\rt \cat{C}$ is a pre-cocartesian fibration and $\cat{C}'\rt \cat{C}$ is any map, then the base change $\cat{D}'=\cat{D}\times_{\cat{C}}\cat{C}'\rt \cat{C}'$ is a pre-cocartesian fibration.

\item If $p\colon \cat{D}\rt \cat{C}$ is a pre-cocartesian fibration, then $\Fun_{d+1}(\cat{B}, \cat{D})\rt \Fun_{d+1}(\cat{B}, \cat{C})$ is a pre-cocartesian fibration for any $(d+1)$-category $\cat{B}$. Furthermore, for any $\cat{B}'\rt \cat{B}$, the restriction functor $\Fun_{d+1}(\cat{B}, \cat{D})\rt \Fun_{d+1}(\cat{B}', \cat{D})$ preserves cocartesian 1-morphisms.

\item Consider the subcategory of $\Fun([1], \cat{Cat}_{d+1})$ whose objects are pre-cocartesian fibrations and morphisms are squares
$$\begin{tikzcd}
\cat{D}\arrow[d, "p"{swap}]\arrow[r, "f"] & \cat{D}'\arrow[d, "p'"]\\
\cat{C}\arrow[r] & \cat{C}'
\end{tikzcd}$$
such that $f$ preserves cocartesian $1$-morphisms. This subcategory is closed under limits.
\end{enumerate}
\end{lemma}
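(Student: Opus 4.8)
The plan is to restate Definition \ref{def:pre-cocart} in a form that makes all four items fall out of a single base-change lemma. Write
$\rho_p\colon\Fun_{d+1}([1],\cat{D})\rt\mc{N}(p):=\cat{D}\times_{\cat{C}}\Fun_{d+1}([1],\cat{C})$
for the comparison functor of \eqref{diag:precocart condition}; then $p$ is a pre-cocartesian fibration exactly when $\rho_p$ admits a fully faithful left adjoint $\lambda_p$, in which case the unit $\mm{id}\Rightarrow\rho_p\lambda_p$ is automatically an equivalence, so $\lambda_p$ may be taken to be a fully faithful \emph{section} of $\rho_p$, with essential image the full subcategory of $p$-cocartesian $1$-morphisms. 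The property ``$\rho$ admits a fully faithful section left adjoint'' is trivially closed under composition (left adjoints, fully faithful functors, and sections all compose), so the real content lies in a base-change statement.

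Here is the lemma I would isolate: if $\rho\colon E\rt B$ admits a fully faithful section left adjoint $\lambda$ and $u\colon B'\rt B$ is arbitrary, then the base change $\rho'\colon E\times_B B'\rt B'$ admits the fully faithful section left adjoint $b'\mapsto(\lambda u(b'),b')$, whose essential image consists of the pairs $(e,b')$ with $e$ in the essential image of $\lambda$. I would prove this by computing mapping spaces in the pullback: $\Map_{E\times_B B'}\big(\lambda'(b'),(e,b'')\big)$ is $\Map_E(\lambda u(b'),e)\times_{\Map_B(u(b'),\rho e)}\Map_{B'}(b',b'')$, and under the adjunction equivalence $\Map_E(\lambda u(b'),e)\simeq\Map_B(u(b'),\rho e)$ the leg of this fibre product is carried to the identity by the triangle identity, so the whole space collapses to $\Map_{B'}(b',\rho'(e,b''))$. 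I expect this verification to be the only genuinely non-formal point; everything else is bookkeeping.

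Granting the lemma, item \emph{(2)} is immediate: $\Fun_{d+1}([1],-)$ preserves pullbacks, so $\mc{N}(p')\simeq\mc{N}(p)\times_{\Fun_{d+1}([1],\cat{C})}\Fun_{d+1}([1],\cat{C}')$ and $\Fun_{d+1}([1],\cat{D}')\simeq\Fun_{d+1}([1],\cat{D})\times_{\mc{N}(p)}\mc{N}(p')$, and under these $\rho_{p'}$ is the base change of $\rho_p$ along $\mc{N}(p')\rt\mc{N}(p)$; the lemma then also identifies the cocartesian $1$-morphisms of $\cat{D}'$ as the pairs $(\alpha,\beta)$ with $\alpha$ cocartesian. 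For \emph{(1)} I would check that $\rho_{pq}$ factors as $\Fun_{d+1}([1],\cat{E})\xrightarrow{\rho_q}\mc{N}(q)=\cat{E}\times_{\cat{D}}\Fun_{d+1}([1],\cat{D})\xrightarrow{\cat{E}\times_{\cat{D}}\rho_p}\cat{E}\times_{\cat{D}}\mc{N}(p)=\mc{N}(pq)$, where the second arrow is a base change of $\rho_p$; both factors admit fully faithful section left adjoints (the first since $q$ is pre-cocartesian, the second by the lemma), hence so does the composite.

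For \emph{(3)}, $\Fun_{d+1}(\cat{B},-)$ preserves all limits, so the comparison functor attached to $\Fun_{d+1}(\cat{B},p)$ is $\Fun_{d+1}(\cat{B},\rho_p)$; postcomposition preserves adjunctions (whisker the unit and counit) and fully faithful functors, so $\Fun_{d+1}(\cat{B},p)$ is pre-cocartesian, its cocartesian $1$-morphisms are the functors $\cat{B}\rt\Fun_{d+1}([1],\cat{D})$ landing in the cocartesian arrows, and restriction along $\cat{B}'\rt\cat{B}$ evidently preserves these. For \emph{(4)}: since $\Fun_{d+1}([1],-)$ and $\mc{N}(-)$ preserve limits, a limit $p_\infty$ of pre-cocartesian fibrations taken along cocartesian-preserving squares has comparison functor $\lim_i\rho_{p_i}$, and cocartesian-preservation forces the comparison (mate) $2$-cells between the $\lambda_{p_i}$ to be invertible, so the $\lambda_{p_i}$ assemble into a compatible family and $\lim_i\lambda_{p_i}$ is a fully faithful section left adjoint of $\lim_i\rho_{p_i}$ (fully faithfulness being detected on mapping spaces, which are computed as limits); thus $p_\infty$ is pre-cocartesian and the cone projections preserve cocartesian arrows, and a last short check -- if $\beta$ has cocartesian image in every $\cat{D}_i$ then the counit on $\beta$ has invertible components, hence is invertible, so $\beta$ is cocartesian -- shows that the limit cone lies in the subcategory. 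Alternatively, one could bypass the direct arguments entirely and deduce all four items from the known stability properties of abstract cocartesian fibrations in the $2$-category of $(d+1)$-categories, see \cite{str80, rie21}.
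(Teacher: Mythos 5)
Your proposal is correct and follows essentially the same route as the paper: your base-change lemma is the paper's Corollary \ref{cor:reflective localization stable under pullback} (proved there by the same representing-object/mapping-space computation), item (3) is handled identically via $\Fun_{d+1}(\cat{B},-)$ preserving limits and adjunctions with fully faithful left adjoint, and item (4) is the paper's Remark \ref{rem:limits of adjointable}. The only soft spot is your claim in (4) that for a general diagram shape the $\lambda_{p_i}$ ``assemble into a compatible family'' once the mates are invertible — this requires either the standard reduction to products, pullbacks and countable towers (which is what the paper does) or an appeal to the functoriality of passing to adjoints across adjointable squares, so you should make one of those moves explicit rather than treating it as automatic.
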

\begin{proof}
(1) is easily verified. For (2), note that the map \eqref{diag:precocart condition} for $\cat{D}'\rt \cat{C}'$ is the base change of the same map for $\cat{D}\rt \cat{C}$. The result then follows from Corollary \ref{cor:reflective localization stable under pullback}.

For (3), note that $\Fun_{d+1}([1], \Fun_{d+1}(\cat{B}, \cat{D}))\cong \Fun_{d+1}(\cat{B}, \Fun_{d+1}([1], \cat{D}))$. Using this, the result follows from the fact that $\Fun_{d+1}(\cat{B}, -)$ preserves limits and adjunctions (with fully faithful left adjoint).

Finally, for (4) one can directly see that the subcategory is closed under products. It then suffices to verify that it is closed under pullbacks and limits of (countable) towers. This comes down to verifying that \eqref{diag:precocart condition} admitting a fully faithful left adjoint is closed under such limits, which follows from Remark \ref{rem:limits of adjointable}. Alternatively, one can also deduce this from Corollary \ref{cor:classical def of pre-cocart}, using an inductive argument to find enough cocartesian lifts in the limits.
\end{proof}
\begin{lemma}\label{lem:cocartesian morphisms}
Let $p\colon \cat{D}\rt \cat{C}$ be a map of $(d+1)$-categories and $\alpha\colon d_0\rt d_1$ a $1$-morphism in $\cat{D}$. Then the following are equivalent:
\begin{enumerate}
\item For every $d_2\in \cat{D}$, the square of mapping $d$-categories
\begin{equation}\label{diag:standard cocartesian condition}\begin{tikzcd}
\Map_{\cat{D}}(d_1, d_2)\arrow[r, "\alpha^*"]\arrow[d, "p"{swap}] & \Map_{\cat{D}}(d_0, d_2)\arrow[d, "p"]\\
\Map_{\cat{C}}(pd_1, pd_2)\arrow[r, "p(\alpha)^*"] & \Map_{\cat{C}}(pd_0, pd_2)
\end{tikzcd}\end{equation}
is cartesian. 

\item For every $1$-morphism $\beta\colon d'_0\rt d'_1$ in $\cat{D}$, the map between mapping $d$-categories
$$\begin{tikzcd}
\Map_{\Fun_{d+1}([1], \cat{D})}(\alpha, \beta)\arrow[r] & \Map_{\cat{D}}(d_0, d_0')\times_{\Map_{\cat{C}}(pd_0, pd'_0)} \Map_{\Fun_{d+1}([1], \cat{C})}(p\alpha, p\beta)
\end{tikzcd}$$
is an equivalence.
\end{enumerate}
\end{lemma}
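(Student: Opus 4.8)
The plan is to show that conditions (1) and (2) are both reformulations of the statement that a certain square of mapping $d$-categories is cartesian, by unwinding the mapping $d$-categories of the functor $(d+1)$-category $\Fun_{d+1}([1],\cat{D})$ using the description of such mapping categories in terms of the interval objects $[1]_{\cat{A}}$ from Example \ref{ex:interval}. Concretely, the key observation is that the $1$-morphism $\alpha\colon d_0\rt d_1$ in $\cat{D}$ is an object of $\Fun_{d+1}([1],\cat{D})$, and for another $1$-morphism $\beta\colon d_0'\rt d_1'$, the formula in Example \ref{ex:interval} (applied with $\cat{A}=[0]$, so $[1]_{\cat{A}}=[1]$) identifies $\Map_{\Fun_{d+1}([1],\cat{D})}(\alpha,\beta)$ with the pullback
$$\begin{tikzcd}
\Map_{\Fun_{d+1}([1], \cat{D})}(\alpha, \beta)\arrow[r]\arrow[d] & \Map_{\cat{D}}(d_0, d_0')\arrow[d, "\beta_*"]\\
 \Map_{\cat{D}}(d_1, d_1')\arrow[r, "\alpha^*"] &  \Map_{\cat{D}}(d_0, d_1').
\end{tikzcd}$$

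**Reducing (2) to a pasting of pullback squares.** First I would write out the analogous pullback square computing $\Map_{\Fun_{d+1}([1],\cat{C})}(p\alpha,p\beta)$ downstairs, and note that $p$ induces a map from the upstairs square to the downstairs one. Condition (2) asserts that the canonical map from $\Map_{\Fun_{d+1}([1],\cat{D})}(\alpha,\beta)$ to the pullback of $\Map_{\cat{D}}(d_0,d_0')\rt \Map_{\cat{C}}(pd_0,pd_0') \lt \Map_{\Fun_{d+1}([1],\cat{C})}(p\alpha,p\beta)$ is an equivalence. Using the two pullback squares above (upstairs and downstairs) together with the evident pullback square $\Map_{\cat{D}}(d_1,d_1')\rt \Map_{\cat{C}}(pd_1,pd_1')$ and the compatibility of all these maps, a routine diagram chase with the pasting lemma for pullback squares shows that condition (2) holds for \emph{all} $\beta$ if and only if, for all $d_0'$ and $d_1'$, the square
$$\begin{tikzcd}
\Map_{\cat{D}}(d_1, d_1')\arrow[r, "\alpha^*"]\arrow[d, "p"{swap}] & \Map_{\cat{D}}(d_0, d_1')\arrow[d, "p"]\\
\Map_{\cat{C}}(pd_1, pd_1')\arrow[r, "p(\alpha)^*"] & \Map_{\cat{C}}(pd_0, pd_1')
\end{tikzcd}$$
is cartesian. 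Renaming $d_1'$ to $d_2$, this is exactly condition (1). The implication (1)$\Rightarrow$(2) is the forward direction of this equivalence and requires only substitution and the pasting lemma; the converse (2)$\Rightarrow$(1) is obtained by taking $\beta$ to be a degenerate arrow $\mm{id}_{d_2}\colon d_2\rt d_2$, for which the upstairs pullback square degenerates (the fiber product over $\Map_{\cat{D}}(d_1,d_2)$ along the identity) so that $\Map_{\Fun_{d+1}([1],\cat{D})}(\alpha,\mm{id}_{d_2})\simeq \Map_{\cat{D}}(d_0,d_2)$ and similarly downstairs, and condition (2) for this $\beta$ becomes precisely the assertion that square \eqref{diag:standard cocartesian condition} is cartesian.

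**Main obstacle.** The genuinely delicate point is the identification of the mapping $d$-category $\Map_{\Fun_{d+1}([1],\cat{D})}(\alpha,\beta)$ with the pullback displayed above: this is the content of Example \ref{ex:interval} specialized to the interval $[1]=[1]_{[0]}$, and one must be careful that the auxiliary data (the objects $\alpha_0,\alpha_1$ and the map $\cat{A}=[0]\rt\Map_{\cat{D}}(\alpha_0,\alpha_1)$ classifying $\alpha$) are being tracked correctly, and in particular that the square is natural in $\cat{D}$ so that $p$ really does induce a map of squares compatible with all face maps. Once this naturality is in place, everything else is formal manipulation of homotopy pullbacks. I would therefore spend the bulk of the write-up carefully stating the pullback square from Example \ref{ex:interval} and its naturality, and then dispatch both implications by the pasting lemma as sketched above.
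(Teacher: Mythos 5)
Your argument is correct and is essentially the paper's own proof: both rest on the identification of $\Map_{\Fun_{d+1}([1],\cat{D})}(\alpha,\beta)$ as a pullback via Example \ref{ex:interval}, recover (1) by specializing (2) to $\beta=\mm{id}_{d_2}$, and deduce (2) from (1) by exhibiting the map in (2) as a base change (along $\beta_*\colon\Map_{\cat{D}}(d_0,d_0')\rt\Map_{\cat{D}}(d_0,d_1')$) of the comparison map whose invertibility is condition (1). One small slip: for $\beta=\mm{id}_{d_2}$ the pullback collapses to $\Map_{\cat{D}}(d_1,d_2)$, since the leg $\beta_*$ of the cospan is the identity on $\Map_{\cat{D}}(d_0,d_2)$ and the pullback is therefore equivalent to the opposite corner, not to $\Map_{\cat{D}}(d_0,d_2)$ as you wrote; with this correction the conclusion you draw is exactly right.
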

\begin{proof}
Using the description of the mapping objects in $\Fun_{d+1}([1], \cat{D})$ from Example \ref{ex:interval}, the map appearing in (2) can be identified with
$$\begin{tikzcd}
\Map_{\cat{D}}(d_0, d_0')\mathop{\times}\limits_{\Map_{\cat{D}}(d_0, d'_1)} \Map_{\cat{D}}(d_1, d'_1)\arrow[r] & \Map_{\cat{D}}(d_0, d_0')\mathop{\times}\limits_{\Map_{\cat{C}}(pd_0, pd'_1)} \Map_{\cat{C}}(pd_1, pd'_1)
\end{tikzcd}$$
In particular, condition (1) is equivalent to condition (2) in the case where $\beta$ is the identity on $d_2$. Conversely, the above map arises as the base change of 
$$
\Map_{\cat{D}}(d_1, d_1')\rt \Map_{\cat{D}}(d_0, d'_1)\times_{\Map_{\cat{C}}(pd_0, pd'_1)}\Map_{\cat{C}}(pd_1, pd'_1)
$$
along the map $\beta_*\colon \Map_{\cat{D}}(d_0, d'_0)\rt \Map_{\cat{D}}(d_0, d_1')$. It follows that (1) implies (2).
\end{proof}
\begin{corollary}\label{cor:classical def of pre-cocart}
A map $p\colon \cat{D}\rt \cat{C}$ of $(d+1)$-categories is a pre-cocartesian fibration if and only if the following condition holds: for every map $\gamma\colon c_0\rt c_1$ in $\cat{C}$ and every $d_0\in p^{-1}(c_0)$, there exists an arrow $\alpha\colon d_0\rt d_1$ in $\cat{D}$ with $p(\alpha)=\gamma$, satisfying the equivalent conditions of Lemma \ref{lem:cocartesian morphisms}.
\end{corollary}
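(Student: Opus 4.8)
The plan is to deduce the statement from Definition \ref{def:pre-cocart}, Proposition \ref{prop:adj via rep} and Lemma \ref{lem:cocartesian morphisms}. Write $q$ for the functor in \eqref{diag:precocart condition}, with domain $\cat{E}=\Fun_{d+1}([1],\cat{D})$ and codomain $\cat{B}=\Fun_{d+1}(\{0\},\cat{D})\times_{\Fun_{d+1}(\{0\},\cat{C})}\Fun_{d+1}([1],\cat{C})$; an object of $\cat{B}$ is a pair $(d_0,\gamma\colon c_0\rt c_1)$ with $p(d_0)=c_0$, an object of $\cat{E}$ is an arrow $\beta\colon d_0'\rt d_1'$ in $\cat{D}$, and $q(\beta)=(d_0',p\beta)$. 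By Definition \ref{def:pre-cocart}, $p$ is a pre-cocartesian fibration precisely when $q$ admits a fully faithful left adjoint (recall that a left adjoint is fully faithful exactly when its unit is an equivalence). Since $\cat{B}$ is a limit of $(d+1)$-categories, its mapping objects are pullbacks of those of $\cat{D}$ and $\Fun_{d+1}([1],\cat{C})$; together with Example \ref{ex:interval} this gives, for $\beta\colon d_0'\rt d_1'$, a natural identification $\Map_{\cat{B}}\big((d_0,\gamma),q(\beta)\big)\simeq \Map_{\cat{D}}(d_0,d_0')\times_{\Map_{\cat{C}}(c_0,pd_0')}\Map_{\Fun_{d+1}([1],\cat{C})}(\gamma,p\beta)$.

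I would then invoke Proposition \ref{prop:adj via rep}: $q$ admits a left adjoint if and only if for each $(d_0,\gamma)\in\cat{B}$ the $(d+1)$-functor $\Map_{\cat{B}}\big((d_0,\gamma),q(-)\big)\colon\cat{E}\rt\hcat{Cat}_d$ is representable; a representing object is the value of the left adjoint at $(d_0,\gamma)$, and its universal element is the corresponding unit component $\eta\colon (d_0,\gamma)\rt q(\alpha)$, where $\alpha\colon d_0'\rt d_1'$ is the chosen representative. The clause that the unit be an equivalence then says exactly that each such $\eta$ is an equivalence in $\cat{B}$; since $q(\alpha)=(d_0',p\alpha)$, this pins down $d_0'\simeq d_0$ and $\gamma\simeq p\alpha$ compatibly, so after replacing $\alpha$ by an equivalent arrow we may assume $d_0'=d_0$ and $p\alpha=\gamma$. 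Conversely, if there is an arrow $\alpha\colon d_0\rt d_1$ over $\gamma$ representing $\Map_{\cat{B}}\big((d_0,\gamma),q(-)\big)$ with its tautological universal element, then this element is the unit component at $(d_0,\gamma)$ and lies over an identity of $\cat{C}$, hence is invertible. Finally, unwinding the representability of $\alpha$ through the pullback formula above, the natural equivalence $\Map_{\cat{E}}(\alpha,\beta)\simeq\Map_{\cat{B}}\big((d_0,\gamma),q(\beta)\big)$ is precisely condition (2) of Lemma \ref{lem:cocartesian morphisms} for $\alpha$. Combining these observations yields the stated criterion.

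The only genuinely delicate point is the bookkeeping around the phrase ``the unit of the adjunction is an equivalence'': one must check carefully, in both directions, that it is equivalent to the possibility of choosing the representing objects of Proposition \ref{prop:adj via rep} to lie over $\gamma$ with prescribed source $d_0$. Everything else is a routine translation between the adjunction-theoretic formulation of Definition \ref{def:pre-cocart} and the pointwise lifting condition, using that mapping objects in the pullback $\cat{B}$ decompose as pullbacks and the description of $\Fun_{d+1}([1],-)$ in Example \ref{ex:interval}.
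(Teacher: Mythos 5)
Your proposal is correct and follows essentially the same route as the paper: the paper's proof is a one-line appeal to Proposition \ref{prop:adj via rep}, observing that the existence of lifts satisfying condition (2) of Lemma \ref{lem:cocartesian morphisms} is exactly the representability of each $\Map_{\cat{B}}\big((d_0,\gamma), q(-)\big)$ together with the unit being an equivalence. Your write-up just supplies the bookkeeping (the pullback decomposition of mapping objects in $\cat{B}$ and the identification of the tautological universal element with the unit component) that the paper leaves implicit.
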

In this case, the $p$-cocartesian morphisms are precisely the morphisms satisfying the equivalent conditions of Lemma \ref{lem:cocartesian morphisms}. For the $1$-categorical case, see also \cite{aya17}.
\begin{proof}
Using Proposition \ref{prop:adj via rep}, the existence of lifts $\alpha$ satisfying the second condition of Lemma \ref{lem:cocartesian morphisms} is equivalent to $p$ being a pre-cocartesian fibration.
\end{proof}
\begin{remark}\label{rem:precocart reform}
The condition appearing in Corollary \ref{cor:classical def of pre-cocart} can also be reformulated as follows: the map $\core_1(\cat{D})\rt \core_1(\cat{C})$ is a cocartesian fibration of $1$-categories and each map of $1$-categories $
\cat{D}(-, \vec{0}_{d})\rt \cat{D}(-, \vec{n}_{d})
$ preserves cocartesian arrows. Using this, a map $f\colon \cat{D}\rt \cat{D}'$ preserves cocartesian $1$-morphisms if and only if the map $\core_1(\cat{D})\rt \core_1(\cat{D}')$ preserves cocartesian arrows in the the usual sense.
\end{remark}
\begin{corollary}\label{cor:disjoint unions of pre-cocart}
Consider a square of $(d+1)$-categories
$$\begin{tikzcd}
\cat{D}\arrow[d, "p"{swap}]\arrow[r] & T\arrow[d, "q"]\\
\cat{C}\arrow[r] & S.
\end{tikzcd}$$
where $T$ and $S$ are spaces. Then $p$ is a pre-cocartesian fibration if and only if for any $t\in T$, the map on fibers $p_t\colon \cat{D}_t\rt \cat{C}_{q(t)}$ is a pre-cocartesian fibration. Furthermore, a morphism in $\cat{D}$ is $p$-cocartesian if and only if it defines a $p_t$-cocartesian morphism in some fiber $\cat{D}_t$.
\end{corollary}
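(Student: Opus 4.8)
The plan is to reduce the statement to the internal characterizations already at our disposal: that a pre-cocartesian fibration is exactly a map admitting enough cocartesian lifts (Corollary~\ref{cor:classical def of pre-cocart}), and that a $1$-morphism is cocartesian precisely when the squares of mapping $d$-categories \eqref{diag:standard cocartesian condition} are cartesian (Lemma~\ref{lem:cocartesian morphisms}). Granting these, the whole statement becomes the assertion that both conditions may be checked \emph{fiberwise over $T$}. The one input requiring care is the decomposition of $\cat{D}$ and $\cat{C}$ over the spaces $T$ and $S$: since $T$ is a space one has $\Cat_{d+1}/T\simeq\Fun(T,\Cat_{d+1})$, so that $\cat{D}\simeq\colim_{t\in T}\cat{D}_t$, and similarly $\cat{C}\simeq\colim_{s\in S}\cat{C}_s$, and $p$ carries $\cat{D}_t$ into $\cat{C}_{q(t)}$. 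What I actually use is the resulting description of mapping $d$-categories: for $d,d'\in\cat{D}$ lying over $t,t'\in T$, applying $\cat{D}\rt T$ yields a map $\Map_{\cat{D}}(d,d')\rt\Map_T(t,t')$ whose fiber over a path $\rho\colon t\rightsquigarrow t'$ is $\Map_{\cat{D}_t}(d,\rho^*d')$, with $\rho^*\colon\cat{D}_{t'}\xrightarrow{\ \sim\ }\cat{D}_t$ the transport equivalence; the same holds for $\cat{C}$, and $p$ is compatible with these via $q_*\colon\Map_T(t,t')\rt\Map_S(q(t),q(t'))$. In particular $\Map_{\cat{D}}(d,d')=\emptyset$ if $t,t'$ lie in different path components.

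The key computation is then the following. Let $\alpha\colon d_0\rt d_1$ be a $1$-morphism of $\cat{D}$ with $d_0,d_1$ in a common fiber $\cat{D}_t$ — so $\alpha$ lies over a constant path in $T$ and over a morphism $\gamma\colon c_0\rt c_1$ of $\cat{C}_{q(t)}$ — and let $d_2\in\cat{D}$ be arbitrary, over $t_2\in T$. Decomposing the four corners of the square \eqref{diag:standard cocartesian condition} for $p$ as above (the left-hand corners fibered over $\Map_T(t,t_2)$, the right-hand ones over $\Map_S(q(t),q(t_2))$, compatibly via $q_*$), and using that precomposition with $\alpha$, resp.\ $\gamma$, does not move the path coordinate, one checks that the comparison map of \eqref{diag:standard cocartesian condition} for $p$ is the coproduct, over $\rho\in\pi_0\Map_T(t,t_2)$, of the comparison maps of \eqref{diag:standard cocartesian condition} for $p_t\colon\cat{D}_t\rt\cat{C}_{q(t)}$ at the test object $\rho^*d_2\in\cat{D}_t$. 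Since a coproduct of maps is an equivalence iff each summand is, and since every object of $\cat{D}_t$ occurs among the $\rho^*d_2$, Lemma~\ref{lem:cocartesian morphisms} gives: $\alpha$ is $p$-cocartesian if and only if it is $p_t$-cocartesian. Noting that an arbitrary $1$-morphism of $\cat{D}$ becomes, after transporting its target along the path it traces in $T$, a morphism of $\cat{D}_t$ for $t$ the component of its source, this establishes the description of $p$-cocartesian morphisms in the statement.

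The two directions now follow immediately. If each $p_t$ is a pre-cocartesian fibration and we are given $\gamma\colon c_0\rt c_1$ in $\cat{C}$ and $d_0\in p^{-1}(c_0)$, then $\gamma$ lies in $\cat{C}_s$ for $s$ its image in $S$, and $d_0\in\cat{D}_t$ for some $t$ with $q(t)=s$; a $p_t$-cocartesian lift of $\gamma$ at $d_0$ inside $\cat{D}_t$ is a $p$-cocartesian lift by the key computation, so $p$ is a pre-cocartesian fibration by Corollary~\ref{cor:classical def of pre-cocart}. Conversely, if $p$ is a pre-cocartesian fibration, fix $t$ and let $\gamma\colon c_0\rt c_1$ in $\cat{C}_{q(t)}$ and $d_0\in p_t^{-1}(c_0)$ be given; a $p$-cocartesian lift $\alpha\colon d_0\rt d_1$ in $\cat{D}$, transported along the (constant) path it traces in $T$, yields a morphism $\alpha'$ of $\cat{D}_t$ over $\gamma$ which is still $p$-cocartesian, hence $p_t$-cocartesian by the key computation; so $p_t$ is a pre-cocartesian fibration.

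The main obstacle is thus entirely the first step: setting up the decomposition of mapping $d$-categories over the path spaces of $T$ and $S$, together with the verification that all the structure maps of \eqref{diag:standard cocartesian condition} respect the resulting coproduct decompositions, so that cartesianness is detected summandwise. This is the $(d+1)$-categorical shadow of the elementary fact that for a functor to a space the total mapping objects fiber over the base mapping objects; everything after it is formal.
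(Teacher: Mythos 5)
Your proof is correct and follows essentially the same route as the paper's: both arguments reduce to the pointwise lift criterion of Corollary~\ref{cor:classical def of pre-cocart}, observe that the comparison square \eqref{diag:standard cocartesian condition} maps to the corresponding square of path spaces of $T$ and $S$ with horizontal equivalences, and detect cartesianness fiberwise over $\Map_T(t,t_2)$. The only cosmetic imprecision is your phrase ``coproduct over $\rho\in\pi_0\Map_T(t,t_2)$'': the comparison map is really a map of families over the whole space $\Map_T(t,t_2)$ (whose components need not be contractible), but since equivalences over a fixed space are detected on fibers over each point, your conclusion is unaffected.
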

\begin{proof}
Suppose that $\alpha\colon d_0\rt d_1$ is an arrow in $\cat{D}$ and let $\tau\colon t_0\rt t_1$ be its image in $T$ (which is an equivalence). For any object $d_2\in \cat{D}$, the square of $(d+1)$-categories \eqref{diag:standard cocartesian condition} then comes with a natural transformation to the square of spaces
$$\begin{tikzcd}
\Map_{T}(t_1, t_2)\arrow[r, "\tau^*"]\arrow[d] & \Map_T(t_0, t_2)\arrow[d]\\
\Map_S(qt_1, qt_2)\arrow[r, "q(\tau)^*"] & \Map_S(qt_0, qt_1).
\end{tikzcd}$$
The horizontal maps in this square are equivalences. The square \eqref{diag:standard cocartesian condition} of $(d+1)$-categories (i.e.\ specific kinds of $(d+1)$-fold simplicial spaces) is then cartesian if and only if for for every $\tau'\colon t_1\rt t_2$ in $T$, the square of fibers over $\tau'$ is a cartesian diagram of $(d+1)$-categories. This means precisely that $\alpha$ defines a cocartesian arrow for $p_t\colon \cat{D}_{t}\rt \cat{C}_{q(t)}$, where $\cat{D}_t$ is the fiber of $\cat{D}$ over the diagram in $T$ of the form 
$$
t_0\rto{\tau} t_1\rto{\tau'} t_2.
$$
Note that such a diagram in $T$ is contractible, i.e.\ it is equivalent to a choice of basepoint $t\in T$. In other words, an arrow in $\cat{D}$ is $p$-cocartesian if and only if it defines a $p_t$-cocartesian arrow where $p_t\colon \cat{D}_t\rt \cat{C}_{q(t)}$ is the map between fibers over $t\in T$. Using this and Corollary \ref{cor:classical def of pre-cocart}, one readily deduces that $p$ is a pre-cocartesian fibration if and only if each $p_t$ is a pre-cocartesian fibration.
\end{proof}

\subsection{\texorpdfstring{$d$}{d}-cocartesian fibrations}
We now turn to the definition of $d$-cocartesian fibrations, which proceeds by induction on $d$:
\begin{definition}\label{def:cocart fib}
Let $p\colon \cat{D}\rt \cat{C}$ be a map of $n$-categories. We then have the following definitions, by induction on $d$:
\begin{enumerate}[label=(\alph*)]
\item $p$ is said to be a \emph{$0$-cocartesian fibration} if $\cat{D}(1)\rt \cat{C}(1)\times_{\cat{C}(\{0\})}\cat{D}(\{0\})$ is an equivalence. A \emph{strong morphism} between two $0$-cocartesian fibrations $p$ and $p'$ is a commuting square
\begin{equation}\label{diag:strong map}\begin{tikzcd}
\cat{D}\arrow[d, "p"{swap}]\arrow[r, "f"] & \cat{D}'\arrow[d, "p'"]\\
\cat{C}\arrow[r] & \cat{C}'
\end{tikzcd}\end{equation}
Dually, $p$ is said to be a \emph{$0$-cartesian fibration} if $p^{\op}\colon \cat{D}^{\op}\rt \cat{C}^{\op}$ is a $0$-cocartesian fibration; strong morphisms between those are commuting squares \eqref{diag:strong map}.

\item $p$ is said to be a \emph{$d$-cocartesian fibration} if it satisfies the following two conditions:
\begin{enumerate}
\item It is a \emph{homwise $(d-1)$-cartesian fibration:} for every $x_0, x_1\in \cat{D}$, the induced map between mapping objects $\Map_{\cat{D}}(x_0, x_1)\rt \Map_{\cat{C}}(p(x_0), p(x_1))$ is a $(d-1)$-cartesian fibration and the composition map determines a strong map of $(d-1)$-cartesian fibrations
\begin{equation}\label{diag:composition preserves cart}\begin{tikzcd}
\Map_{\cat{D}}(x_1, x_2)\times \Map_{\cat{D}}(x_0, x_1)\arrow[d, "p"{swap}]\arrow[r] & \Map_{\cat{D}}(x_0, x_2)\arrow[d, "p"]\\
\Map_{\cat{C}}(p(x_1), p(x_2))\times \Map_{\cat{C}}(p(x_0), p(x_1))\arrow[r] & \Map_{\cat{C}}(p(x_0), p(x_2)).
\end{tikzcd}\end{equation}

\item It is a pre-cocartesian fibration (Definition \ref{def:pre-cocart}).
\end{enumerate}

\item 
$p$ is said to be a \emph{$d$-cartesian fibration} if it satisfies the following two conditions:
\begin{enumerate}[label={({\arabic*}')}]
\item It is a \emph{homwise $(d-1)$-cocartesian fibration:} it induces $(d-1)$-cocartesian fibration on mapping objects and the composition determines a strong map of $(d-1)$-cocartesian fibrations \eqref{diag:composition preserves cart}.

\item It is a pre-cartesian fibration (Remark \ref{rem:pre-cart}).
\end{enumerate}

\item
A \emph{strong morphism} between $d$-(co)cartesian fibrations is a commuting square \eqref{diag:strong map} such that $f$ preserves (co)cartesian $1$-morphisms and for any $x_0, x_1\in \cat{D}$, the induced square
$$\begin{tikzcd}
\Map_{\cat{D}}(x_0, x_1)\arrow[r, "f"]\arrow[d, "p"{swap}] & \Map_{\cat{D}'}(fx_0, fx_1)\arrow[d, "p'" ]\\
\Map_{\cat{C}}(px_0, px_1) \arrow[r] & \Map_{\cat{C}'}(p'(f x_0), p'(f x_1))
\end{tikzcd}$$
is a strong morphism of $(d-1)$-(co)cartesian fibrations.
\end{enumerate}
\end{definition}
\begin{warning}
A $0$-cocartesian fibration can equivalently be viewed as a Segal copresheaf (as in Definition \ref{def:lfib}) whose domain is an $n$-category. Note that the Segal copresheaves that arise in this way form a very restrictive class, since their fibers are spaces (cf.\ Warning \ref{war:lfib}).
\end{warning}
\begin{remark}\label{rem:opposites of cocart fib}
As usual, there are $2^{d+1}$ variants of the notion of a $d$-cocartesian fibration between $(d+1)$-categories, by taking opposites in the various dimensions. We will only make use of the two notions considered above, which are related by taking opposites in every dimension.
\end{remark}
\begin{remark}
The definition of a $d$-cocartesian fibration $p\colon \cat{D}\rt \cat{C}$ exhibits an alternating pattern: odd-dimensional cells are required to admit $p$-cocartesian lifts and even-dimensional cells are required to admit $p$-cartesian lifts. Let us explain heuristically why this should encode a fully \emph{covariant} diagram of $d$-categories indexed by $\cat{C}$.

First, to make sure that the fiber $p^{-1}(c_0)$ over an object $c_0\in \cat{C}$ depends covariantly on $c_0$, we need to be able to associate to each $1$-morphism $\alpha\colon c_0\rt c_1$ in $\cat{C}$ and each $d_0\in p^{-1}(c_0)$ an object $\alpha_!d_0\in p^{-1}(c_1)$. Of course, this is done using the (essentially unique) $p$-cocartesian arrow $\tilde{\alpha}\colon d_0\rt \alpha_!d_0$ covering $\alpha$. 

Next, given two objects $d_0, d_1\in \cat{D}$ with images $c_0, c_1\in \cat{C}$, consider the induced map $p\colon \Map_{\cat{D}}(d_0, d_1)\rt \Map_{\cat{C}}(c_0, c_1)$. For each $\alpha\colon c_0\rt c_1$ in $\cat{C}$, its fiber is given by
$$
p^{-1}(\alpha)\simeq \Map_{p^{-1}(c_1)}(\alpha_!d_0, d_1).
$$
Consequently, if $\alpha_!d_0$ depends \emph{covariantly} on the $1$-morphism $\alpha$, then the fiber $p^{-1}(\alpha)$ will depend \emph{contravariantly} on $\alpha$. Indeed, each $2$-morphism $h\colon \alpha\rt \beta$ in $\cat{C}$ should give rise to a functor
\begin{equation}\label{diag:precomp with 2cell}\begin{tikzcd}
h^*\colon p^{-1}(\beta)\arrow[r] & p^{-1}(\alpha); \quad \big(\beta_!d_0\rto{g} d_1\big)\arrow[r, mapsto] & \big(\alpha_!d_0\xrightarrow{h(d_0)}\beta_!d_0\rto{g} d_1\big)
\end{tikzcd}\end{equation}
that precomposes with the natural transformation induced by $h$. In terms of the fibration $p$, this means that each $2$-morphism $h\colon \alpha\rt \beta$ in $\cat{C}$ and $g\in p^{-1}(\beta)$ should admit a \emph{$p$-cartesian} lift $\tilde{h}\colon h^*(g)\rt g$.

At the level of $2$-morphisms, we then obtain a functor of the form
$$\begin{tikzcd}
p\colon \Map_{\Map_{\cat{D}}(d_0, d_1)}(f, g)\arrow[r] & \Map_{\Map_{\cat{C}}(c_0, c_1)}(\alpha, \beta)
\end{tikzcd}$$
for each $f\in p^{-1}(\alpha)$ and $g\in p^{-1}(\beta)$. For a given $2$-cell $h\colon \alpha\to \beta$ in $\cat{C}$, postcomposition with the $p$-cartesian lift $\tilde{h}$ induces an equivalence 
$$
p^{-1}(h)\simeq \Map_{p^{-1}(\alpha)}(f, h^*g).
$$
If the natural transformation $h(d_0)\colon \alpha_!d_0\rt \beta_!d_0$ depends covariantly on the $2$-morphism $h$, then \eqref{diag:precomp with 2cell} shows that $h^*$ depends covariantly on $h$ as well. Consequently, the fiber $p^{-1}(h)$ depends covariantly on $h$, so that $3$-morphisms should have $p$-cocartesian lifts. In higher dimensions, one can repeat the discussion from the last two paragraphs to arrive at the alternating list of cocartesian and cartesian conditions from Definition \ref{def:cocart fib}.
\end{remark}
%
%
Note that for any map of $n$-categories $p\colon \cat{D}\rt \cat{C}$, the property of being a $d$-cocartesian fibration does not depend on whether we consider $p$ as a map of $n$-categories or $(n+1)$-categories. Similarly, the difference between $d$-cocartesian and $(d+1)$-cocartesian fibrations is only fiberwise:
\begin{lemma}\label{lem:d-cocart as d+1-cocart}
A map $p\colon \cat{D}\rt \cat{C}$ between $n$-categories is a $d$-cocartesian fibration if and only if it is a $(d+1)$-cocartesian fibration and its fibers are $d$-categories.
\end{lemma}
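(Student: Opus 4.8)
The plan is to argue by induction on $d$, proving at the same time the companion statement that, for two maps of $n$-categories $p,p'$ that are both $d$-cocartesian fibrations, a commuting square between them is a strong morphism of $d$-cocartesian fibrations if and only if it is a strong morphism of $(d+1)$-cocartesian fibrations. This morphism-level statement is what lets one handle the clauses in Definition \ref{def:cocart fib} asserting that the composition maps \eqref{diag:composition preserves cart} are strong morphisms of $(d-1)$-(co)cartesian fibrations. Taking opposites in every categorical direction interchanges $d$-cocartesian and $d$-cartesian fibrations (Remark \ref{rem:opposites of cocart fib}), so both statements are equivalent to their ``cartesian'' counterparts, which one may therefore invoke at earlier stages of the induction.

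For the base case $d=0$, first suppose $p\colon\cat{D}\rt\cat{C}$ is a $0$-cocartesian fibration. The defining equivalence $\cat{D}(1)\simeq\cat{C}(1)\times_{\cat{C}(\{0\})}\cat{D}(\{0\})$ propagates through the Segal conditions to an equivalence $\cat{D}(n)\simeq\cat{C}(n)\times_{\cat{C}(\{0\})}\cat{D}(\{0\})$ for every $n$, compatibly with the $0$-th vertex projections. Pulling back along the degenerate simplex on an object $c$ then gives $\cat{D}_c(n)\simeq\cat{D}_c(0)$ for all $n$, and since $\cat{D}_c(0)$ is a space (as $\cat{D}$ is an $n$-category) this forces $\cat{D}_c$ to be a space; the same computation shows each $\Map_{\cat{D}}(x_0,x_1)\rt\Map_{\cat{C}}(px_0,px_1)$ is an equivalence, so the local $0$-cartesian fibration condition and its compatibility with composition are automatic, while the pre-cocartesian condition follows from Corollary \ref{cor:classical def of pre-cocart} by checking that the unique lift of an arrow of $\cat{C}$ with prescribed source satisfies the conditions of Lemma \ref{lem:cocartesian morphisms}, a short computation in the copresheaf description. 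Conversely, if $p$ is a $1$-cocartesian fibration with space fibers, then by Remark \ref{rem:precocart reform} the functor $\core_1\cat{D}\rt\core_1\cat{C}$ is a cocartesian fibration of $1$-categories whose fibers $\core_0(\cat{D}_c)=\cat{D}_c$ are spaces, hence is a left fibration (a cocartesian fibration of $1$-categories with discrete fibers is a left fibration); this yields the equivalence on spaces of objects of $\cat{D}(1)$, and one promotes it to the desired equivalence of $(n-1)$-categories $\cat{D}(1)\simeq\cat{C}(1)\times_{\cat{C}(\{0\})}\cat{D}(\{0\})$ using the formula for mapping objects of $\Fun_{n}([1],\cat{D})$ from Example \ref{ex:interval} and rewriting the resulting pullbacks via the cartesian squares \eqref{diag:standard cocartesian condition} attached to the (now all cocartesian) arrows of $\cat{D}$. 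The morphism-level statement at $d=0$ is immediate, since every morphism of a $0$-cocartesian fibration is cocartesian and a strong morphism of $1$-cocartesian fibrations imposes nothing beyond a commuting square on mapping objects.

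For the inductive step, fix $d\geq 1$ and assume both statements for $d-1$. Both ``$p$ is a $d$-cocartesian fibration'' and ``$p$ is a $(d+1)$-cocartesian fibration with $d$-categorical fibers'' require $p$ to be a pre-cocartesian fibration, so assume this; write $M(x_0,x_1)$ for the map $\Map_{\cat{D}}(x_0,x_1)\rt\Map_{\cat{C}}(px_0,px_1)$. By the cartesian form of the inductive hypothesis, $M(x_0,x_1)$ is a $(d-1)$-cartesian fibration if and only if it is a $d$-cartesian fibration all of whose fibers are $(d-1)$-categories, and --- the three maps in the composition square \eqref{diag:composition preserves cart} then being $d$-cartesian fibrations with $(d-1)$-categorical fibers --- the morphism-level hypothesis identifies ``composition is a strong morphism of $(d-1)$-cartesian fibrations'' with ``composition is a strong morphism of $d$-cartesian fibrations''. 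Hence $p$ is a $d$-cocartesian fibration if and only if $p$ is a $(d+1)$-cocartesian fibration and every $M(x_0,x_1)$ has $(d-1)$-categorical fibers. It remains to identify this last condition with ``every fiber $\cat{D}_c$ is a $d$-category''. For $x_0,x_1$ in a fiber $\cat{D}_c$, a direct pullback manipulation identifies $\Map_{\cat{D}_c}(x_0,x_1)$ with the fiber of $M(x_0,x_1)$ over $\mm{id}_c\in\Map_{\cat{C}}(c,c)$, and for an arbitrary $\gamma\colon px_0\rt px_1$ the $p$-cocartesian lift of $\gamma$ together with the cartesian square \eqref{diag:standard cocartesian condition} identifies the fiber of $M(x_0,x_1)$ over $\gamma$ with $\Map_{\cat{D}_{px_1}}(\gamma_!x_0,x_1)$, a fiber over an identity; thus all fibers of all the $M(x_0,x_1)$ are $(d-1)$-categories exactly when all mapping $(n-1)$-categories of all fibers $\cat{D}_c$ are $(d-1)$-categories, which since $d\geq 1$ is exactly the condition that each $\cat{D}_c$ be a $d$-category (Definition \ref{def:d-cat}). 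The morphism-level statement for $d$ follows in the same way: the notion of cocartesian $1$-morphism depends only on the common pre-cocartesian structure, and the conditions on induced squares of mapping objects match up by the morphism-level hypothesis for $d-1$.

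The step I expect to be the main obstacle is the passage --- needed both in the base case and in the inductive step --- between the ``internal'' description of these fibrations (the pre-cocartesian condition together with the fibrations induced on mapping objects) and the ``external'' copresheaf-type description in the simplicial directions: concretely, showing that a $1$-cocartesian fibration with discrete fibers satisfies the $0$-cocartesian condition $\cat{D}(1)\simeq\cat{C}(1)\times_{\cat{C}(\{0\})}\cat{D}(\{0\})$ not merely on objects but as an equivalence of $(n-1)$-categories, and verifying that forming fibers of $p$ is compatible with forming mapping objects well enough to move the $(d-1)$-categoricity condition back and forth between them. By comparison, the bookkeeping needed to track the various ``strong morphism'' conditions is routine once the morphism-level statement is carried along in the induction.
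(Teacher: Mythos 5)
Your proof is correct and follows the same overall strategy as the paper's: an induction on $d$ that reduces everything to the case $d=0$, where the crux is that a fibrewise $1$-morphism lying over a space-valued fiber is an equivalence, hence every $1$-morphism is cocartesian. The paper compresses the inductive step into the single sentence ``an inductive argument reduces this to the case $d=0$'', and the two ingredients you supply to make that reduction actually run --- the companion morphism-level statement needed to handle the composition squares \eqref{diag:composition preserves cart}, and the identification of the fiber of $\Map_{\cat{D}}(x_0,x_1)\rt\Map_{\cat{C}}(px_0,px_1)$ over an arbitrary $\gamma$ with $\Map_{\cat{D}_{px_1}}(\gamma_!x_0,x_1)$ via the cartesian square \eqref{diag:standard cocartesian condition} attached to a cocartesian lift --- are exactly the right ones. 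The only real divergence is the converse direction of the base case, which is also the step you flag as delicate: rather than establishing the left-fibration condition on $\core_1$ and then promoting it degreewise through the remaining simplicial directions (your appeal to Example \ref{ex:interval} here is the vaguest part of the argument, since the mapping objects of $\cat{D}(1)$ are globular rather than square-shaped), the paper observes that the map $\cat{D}(1)\rt\cat{C}(1)\times_{\cat{C}(\{0\})}\cat{D}(\{0\})$ already admits a fully faithful left adjoint with essential image the $p$-cocartesian morphisms, coming from the pre-cocartesian hypothesis; once every $1$-morphism is shown to be cocartesian (factor it as a cocartesian arrow followed by a fibrewise arrow, the latter being an equivalence because the fiber is a space), that adjoint is essentially surjective and the adjunction collapses to the desired equivalence in one stroke, with no degreewise promotion needed. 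Your route can be completed, but the adjoint formulation is what lets the paper dispose of the higher simplicial degrees for free.
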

In particular, a functor between $n$-categories is a $d$-cocartesian fibration for some $d>n$ if and only if is an $n$-cocartesian fibration.
\begin{proof}
An inductive argument reduces this to the case $d=0$: if $p$ is a $0$-cocartesian fibration, then certainly its fibers are spaces and it is a homwise $0$-cartesian fibration (since each $\cat{D}(p)\rt \cat{C}(p)$ is the base change of a map between spaces). It is a $1$-cocartesian fibration since every arrow in $\cat{D}$ is $p$-cocartesian and every morphism in $\cat{C}$ can be lifted to an arrow in $\cat{D}$ (since $p$ is a $0$-cocartesian fibration).

Conversely, suppose that $p\colon \cat{D}\rt \cat{C}$ is a $1$-cocartesian fibration whose fibers are spaces. We have to verify that the map $\cat{D}(1)\rt \cat{C}(1)\times_{\cat{C}(\{0\})}\cat{D}(\{0\}) $ is an equivalence of $(n-1)$-categories. Note that this map is obtained from the map of $n$-categories \eqref{diag:precocart condition} by taking $(n-1)$-cores. It particular, it admits a fully faithful left adjoint whose essential image consists of the $p$-cocartesian morphisms. It therefore suffices to verify that every $1$-morphism in $\cat{D}$ is $p$-cocartesian: this follows immediately from the fact that every $1$-morphism $\alpha$ factors as $\alpha=\alpha''\circ \alpha'$, where $\alpha'$ is cocartesian and $\alpha''$ is a fiberwise morphism (and hence an equivalence).
\end{proof}
\begin{remark}\label{rem:highest dimension of generality}
It will follow from straightening that the notion of a $d$-cocartesian fibration essentially stabilizes at the level of $(d+1)$-categories: if $p$ is a $d$-cocartesian fibration between $(d+k)$-categories for $k\geq 2$, then $p$ arises as the base change of a $d$-cocartesian fibration $\cat{D}'\rt |\cat{C}|_{d+1}$, where $|\cat{C}|_{d+1}$ is the $(d+1)$-category obtained by inverting all morphisms in dimension $> d+1$.
\end{remark}

\begin{definition}[Lateral $k$-morphisms]
Let $p\colon \cat{D}\rt \cat{C}$ be a $d$-cocartesian fibration between $n$-categories. For any $1\leq k\leq d$, let $\alpha\in \cat{D}(1, \dots, 1 , \vec{0}_{n-k})$ be a $k$-morphism in $\cat{D}$, considered as a $1$-morphism $\alpha\colon f\rt g$ between two $(k-1)$-morphisms $f, g \colon x\rt y$. Let
$$\begin{tikzcd}
p_{(k-1)}\colon \Map_{\cat{D}}(x, y)\arrow[r] & \Map_{\cat{C}}(p(x), p(y))
\end{tikzcd}$$
be the induced map between $(n+1-k)$-categories of $(k-1)$-morphisms between $x$ and $y$. Then $\alpha$ is said to be ($p$-)\emph{lateral} if it is:
\begin{itemize}
\item a $p_{(k-1)}$-cocartesian $1$-morphism if $k$ is odd.
\item a $p_{(k-1)}$-cartesian $1$-morphism if $k$ is even.
\end{itemize}
When $p$ is an $d$-cartesian fibration, we define \emph{lateral} $k$-morphisms similarly, but with the cases of $k$ even and odd exchanged.
In particular, the meaning depends on whether $p$ is considered as a $d$-cocartesian or $d$-cartesian fibration.
\end{definition}
Unraveling the definitions, a square \eqref{diag:strong map} defines a strong morphism between two $d$-cocartesian fibrations if and only if the map $f$ preserves lateral $k$-morphisms for all $k\leq d$. 

\begin{lemma}\label{lem:stability properties cocart2}
Let $p\colon \cat{D}\rt \cat{C}$ be a map of $n$-categories covering a map of spaces $q\colon T\rt S$, as in Corollary \ref{cor:disjoint unions of pre-cocart}. Then $p$ is a $d$-cocartesian fibration if and only if each map on fibers $p_t\colon \cat{D}_t\rt \cat{C}_{q(t)}$ is a $d$-cocartesian fibration. For each $k\leq d$, a $k$-morphism in $\cat{D}$ covering a point $t\in T$ is $p$-lateral if and only if it is $p_t$-lateral.
\end{lemma}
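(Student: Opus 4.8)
The plan is to argue by induction on $d$, establishing simultaneously the analogous statement for $d$-cartesian fibrations; the latter will follow formally from the cocartesian case applied to $p^{\op}\colon \cat{D}^{\op}\rt \cat{C}^{\op}$ over $q$, using that $T^{\op}=T$ and $S^{\op}=S$ since $T$ and $S$ are spaces. For the base case $d=0$, I would note that both the source map $\cat{D}(1)\rt \cat{D}(\{0\})$ and the projection $\cat{C}(1)\times_{\cat{C}(\{0\})}\cat{D}(\{0\})\rt \cat{D}(\{0\})$ factor through $\cat{D}(0)\rt T$, so that the map whose invertibility defines $0$-cocartesianness of $p$ is a map of $(n-1)$-categories over the space $T$. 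Such a map is an equivalence if and only if it is so on each fibre over a point $t\in T$, since the fibre functors over the points of a space are jointly conservative; a direct manipulation of the pullbacks involved then identifies this fibre with the map $\cat{D}_t(1)\rt \cat{C}_{q(t)}(1)\times_{\cat{C}_{q(t)}(\{0\})}\cat{D}_t(\{0\})$ defining $0$-cocartesianness of $p_t$. There are no lateral $k$-morphisms when $d=0$, so this settles that case.

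For the inductive step ($d\geq 1$), recall that $p$ is a $d$-cocartesian fibration exactly when (i) it is a pre-cocartesian fibration and (ii) it is a local $(d-1)$-cartesian fibration whose composition maps \eqref{diag:composition preserves cart} are strong. Statement (i) is handled directly by Corollary \ref{cor:disjoint unions of pre-cocart}: $p$ is a pre-cocartesian fibration if and only if each $p_t$ is, and the $p$-cocartesian $1$-morphisms are exactly those that are $p_t$-cocartesian in some fibre. This already yields the case $k=1$ of the statement about lateral morphisms.

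For (ii), I would fix $x_0, x_1\in \cat{D}$, lying over $t_0, t_1\in T$, and observe that the map $\Map_{\cat{D}}(x_0, x_1)\rt \Map_{\cat{C}}(px_0, px_1)$ is a map of $(n-1)$-categories covering the map of spaces $\Map_T(t_0, t_1)\rt \Map_S(qt_0, qt_1)$. The inductive hypothesis --- in its $(d-1)$-cartesian version --- then says this map is a $(d-1)$-cartesian fibration if and only if each of its fibres over a point of $\Map_T(t_0,t_1)$ is one. Now the fibre of $\Map_{\cat{D}}(x_0,x_1)\rt \Map_T(t_0,t_1)$ over a path $\gamma$ is, after transporting $x_1$ along $\gamma$, the mapping $(n-1)$-category $\Map_{\cat{D}_{t_0}}(x_0, \gamma^{*}x_1)$, and likewise in $\cat{C}$; these identifications are natural in the objects and compatible with composition, so they identify the above fibres with the maps on mapping $(n-1)$-categories induced by the various $p_t$, and identify the composition squares \eqref{diag:composition preserves cart} for $p$ with those for the $p_t$. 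Hence (ii) holds for $p$ if and only if each $p_t$ is a local $(d-1)$-cartesian fibration with strong composition; together with (i) this proves that $p$ is a $d$-cocartesian fibration iff every $p_t$ is. The lateral statement for $2\leq k\leq d$ follows because a $k$-morphism over $t\in T$ is $p$-lateral precisely when it is cocartesian (resp.\ cartesian) as a $1$-morphism for the relevant map of iterated mapping objects, which by iterating the identifications just described is the corresponding map induced by $p_t$; cocartesianness of a $1$-morphism is invariant under such an equivalence.

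The hard part will be the recursive reduction in the previous paragraph: one applies the lemma one categorical level down to $\Map_{\cat{D}}(x_0,x_1)\rt \Map_{\cat{C}}(px_0,px_1)$ viewed over the space-valued map $\Map_T(t_0,t_1)\rt \Map_S(qt_0,qt_1)$ rather than over a point, and one must set up the identification of the fibres of $\Map_{\cat{D}}(x_0,x_1)\rt \Map_T(t_0,t_1)$ with mapping objects of the fibres $\cat{D}_t$ carefully enough to be natural and compatible with composition. Note that because $\Map_T(t_0,t_1)$ is in general merely a space and not contractible, one cannot bypass the inductive hypothesis by base-changing $\Map_{\cat{D}}(x_0,x_1)\rt \Map_{\cat{C}}(px_0,px_1)$ directly along the inclusion $\Map_{\cat{D}_{t_0}}(x_0,x_1)\hooklongrightarrow \Map_{\cat{D}}(x_0,x_1)$; the inductive hypothesis is what does the work. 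Everything else should be a routine unwinding of Definition \ref{def:cocart fib}.
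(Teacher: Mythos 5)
Your proposal is correct and is essentially the paper's argument: the paper's proof is literally "this follows by induction, using Corollary \ref{cor:disjoint unions of pre-cocart}", and your write-up is the detailed expansion of that one-liner (base case $d=0$ over the space $T$, Corollary \ref{cor:disjoint unions of pre-cocart} for the pre-cocartesian condition and $k=1$ laterality, and the inductive hypothesis applied to $\Map_{\cat{D}}(x_0,x_1)\rt\Map_{\cat{C}}(px_0,px_1)$ over $\Map_T(t_0,t_1)\rt\Map_S(qt_0,qt_1)$ for the local condition). The only cosmetic quibble is that the cartesian dual is obtained by taking opposites in \emph{every} dimension, not just the first, but this does not affect the argument.
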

\begin{proof}
This follows by induction, using Corollary \ref{cor:disjoint unions of pre-cocart}.
\end{proof}
\begin{lemma}\label{lem:local cartesian}
A map $p\colon \cat{D}\rt \cat{C}$ is a homwise $(d-1)$-cartesian fibration if and only if it satisfies the following condition: for every $[m]\in \Del$, the map $\cat{D}(m)\rt \cat{C}(m)$ is a $(d-1)$-cartesian fibration and each $[m]\rt [m']$ induces a strong morphism between $(d-1)$-cartesian fibrations.
\end{lemma}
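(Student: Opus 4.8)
The plan is to deduce both implications from the fibrewise criterion of Lemma~\ref{lem:stability properties cocart2} (in its cartesian variant, obtained via Remark~\ref{rem:opposites of cocart fib}) applied over spaces of tuples of objects, together with the stability of $(d-1)$-cartesian fibrations and strong morphisms under finite products. I would first recall the structural fact: for an $n$-category $\cat{C}$, the $(n-1)$-category $\cat{C}(m)$ fibres over the space $\cat{C}(0)^{m+1}$ via the vertices, and by the Segal condition the fibre over a tuple $(c_0,\dots,c_m)$ is $\Map_{\cat{C}}(c_0,c_1)\times\dots\times\Map_{\cat{C}}(c_{m-1},c_m)$, a point when $m=0$; the map $\cat{D}(m)\rt\cat{C}(m)$ thus covers the map of spaces $\cat{D}(0)^{m+1}\rt\cat{C}(0)^{m+1}$.

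Suppose first that $p$ is a local $(d-1)$-cartesian fibration. For $m=0$ the map $\cat{D}(0)\rt\cat{C}(0)$ is a map of spaces, hence a $0$-cocartesian fibration (Definition~\ref{def:cocart fib}(a)) and therefore, by Lemma~\ref{lem:d-cocart as d+1-cocart} and Remark~\ref{rem:opposites of cocart fib}, a $(d-1)$-cartesian fibration. For $m\geq 1$, Lemma~\ref{lem:stability properties cocart2} reduces the claim to showing each fibre $\prod_i\Map_{\cat{D}}(x_i,x_{i+1})\rt\prod_i\Map_{\cat{C}}(px_i,px_{i+1})$ is a $(d-1)$-cartesian fibration; each factor is one by hypothesis, and a finite product of $(d-1)$-cartesian fibrations is again one (the defining adjunction of Definition~\ref{def:pre-cocart} is the product of the individual adjunctions, cf.\ Lemma~\ref{lem:pre-cocart stable}(4), and the local clause follows by an easy induction on $d$, its base case being that finite products commute with pullbacks). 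For the second assertion, since strong morphisms compose it suffices to treat the generating cofaces and codegeneracies: on fibres an inner coface is a product of identities with the composition map, which is a strong morphism by the condition that composition preserves cartesian morphisms (diagram~\eqref{diag:composition preserves cart}); an outer coface is a projection off a product; and a codegeneracy is a product of identities with the unit map $\cat{D}(0)\rt\cat{D}(1)$, $x\mapsto\mathrm{id}_x$, which is strong because identities, and more generally equivalences, are lateral $k$-morphisms in every dimension. Products and projections of strong morphisms are strong, so all of these are strong morphisms of $(d-1)$-cartesian fibrations.

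Conversely, assume each $\cat{D}(m)\rt\cat{C}(m)$ is a $(d-1)$-cartesian fibration and each $[m]\rt[m']$ induces a strong morphism. Applying Lemma~\ref{lem:stability properties cocart2} to $\cat{D}(1)\rt\cat{C}(1)$ over $\cat{D}(0)^2\rt\cat{C}(0)^2$ shows that each $\Map_{\cat{D}}(x_0,x_1)\rt\Map_{\cat{C}}(px_0,px_1)$ is a $(d-1)$-cartesian fibration. For the composition clause, the inner coface $d_1\colon[1]\rt[2]$ induces a strong morphism $\cat{D}(2)\rt\cat{D}(1)$ over $\cat{C}(2)\rt\cat{C}(1)$ lying over $\cat{D}(0)^3\rt\cat{D}(0)^2$; since lateral morphisms are detected on fibres (Lemma~\ref{lem:stability properties cocart2}), restricting this strong morphism to the fibre over a tuple $(x_0,x_1,x_2)$ is precisely diagram~\eqref{diag:composition preserves cart}, now seen to be a strong morphism of $(d-1)$-cartesian fibrations. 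Hence $p$ is a local $(d-1)$-cartesian fibration.

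The step I expect to be the main obstacle is the inductive bookkeeping on $d$: one must know that the properties "$(d-1)$-cartesian fibration" and "strong morphism" are stable under finite products, detected on fibres of maps over spaces, and that equivalences stay lateral in all categorical dimensions. Each ingredient is either immediate from the definitions or already recorded (Lemmas~\ref{lem:pre-cocart stable} and~\ref{lem:stability properties cocart2}, Corollary~\ref{cor:disjoint unions of pre-cocart}), so the argument is really an unwinding; the one genuine subtlety is that all these stability statements are phrased for cocartesian fibrations and must be transported to the cartesian case through Remark~\ref{rem:opposites of cocart fib}.
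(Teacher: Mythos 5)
Your argument is correct and follows essentially the same route as the paper: both directions come down to applying the fibrewise detection criterion (Lemma~\ref{lem:stability properties cocart2}) to $\cat{D}(m)\rt\cat{C}(m)$ over the spaces $\cat{D}(0)^{m+1}\rt\cat{C}(0)^{m+1}$, identifying the fibres with products of mapping $(d-1)$-categories via the Segal conditions. The only (harmless) difference is organizational: the paper uses the Segal conditions to reduce everything to $\cat{D}(1)\rt\cat{C}(1)$ and the single face $d_1\colon\cat{D}(2)\rt\cat{D}(1)$, whereas you check all generating cofaces and codegeneracies fibrewise, which additionally requires the (true, standard) observation that equivalences are lateral in every dimension.
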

\begin{proof}
Taking sources and targets defines a natural transformation from the map $\cat{D}(1)\rt \cat{C}(1)$ to the map of spaces $\cat{D}(0)\times \cat{D}(0)\rt \cat{C}(0)\times \cat{C}(0)$. By Lemma \ref{lem:stability properties cocart2}, $\cat{D}(1)\rt \cat{C}(1)$ is then a $(d-1)$-cartesian fibration if and only if for each $d_0, d_1\in \cat{D}$, the map on fibers $\Map_{\cat{D}}(d_0, d_1)\rt \Map_{\cat{C}}(pd_0, pd_1)$ is a $(d-1)$-cartesian fibration. In turn, the Segal conditions imply that each $\cat{D}(m)\rt \cat{C}(m)$ is a $(d-1)$-cartesian fibration as soon as $\cat{D}(1)\rt \cat{C}(1)$ is.

In this case, the simplicial structure maps induce strong morphisms between $(d-1)$-cartesian fibrations if and only if $d_1\colon \cat{D}(2)\rt \cat{D}(1)$ is a strong morphism. Again, Lemma \ref{lem:stability properties cocart2} shows that this can be verified fiberwise over each triple of objects $d_0, d_1, d_2\in \cat{D}(0)$, which means precisely that $\Map_{\cat{D}}(d_1, d_2)\times \Map_{\cat{D}}(d_0, d_1)\rt \Map_{\cat{D}}(d_0, d_2)$ defines a strong morphism of $(d-1)$-cartesian fibrations.
\end{proof}
\begin{lemma}\label{lem:stability properties cocart1}
The following assertions hold:
\begin{enumerate}
\item The class of $d$-cocartesian fibrations is closed under composition and base change.
\item\label{it:functor} If $p\colon \cat{D}\rt \cat{C}$ is a $d$-cocartesian fibration, then $\Fun_n(\cat{B}, \cat{D})\rt \Fun_n(\cat{B}, \cat{C})$ is a $d$-cocartesian fibration as well and restriction along $\cat{B}'\rt \cat{B}$ defines a strong morphism.
\item\label{it:limits of dcocart} The subcategory of $\Fun([1], \Cat_{n})$ spanned by the $d$-cocartesian fibrations and strong morphisms between them is closed under limits.
\end{enumerate}
\end{lemma}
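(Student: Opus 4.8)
The plan is to prove all three assertions simultaneously by induction on $d$; by Remark~\ref{rem:opposites of cocart fib} the mirror statements for $d$-cartesian fibrations hold as well, so I may freely use the inductive hypothesis for $(d-1)$-cartesian fibrations. The condition of being a pre-cocartesian fibration is, in each of the three cases, already taken care of: items (1), (2), (3) of Lemma~\ref{lem:pre-cocart stable} handle composition, base change and $\Fun_n(\cat{B},-)$ (with restriction preserving cocartesian $1$-morphisms), and item (4) handles closure under limits. Thus the real work is to verify the remaining clause of Definition~\ref{def:cocart fib}(b) --- that the map is a \emph{local $(d-1)$-cartesian fibration} --- together with the strongness of the comparison functors appearing in parts (2) and (3).

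In the base case $d=0$ the local clause is vacuous and every square between $0$-cocartesian fibrations is strong, so one only checks that the defining equivalence $\cat{D}(1)\rto{\sim}\cat{C}(1)\times_{\cat{C}(\{0\})}\cat{D}(\{0\})$ is preserved, which is a direct manipulation of iterated pullbacks (for $\Fun_n(\cat{B},-)$ one uses the identifications $\Fun_n(\cat{B},\cat{E})(1)\simeq\core_{n-1}\Fun_n\big(\cat{B},\Fun_n([1],\cat{E})\big)$ and $\core_{n-1}\Fun_n([1],\cat{E})\simeq\cat{E}(1)$, reducing everything to the fact that $\Fun_n(\cat{B},-)$ preserves limits; for limits one uses that they are computed pointwise on $\Del_n$). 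For $d\ge 1$, the local clause and the strongness conditions are phrased entirely in terms of the maps of mapping $(d-1)$-categories $\Map_{\cat{D}}(x_0,x_1)\rt\Map_{\cat{C}}(px_0,px_1)$ and the composition squares~\eqref{diag:composition preserves cart}. For composition and base change this is routine: forming the mapping object commutes with both operations --- for instance $\Map_{\cat{D}\times_{\cat{C}}\cat{C}'}\big((x,c),(x',c')\big)\simeq\Map_{\cat{D}}(x,x')\times_{\Map_{\cat{C}}(px,px')}\Map_{\cat{C}'}(c,c')$ --- so one invokes the composition, resp.\ base-change, part of the inductive hypothesis for the induced maps of mapping $(d-1)$-categories, using in addition that composites and base changes of strong morphisms of $(d-1)$-cartesian fibrations are again strong (a short separate induction on the categorical dimension). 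The closure under limits in part (3) is analogous, using $\Map_{\lim_i\cat{D}_i}\simeq\lim_i\Map_{\cat{D}_i}$ and part (3) of the inductive hypothesis.

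The one step that is genuinely different --- and the main obstacle --- is part (2) for $\Fun_n(\cat{B},-)$, since this cannot be reduced to a statement fibrewise over $\cat{B}$. The key point is that the mapping $(d-1)$-category $\Map_{\Fun_n(\cat{B},\cat{D})}(F,G)$ is the end $\int_{b\in\cat{B}}\Map_{\cat{D}}(Fb,Gb)$, that is, a limit over (the twisted arrow category of) $\cat{B}$ of the mapping objects $\Map_{\cat{D}}(Fb_0,Gb_1)$, whose transition maps are the whiskerings by the $1$-morphisms $F(b_0'\to b_0)$ and $G(b_1\to b_1')$. Each such whiskering is a \emph{strong} morphism of $(d-1)$-cartesian fibrations over the corresponding mapping object of $\cat{C}$: whiskering by a $1$-morphism $\alpha$ factors as the composition functor precomposed with $(-,\mathrm{id}_\alpha)$, and since $\mathrm{id}_\alpha$ is an equivalence (hence cartesian at every level) and the composition square~\eqref{diag:composition preserves cart} is strong by hypothesis, this composite preserves lateral morphisms. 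It follows that $\Map_{\Fun_n(\cat{B},\cat{D})}(F,G)\rt\Map_{\Fun_n(\cat{B},\cat{C})}(\bar pF,\bar pG)$ is the limit of a diagram of $(d-1)$-cartesian fibrations and strong morphisms, hence a $(d-1)$-cartesian fibration by part (3) of the inductive hypothesis; the same reasoning shows that composition in $\Fun_n(\cat{B},\cat{D})$ is strong and that restriction along any $\cat{B}'\rt\cat{B}$ defines a strong morphism. This closes the induction.
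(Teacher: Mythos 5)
Your overall architecture (simultaneous induction on $d$, Lemma \ref{lem:pre-cocart stable} for the pre-cocartesian clauses, mapping-object bookkeeping for parts (1) and (3)) is sound and matches the paper, and your observation that whiskering is strong because it factors through the composition square \eqref{diag:composition preserves cart} against an identity is correct and useful. The problem is the formula on which you base part (2). You describe $\Map_{\Fun_n(\cat{B},\cat{D})}(F,G)$ as a limit over $\Tw(\cat{B})$ of the mapping objects $\Map_{\cat{D}}(Fb_0,Gb_1)$ with whiskering transition maps; that is the \emph{unenriched} end formula, valid only when $\cat{B}$ is a $1$-category. For $n\geq 2$ the correct enriched end is the limit of the diagram $\prod_b\Map_{\cat{D}}(Fb,Gb)\rightrightarrows\prod_{b,b'}\Fun_{n-1}\big(\Map_{\cat{B}}(b,b'),\Map_{\cat{D}}(Fb,Gb')\big)$, which involves cotensors by the mapping $(n-1)$-categories of $\cat{B}$. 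Compare Example \ref{ex:interval}: already for $\cat{B}=[1]_{\cat{A}}$ the mapping object is a pullback against $\Fun_{n-1}(\cat{A},\Map_{\cat{D}}(\alpha_0,\beta_1))$, not against $\Map_{\cat{D}}(\alpha_0,\beta_1)$ itself. Your diagram of ``$(d-1)$-cartesian fibrations and strong morphisms'' therefore does not compute the right object, and the lemma is used in the paper precisely in the regime $n=d+1\geq 2$ where this matters (e.g.\ in Lemma \ref{lem:marked functor cats} and Proposition \ref{prop:cocart forms cart fibration}).

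The gap is repairable: the cotensor terms are exactly an instance of assertion (2) of the present lemma one categorical dimension down (for $(d-1)$-cartesian fibrations of $(n-1)$-categories, via the opposites of Remark \ref{rem:opposites of cocart fib}), which is available in your induction on $d$; combined with products and part (3) this handles the enriched end. But you would also need to justify the enriched end formula itself in the iterated-Segal-space model, which the paper does not establish in this generality. The paper sidesteps both issues by first invoking part (3) to reduce to the generating objects $\cat{B}=[1]_{\cat{A}}$, where Corollary \ref{cor:internal hom} and Example \ref{ex:interval} give an explicit degreewise pullback description of $\Fun_n([1]_{\cat{A}},\cat{D})\rt\Fun_n([1]_{\cat{A}},\cat{C})$ that can be checked against the criterion of Lemma \ref{lem:local cartesian}.
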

\begin{proof}
All properties follow from Lemma \ref{lem:pre-cocart stable} by induction, the case $d=0$ being readily verified. The only assertion requiring extra care is part \ref{it:functor}. Using part \ref{it:limits of dcocart}, it suffices to verify this in the case where $\cat{B}=[1]_{\cat{A}}$ for an $(n-1)$-category $\cat{A}$ (and likewise for $\cat{B}'$). By Lemma \ref{lem:pre-cocart stable}, $q\colon \Fun_n([1]_{\cat{A}}, \cat{D})\rt \Fun_n([1]_{\cat{A}}, \cat{C})$ is pre-cocartesian and restriction preserves cocartesian $1$-morphisms. To see that it is locally $(d-1)$-cartesian, consider the following diagram of simplicial $(n-1)$-categories, given in simplicial degree $m$ by
$$\begin{tikzcd}
\Fun_{n-1}\big(\cat{A}, \cat{D}\big([1]\times [m]\big)\big)\arrow[d]\arrow[r] &  \Fun_{n-1}\big(\cat{A}, \cat{D}(1)^{\times m+1}\big)\arrow[d] & \core_0\Fun_{n-1}\big(\cat{A}, \cat{D}(1)^{\times m+1}\big)\arrow[d]\arrow[l]\\
 \Fun_{n-1}\big(\cat{A}, \cat{C}\big([1]\times [m]\big)\big)\arrow[r]  & \Fun_{n-1}\big(\cat{A}, \cat{C}(1)^{\times m+1}\big)&  \core_0\Fun_{n-1}\big(\cat{A}, \cat{C}(1)^{\times m+1}\big)\arrow[l]
\end{tikzcd}$$
Using Lemma \ref{lem:local cartesian}, it follows by inductive hypothesis that the vertical maps define simplicial diagrams of $(d-1)$-cartesian fibrations and strong morphisms between them. Furthermore, the horizontal maps define strong morphisms between these $(d-1)$-cartesian fibrations. Taking pullbacks along the rows, we then obtain another simplicial diagram of $(d-1)$-cartesian fibrations and strong morphisms. By Corollary \ref{cor:internal hom}, this is precisely the map $q\colon \Fun_n\big([1]_{\cat{A}}, \cat{D}\big)\rt \Fun_n\big([1]_{\cat{A}}, \cat{C}\big)$.

We conclude by induction and Lemma \ref{lem:local cartesian} that $q$ is locally $(d-1)$-cartesian. From the above description one readily verifies that restriction along $[1]_{\cat{A}'}\rt [1]_{\cat{A}}$ preserves lateral $k$-morphisms for all $1\leq k\leq d$.
\end{proof}

\subsection{The \texorpdfstring{$(d+2)$}{d+2}-category of \texorpdfstring{$d$}{d}-cocartesian fibrations}\label{sec:cat of cocart fibs}
Because of Remark \ref{rem:highest dimension of generality}, we will henceforth \emph{only consider $d$-cocartesian fibrations between $(d+1)$-categories}. In light of Lemma \ref{lem:d-cocart as d+1-cocart}, this is not really restrictive, since we can always choose $d$ large enough to cover $n$-cocartesian fibration between $m$-categories for some given $m$ and $n$.

\begin{definition}\label{def:cat of d-cocart fibs}
We will write $\hcat{Cocart}_d\subseteq \Fun_{d+1}\big([1], \hcat{Cat}_{d+1}\big)$ for the sub-$(d+2)$-category whose objects are the $d$-cocartesian fibrations $p\colon \cat{D}\rt \cat{C}$ and whose $1$-morphisms are the strong morphisms between them (and all higher morphisms between those). Let us write $\pi\colon \hcat{Cocart}_d\rt \hcat{Cat}_{d+1}$ for the codomain projection and $\hcat{Cocart}_d(\cat{C})$ for the fiber over a $(d+1)$-category $\cat{C}$.

Let us furthermore write $\pi\colon \cat{Cocart}_{d}\rt \Cat_{d+1}$ for the induced functor between $1$-cores. The fiber over a $(d+1)$-category $\cat{C}$ is denoted $\cat{Cocart}_{d}(\cat{C})$.
\end{definition}
\begin{remark}
\sloppy One can model $\pi\colon \hcat{Cocart}_d\rt \hcat{Cat}_{d+1}$ by a strict functor between $\CompSegS_{d+1}$-enriched categories, where $\hcat{Cocart}_d$ is modeled by an enriched subcategory of \mbox{$\Fun_{\enr}([1], \CompSegS_{d+1})$} whose objects are fibrant models for $d$-cocartesian fibrations and whose mapping objects are the subobjects consisting of strong morphisms.
\end{remark}
\begin{proposition}\label{prop:cocart forms cart fibration}
The projection $\pi\colon \hcat{Cocart}_d\rt \hcat{Cat}_{d+1}$ is a $(d+1)$-cartesian fibration.
\end{proposition}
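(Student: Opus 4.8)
The plan is to verify the two conditions of Definition~\ref{def:cocart fib}(c) for a $(d+1)$-cartesian fibration: that $\pi$ is a pre-cartesian fibration, and that it is a local $d$-cocartesian fibration with strong composition maps. The inputs are the stability properties of Lemma~\ref{lem:stability properties cocart1} and Lemma~\ref{lem:pre-cocart stable}, together with the explicit formula for mapping objects in arrow $(d+1)$-categories from Example~\ref{ex:interval} (applied to $[1]=[1]_{\ast}$). Note also that the fibers $\hcat{Cocart}_d(\cat{C})$ are full sub-$(d+1)$-categories of the overcategories $\hcat{Cat}_{d+1}/\cat{C}$, so they are $(d+1)$-categories, as they must be for a $(d+1)$-cartesian fibration.

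\emph{Pre-cartesian fibration.} By the dual of Corollary~\ref{cor:classical def of pre-cocart} it suffices to produce a $\pi$-cartesian lift of an arbitrary $f\colon\cat{C}'\rt\cat{C}$ at an arbitrary $d$-cocartesian fibration $p\colon\cat{D}\rt\cat{C}$. I take the base-change square $\bar{f}\colon f^*p\rt p$ over $f$. By Lemma~\ref{lem:stability properties cocart1}(1), $f^*p$ is again a $d$-cocartesian fibration, and $\bar{f}$ is a strong morphism because base change preserves and detects cocartesian $1$-morphisms (Lemma~\ref{lem:pre-cocart stable}(2)) and hence, by induction on $k$, lateral $k$-morphisms. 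To see that $\bar{f}$ is $\pi$-cartesian I apply the criterion of Lemma~\ref{lem:cocartesian morphisms}: for every $r\in\hcat{Cocart}_d$ with codomain $\cat{B}$ one must check
$$\Map_{\hcat{Cocart}_d}(r, f^*p)\;\simeq\;\Map_{\hcat{Cocart}_d}(r, p)\times_{\Map_{\hcat{Cat}_{d+1}}(\cat{B},\cat{C})}\Map_{\hcat{Cat}_{d+1}}(\cat{B},\cat{C}'),$$
which reduces to the analogous elementary statement for the codomain projection $\Fun_{d+1}([1],\hcat{Cat}_{d+1})\rt\hcat{Cat}_{d+1}$ --- where $\bar{f}$ is a pullback square --- together with the fact that a functor $\cat{E}\rt f^*\cat{D}$ preserves lateral morphisms exactly when its composite to $\cat{D}$ does.

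\emph{Local $d$-cocartesian fibration.} Fix $p\colon\cat{D}\rt\cat{C}$ and $p'\colon\cat{D}'\rt\cat{C}'$ in $\hcat{Cocart}_d$. By Example~\ref{ex:interval}, the mapping $(d+1)$-category $\Map_{\Fun_{d+1}([1],\hcat{Cat}_{d+1})}(p,p')$ is the pullback of the postcomposition functor $\Fun_{d+1}(\cat{D},p')\colon\Fun_{d+1}(\cat{D},\cat{D}')\rt\Fun_{d+1}(\cat{D},\cat{C}')$ along precomposition with $p$, so its projection to $\Map_{\hcat{Cat}_{d+1}}(\cat{C},\cat{C}')$ is a $d$-cocartesian fibration by Lemma~\ref{lem:stability properties cocart1}(1) and (2), with cocartesian morphisms the pointwise cocartesian natural transformations. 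The subcategory $\Map_{\hcat{Cocart}_d}(p,p')$ is the full sub-$(d+1)$-category on the strong morphisms, and it remains to show it is closed under these cocartesian morphisms and their lateral morphisms, and that composition of strong morphisms is strong. All of this follows from the single claim that \emph{cocartesian transport preserves strong morphisms}: if $g\colon\cat{D}\rt\cat{D}'$ is strong over $u$ and $g\Rightarrow g'$ is pointwise cocartesian over a natural transformation $u\Rightarrow u'$, then $g'$ is strong. On objects and $1$-morphisms this is the cancellation property of cocartesian arrows applied to the naturality squares of $g\Rightarrow g'$; in higher dimensions one passes to mapping categories, where the transport is governed by the cartesian squares~\eqref{diag:standard cocartesian condition} associated to the cocartesian arrows $g(x_i)\rt g'(x_i)$ of $\cat{D}'$, and one concludes by induction on $d$.

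\emph{Main obstacle.} The technical heart is precisely this last claim, i.e.\ keeping track of lateral $k$-morphisms through the inductive reduction to mapping categories and checking their compatibility with cocartesian transport via~\eqref{diag:standard cocartesian condition}. Once it is established, one obtains at once that $\Map_{\hcat{Cocart}_d}(p,p')\rt\Map_{\hcat{Cat}_{d+1}}(\cat{C},\cat{C}')$ is a $d$-cocartesian fibration with the same cocartesian morphisms, that the inclusion into the ambient mapping $(d+1)$-category is a strong morphism, and that strong composition is inherited from the ambient case --- which completes the verification of Definition~\ref{def:cocart fib}(c), hence the proposition.
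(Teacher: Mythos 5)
Your proposal is correct and follows essentially the same route as the paper: the pre-cartesian lifts are the base-change squares, checked via the identification $\Map_{\hcat{Cocart}_d}(p,p')\simeq\Fun^{\natural}_{d+1}(\cat{D},\cat{D}')\times_{\Fun_{d+1}(\cat{D},\cat{C}')}\Fun_{d+1}(\cat{C},\cat{C}')$, and the local $d$-cocartesian condition reduces to your key claim that cocartesian transport preserves lateral-morphism-preserving functors, which is exactly the paper's Lemma \ref{lem:marked functor cats} and is proved there by the same two steps you sketch (cancellation for $1$-morphisms, then the square of mapping $d$-categories with $\mu(a_0)^*$ an equivalence and $\mu(a_1)_*$ strong). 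The only point you gloss is that strongness of the composition map is not a consequence of the transport claim but a separate (routine) check via horizontal composition of lateral transformations, using that lateral morphisms are detected pointwise.
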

\begin{lemma}\label{lem:marked functor cats}
Let $p\colon \cat{D}\rt \cat{C}$ be a $d$-cocartesian fibration and let $\cat{A}$ be a $(d+1)$-category equipped with subspaces $S_k\subseteq \cat{A}(1, \dots, 1, \vec{0}_{d+1-k})$ of `marked $k$-morphisms' for all $1\leq k\leq d$ (without any further condition). Let $\Fun^{\sharp}_{d+1}(\cat{A}, \cat{D})\subseteq \Fun_{d+1}(\cat{A}, \cat{D})$ denote the full sub-$(d+1)$-category spanned by those functors $\cat{A}\rt \cat{D}$ sending every marked $k$-morphism in $\cat{A}$ to a $p$-lateral $k$-morphism in $\cat{D}$. Then the natural map $\Fun^{\sharp}_{d+1}(\cat{A}, \cat{D})\rt \Fun_{d+1}(\cat{A}, \cat{C})$ is a $d$-cocartesian fibration.
\end{lemma}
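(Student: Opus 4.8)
The plan is to deduce the statement from the corresponding facts about the unmarked map $\bar q\colon\Fun_{d+1}(\cat{A},\cat{D})\rt\Fun_{d+1}(\cat{A},\cat{C})$, which is a $d$-cocartesian fibration by Lemma~\ref{lem:stability properties cocart1}\ref{it:functor}. Since $\Fun^{\sharp}_{d+1}(\cat{A},\cat{D})$ is by definition a \emph{full} sub-$(d+1)$-category of $\Fun_{d+1}(\cat{A},\cat{D})$ and the map in question is the restriction of $\bar q$, I would verify the two defining conditions of Definition~\ref{def:cocart fib}(b) for $q$ one at a time. The local $(d-1)$-cartesian condition (with composition a strong map) is then \emph{immediate} and requires no induction: for $F_0,F_1\in\Fun^{\sharp}_{d+1}(\cat{A},\cat{D})$, fullness identifies $\Map_{\Fun^{\sharp}}(F_0,F_1)\rt\Map_{\Fun_{d+1}(\cat{A},\cat{C})}(qF_0,qF_1)$ with $\Map_{\Fun_{d+1}(\cat{A},\cat{D})}(F_0,F_1)\rt\Map_{\Fun_{d+1}(\cat{A},\cat{C})}(\bar qF_0,\bar qF_1)$, which is a $(d-1)$-cartesian fibration because $\bar q$ is; likewise the composition map of $\Fun^{\sharp}_{d+1}(\cat{A},\cat{D})$ is a restriction of that of $\Fun_{d+1}(\cat{A},\cat{D})$, hence a strong map of $(d-1)$-cartesian fibrations.

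For the pre-cocartesian condition, $\bar q$ is a pre-cocartesian fibration by Lemma~\ref{lem:pre-cocart stable}(3), and—applying the last sentence of that item to the inclusions $\ast\rt\cat{A}$ of objects, together with a cancellation argument—its cocartesian $1$-morphisms are exactly the pointwise $p$-cocartesian natural transformations. Fullness shows that a $\bar q$-cocartesian morphism between two objects of $\Fun^{\sharp}_{d+1}(\cat{A},\cat{D})$ is automatically $q$-cocartesian, since the criterion of Lemma~\ref{lem:cocartesian morphisms}(1) for $q$ only tests a subclass of the objects that appear for $\bar q$. By Corollary~\ref{cor:classical def of pre-cocart} it therefore suffices to prove that $\Fun^{\sharp}_{d+1}(\cat{A},\cat{D})$ is closed under cocartesian pushforward: if $F_0\in\Fun^{\sharp}_{d+1}(\cat{A},\cat{D})$ and $\alpha\colon F_0\Rightarrow F_1$ is a $\bar q$-cocartesian lift of some $\gamma\colon\bar qF_0\rt G_1$, then $F_1$ again sends every marked $k$-morphism to a $p$-lateral $k$-morphism.

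To prove this, reduce to a single marked cell. Let $C_k$ be the free $(d+1)$-category on a $k$-morphism (so $C_1=[1]$ and $C_k=[1]_{C_{k-1}}$, iterating the construction of Example~\ref{ex:interval}; maps $C_k\rt\cat{E}$ corepresent the $k$-morphisms of $\cat{E}$), and let $\iota_a\colon C_k\rt\cat{A}$ classify a marked $k$-morphism $a$. Restriction along $\iota_a$ preserves $\bar q$-cocartesian $1$-morphisms (Lemma~\ref{lem:pre-cocart stable}(3)), so applying $\iota_a^*$ reduces the claim to: the full subcategory of $\Fun_{d+1}(C_k,\cat{D})$ on functors hitting a $p$-lateral $k$-morphism is closed under cocartesian pushforward. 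For $k=1$ this is the standard cancellation for cocartesian $1$-morphisms. For $k\geq2$, write $C_k=[1]_{C_{k-1}}$; a functor $F\colon C_k\rt\cat{D}$ is the data of objects $F0,F1$ and a functor $F^{01}\colon C_{k-1}\rt\Map_{\cat{D}}(F0,F1)$, and $F$ hits a $p$-lateral $k$-morphism iff $F^{01}$ hits a lateral $(k-1)$-morphism for the $(d-1)$-cartesian fibration $p_{(1)}\colon\Map_{\cat{D}}(F0,F1)\rt\Map_{\cat{C}}(pF0,pF1)$ (the even/odd clauses in the definition of lateral morphism matching up). The components $\alpha_0\colon F0\rt F'0$ and $\alpha_1\colon F1\rt F'1$ are $p$-cocartesian, and naturality of $\alpha$ yields an equivalence $\alpha_0^{*}\circ F'^{01}\simeq(\alpha_1)_{!}\circ F^{01}$ of functors $C_{k-1}\rt\Map_{\cat{D}}(F0,F'1)$. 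Now $(\alpha_1)_{!}$ preserves lateral morphisms, being the restriction of the composition functor to $\{\alpha_1\}\times\Map_{\cat{D}}(F0,F1)$, which is a strong map of $(d-1)$-cartesian fibrations by Definition~\ref{def:cocart fib}(b)(i); and $\alpha_0^{*}$ \emph{reflects} lateral morphisms because, by Lemma~\ref{lem:cocartesian morphisms}(1), it exhibits $\Map_{\cat{D}}(F'0,F'1)$ over $\Map_{\cat{C}}(pF'0,pF'1)$ as a base change of the $(d-1)$-cartesian fibration $\Map_{\cat{D}}(F0,F'1)\rt\Map_{\cat{C}}(pF0,pF'1)$. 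Hence $F^{01}$ hitting laterals forces $F'^{01}$ to, which is the required closure.

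The main obstacle is exactly this last step: bookkeeping how the $p$-cocartesianness of the components $\alpha_0,\alpha_1$ of the transported morphism interacts with the induced fibration $p_{(1)}$ on mapping $d$-categories—post-composition by a cocartesian morphism is controlled by the strong-composition clause of Definition~\ref{def:cocart fib}(b)(i), pre-composition by the base-change description in Lemma~\ref{lem:cocartesian morphisms}(1)—and in particular checking that these two functors genuinely assemble (via the naturality equivalence) so that the two preservation/reflection facts combine. Everything else is formal, and notably no induction on $d$ is needed.
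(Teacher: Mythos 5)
Your proof is correct and follows essentially the same route as the paper: fullness disposes of the local $(d-1)$-cartesian condition, and the real content is showing that the target of a $\bar q$-cocartesian lift of a functor in $\Fun^{\sharp}_{d+1}(\cat{A},\cat{D})$ again sends marked cells to laterals, which you and the paper both establish via the naturality square on mapping $d$-categories --- post-composition with the cocartesian component at the target preserves laterals by the strong-composition clause of Definition \ref{def:cocart fib}, while pre-composition with the cocartesian component at the source reflects them via the base-change criterion of Lemma \ref{lem:cocartesian morphisms}(1). Your extra reduction to the free cell $C_k$ is a harmless reformulation of the paper's direct argument with $\Map_{\cat{A}}(a_0,a_1)$.
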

\begin{proof}
By Lemma \ref{lem:stability properties cocart1}, $\Fun_{d+1}(\cat{A}, \cat{D})\rt \Fun_{d+1}(\cat{A}, \cat{C})$ is a $d$-cocartesian fibration. Since $\Fun^{\sharp}_{d+1}(\cat{A}, \cat{D})$ is a full sub-$(d+1)$-category of $\Fun_{d+1}(\cat{A}, \cat{D})$, the map $\Fun^{\sharp}_{d+1}(\cat{A}, \cat{D})\rt \Fun_{d+1}(\cat{A}, \cat{C})$ remains a homwise $(d-1)$-cartesian fibration. 
It then remains to verify the following assertion: if $\mu\colon f\rt g$ is a cocartesian $1$-morphism in $\Fun_{d+1}(\cat{A}, \cat{D})$ such that $f\in \Fun^\sharp_{d+1}(\cat{A}, \cat{D})$, then $g\in \Fun^\sharp_{d+1}(\cat{A}, \cat{D})$ as well.

Let us first verify that $g$ sends all marked $1$-morphisms in $\cat{A}$ to cocartesian morphisms in $\cat{D}$: indeed, for every such map $\alpha\colon a_0\rt a_1$, there is a commuting square in $\cat{D}$ of the form
$$\begin{tikzcd}
f(a_0)\arrow[r, "\mu"]\arrow[d, "f(\alpha)"{swap}] & g(a_0)\arrow[d, "g(\alpha)"]\\
f(a_1) \arrow[r, "\mu"] & g(a_1).
\end{tikzcd}$$
The horizontal arrows are cocartesian since $\mu$ was a cocartesian natural transformation (Lemma \ref{lem:stability properties cocart1}). Since $f(\alpha)$ is a cocartesian arrow, one sees that $g(\alpha)$ is a cocartesian arrow as well (cf.\ the argument in \cite[Proposition 2.4.1.7]{lur09} and \cite[Lemma 5.1.5]{rie21}).

To see that $g$ preserves  lateral $k$-morphisms for $k>1$, note that for each $a_0, a_1\in \cat{A}$ there is a commuting square of mapping $d$-categories
$$\begin{tikzcd}
\Map_{\cat{A}}(a_0, a_1)\arrow[r, "f"]\arrow[d, "g"{swap}] & \Map_{\cat{D}}(fa_0, fa_1)\arrow[d, "\mu(a_1)_*"]\\
\Map_{\cat{D}}(ga_0, ga_1)\arrow[r, "\mu(a_0)^*"] & \Map_{\cat{D}}(fa_0, ga_1).
\end{tikzcd}$$
Since $p\colon \cat{D}\rt \cat{C}$ is a $d$-cocartesian fibration, the functor $\mu(a_1)_*$ postcomposing with $\mu(a_1)$ is strong, i.e.\ it preserves lateral $k$-morphisms. Since $f$ sends marked $k$-morphisms to lateral $k$-morphisms, it follows that $\mu(a_1)_*\circ f$ does as well. On the other hand, since $\mu(a_0)$ is a cocartesian $1$-morphism, the map $\mu(a_0)^*$ is an equivalence of $d$-categories. This implies that $g$ sends marked $k$-morphisms to lateral $k$-morphisms as well.
\end{proof}
\begin{proof}[Proof of Proposition \ref{prop:cocart forms cart fibration}]
Let us start by verifying that $\pi\colon \hcat{Cocart}_d\rt \hcat{Cat}_{d+1}$ is a homwise $d$-cocartesian fibration. Let $p_0\colon \cat{D}_0\rt \cat{C}_0$ and $p_1\colon \cat{D}_1\rt \cat{C}_1$ be $d$-cocartesian fibrations. We have to check that the induced map on mapping $(d+1)$-categories
$$\begin{tikzcd}
\pi\colon \Map_{\hcat{Cocart}_d}(p_0, p_1)\arrow[r] & \Fun_{d+1}(\cat{C}_0, \cat{C}_1)
\end{tikzcd}$$
is a $d$-cocartesian fibration. To see this, note that one can identify
\begin{equation}\label{eq:mapping categories between cocart}\begin{tikzcd}
\Map_{\hcat{Cocart}_d}(p_0, p_1)=\Fun^{\natural}_{d+1}(\cat{D}_0, \cat{D}_1)\times_{\Fun_{d+1}(\cat{D}_0, \cat{C}_1)} \Fun_{d+1}(\cat{C}_0, \cat{C}_1)
\end{tikzcd}\end{equation}
where $\Fun^{\natural}_{d+1}(\cat{D}_0, \cat{D}_1)\subseteq \Fun_{d+1}(\cat{D}_0, \cat{D}_1)$ is the full sub-$(d+1)$-category spanned by those functors $\cat{D}_0\rt \cat{D}_1$ preserving lateral $k$-morphisms. The result then follows from Lemma \ref{lem:marked functor cats} and stability of $d$-cocartesian fibrations under base change (Lemma \ref{lem:stability properties cocart1}).

Next, given another $d$-cocartesian fibration $p_2\colon \cat{D}_2\rt\cat{C}_2$, we have to verify that the square induced by composition
$$\begin{tikzcd}
\Map_{\hcat{Cocart}_d}(p_1, p_2)\times\Map_{\hcat{Cocart}_d}(p_0, p_1)\arrow[d]\arrow[r, "\circ"] & \Map_{\hcat{Cocart}_d}(p_0, p_2)\arrow[d]\\
\Fun_{d+1}(\cat{C}_1, \cat{C}_2)\times \Fun_{d+1}(\cat{C}_0, \cat{C}_1)\arrow[r, "\circ"] & \Fun_{d+1}(\cat{C}_0, \cat{C}_2)
\end{tikzcd}$$
gives a strong morphism of $d$-cocartesian fibrations. To see this, let $f, g\in \Map(p_0, p_1)$ and $f', g'\in \Map(p_1, p_2)$. We will abuse notation and write $f, g\colon \cat{D}_0\rt \cat{D}_1$ and $f', g'\colon \cat{D}_1\rt \cat{D}_2$ for the underlying functors, which all preserve lateral $k$-morphisms. Let $\alpha\colon f \rt g$ and $\beta\colon f'\rt g'$ be lateral $k$-morphisms (which we depict as 1-morphisms instead of higher cells, for simplicity). Then the image of $(\alpha, \beta)$ under $\circ$ is given by the horizontal composition 
$$\begin{tikzcd}
f'f\arrow[r, "f'\alpha"] & f'g\arrow[r, "\beta g"] & g'g.
\end{tikzcd}$$
Then $f'\alpha$ is lateral since $f'$ preserves lateral $k$-morphisms and $\beta g$ is lateral since lateral natural transformations are detected pointwise. Consequently, their horizontal composite is a lateral $k$-morphism as well.

It remains to check that $\pi\colon \hcat{Cocart}_d\rt \hcat{Cat}_{d+1}$ is a pre-cartesian fibration. Given a map $f\colon \cat{C}_1\rt \cat{C}_2$ and a $d$-cocartesian fibration $p_2\colon \cat{D}_2\rt \cat{C}_2$, let $p_1\colon \cat{D}_1=\cat{D}_2\times_{\cat{C}_2}\cat{C}_1\rt \cat{C}_1$ be the base change. We have to show that the canonical (strong) map of cocartesian fibrations $\tilde{f}\colon \cat{D}_1\rt \cat{D}_2$ defines a cartesian lift of $f$, i.e.\ that for any $p_0\colon \cat{D}_0\rt \cat{C}_0$, the square
$$\begin{tikzcd}
\Map_{\hcat{Cocart}_d}(p_0, p_1)\arrow[r, "\tilde{f}_*"]\arrow[d] & \Map_{\hcat{Cocart}_d}(p_0, p_2)\arrow[d]\\
\Fun_{d+1}(\cat{C}_0, \cat{C}_1)\arrow[r, "f_*"] & \Fun_{d+1}(\cat{C}_0, \cat{C}_2)
\end{tikzcd}$$
is cartesian. Using \eqref{eq:mapping categories between cocart}, one sees that this comes down to verifying that
$$\begin{tikzcd}
\Fun^{\natural}_{d+1}\big(\cat{D}_0, \cat{D}_2\times_{\cat{C}_2}\cat{C}_1\big)\arrow[r]\arrow[d] &  \Fun^{\natural}_{d+1}\big(\cat{D}_0, \cat{D}_2\big)\arrow[d]\\
\Fun^{\natural}_{d+1}\big(\cat{D}_0, \cat{C}_1\big)\arrow[r] & \Fun_{d+1}\big(\cat{D}_0, \cat{C}_2\big)
\end{tikzcd}$$
is cartesian. This follows immediately from the fact that a functor $\cat{D}_0\rt \cat{D}_2\times_{\cat{C}_2}\cat{C}_1$ preserves lateral $k$-morphisms if and only if the corresponding map $\cat{D}_0\rt \cat{D}_2$  does.
\end{proof}

\section{Straightening and unstraightening}\label{sec:straightening}
Finally, we turn to straightening and unstraightening: for a small $(d+1)$-category $\cat{C}$, we will establish an equivalence between $d$-cocartesian fibrations over $\cat{C}$ and copresheaves $\cat{C}\rt \hcat{Cat}_d$. We will start by describing this as an equivalence of $1$-categories (Theorem \ref{thm:unstraightening}). We then use this to establish a more structured version of straightening-unstraightening that provides an equivalence of $(d+1)$-categories and is furthermore $(d+2)$-functorial in $\cat{C}$ (Theorem \ref{thm:unstraightening very functorial}).

\subsection{Straightening and unstraightening: 1-categorical version}
Our first goal will be to prove the following: 
\begin{theorem}\label{thm:unstraightening}
There is an equivalence of cartesian fibrations between $1$-categories
$$\begin{tikzcd}
\SegCoPSh_d\arrow[rr, "\sim"]\arrow[rd] & & \cat{Cocart}_d\arrow[ld]\\
& \Cat_{d+1}.
\end{tikzcd}$$
In particular, for each $(d+1)$-category $\cat{C}$ this induces equivalences of $1$-categories
$$\begin{tikzcd}
\core_1\Fun_{d+1}(\cat{C}, \hcat{Cat}_d)\arrow[r,  "\sim"] & \SegCoPSh_d(\cat{C})\arrow[r, "\sim"] & \cat{Cocart}_d(\cat{C})
\end{tikzcd}$$
between (Segal) copresheaves of $d$-categories on $\cat{C}$ and $d$-cocartesian fibrations over $\cat{C}$. 
\end{theorem}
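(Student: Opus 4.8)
We treat the two equivalences separately. The identification $\core_1\Fun_{d+1}(\cat{C}, \hcat{Cat}_d)\simeq \SegCoPSh_d(\cat{C})$, and its naturality in $\cat{C}$, is precisely Corollary \ref{cor:left fibs are copresheaves} and Proposition \ref{prop:space representability} (packaged as in Variant \ref{var:maps of presheaves}). The remaining task, which is where all the work lies, is to construct an equivalence of cartesian fibrations $\SegCoPSh_d\simeq \cat{Cocart}_d$ over $\Cat_{d+1}$; the statement over a fixed $(d+1)$-category $\cat{C}$ then follows by restriction to the fibre, and composing with Corollary \ref{cor:left fibs are copresheaves} yields the displayed chain.

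The plan is to build this equivalence as a composite of $d$ equivalences, one for each pair of consecutive simplicial directions — the ``$d$-step combinatorial procedure'' of the introduction. For $0\leq k\leq d$ I would introduce a cartesian fibration $\cat{F}^{(k)}\rt \Cat_{d+1}$ whose objects are maps $X\rt \cat{C}$ of $(d+1)$-fold simplicial spaces, $\cat{C}$ a $(d+1)$-category, with the following interpolating conditions and the evident strong morphisms between them: in the simplicial directions $1,\dots,k$ the map carries the alternating $\mm{cocart}/\mm{cart}$ structure of a $d$-cocartesian fibration, unwound in terms of the $(d+1)$-fold simplicial structure via Lemma \ref{lem:local cartesian} and Remark \ref{rem:precocart reform} (note that $X$ itself need not be a $(d+1)$-category unless $k=d$); in direction $k+1$ it is a $0$-cocartesian (resp.\ $0$-cartesian, according to the parity of $k$) fibration, so the fibres become constant in that direction; and in directions $k+2,\dots,d+1$ one only asks that each $X(\vec{n}_{k+1},-,\dots,-)\rt \cat{C}(\vec{n}_{k+1},-,\dots,-)$ be a map of $(d-k)$-categories. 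At the two ends one has $\cat{F}^{(0)}=\SegCoPSh_d$ directly from Definition \ref{def:lfib}, and $\cat{F}^{(d)}=\cat{Cocart}_d$ by combining the unwinding above with Lemma \ref{lem:d-cocart as d+1-cocart} (the $0$-cocartesian condition in the last direction being exactly the requirement that the fibres be $d$-categories).

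For the step $\cat{F}^{(k)}\simeq \cat{F}^{(k+1)}$ I would fix inputs in all directions outside $\{k+1,k+2\}$ and regard $X\rt \cat{C}$ as a map of bisimplicial spaces over the double category $\cat{C}(\vec{n}_k,-,-,\vec{0})$ — a double category because $\cat{C}$ is a $(d+1)$-category. The $0$-(co)cartesian condition in direction $k+1$ makes this slice a left (resp.\ right) fibration in the first active direction; and although $\cat{F}^{(k)}$ imposes no fibration condition in direction $k+2$, that condition in direction $k+1$ together with the constancy conditions on $\cat{C}$ force the slice to be a cartesian (resp.\ cocartesian) fibration in the second active direction too — indeed, setting direction $k+1$ to $0$ makes $\cat{C}(\vec{n}_k,0,-,\vec{0})$ a space, so $X(\vec{n}_k,0,-,\vec{0})\rt \cat{C}(\vec{n}_k,0,-,\vec{0})$ is trivially such a fibration, and the general slice is a base change of it. Thus the active slice is a fibration of double categories of one of the eight types of Variant \ref{var:fibrations}, and applying the corresponding reflection functor from Theorem \ref{thm:main theorem} — the variant of $\Psi^\perp$ of Variant \ref{var:other reflections} chosen to match the alternating variance — turns it into the reflected type. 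By construction this reflection exchanges the horizontal and vertical fibres, so the constant direction moves from position $k+1$ to position $k+2$ while direction $k+1$ acquires the genuine $\mm{cocart}$ or $\mm{cart}$ structure. Because the reflection functor is a right adjoint (Construction \ref{con:constructing marked functors}) acting only in the two active directions, applying it slice-by-slice commutes with the limit-type Segal and completeness conditions and with the simplicial structure maps in the inert directions, and — using stability of the relevant fibration conditions under cotensoring with a fixed object for the conditions in directions $1,\dots,k$ — the output reassembles to a $(d+1)$-fold simplicial space lying in $\cat{F}^{(k+1)}$, functorially in $\cat{C}$. As each reflection is an equivalence of cartesian fibrations over $\DCat$ preserving cartesian arrows, and both the slicing and the reassembly are compatible with base change along maps $\cat{C}'\rt \cat{C}$, the composite of the $d$ steps is the desired equivalence of cartesian fibrations over $\Cat_{d+1}$.

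The real difficulty is not in any single step but in the organization: pinning down the intermediate conditions $\cat{F}^{(k)}$ so that a Segal copresheaf and a $d$-cocartesian fibration genuinely sit at the two ends, correctly matching the eight-fold zoo of fibrations and reflections of Section \ref{sec:reflecting} against the alternating $\mm{cocart}/\mm{cart}$ pattern of Definition \ref{def:cocart fib}, and verifying at each stage both that the active bidirectional slice satisfies the hypotheses of the relevant variant of Theorem \ref{thm:main theorem} and that the reflection produces exactly the conditions defining $\cat{F}^{(k+1)}$. The endpoint identification $\cat{F}^{(d)}=\cat{Cocart}_d$ also requires care, since it matches the inductive Definition \ref{def:cocart fib} with the flat $(d+1)$-fold simplicial description, using Lemmas \ref{lem:local cartesian} and \ref{lem:d-cocart as d+1-cocart} and Remark \ref{rem:precocart reform}.
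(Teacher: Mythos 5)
Your proposal is correct and follows essentially the same route as the paper: a $d$-step iteration of the reflection functor of Theorem \ref{thm:main theorem} applied to consecutive pairs of simplicial directions, through a chain of intermediate cartesian fibrations interpolating between Segal copresheaves and $d$-cocartesian fibrations (the paper's ``$k$-th stage fibrations''), with the endpoints identified via Lemma \ref{lem:d-cocart as d+1-cocart} and Corollary \ref{cor:left fibs are copresheaves}. The only cosmetic difference is that the paper keeps the intermediate categories larger (allowing $d$-fold-category fibers) and imposes the ``fibers are $d$-categories'' condition only once at the end, whereas you carry it through every stage.
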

\begin{remark}\label{rem:cart case}
Similarly, for every $(d+1)$-category $\cat{C}$, there is an equivalence of $1$-categories between cartesian fibrations and presheaves:
$$
\core_1\Fun_{d+1}\big(\cat{C}^{\op}, \hcat{Cat}_d\big)\simeq \core_1\Fun_{d+1}\big(\cat{C}^{{\scriptscriptstyle (1, \dots, d+1)-}\op}, \hcat{Cat}_d\big)\simeq \Cocart_d\big(\cat{C}^{{\scriptscriptstyle (1, \dots, d+1)-}\op}\big)\simeq \cat{Cart}_d(\cat{C}).
$$
Here the first equivalence sends a functor $F\colon \cat{C}^{\op}\rt \hcat{Cat}_d$ to the functor $c\mapsto F(c)^{(1,\dots, d)-\op}$ (which changes the variance with respect to higher morphisms). The second equivalence is Theorem \ref{thm:unstraightening} and the last equivalence takes $(1, \dots, d+1)$-opposites to obtain a cartesian fibration (Remark \ref{rem:opposites of cocart fib}), whose fibers are equivalent to the values of $F$.
\end{remark}
We will prove this theorem by a repeated application of Theorem \ref{thm:main theorem} in different simplicial directions. Concretely, the above equivalence will arise as a composition of $d$ equivalences, whose intermediate categories consist of certain fibrations of $(d+1)$-fold categories:
\begin{definition}\label{def:k-th stage fib}
Let $\cat{C}$ be a $(d+1)$-category and let $p\colon \dcat{D}\rt \cat{C}$ be a map of $(d+1)$-fold categories. By induction, we will say that $p$ is:
\begin{itemize}
\item a \emph{$0$-th stage} fibration if $\dcat{D}(1)\rt \cat{C}(1)\times_{\cat{C}(\{0\})} \dcat{D}(\{0\})$ is an equivalence of $d$-fold categories. A \emph{strong map} of $0$-th stage fibrations is simply a commuting square.

\item a \emph{$k$-th stage fibration}, for $1\leq k\leq d$, if it satisfies the following two conditions.
\begin{enumerate}[label=(a\arabic*)]
\item\label{it:loccartobjects} Each $\dcat{D}(n)^{(1, \dots, d)-\op}\rt \cat{C}(n)^{(1, \dots, d)-\op}$ is a $(k-1)$-st stage fibration and each $[n]\rt [n']$ induces a strong map between $(k-1)$-st stage fibrations.

\item\label{it:precocartobjects} The map of categories $\dcat{D}(-, \vec{0}_d)\rt \cat{C}(-, \vec{0}_d)$ is a cocartesian fibration and each map $\dcat{D}(-, \vec{0}_d)\rt \dcat{D}(-, \vec{n}_d)\times_{\cat{C}(-, \vec{n}_d)} \cat{C}(-, \vec{0}_d)$ preserves cocartesian arrows (over $\cat{C}(-, \vec{0}_d)$).
\end{enumerate}
A strong map between $k$-th stage fibrations is a commuting square such that:
\begin{enumerate}[label=(b\arabic*)]
\item\label{it:loccartmorphisms} evaluating at $[n]\in \Del$ in the first coordinate yields a strong map of $(k-1)$-st stage fibrations.
\item\label{it:precocartmorphisms} the map $\dcat{D}(-, \vec{0}_d)\rt \dcat{D}'(-, \vec{0}_d)$ preserves cocartesian arrows.
\end{enumerate}
\end{itemize}
We will write $\cat{Fib}_d^{(k)}\subseteq \Fun\big([1], \Cat^{\otimes d+1}\big)$ for the subcategory whose objects are $k$-th stage fibrations (in particular, the codomain is a $(d+1)$-category) and whose maps are commuting squares satisfying conditions \ref{it:loccartmorphisms} and \ref{it:precocartmorphisms}. The codomain projection defines a cartesian fibration
$$\begin{tikzcd}
\mm{codom}\colon \cat{Fib}_d^{(k)}\arrow[r] & \Cat_{d+1}.
\end{tikzcd}$$
\end{definition}
\begin{proposition}\label{prop:k-fibs to k+1-fibs}
For each $1\leq k\leq d$, there is an equivalence of categories
$$\begin{tikzcd}
\cat{Fib}_d^{(k-1)}\arrow[rr, "\sim"]\arrow[rd, "\mm{codom}"{swap}] & & \cat{Fib}_d^{(k)}\arrow[ld, "\mm{codom}"]\\
& \Cat_{d+1}.
\end{tikzcd}$$
\end{proposition}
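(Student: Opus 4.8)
The plan is to apply Theorem~\ref{thm:main theorem} to the double category formed by the $k$-th and $(k+1)$-st simplicial directions, parametrically over the remaining $d-1$ directions. Unwinding Definition~\ref{def:k-th stage fib} by iterating the recursion in~\ref{it:loccartobjects}, a $(k-1)$-st stage fibration $p\colon\dcat{D}\rt\cat{C}$ over a $(d+1)$-category $\cat{C}$ amounts to a map of $(d+1)$-fold categories carrying an (alternating) cocartesian/cartesian fibration structure (in the sense of~\ref{it:precocartobjects}) in the directions $1,\dots,k-1$, satisfying the ``$0$-th stage'' equivalence condition $\dcat{D}(\vec{n}_{k-1},1,-,\dots,-)\simeq\cat{C}(\vec{n}_{k-1},1,-,\dots,-)\times_{\cat{C}(\vec{n}_{k-1},\{0\},-,\dots,-)}\dcat{D}(\vec{n}_{k-1},\{0\},-,\dots,-)$ in direction $k$, with no further condition in the directions $k+1,\dots,d+1$. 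The key point is that since $\cat{C}$ is a $(d+1)$-category, the $(d-k)$-fold category $\cat{C}(\vec{n}_{k-1},0,-,\dots,-)$ is constant; hence the above equivalence forces each map $\dcat{D}(\vec{n}_{k-1},n_k,-,\dots,-)\rt\cat{C}(\vec{n}_{k-1},n_k,-,\dots,-)$ to be a base change of a map of spaces in direction $k+1$, and in particular both a cartesian and a cocartesian fibration there. Consequently, after fixing the indices in the $d-1$ ``parameter'' directions $1,\dots,k-1,k+2,\dots,d+1$ and passing to the appropriate opposite in direction $k$ to normalize variances, the restriction of $p$ to directions $(k,k+1)$ is a (left, cart)-fibration in the sense of Definition~\ref{def:fibrations}; dually, a $k$-th stage fibration restricts in directions $(k,k+1)$ to a (cocart, right)-fibration.

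Concretely, I would first turn this into an equivalence of categories: for each $(d+1)$-category $\cat{C}$, identify $\cat{Fib}_d^{(k-1)}(\cat{C})$ with the category of diagrams $(\Del^{\op})^{\times d-1}\rt\cat{Fib}^{\mm{left, cart}}$ lying over a suitable reindexing of $\cat{C}$ and subject to: (i) the Segal and completeness conditions in the parameter directions; (ii) the constancy conditions making the total object a $(d+1)$-fold category; and (iii) the remaining instances of~\ref{it:precocartobjects} (and their strong-morphism analogues~\ref{it:loccartmorphisms},~\ref{it:precocartmorphisms}) pertaining to the directions $1,\dots,k-1$. There is a verbatim identification of $\cat{Fib}_d^{(k)}(\cat{C})$ with the analogous category of diagrams valued in $\cat{Fib}^{\mm{cocart, right}}$, subject to \emph{the same} list of conditions (i)--(iii). (For $k$ even one replaces $\cat{Fib}^{\mm{left, cart}}$ and $\cat{Fib}^{\mm{cocart, right}}$ by the variants $\cat{Fib}^{\mm{right, cocart}}$ and $\cat{Fib}^{\mm{cart, left}}$ coming from Variant~\ref{var:fibrations}, reflecting the $(1,\dots,d)$-opposites accumulated along the recursion.)

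Next I would apply the equivalence $\Psi^\perp\colon\cat{Fib}^{\mm{left, cart}}\simeq\cat{Fib}^{\mm{cocart, right}}\colon\Psi^\top$ of Theorem~\ref{thm:main theorem} (or the corresponding variant for $k$ even, cf.\ Variant~\ref{var:other reflections}) levelwise in the parameter directions. Since $\Psi^\perp$ is an equivalence of cartesian fibrations over $\DCat$, postcomposition with it is an equivalence of diagram categories preserving the condition of lying over $\cat{C}$ as well as conditions (i) and (ii). For condition (iii): the ``furthermore'' clause of Theorem~\ref{thm:main theorem} says that $\dcat{D}$ and $\Psi^\perp(\dcat{D})$ have the same fibers after exchanging the two reflected directions, and the instances of~\ref{it:precocartobjects} in directions $1,\dots,k-1$ are detected on such fibers (the analogue of Lemma~\ref{lem:stability properties cocart2} for $k$-th stage fibrations); combined with the explicit description of $\Psi^\perp$ on objects and $1$-morphisms from Section~\ref{sec:reflection pointwise}, this shows condition (iii) is preserved in both directions. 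Naturality in $\cat{C}$, i.e.\ compatibility with base change along maps of $(d+1)$-categories, is immediate from the base-change compatibility of $\Psi^\perp$ recorded in Construction~\ref{con:constructing marked functors}, so the construction assembles to the asserted equivalence over $\Cat_{d+1}$.

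The hard part will be the bookkeeping in the last two paragraphs. First, tracking variances: the recursion in~\ref{it:loccartobjects} applies a $(1,\dots,m)$-opposite at each stage, so by the time one reaches the ``$0$-th stage'' in direction $k$ this direction has been reversed $k-1$ times; accordingly one must select the correct one of the eight variants of Theorem~\ref{thm:main theorem} (Variant~\ref{var:fibrations}) and check that it is compatible with the cocartesian/cartesian structure already present in the lower directions (I expect $\Psi^\perp$ itself for $k$ odd and the horizontal-plus-vertical-opposite variant for $k$ even). Second, and more seriously, one must verify that a diagram satisfying conditions (i)--(iii) is carried by the levelwise reflection to one satisfying them again: the subtle point is that conditions (iii) couple direction $k$ (which $\Psi^\perp$ modifies) with the parameter directions $1,\dots,k-1$ and $k+2,\dots,d+1$ via the various strongness requirements of Definition~\ref{def:k-th stage fib}, so one has to check that the marked/cocartesian lifts produced in the proof of Proposition~\ref{prop:reflection is cocart-left} interact correctly with these face maps; this is exactly where the ``fibers differ only by exchanging the two directions'' statement of Theorem~\ref{thm:main theorem} does the essential work.
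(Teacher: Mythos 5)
Your proposal is correct and follows essentially the same route as the paper's proof: reinterpret a $(k-1)$-st (resp.\ $k$-th) stage fibration as a $\Del^{\times d-1,\op}$-diagram of (left, cart)- (resp.\ (cocart, right)-) fibrations in the $k$-th and $(k+1)$-st directions — using that $\cat{C}(\vec{n}_{k-1},0,-,\dots)$ is constant so the cartesian condition in direction $k+1$ is automatic — then apply $\Psi^\perp$ levelwise and check the residual conditions via the ``fibers exchange coordinates'' clause (Proposition \ref{prop:why its called reflection}) together with base-change compatibility. The paper likewise handles even $k$ by passing to opposites and verifies condition \ref{it:precocartobjects} by an induction on $k$ restricting to $\core_1(\cat{C})$, which is the fiberwise detection you describe.
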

\begin{proof}
Let us fix $k\geq 1$ and start with the following observation: if $p\colon \dcat{D}\rt \cat{C}$ is a $k$-th stage fibration, then each
\begin{equation}\label{diag:double cat part of k-fib}\begin{tikzcd}
\dcat{D}(\vec{n}_{k-1}, -, -, \vec{n}_{d-k})\arrow[r] & \cat{C}(\vec{n}_{k-1}, -, -, \vec{n}_{d-k})
\end{tikzcd}\end{equation}
is a (cart, left)-fibration if $k$ is even, and a (cocart, right)-fibration if $k$ is odd. Furthermore, each $\vec{n}_{k-1}\rt \vec{n}_{k-1}'$ and $\vec{n}_{d-k}\rt \vec{n}_{d-k}'$ induces a map preserving (co)cartesian morphisms. When $k=1$ this follows from condition \ref{it:precocartobjects} and the fact that $\cat{C}(-, \vec{0}_d)\simeq \cat{C}(-, 0, \vec{n}_{d-1})$; for higher $k$ it follows inductively, by repeatedly applying condition \ref{it:loccartobjects}.

From this point on, let us assume that $k$ is odd; when $k$ is even, the same argument applies up to taking opposites (in all dimensions). Note that for a $(k-1)$-st stage fibration $p\colon \dcat{D}\rt \cat{C}$, the map \eqref{diag:double cat part of k-fib} is a (left, cart)-fibration and each $\vec{n}_{k-1}\rt \vec{n}_{k-1}'$ and $\vec{n}_{d-k}\rt \vec{n}_{d-k}'$ induces a strong map of (left, cart)-fibrations. Indeed, the previous paragraph shows that $p$ is a left fibration in the $k$-th variable. Furthermore, setting the $k$-th entry equal to $0$ yields a map $\dcat{D}(\vec{n}_{k-1}, 0, -, \vec{n}_{d-k})\rt \cat{C}(\vec{n}_{k-1}, 0, -, \vec{n}_{d-k})$ from a $1$-category to a space. In particular, each such map is a cartesian fibration and since its cartesian arrows are the equivalences, each $\vec{n}_{k-1}\rt \vec{n}_{k-1}'$ and $\vec{n}_{d-k}\rt \vec{n}_{d-k}'$ induces a map preserving cartesian arrows.

Denoting $\cat{I}=\big(\Del^{\times k-1}\times \Del^{\times d-k}\big)^{\op}$, we therefore obtain a diagram
$$\begin{tikzcd}[column sep=0.4pc, row sep=1pc]
\cat{Fib}_d^{(k-1)}\arrow[rrr, hook]\arrow[rrd, dotted]\arrow[rrddd, "\mm{codom}"{swap}, bend right=10] & \hspace{10pt} & & \Fun\big(\cat{I}, \cat{Fib}^{\mm{left, cart}}\big)\arrow[rddd, bend right=10]\arrow[rd, "\Psi^\perp_{(k, k+1)}", "\sim"{swap}]\\
& & \cat{Fib}_d^{(k)}\arrow[rr, hook, crossing over]\arrow[dd, "\mm{codom}"] & & \Fun\big(\cat{I}, \cat{Fib}^{\mm{cocart, right}}\big)\arrow[dd, "\mm{codom}"]\\
\\
& & \Cat_d\arrow[rr, hook] &  & \Cat^{\otimes d}.
\end{tikzcd}$$
Here the functors from left to right are subcategory inclusions and the top right equivalence is the reflection functor from Theorem \ref{thm:main theorem}, applied to the $k$-th and $(k+1)$-st variable. We will prove by induction on $k$ that the equivalence $\Psi^\perp_{(k, k+1)}$ identifies the subcategories of $(k-1)$-st stage fibrations and $k$-th stage fibrations.

When $k=1$, note that the bottom and front square are pullbacks: a map $\dcat{D}\rt \cat{C}$ is a $0$-th stage fibration if and only if it is a natural (left, cart)-fibration in the first two variables, and a $1$-st stage fibration if and only if it is a natural (cocart, right)-fibration in the first two variables. In particular, $\Psi_{(1, 2)}^\perp$ identifies $0$-th stage fibrations and $1$-st stage fibrations.

For $k>1$, the inductive hypothesis implies that $p\colon \dcat{D}\rt \cat{C}$ satisfies condition \ref{it:loccartobjects} for $(k-1)$-st stage fibrations if and only if $\Psi_{(k, k+1)}^\perp(p)\colon \Psi^\perp(\dcat{D})\rt \cat{C}$ satisfies condition \ref{it:loccartobjects} for $k$-th stage fibrations. For condition \ref{it:precocartobjects}, note that $\dcat{D}\times_{\cat{C}}\core_1(\cat{C})\rt \core_1(\cat{C})$ is given in the $k$-th and $(k+1)$-st variables by a (left, cart)-fibration over a space. For such (left, cart)-fibrations over spaces, the functor $\Psi^\perp$ is equivalent to the functor that exchanges the $k$-th and $(k+1)$-st variable (Proposition \ref{prop:why its called reflection}). Using that $\Psi^\perp$ is compatible with base change, we therefore find
\begin{align*}
\Psi_{(k, k+1)}^\perp(\dcat{D})\big(-,\vec{n}_{k-2},m, n, \vec{n}_{d-k}\big)&\times_{\cat{C}(-,\vec{n}_{k-2},m, n, \vec{n}_{d-k})}\cat{C}(-, \vec{0}_d)\\
&\simeq \dcat{D}\big(-,\vec{n}_{k-2},n, m, \vec{n}_{d-k}\big)\times_{\cat{C}(-,\vec{n}_{k-2},n, m, \vec{n}_{d-k})}\cat{C}(-, \vec{0}_d).
\end{align*}
It follows that $p$ satisfies condition \ref{it:precocartobjects} if and only if $\Psi_{(k, k+1)}^\perp(p)$ does. 
A similar argument shows that $\Psi_{(k, k+1)}^\perp$ identifies strong morphisms between $(k-1)$-st stage fibrations and $k$-th stage fibrations, so that $\Psi_{(k, k+1)}^\perp$ indeed restricts to an equivalence $\cat{Fib}_d^{(k-1)}\rto{\sim} \cat{Fib}_d^{(k)}$.
\end{proof}
\begin{corollary}\label{cor:pullback fiberwise}
Consider a strong morphism of $k$-th stage fibrations
$$\begin{tikzcd}
\dcat{D}\arrow[r, "f"]\arrow[d, "p"{swap}] & \dcat{D}'\arrow[d, "p'"]\\
\cat{C}\arrow[r] & \cat{C}'.
\end{tikzcd}$$
This square is cartesian if and only if the map $f$ induces equivalences on fibers $\dcat{D}_c\rt \dcat{D}'_c$ for each object $c\in \cat{C}$.
\end{corollary}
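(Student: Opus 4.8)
The plan is to establish both implications; the ``only if'' direction is formal, and the content lies in showing that a strong morphism $f\colon \dcat{D}\to\dcat{D}'$ of $k$-th stage fibrations over $g\colon\cat{C}\to\cat{C}'$ which induces an equivalence $\dcat{D}_c\to\dcat{D}'_{g(c)}$ on every fibre is a cartesian square. If the square is already cartesian then $\dcat{D}\simeq\dcat{D}'\times_{\cat{C}'}\cat{C}$, so $\dcat{D}_c\simeq\dcat{D}'_{g(c)}$ via $f$, which gives ``only if''. For the converse, factor $f$ through the base change $g^*\dcat{D}':=\dcat{D}'\times_{\cat{C}'}\cat{C}$: the canonical map $g^*\dcat{D}'\to\dcat{D}'$ is a cartesian lift of $g$ for the cartesian fibration $\mm{codom}\colon\cat{Fib}_d^{(k)}\to\Cat_{d+1}$, so in particular $g^*\dcat{D}'$ is again a $k$-th stage fibration with $(g^*\dcat{D}')_c\simeq\dcat{D}'_{g(c)}$, and the induced map $f'\colon\dcat{D}\to g^*\dcat{D}'$ is again a strong morphism over $\cat{C}$ (routine, since cartesian lifts along $\mm{codom}$ are base changes). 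As the right square is cartesian, it suffices to prove $f'$ is an equivalence, and $f'$ is a fibrewise equivalence by hypothesis. Thus the corollary reduces to the assertion $(\ast_k)$: \emph{a strong morphism in $\cat{Fib}_d^{(k)}(\cat{C})$ inducing an equivalence on every fibre is an equivalence.}

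I would prove $(\ast_k)$ by induction on $k$. For $k=0$: the $0$-th stage condition together with the Segal conditions for $\dcat{D}$ in the first simplicial direction yields a natural equivalence $\dcat{D}\simeq\cat{C}\times_{\cat{C}(0)}\dcat{D}(0)$, where $\cat{C}(0)$ is viewed as a constant $(d+1)$-fold simplicial space via the ``source'' map $\cat{C}\to\cat{C}(0)$. Hence a strong morphism $\dcat{D}\to\dcat{D}'$ over $\cat{C}$ is an equivalence as soon as $\dcat{D}(0)\to\dcat{D}'(0)$ is. The latter is a morphism of $d$-fold categories over the \emph{space} $\cat{C}(0)$, and unwinding the pullback one sees that its fibre over a point $c\in\cat{C}(0)$ is $\dcat{D}(0)_c\simeq\dcat{D}_c$ (the fibres of a $0$-th stage fibration being constant in the first simplicial direction). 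Since a morphism over a space which is a fibrewise equivalence is an equivalence, this finishes the base case.

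For the inductive step, Proposition \ref{prop:k-fibs to k+1-fibs} gives an equivalence of categories $\Psi^\perp_{(k,k+1)}\colon\cat{Fib}_d^{(k-1)}(\cat{C})\xrightarrow{\ \sim\ }\cat{Fib}_d^{(k)}(\cat{C})$ (the restriction to the fibre over $\cat{C}$ of the equivalence over $\Cat_{d+1}$). Since $\Psi^\perp$ is a map of cartesian fibrations (Theorem \ref{thm:main theorem}) it commutes with base change along $\{c\}\to\cat{C}$, so the fibre of $\Psi^\perp_{(k,k+1)}(\dcat{D})$ over $c$ is $\Psi^\perp$ applied to the fibre $\dcat{D}_c$, which by Proposition \ref{prop:why its called reflection} is simply $\dcat{D}_c$ with its $k$-th and $(k+1)$-st simplicial directions interchanged. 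As this is a functorial isomorphism, $\Psi^\perp_{(k,k+1)}$ both preserves and detects fibrewise equivalences; being an equivalence of categories it also preserves and detects equivalences. Therefore $(\ast_{k-1})$ implies $(\ast_k)$, completing the induction and the proof.

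The main obstacle I anticipate is not in the inductive engine but in the two geometric inputs: the identification $\dcat{D}\simeq\cat{C}\times_{\cat{C}(0)}\dcat{D}(0)$ with the attendant constancy of fibres in the base case, and the description of the fibres of $\Psi^\perp_{(k,k+1)}(\dcat{D})$ over objects, which requires combining compatibility of $\Psi^\perp$ with base change with the ``fibres over a point'' computation of Proposition \ref{prop:why its called reflection}. Both are straightforward unwindings of the definitions, but they are the places where a careless argument could slip.
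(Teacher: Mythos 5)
Your proof is correct and follows essentially the same route as the paper's (two-line) argument: the case $k=0$ is handled directly, and the inductive step is transported through the equivalence of Proposition \ref{prop:k-fibs to k+1-fibs}. The details you supply — the reduction to a fiberwise equivalence over a fixed base, the compatibility of $\Psi^\perp_{(k,k+1)}$ with base change, and the coordinate-swap description of the fibers via Proposition \ref{prop:why its called reflection} — are exactly what the paper leaves implicit.
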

\begin{proof}
This is evident for $k=0$. For higher $k$, it follows from Proposition \ref{prop:k-fibs to k+1-fibs}.
\end{proof}
\begin{lemma}\label{lem:d-th stage with d-cat fibers}
Let $p\colon \dcat{D}\rt \cat{C}$ be a $d$-th stage fibration from a $(d+1)$-fold category to a $(d+1)$-category. Then the following two conditions are equivalent:
\begin{enumerate}
\item For any object $c\in \cat{C}$, the fiber $\dcat{D}_c$ is a $d$-category in the first $d$-variables, and constant in the last variable.
\item $\dcat{D}$ is a $(d+1)$-category.
\end{enumerate}
\end{lemma}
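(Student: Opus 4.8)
The plan is to argue by induction on $d$. When $d=0$ (and likewise for $d\le 1$) both conditions hold automatically: a $1$-fold category is always a $1$-category, so $(2)$ holds, while for a $0$-th stage fibration the equivalence $\dcat{D}(1)\simeq\cat{C}(1)\times_{\cat{C}(\{0\})}\dcat{D}(\{0\})$ restricts over each object $c$ to $\dcat{D}_c(1)\simeq\dcat{D}_c(\{0\})$, so every fibre is constant in its single (hence last) variable and $(1)$ holds too.

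For the inductive step I would first record two elementary observations. (i) A $(d+1)$-fold category $\dcat{D}$ is a $(d+1)$-category if and only if $\dcat{D}(0,-,\dots,-)$ is constant and $\dcat{D}(1,-,\dots,-)$ is a $d$-category: once $\dcat{D}(0,-,\dots,-)$ is a space, the Segal decomposition $\dcat{D}(n)\simeq\dcat{D}(1)\times_{\dcat{D}(0)}\dots\times_{\dcat{D}(0)}\dcat{D}(1)$ over a space propagates the second condition to all $\dcat{D}(n)$, which is exactly the remaining list of constancy conditions. (ii) For any $d$-th stage fibration $p$, unwinding condition \ref{it:loccartobjects} of Definition \ref{def:k-th stage fib} $d$ times shows that $\dcat{D}(\vec{n}_d,-)\to\cat{C}(\vec{n}_d,-)$ is a $0$-th stage fibration, hence a left or right fibration of $1$-categories; its fibre over a point is therefore a space, so $\dcat{D}_c(\vec{n}_d,-)$ is always a space, i.e.\ every fibre of a $d$-th stage fibration is automatically constant in the last variable. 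Granting (i) and (ii), the implication $(2)\Rightarrow(1)$ is immediate: $\dcat{D}_c=\dcat{D}\times_{\cat{C}(0)}\{c\}$ is a pullback of $(d+1)$-fold categories, each constancy condition for $\dcat{D}_c$ in its first $d$ variables follows from the corresponding one for $\dcat{D}$ and for $\cat{C}$ (which is a $(d+1)$-category), and constancy in the last variable is (ii).

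For $(1)\Rightarrow(2)$ I would check the two conditions of (i). First, $\dcat{D}(0,-,\dots,-)$ maps to the space $\cat{C}(0,-,\dots,-)=\cat{C}(0)$, hence is a family of spaces over $\cat{C}(0)$ with fibres $\dcat{D}_c(0,-,\dots,-)$; by $(1)$ each of these is the space of objects of the $d$-category $\dcat{D}_c$, so $\dcat{D}(0,-,\dots,-)$ is a space. Second, Definition \ref{def:k-th stage fib}\ref{it:loccartobjects} exhibits $\dcat{D}(1)^{(1,\dots,d)-\op}\to\cat{C}(1)^{(1,\dots,d)-\op}$ as a $(d-1)$-st stage fibration over a $d$-category, so by the inductive hypothesis $\dcat{D}(1,-,\dots,-)$ is a $d$-category as soon as each of its fibres over an object is a $(d-1)$-category in the first $d-1$ variables and constant in the last. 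These fibres are the $d$-fold categories $\dcat{D}(1)_\gamma=\dcat{D}(1,-,\dots,-)\times_{\cat{C}(1,-,\dots,-)}\{\gamma\}$, one for each $1$-morphism $\gamma\colon c_0\to c_1$ of $\cat{C}$; constancy in the last variable is again (ii), and the remaining assertion is the heart of the matter.

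The hard part is thus to show that $\dcat{D}(1)_\gamma$ is a $(d-1)$-category in its first $d-1$ variables. Here I would use condition \ref{it:precocartobjects} of Definition \ref{def:k-th stage fib}, namely that $p$ restricts to a cocartesian fibration $\dcat{D}(-,\vec{0}_d)\to\cat{C}(-,\vec{0}_d)$ compatibly with the remaining simplicial structure, together with the standard description of the fibre of a cocartesian fibration over an arrow $\gamma$ in terms of the fibres over $c_0$ and $c_1$ and cocartesian transport along $\gamma$. Combined with the $d$-th stage fibration structure in the remaining directions (condition \ref{it:loccartobjects}), this expresses $\dcat{D}(1)_\gamma$ in terms of $\dcat{D}_{c_0}$ and $\dcat{D}_{c_1}$, which are $d$-categories by $(1)$; carefully keeping track of which of their variables are already spaces, one concludes that $\dcat{D}(1)_\gamma$ is a $(d-1)$-category in the required variables. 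Hence $\dcat{D}(1,-,\dots,-)$ is a $d$-category, so $\dcat{D}$ is a $(d+1)$-category, which closes the induction.
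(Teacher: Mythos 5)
Your argument is correct and follows the paper's proof essentially step for step: induction on $d$; the observation that a $d$-th stage fibration is a left/right fibration in the last variable (which gives both $(2)\Rightarrow(1)$ and the constancy of fibres in that variable); the fact that $\dcat{D}(0)$ is a space; and the reduction via condition \ref{it:loccartobjects} and the inductive hypothesis to showing that the fibres of $\dcat{D}(1)\rt\cat{C}(1)$ over $1$-morphisms $\gamma\colon c_0\rt c_1$ are $(d-1)$-categories. The only step you leave as a sketch is exactly the one the paper makes precise: since $\dcat{D}(\{0\})_{c_0}$ is a space one may fix a source $d_0\in\dcat{D}_{c_0}$, and condition \ref{it:precocartobjects} supplies a cocartesian lift $\alpha\colon d_0\rt d_1$ of $\gamma$ for which precomposition gives an equivalence $\dcat{D}(1)_{\mm{id}_{c_1}}\times_{\dcat{D}(\{0\})}\{d_1\}\simeq\dcat{D}(1)_{\gamma}\times_{\dcat{D}(\{0\})}\{d_0\}$, whose domain is a $(d-1)$-category because it sits inside the $d$-category $\dcat{D}_{c_1}$ (so only $\dcat{D}_{c_1}$, not $\dcat{D}_{c_0}$, is actually needed at this point).
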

\begin{proof}
We proceed by induction on $d$, the case $d=0$ being obvious. Note that a $d$-th stage fibration is either a left or a right fibration in the last variable, so that $\dcat{D}(\vec{n}_d, p)\simeq \dcat{D}(\vec{n}_d, 0)\times_{\cat{C}(\vec{n}_d, 0)} \cat{C}(\vec{n}_{d}, p)$. This immediately shows that (2) implies (1). For the converse, note that (1) implies that the fibers of $\dcat{D}(0)\rt \cat{C}(0)$ are spaces, so that $\dcat{D}(0)$ is a space. It then remains to verify that $\dcat{D}(1)$ is a $d$-category. Since $\dcat{D}(1)\rt \cat{C}(1)$ is the opposite of a $(d-1)$-st stage fibration, the inductive hypothesis says that it suffices to verify that its fibers are $(d-1)$-categories. In particular, we can reduce to the case where $\cat{C}\simeq \core_1(\cat{C})$ is a $1$-category.

In this case, take a morphism $\gamma\colon c_0\rt c_1$ in $\cat{C}$ and consider the map on fibers $\dcat{D}(1)_{\gamma}\rt \dcat{D}(\{0\})_{c_0}$. Since the target is a space, it suffices to verify that the fiber over each point $d_0\in \dcat{D}_{c_0}$ is a $(d-1)$-category. Using condition \ref{it:precocartobjects}, we can take a cocartesian lift $\alpha\colon d_0\rt d_1$ of $\gamma$ such that precomposition with $\alpha$ determines an equivalence of $(d-1)$-fold categories
$$\begin{tikzcd}
\alpha^*\colon \dcat{D}(1)_{\mm{id}_{c_1}}\times_{\dcat{D}(\{0\})} \{d_1\}\arrow[r, "\sim"] & \dcat{D}(1)_{\gamma}\times_{\dcat{D}(\{0\})} \{d_0\}.
\end{tikzcd}$$
The domain is a $(d-1)$-category since the fiber $\dcat{D}_{c_1}$ is a $d$-category by assumption.
\end{proof}
\begin{proposition}\label{prop:cocartesian full inside d-fibration}
There is a fully faithful inclusion $\cat{Cocart}_{d}\hooklongrightarrow \cat{Fib}_d^{(d)}$, whose essential image consists of the $d$-th stage fibrations satisfying the equivalent conditions of Lemma \ref{lem:d-th stage with d-cat fibers}. 
\end{proposition}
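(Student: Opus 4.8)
The plan is to identify, under the fully faithful inclusion $\Cat_{d+1}\hooklongrightarrow \Cat^{\otimes d+1}$ of Section \ref{sec:higher cats}, the $d$-cocartesian fibrations between $(d+1)$-categories with exactly those $d$-th stage fibrations whose total space is a $(d+1)$-category, compatibly with the two notions of strong morphism. Since $\Cat_{d+1}$ is a \emph{full} subcategory of $\Cat^{\otimes d+1}$, the induced functor $\Fun([1], \Cat_{d+1})\hooklongrightarrow \Fun([1], \Cat^{\otimes d+1})$ is fully faithful; once we know it carries $\cat{Cocart}_d$ into $\cat{Fib}_d^{(d)}$ and that a square between objects of $\cat{Cocart}_d$ is a strong morphism of $d$-cocartesian fibrations precisely when it is a strong map of $d$-th stage fibrations, we obtain the desired fully faithful functor $\cat{Cocart}_d\hooklongrightarrow \cat{Fib}_d^{(d)}$, with essential image the $d$-th stage fibrations $\dcat{D}\rt\cat{C}$ for which $\dcat{D}$ is a $(d+1)$-category --- which by Lemma \ref{lem:d-th stage with d-cat fibers} is the class named in the statement.

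The core assertion, proved by induction on $d$, is: a map $p\colon \cat{D}\rt \cat{C}$ between $(d+1)$-categories is a $d$-cocartesian fibration (Definition \ref{def:cocart fib}) if and only if, viewed as a map of $(d+1)$-fold categories, it is a $d$-th stage fibration (Definition \ref{def:k-th stage fib}); likewise a commuting square between such maps is a strong morphism of $d$-cocartesian fibrations if and only if it is a strong map of $d$-th stage fibrations. The base case $d=0$ is immediate: both notions are defined by the single condition that $\cat{D}(1)\rt \cat{C}(1)\times_{\cat{C}(\{0\})}\cat{D}(\{0\})$ be an equivalence, and the two notions of strong morphism are identical. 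For the inductive step one matches the two defining conditions of Definition \ref{def:cocart fib}(b) with conditions \ref{it:loccartobjects} and \ref{it:precocartobjects}. By Lemma \ref{lem:local cartesian}, ``$p$ is a local $(d-1)$-cartesian fibration'' is equivalent to ``each $\cat{D}(n)\rt\cat{C}(n)$ is a $(d-1)$-cartesian fibration and each $[n]\rt[n']$ induces a strong morphism''; taking $(1,\dots,d)$-opposites turns each $\cat{D}(n)\rt\cat{C}(n)$ into a $(d-1)$-cocartesian fibration between $d$-categories, to which the inductive hypothesis applies, yielding exactly \ref{it:loccartobjects}. And by Remark \ref{rem:precocart reform}, ``$p$ is a pre-cocartesian fibration'' is equivalent to ``$\cat{D}(-,\vec{0}_d)\rt\cat{C}(-,\vec{0}_d)$ is a cocartesian fibration and each $\cat{D}(-,\vec{0}_d)\rt\cat{D}(-,\vec{n}_d)$ preserves cocartesian arrows'', which --- using $\cat{C}(-,\vec{0}_d)\simeq\cat{C}(-,0,\vec{n}_{d-1})$ and factoring this latter map through the pullback --- is precisely \ref{it:precocartobjects}. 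Since every implication here is an equivalence, both directions follow at once.

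The morphism-level statement runs along the same lines. Definition \ref{def:cocart fib}(d) asks that $f$ preserve cocartesian $1$-morphisms and induce, for all $x_0,x_1$, a strong morphism of $(d-1)$-cartesian fibrations on mapping objects. By the last sentence of Remark \ref{rem:precocart reform}, the first requirement is condition \ref{it:precocartmorphisms}; and, arguing fiberwise over the spaces of objects as in the proof of Lemma \ref{lem:local cartesian} (together with the Segal conditions), the second requirement is equivalent to each $\cat{D}(n)\rt\cat{D}'(n)$ being a strong morphism of $(d-1)$-cartesian fibrations, which after $(1,\dots,d)$-opposites and the inductive hypothesis is condition \ref{it:loccartmorphisms}. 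Assembling the object- and morphism-level statements yields the proposition. The one point requiring care --- and the main (if modest) obstacle --- is the bookkeeping of variances: $k$-th stage fibrations are built from $(k-1)$-st stage fibrations only after reversing all of the last $d$ directions, so the local $(d-1)$-cartesian condition on $p$ must be paired with the $(d-1)$-st stage condition on the \emph{opposite} of each $\cat{D}(n)$; running the induction on the pair of statements ``$d$-cocartesian fibration'' and ``$d$-cartesian fibration'' (interchanged by taking all opposites, cf.\ Remark \ref{rem:opposites of cocart fib}) keeps this straightforward.
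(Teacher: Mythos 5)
Your overall strategy coincides with the paper's: regard both $\cat{Cocart}_d$ and the subcategory of $\cat{Fib}_d^{(d)}$ singled out by Lemma \ref{lem:d-th stage with d-cat fibers} as subcategories of $\Fun([1], \Cat^{\otimes d+1})$, and show by induction on $d$ that they have the same objects and morphisms; your treatment of condition \ref{it:loccartobjects} via Lemma \ref{lem:local cartesian} and the variance bookkeeping is also exactly what the paper does. The gap is in the step where you declare that the reformulation of pre-cocartesianness from Remark \ref{rem:precocart reform} ``is precisely \ref{it:precocartobjects}'' after ``factoring through the pullback''. Condition \ref{it:precocartobjects} asks that $\cat{D}(-,\vec{0}_d)\rt\cat{D}(-,\vec{n}_d)\times_{\cat{C}(-,\vec{n}_d)}\cat{C}(-,\vec{0}_d)$ send cocartesian arrows to arrows that are cocartesian \emph{over $\cat{C}(-,\vec{0}_d)$}, whereas $p$-cocartesianness of a $1$-morphism $\alpha$ amounts to the cartesianness of the mapping squares \eqref{diag:standard cocartesian condition} over the full mapping $d$-categories of $\cat{C}$, i.e.\ to the image of $\alpha$ in each $\cat{D}(-,\vec{n}_d)$ being cocartesian \emph{over $\cat{C}(-,\vec{n}_d)$}. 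One implication is formal (cocartesian arrows are stable under base change), but the converse is not: a square can become cartesian after pulling back along $\Map_{\core_1(\cat{C})}\hooklongrightarrow\Map_{\cat{C}}$ without having been cartesian to begin with, so condition \ref{it:precocartobjects} is a priori weaker than pre-cocartesianness.

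This is exactly where the paper spends the bulk of its proof: it passes to the base change $p'\colon\cat{D}\times_{\cat{C}}\core_1(\cat{C})\rt\core_1(\cat{C})$ and shows, via a cube of mapping $d$-categories, that $p$-cocartesian and $p'$-cocartesian $1$-morphisms coincide. The argument uses that, by condition \ref{it:loccartobjects} and the inductive identification, the vertical maps in these squares are $(d-1)$-st stage fibrations, so that cartesianness of squares between them is detected on fibers over objects (Corollary \ref{cor:pullback fiberwise}); the fibers of the front and back faces agree because $\Map_{\core_1(\cat{C})}\hooklongrightarrow\Map_{\cat{C}}$ is an equivalence on spaces of objects. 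Since Corollary \ref{cor:pullback fiberwise} itself rests on the chain of reflection equivalences of Proposition \ref{prop:k-fibs to k+1-fibs}, this step is genuinely not a formality, and you need to supply it (or an equivalent argument) for the direction ``\ref{it:loccartobjects} and \ref{it:precocartobjects} imply pre-cocartesian''. The remainder of your proof, including the morphism-level comparison, is fine once this is in place.
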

\begin{proof}
Both $\cat{Cocart}_d$ and the full subcategory of $\cat{Fib}_d^{(d)}$ defined by Lemma \ref{lem:d-th stage with d-cat fibers} are subcategories of $\Fun\big([1], \Cat^{\otimes d+1}\big)$. It therefore suffices 
to show that these two subcategories have the same objects and morphisms. We will do this by induction on $d$, the case $d=0$ being evident.

\subsubsection*{Objects}
Let $p\colon \cat{D}\rt \cat{C}$ be a map between $(d+1)$-categories. By inductive hypothesis and Lemma \ref{lem:local cartesian}, $p$ satisfies condition \ref{it:loccartobjects} if and only if $p$ is a homwise $(d-1)$-cartesian fibration. Assuming that $p$ satisfies these equivalent conditions, it remains to show that $p$ satisfies condition \ref{it:precocartobjects} if and only if it is a pre-cocartesian fibration. To see this, consider the base change 
$$\begin{tikzcd}
p'\colon \cat{D}'=\cat{D}\times_{\cat{C}}\core_1(\cat{C})\arrow[r] &  \core_1(\cat{C}).
\end{tikzcd}$$ 
For any $1$-morphism $\alpha\colon d_0\rt d_1$ and object $d_2$ in $\cat{D}'$ (or equivalently, in $\cat{D}$), we have
$$\begin{tikzcd}[column sep=0.05pc, row sep=0.8pc]
\Map_{\cat{D}'}(d_1, d_2)\arrow[rr, "\alpha^*"]\arrow[rd]\arrow[dd, "p'"{swap}] & & \Map_{\cat{D}'}(d_0, d_2)\arrow[dd, dotted]\arrow[rd]\\
& \Map_{\cat{D}}(d_1, d_2)\arrow[rr]\arrow[dd] & & \Map_{\cat{D}}(d_0, d_2)\arrow[dd, "p"]\\
\Map_{\core_1(\cat{C})}(pd_1, pd_2)\arrow[rr, dotted]\arrow[rd] & &  \Map_{\core_1(\cat{C})}(pd_0, pd_2)\arrow[rd, dotted] \\
& \Map_{\cat{C}}(pd_1, pd_2)\arrow[rr, "p(\alpha)^*"{swap}] &  & \Map_{\cat{C}}(pd_0, pd_2).
\end{tikzcd}$$
The left and right face of the cube are cartesian. By our assumption on $p$, the vertical arrows are $(d-1)$-cartesian fibrations and the front and back face determine strong maps of $(d-1)$-cartesian fibrations. By inductive hypothesis, this means equivalently that the vertical arrows are $(d-1)$-st stage fibrations between $d$-categories and that the front and back face determine strong morphisms between $(d-1)$-st stage fibrations. Since the back and front faces are equivalent at the level of objects, Corollary \ref{cor:pullback fiberwise} then implies that the front face is cartesian if and only if the back face is cartesian. By Corollary \ref{cor:classical def of pre-cocart}, this means that $\alpha$ is $p'$-cocartesian if and only if it is $p$-cocartesian. It follows that $p$ is a pre-cocartesian fibration if and only if $p'$ is a pre-cocartesian fibration, which in turn is equivalent to condition \ref{it:precocartobjects}, by Remark \ref{rem:precocart reform}.

\subsubsection*{Morphisms}
A commuting square determines a strong morphism between $d$-cocartesian fibrations if and only if the induced square on $d$-categories of morphisms induces a strong morphism between $(d-1)$-cartesian fibrations and the map $\core_1(\cat{D})\rt \core_1(\cat{D}')$ preserves cocartesian $1$-morphisms. Comparing this with condition \ref{it:loccartmorphisms} and \ref{it:precocartmorphisms} shows inductively that such strong morphisms coincide with strong morphisms of $d$-th stage fibrations.
\end{proof}

\begin{proof}[Proof (of Theorem \ref{thm:unstraightening})]
Composing the equivalences from Proposition \ref{prop:k-fibs to k+1-fibs} yields
\begin{equation}\label{eq:big unstraightening}\begin{tikzcd}
\cat{Fib}^{(0)}\arrow[r, "\sim"] & \cat{Fib}^{(d)}
\end{tikzcd}\end{equation}
that commutes with the codomain projection. Note that on both sides, the fiber over $\ast\in \Cat_{d+1}$ can be identified with the category $\Cat^{\otimes d}$ of $d$-fold categories: in $\cat{Fib}^{(0)}$, a $d$-fold category is viewed as a $(d+1)$-fold category which is constant in the first variable, and in $\cat{Fib}^{(d)}$ it is constant in the last variable. Since the above equivalence is given by repeatedly applying the functor $\Psi^\perp$, Proposition \ref{prop:why its called reflection} shows that the induced equivalence on fibers over $\ast\in \Cat_{d+1}$ simply sends a $d$-fold category to itself.

Now observe that there is a fully faithful inclusion $\SegCoPSh_{d}\hooklongrightarrow \cat{Fib}^{(0)}$, whose essential image consists of all those $0$-th stage fibrations $p\colon \dcat{D}\rt \cat{C}$ with the property that for any object $c\in \cat{C}$, the fiber $\dcat{D}_c$ is a $d$-category (rather than a $d$-fold category). The essential image of $\SegCoPSh_d$ under \eqref{eq:big unstraightening} then consists of $d$-th stage fibrations whose fibers are $d$-categories. By Proposition \ref{prop:cocartesian full inside d-fibration}, this means that \eqref{eq:big unstraightening} restricts to an equivalence $\SegCoPSh_{d}\rto{\sim} \cat{Cocart}_{d}$, as desired. The second part of Theorem \ref{thm:unstraightening} now follows directly from Corollary \ref{cor:left fibs are copresheaves}. 
\end{proof}

\subsection{Straightening and unstraightening: uniqueness and further properties}\label{sec:rigidity}
The equivalence of Theorem \ref{thm:unstraightening} arises from an iteration of explicit combinatorial maneuvers. The purpose of this intermezzo is to argue that at the level of objects, the precise construction of this equivalence is essentially irrelevant:
\begin{proposition}\label{prop:unique unstraightening}
There is a contractible space of equivalences of right fibrations
$$\begin{tikzcd}
\cat{coPSh}_d^{\mm{Seg}, \cart}\arrow[rd]\arrow[rr, dotted, "\sim"] & & \Cocart_d^{\cart}\arrow[ld]\\
& \cat{Cat}_{d+1}.
\end{tikzcd}$$
\end{proposition}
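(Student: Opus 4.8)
The plan is to recognize both right fibrations as \emph{representable} over $\Cat_{d+1}$, with representing object $\hcat{Cat}_d$, and then to reduce the statement to the rigidity of $\hcat{Cat}_d$.

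First I would recall that $\cat{coPSh}_d^{\mm{Seg},\cart}\rt\Cat_{d+1}$ is representable: this is precisely Proposition~\ref{prop:space representability} (in the large setting of the subsequent remark), which says that the universal Segal copresheaf $u$ is a terminal object of the total space, lying over $\hcat{Cat}_d$; equivalently, the presheaf on $\Cat_{d+1}$ classifying this right fibration is $\Map_{\Cat_{d+1}}(-,\hcat{Cat}_d)$. To see that $\Cocart_d^{\cart}\rt\Cat_{d+1}$ is representable by $\hcat{Cat}_d$ as well, I would restrict the equivalence of cartesian fibrations $\SegCoPSh_d\simeq\Cocart_d$ of Theorem~\ref{thm:unstraightening} to its cartesian morphisms: this gives an equivalence of right fibrations $\cat{coPSh}_d^{\mm{Seg},\cart}\simeq\Cocart_d^{\cart}$ over $\Cat_{d+1}$ carrying $u$ to a terminal object $u'$ of $\Cocart_d^{\cart}$, again lying over $\hcat{Cat}_d$. (One could also exhibit $u'$ directly, as the ``universal'' $d$-cocartesian fibration over $\hcat{Cat}_d$, and verify its terminality via Lemma~\ref{lem:representable left fib}; but transporting along Theorem~\ref{thm:unstraightening} is shorter.)

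Granting this, both legs of the triangle correspond, under the equivalence between right fibrations over $\Cat_{d+1}$ and presheaves $\Cat_{d+1}^{\op}\rt\sS$, to the \emph{same} representable presheaf $\Map_{\Cat_{d+1}}(-,\hcat{Cat}_d)$, and an equivalence of right fibrations over $\Cat_{d+1}$ is exactly an equivalence of the corresponding presheaves. By full faithfulness of the Yoneda embedding $\Cat_{d+1}\hooklongrightarrow\Fun(\Cat_{d+1}^{\op},\sS)$ — taking the equivalence of Theorem~\ref{thm:unstraightening} as a base point — the space of such equivalences is therefore identified with the space $\Aut_{\Cat_{d+1}}(\hcat{Cat}_d)$ of auto-equivalences of $\hcat{Cat}_d$ as a $(d+1)$-category. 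So the proposition is equivalent to the assertion that $\hcat{Cat}_d$ is \emph{rigid}: $\Aut_{\Cat_{d+1}}(\hcat{Cat}_d)\simeq\ast$.

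This rigidity statement carries the real content, and I expect proving it to be the main obstacle. It holds for $d=0$ since $\hcat{Cat}_0=\sS$ is the free cocompletion of a point, so any auto-equivalence — preserving colimits and the generating terminal object — is the identity. For general $d$ I would argue by induction on $d$ (using rigidity of $\hcat{Cat}_{d-1}$ and the description of $\hcat{Cat}_d$ via categories enriched in $\hcat{Cat}_{d-1}$), or cite the unicity theorems for $(\infty,d)$-categories (To\"en for $d=1$, Barwick--Schommer-Pries in general). Concretely, any auto-equivalence of $\hcat{Cat}_d$ induces one of the underlying $1$-category $\Cat_d$, and $\Aut_{\Cat_1}(\Cat_d)$ is exhausted by the opposite operations $(-)^{k\text{-}\op}$ for $1\leq k\leq d$; one then checks that none of these, other than the identity, extends to a $(d+1)$-endofunctor of $\hcat{Cat}_d$ — since $(-)^{k\text{-}\op}$ reverses the direction of the $(k+1)$-morphisms of $\hcat{Cat}_d$ — and that an auto-equivalence of $\hcat{Cat}_d$ restricting to the identity on $\Cat_d$ is itself the identity. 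Once $\Aut_{\Cat_{d+1}}(\hcat{Cat}_d)\simeq\ast$ is established, the space of equivalences in the statement is contractible, as claimed.
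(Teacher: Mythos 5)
Your overall strategy is the paper's: both right fibrations are classified by the presheaf represented by $\hcat{Cat}_d$, and the space of equivalences between them is a (nonempty, by Theorem \ref{thm:unstraightening}) torsor over the automorphisms of that presheaf, which rigidity of $\hcat{Cat}_d$ kills. Two steps need repair, though. First, the Yoneda step does not typecheck as written: $\hcat{Cat}_d$ is a \emph{large} $(d+1)$-category, so it is not an object of $\Cat_{d+1}$, and the Yoneda embedding of $\Cat_{d+1}$ cannot identify automorphisms of the classifying presheaf with $\Aut_{\CAT_{d+1}}(\hcat{Cat}_d)$. The paper instead exhibits $\cat{coPSh}_d^{\mm{Seg},\cart}\rt\Cat_{d+1}$ as the pullback of the corresponding right fibration over $\CAT_{d+1}$, applies Proposition \ref{prop:space representability} in the large setting there, and then shows $\Aut(F)\simeq\Aut(F|_{\Cat_{d+1}})$ for the representable presheaf $F$ by using that $\CAT_{d+1}$ is an ind-completion of $\Cat_{d+1}$, so that restriction is fully faithful on presheaves preserving the relevant filtered colimits. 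This comparison is a genuine step your argument is missing.

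Second, your rigidity sketch is incomplete precisely where you acknowledge the real content lies. Classifying the underlying $1$-functor among the opposite operations and ruling out the nontrivial ones only controls $\pi_0$ of $\Aut(\hcat{Cat}_d)$; contractibility also requires that the automorphisms of the identity (and all higher homotopy) vanish, and the claim that an auto-equivalence restricting to the identity on $\Cat_d$ is itself the identity is exactly the nontrivial point, not a routine check. The paper's Proposition \ref{prop:rigidity of d-cat} handles both at once and more cheaply: given an equivalence $\phi$, a point of $\phi(\ast)$ yields by the enriched Yoneda lemma a natural transformation $\mm{id}\simeq\Map_{\hcat{Cat}_d}(\ast,-)\rt\phi$, which is an equivalence by \cite[Theorem 10.1]{bar21} applied to the underlying $1$-functors; the endomorphism $d$-category of the resulting unique object is then $\Map_{\hcat{Cat}_d}(\ast,\ast)\simeq\ast$. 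If you keep your route, you still need an argument of this kind for the final step.
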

Combining this with Proposition \ref{prop:space representability} (in the large setting) and rephrasing everything in terms of presheaves rather than right fibrations (using straightening), we obtain:
\begin{corollary}\label{cor:naturality space level}
For any small $(d+1)$-category $\cat{C}$, there is a \emph{unique $1$-natural} equivalence 
$$\begin{tikzcd}
\mm{Un}\colon \core_0\Fun_{d+1}\big(\cat{C}, \hcat{Cat}_d\big)\arrow[r, yshift=1ex] & \core_0\Cocart_d\big(\cat{C}\big)\colon \mm{St}\arrow[l, yshift=-1ex, "\sim"{swap}]
\end{tikzcd}$$
between the \emph{spaces} of copresheaves $\cat{C}\rt \hcat{Cat}_d$ and of $d$-cocartesian fibrations $\cat{D}\rt \cat{C}$. We will refer to these functors as \emph{unstraightening} and \emph{straightening}.
\end{corollary}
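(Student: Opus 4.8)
The plan is to obtain this from Proposition \ref{prop:unique unstraightening} by translating it, through straightening for right fibrations over the $1$-category $\Cat_{d+1}$, into a statement about space-valued presheaves, and then evaluating at $\cat{C}$. All the substance already sits in Propositions \ref{prop:space representability} and \ref{prop:unique unstraightening}; the remaining work is bookkeeping, the one genuinely load-bearing point being the \emph{contractibility} asserted in the latter, which is what upgrades ``an equivalence of spaces for each $\cat{C}$'' to ``the unique $1$-natural equivalence''.

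First I would pin down the straightenings of the two right fibrations in Proposition \ref{prop:unique unstraightening}. By Proposition \ref{prop:cocart forms cart fibration}, $\hcat{Cocart}_d\rt \hcat{Cat}_{d+1}$ is a $(d+1)$-cartesian fibration, so its $1$-core $\Cocart_d\rt \Cat_{d+1}$ is a cartesian fibration of $1$-categories, and hence $\Cocart_d^{\cart}\rt \Cat_{d+1}$ is a right fibration. Its fiber over a $(d+1)$-category $\cat{C}$ is the maximal subgroupoid $\core_0\Cocart_d(\cat{C})$, since a cartesian arrow lying over $\mathrm{id}_{\cat{C}}$ is an equivalence; thus, invoking straightening for right fibrations over the (large) $1$-category $\Cat_{d+1}$ --- available as the arrow-reversed case $d=0$ of Corollary \ref{cor:left fibs are copresheaves} --- the right fibration $\Cocart_d^{\cart}$ corresponds to the presheaf $\cat{C}\mapsto\core_0\Cocart_d(\cat{C})$ on $\Cat_{d+1}$. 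On the other side, the large version of Proposition \ref{prop:space representability} states precisely that $\cat{coPSh}_d^{\mm{Seg},\cart}\rt \Cat_{d+1}$ is representable by $\hcat{Cat}_d$, i.e.\ that its straightening is the presheaf $\cat{C}\mapsto \core_0\Fun_{d+1}(\cat{C},\hcat{Cat}_d)$.

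Since straightening is an equivalence of categories between right fibrations over $\Cat_{d+1}$ and $\Fun(\Cat_{d+1}^{\op},\sS)$, it carries the contractible space of equivalences $\cat{coPSh}_d^{\mm{Seg},\cart}\simeq \Cocart_d^{\cart}$ supplied by Proposition \ref{prop:unique unstraightening} to a contractible space of natural equivalences of presheaves $\core_0\Fun_{d+1}(-,\hcat{Cat}_d)\simeq\core_0\Cocart_d(-)$. A point of this contractible space restricts, for every small $(d+1)$-category $\cat{C}$, to an equivalence of spaces $\mm{Un}\colon\core_0\Fun_{d+1}(\cat{C},\hcat{Cat}_d)\rto{\sim}\core_0\Cocart_d(\cat{C})$ which is $1$-natural in $\cat{C}$; I would then let $\mm{St}$ be its (essentially unique) inverse, and contractibility of the ambient space of natural equivalences is exactly the asserted uniqueness. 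I do not anticipate a real obstacle here: the only step demanding attention is the first one --- identifying the fibers of $\Cocart_d^{\cart}$ with the $0$-cores $\core_0\Cocart_d(\cat{C})$ and matching both right fibrations with presheaves --- together with keeping naturality in $\cat{C}$ honest, which is automatic as long as one argues entirely inside the category of right fibrations over $\Cat_{d+1}$ until the final evaluation at $\cat{C}$.
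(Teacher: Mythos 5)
Your proposal is correct and follows the paper's own route exactly: the paper likewise deduces the corollary by combining Proposition \ref{prop:unique unstraightening} with Proposition \ref{prop:space representability} (in the large setting) and transporting the contractible space of equivalences of right fibrations over $\Cat_{d+1}$ through straightening into a contractible space of natural equivalences of space-valued presheaves. The bookkeeping you supply — identifying the fibers of $\Cocart_d^{\cart}$ with $\core_0\Cocart_d(\cat{C})$ and matching both right fibrations with their presheaves before evaluating at $\cat{C}$ — is precisely what the paper leaves implicit.
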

Remark \ref{rem:cart case} also gives a unique $1$-natural equivalence $\core_0\Fun_{d+1}(\cat{C}^{\op}, \hcat{Cat}_d)\simeq \core_0\cat{Cart}_d(\cat{C})$.
\begin{remark}
In particular, the straightening of a $d$-cocartesian fibration $p\colon \cat{D}\rt \cat{C}$ between $(d+1)$-categories is naturally equivalent to the straightening of $p$, viewed as a special kind of $(d+1)$-cocartesian fibration between $(d+2)$-categories.
\end{remark}
We will deduce Proposition \ref{prop:unique unstraightening} from the following (simpler) version of a result of Barwick and Schommer-Pries \cite{bar21}:
\begin{proposition}\label{prop:rigidity of d-cat}
Let $\hcat{CAT}_{d+1}$ be the (very large) $(d+2)$-category of large $(d+1)$-categories and let $\Aut_{\hcat{CAT}_{d+1}}(\hcat{Cat}_d)\subseteq \Fun_{d+1}(\hcat{Cat}_d, \hcat{Cat}_d)$ be the full sub-$(d+1)$-category on those $(d+1)$-functors that are equivalences. Then $\Aut_{\hcat{CAT}_{d+1}}(\hcat{Cat}_d)\simeq \ast$.
\end{proposition}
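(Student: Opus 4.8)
This is a rigidity statement whose one non-formal ingredient is the unicity theorem of Barwick--Schommer-Pries \cite{bar21}: the $\infty$-category $\Cat_d=\core_1\hcat{Cat}_d$ of $d$-categories has automorphism space $(\mathbb{Z}/2)^d$, generated by the iterated opposite functors $(-)^{k-\op}$ for $1\leq k\leq d$. The plan is to leverage this in two steps. First: the space $\core_0\Aut_{\hcat{CAT}_{d+1}}(\hcat{Cat}_d)$ of $(d+1)$-autoequivalences is connected, i.e.\ $\mm{id}$ is the only $(d+1)$-autoequivalence up to equivalence. Second: $\mm{id}$ is rigid, i.e.\ the center $Z=\Map_{\Fun_{d+1}(\hcat{Cat}_d,\hcat{Cat}_d)}(\mm{id},\mm{id})$ is contractible as a $d$-category (meaning $Z$ and all of its iterated mapping objects are contractible). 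Since $\Aut_{\hcat{CAT}_{d+1}}(\hcat{Cat}_d)$ is a full subcategory of $\Fun_{d+1}(\hcat{Cat}_d,\hcat{Cat}_d)$, all of whose objects are then equivalent to $\mm{id}$ and between which all mapping objects are copies of $Z$, the two steps together give $\Aut_{\hcat{CAT}_{d+1}}(\hcat{Cat}_d)\simeq\ast$.

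\emph{First step.} A $(d+1)$-autoequivalence $\Phi$ induces an autoequivalence $\core_1\Phi$ of the $\infty$-category $\Cat_d$, which by \cite{bar21} is a composite of opposites; it remains to exclude the nontrivial ones. The key point is that $(-)^{k-\op}$ reverses the direction of $(k+1)$-morphisms in $\hcat{Cat}_d$, so it is not a $(d+1)$-endofunctor of $\hcat{Cat}_d$ but only a $(d+1)$-equivalence onto a nontrivial opposite of $\hcat{Cat}_d$. Concretely, since $\Phi$ preserves the terminal object, a $(d+1)$-endofunctor lying over $(-)^{k-\op}$ would restrict on the mapping $d$-category $\Fun_d(\ast,\cat{C})\simeq\cat{C}$ to a covariant functor $\cat{C}\rt\cat{C}^{k-\op}$ which is the identity on objects and, testing inductively against the cells, on all morphisms of dimension $<k$; this is impossible as soon as $\cat{C}$ carries a $k$-morphism whose reversal does not exist. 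Hence $\core_1\Phi\simeq\mm{id}$. A $(d+1)$-functor lifting $\mm{id}$ is then determined on its underlying $\infty$-category by its restriction to the cells $\Theta_d\hooklongrightarrow\hcat{Cat}_d$, and by the rigidity of $\Theta_d$ underlying \cite{bar21} this restriction must be the identity, so that $\Phi\simeq\mm{id}$ (the remaining higher coherence data being controlled exactly as in the second step).

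\emph{Second step.} We argue by induction on $d$, the base $d=0$ being the classical triviality of the center of $\sS$ (which is freely generated under colimits by the point). For the inductive step, realize the center as an end $Z=\int_{\cat{C}}\Fun_d(\cat{C},\cat{C})$ and use that $\hcat{Cat}_d$ is the colimit-closure of a small dense subcategory built from the terminal object and the objects $[1]_{\cat{A}}$, $\cat{A}$ a $(d-1)$-category (equivalently, of the cells). Naturality then forces any self-transformation $\eta$ of $\mm{id}$ to be the identity on objects of every $\cat{C}$, and to act as the identity on mapping $(d-1)$-categories precisely when the induced self-transformation of $\mm{id}_{\hcat{Cat}_{d-1}}$ is trivial --- which is the inductive hypothesis. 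Running the same bookkeeping one categorical dimension at a time identifies $Z$, and each of its iterated mapping objects, with a point.

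The main obstacle is the second step: making the naturality and density arguments rigorous in the $\infty$-categorical setting --- constructing the ends $\int_{\cat{C}}\Fun_d(\cat{C},\cat{C})$ and matching them with the enriched Yoneda description of $\Fun_{d+1}$, exhibiting a genuinely dense diagram of cells, and threading the induction through the identification of $\hcat{Cat}_d$ with categories enriched in $\hcat{Cat}_{d-1}$ --- together with invoking \cite{bar21} in the form needed for $(d+1)$-functors rather than merely functors of underlying $\infty$-categories.
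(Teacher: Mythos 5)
Your two-step decomposition (every $(d+1)$-autoequivalence is equivalent to $\mm{id}$, and the endomorphism $d$-category of $\mm{id}$ is contractible) is the right shape of the statement, but as written there is a genuine gap: the second step is a programme rather than a proof. You flag yourself that constructing the end $\int_{\cat{C}}\Fun_d(\cat{C},\cat{C})$, exhibiting a genuinely dense diagram of cells, and threading the induction through the enrichment are all left open, and your first step leans on the second for the "remaining higher coherence data", so neither step actually closes. In addition, the exclusion of the opposite functors is shaky as stated: "a $k$-morphism whose reversal does not exist" does not parse ($\cat{C}^{k-\op}$ always contains the reversed morphism); what you would actually need is that there is no identity-on-objects family of functors $\cat{C}\rt\cat{C}^{k-\op}$ natural in $\cat{C}$, and that requires its own argument.

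The single observation that collapses all of this is that the identity functor of $\hcat{Cat}_d$ is corepresentable by the terminal object: $\mm{id}_{\hcat{Cat}_d}\simeq\Map_{\hcat{Cat}_d}(\ast,-)$ as $(d+1)$-functors. Given an autoequivalence $\phi$, it preserves the terminal object, so the essentially unique object $u\in\phi(\ast)$ produces, via the enriched Yoneda lemma, a natural transformation of $(d+1)$-functors $u\colon\mm{id}\rt\phi$. Its underlying transformation of $1$-functors is a morphism in the full subcategory of $\Fun(\Cat_d,\Cat_d)$ spanned by the autoequivalences, which by \cite[Theorem 10.1]{bar21} is the discrete group $(\mathbb{Z}/2)^d$; hence $\core_1\phi\simeq\mm{id}$ and $u$ is an equivalence. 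This disposes of the opposite functors for free, since a discrete category admits no morphisms between distinct objects. For the center, the enriched Yoneda lemma gives
$$\Map_{\Fun_{d+1}(\hcat{Cat}_d,\hcat{Cat}_d)}(\mm{id},\mm{id})\simeq\Map_{\Fun_{d+1}(\hcat{Cat}_d,\hcat{Cat}_d)}\big(\Map_{\hcat{Cat}_d}(\ast,-),\Map_{\hcat{Cat}_d}(\ast,-)\big)\simeq\Map_{\hcat{Cat}_d}(\ast,\ast)\simeq\ast,$$
with no induction, no ends and no density argument needed. This is exactly the paper's proof; I would encourage you to rebuild your write-up around the representability of the identity rather than attempting the cellular induction.
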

\begin{proof}
Let $\phi\colon \hcat{Cat}_d\rt \hcat{Cat}_d$ be an equivalence of $(d+1)$-categories. Since $\phi$ preserves the terminal object, we can pick a (unique) element $u\in \phi(\ast)$. By the enriched Yoneda lemma \cite[6.2.7]{hin20} (or Example \ref{ex:Yoneda lemma}), $u$ induces a natural transformation of $(d+1)$-functors $u\colon \Map_{\hcat{Cat}_d}(\ast, -)\rt \phi(-)$. Note that the domain can be identified with the identity on $\hcat{Cat}_d$, so that $u$ is in particular a natural transformation between two automorphisms of the $1$-category $\Cat_d$. By \cite[Theorem 10.1]{bar21}, $u$ is a natural equivalence.

We conclude that $\Aut_{\hcat{CAT}_{d+1}}(\hcat{Cat}_d)$ has an essentially unique object, given by $\mm{id}_{\hcat{Cat}_d}\simeq \Map_{\hcat{Cat}_d}(\ast, -)$. Using the enriched Yoneda lemma once more, the endomorphism $d$-category of this object can be identified with $\Map_{\hcat{Cat}_d}(\ast, \ast)\simeq \ast$, which concludes the proof.
\end{proof}
\begin{proof}[Proof of Proposition \ref{prop:unique unstraightening}]
By Theorem \ref{thm:unstraightening}, the space of such equivalences is a nonempty torsor over the automorphism group $G$ of the right fibration $\cat{coPSh}^{\mm{Seg}, \cart}_d\rt \cat{Cat}_{d+1}$. Note that this right fibration fits into a pullback square
$$\begin{tikzcd}
\cat{coPSh}^{\mm{Seg}, \cart}_d\arrow[r, hook]\arrow[d, "\pi"{swap}] & \cat{coPSH}^{\mm{Seg}, \cart}_d\arrow[d, "\pi'"]\\
\cat{Cat}_{d+1}\arrow[r, hook] & \cat{CAT}_{d+1}
\end{tikzcd}$$
where $\cat{coPSH}^{\mm{Seg}, \cart}_d$ is the category of Segal copresheaves $X\rt \cat{C}$ over large $(d+1)$-categories, whose fibers are essentially small.

Applying Proposition \ref{prop:space representability} in the large setting (with $\kappa=\kappa_{\mm{sm}}$ the supremum of all small cardinals), the right fibration $\pi'$ corresponds under (some $1$-natural version of) unstraightening to the presheaf $F\colon \cat{CAT}_{d+1}^{\op}\rt \sS^{\mm{big}}$ representable by the $(d+1)$-category $\hcat{Cat}_d$. We then obtain equivalences of spaces of automorphisms
$$\begin{tikzcd}
\mm{Aut}_{\cat{CAT}_{d+1}}(\hcat{Cat}_d)\arrow[r, "\sim"] & \mm{Aut}(F)\arrow[r, "\sim"] & \mm{Aut}(F\big|\Cat_{d+1})\simeq G.
\end{tikzcd}$$
Here the first equivalence is the Yoneda embedding \eqref{diag:yoneda} and the last equivalence is unstraightening. The middle equivalence uses that $\CAT_{d+1}$ is a (large) ind-completion of $\Cat_{d+1}$; in particular, restriction along $\Cat_{d+1}\rt \CAT_{d+1}$ is fully faithful on presheaves sending large $\kappa_\mm{sm}$-filtered colimits in $\CAT_{d+1}$ to large limits in $\sS^{\mm{big}}$ (e.g.\ on representable presheaves). Finally, Proposition \ref{prop:rigidity of d-cat} implies that the left space is contractible.
\end{proof}
Theorem \ref{thm:rectification} and Theorem \ref{thm:unstraightening} imply that the (un)straightening equivalence of Corollary \ref{cor:naturality space level} admits some extension to an equivalence of $1$-categories, rather than spaces. Using this, let us record some properties of the (un)straightening at the level of objects.
\begin{example}\label{ex:straightening fibers}
The proof of Theorem \ref{thm:unstraightening} shows that over $\cat{C}=\ast$, the (un)straightening equivalence of Corollary \ref{cor:naturality space level} is homotopic to the identity on $\Cat_d$. Since (un)straightening intertwines restriction of copresheaves with base change of $d$-cocartesian fibrations, it follows that the straightening of a cocartesian fibration $p\colon \cat{D}\rt \cat{C}$ is given pointwise by $\mm{St}(p)_c\simeq \cat{D}_c$.
\end{example}
\begin{example}\label{ex:straightening morphisms}
Let $p\colon \cat{D}\rt \cat{C}$ be a $d$-cocartesian fibration and let $\gamma\colon c_0\rt c_1$ be a $1$-morphism in $\cat{C}$. The straightening of $p$ determines a functor $\gamma_*\colon \cat{D}_{c_0}\rt \cat{D}_{c_1}$ that can be understood explicitly as follows. For any diagram $\sigma_0\colon \cat{K}\rt \cat{D}_{c_0}$, there exists a unique extension of $\sigma_0$ to a map $\sigma\colon [1]\times \cat{K}\rt \cat{D}$ that covers $\gamma\colon [1]\rt \cat{C}$ and preserves cocartesian arrows. The restriction $\sigma_1\colon \{1\}\times \cat{K}\rt \cat{D}_1$ to the fiber over $1$ then presents the composite diagram $\gamma_*\circ \sigma_0$. In particular, applying this when $\sigma_0=\mm{id}_{\cat{D}_{c_0}}$ gives a description of $\gamma_*$.

To see this, it suffices to treat the case where $\cat{C}=[1]$. By naturality with respect to the projection $[1]\rt \ast$, the unstraightening of a constant copresheaf $\Delta_{\cat{K}}\colon [1]\rt \hcat{Cat}_d$ with value $\cat{K}$ is given by the projection $\pi_1\colon [1]\times \cat{K}\rt [1]$. Theorem \ref{thm:unstraightening} then shows that we have a commuting diagram of mapping spaces
$$\begin{tikzcd}[column sep=0.5pc, row sep=0.8pc]
 & \Map_{\Fun([1], \Cat_d)}(\Delta_{\cat{K}}, \mm{St}(p))\arrow[dd, "\sim"]\arrow[rd, "\mm{ev}_1"]\arrow[ld, "\mm{ev}_0"{swap}, "\sim"] & \\
\Map_{\Cat_d}(\cat{K}, \cat{D}_0) & & \Map_{\Cat_d}(\cat{K}, \cat{D}_1)\\
& \Map_{\cat{Cocart}_d([1])}(\pi_1, p)\arrow[lu, "0^*", "\sim"{swap}]\arrow[ru, "1^*"{swap}].
\end{tikzcd}$$
Now note that the top zigzag is homotopic to postcomposition with $\gamma_*$, while the bottom zigzag corresponds precisely to the procedure described above.
\end{example}
\begin{example}\label{ex:straightening 2-morphism}
Let $C_2$ be the $2$-cell and let $[2]^\mm{lax}$ denote the $2$-category
$$\begin{tikzcd}[column sep=1.5pc, row sep=1pc]
0\arrow[r]\arrow[rd, bend right, ""{name=t}] & 1\arrow[d]\\ & 2.\arrow[Rightarrow, from=1-2, to=t]
\end{tikzcd}$$
The projection $p\colon [2]^\mm{lax}\rt C_2$ sending $0\mapsto 0$ and $1, 2\mapsto 1$ is a $1$-cocartesian fibration. By Examples \ref{ex:straightening fibers} and \ref{ex:straightening morphisms}, the straightening $\mm{St}(p)\colon C_2\rt \hcat{Cat}_1$ is given by the diagram
$$\begin{tikzcd}[column sep=3pc]
\ast\arrow[r, bend left, "{0}"{above}, ""{swap, name=s}]\arrow[r, bend right, "{1}"{below}, ""{name=t}] & {[1]}\arrow[Rightarrow, from=s, to=t]
\end{tikzcd}$$
where the natural transformation is the only possible one. In other words, the straightening $\mm{St}(p)=\allowbreak\Map_{C_2}(0, -)$ is representable by $0$.
Likewise, let $\cat{K}$ be a $d$-category and consider the diagram $F_{\cat{K}}\colon C_2\rt \hcat{Cat}_d$ encoding the evident natural transformation from $(0, \mm{id})$ to $(1, \mm{id})\colon \cat{K}\rt [1]\times \cat{K}$. Then $\mm{Un}(F_{\cat{K}})\simeq [2]^\mm{lax}\times \cat{K}$. 

Now let $q\colon \cat{D}\rt C_2$ be a general $d$-cocartesian fibration, so that $\mm{St}(q)$ describes a natural transformation $\mu\colon f_0\rt f_1$ between two functors $f_0, f_1\colon \cat{D}_0\rt \cat{D}_1$. As in Example \ref{ex:straightening morphisms}, one can obtain $\mu$ as follows: there is a unique map $[2]^\mm{lax}\times \cat{D}_0\rt \cat{D}$ over $C_2$ restricting to the identity on $\{0\}\times \cat{D}_0$, and $\mu$ is given by its restriction $\{1\leq 2\}\times \cat{D}_0\rt \cat{D}_1$.
\end{example}
\begin{example}
Given a $2$-category $\cat{C}$, let us denote by $\mm{Tw}_{\mm{lax}}(\cat{C})\rt \cat{C}^{\op}\times \cat{C}$ the $1$-cocartesian fibration arising as the unstraightening of $\Map_{\cat{C}}\colon \cat{C}^{\op}\times\cat{C}\rt \hcat{Cat}_1$. We will refer to $\mm{Tw}_{\mm{lax}}(\cat{C})$ as the \emph{lax twisted arrow $2$-category} of $\cat{C}$.

Explicitly, $\mm{Tw}_{\mm{lax}}(\cat{C})$ is the image under the functor $\Psi^\perp$ of the Segal copresheaf $\mm{Tw}(\cat{C})\rt \cat{C}^{\op}\times \cat{C}$ (Example \ref{ex:mapping space functor}). Unraveling the definitions, one arrives at the following explicit description of $\mm{Tw}_{\mm{lax}}(\cat{C})$ as a bisimplicial space. For each $[m], [n]\in \Del$, consider the bisimplicial space (in fact, set)
$$
J[m, n]=\colim_{\substack{\alpha\star \beta\colon [q]\star [p]\to [m]\\ \alpha'\colon [q]\to [n]}} \big([p]^{\op}\star [p]\big)\boxtimes [q].
$$
This comes with natural inclusions $[m]^{\op}\boxtimes [n]\rt J[m, n]\lt [m]\boxtimes [n]$, arising as parts of the bisimplex associated to $(\alpha, \alpha', \beta)=(\mm{cst}(0), \mm{id}_{[m]}, \mm{id}_{[n]})$. One then has that
$$
\mm{Tw}_{\mm{lax}}(\cat{C})(m, n)\subseteq \Map_{\Fun(\Del^{\times 2, \op}, \sS)}\big(J[m, n], \cat{C}\big)
$$
is the subspace of maps $J[m, n]\rt \cat{C}$ whose restriction to any bisimplex of the form $\big([0]^{\op}\star [0]\big)\boxtimes [q]\rt J[m, n]$ (i.e.\ a summand for which $p=0$) is degenerate in the $q$-direction.

Note that in the definition of $J[m, n]$, it suffices to take the colimit over the cofinal subcategory of $(\alpha, \alpha', \beta)$ such that $(\alpha, \alpha')\colon [q]\rt [m]\times [n]$ is injective and $\beta\colon [p]\rt \{\alpha(q)\leq \dots\leq m\big\}$ is the inclusion of an upwards closed subset. Using this for low values of $m$ and $n$, one sees that the $2$-category $\mm{Tw}_{\mm{lax}}(\cat{C})$ has objects given by morphisms in $\cat{C}$, and $1$-and $2$-morphisms can be depicted by diagrams in $\cat{C}$ of the form
$$\begin{tikzcd}
c_0\arrow[r, ""{below, name=s1}] & c_1\arrow[d]\\
d_0\arrow[u] \arrow[r, ""{above, name=t1}] & d_1 \arrow[Rightarrow, from=s1, to=t1, shorten=1ex]
\end{tikzcd}
\qquad\qquad\qquad \qquad
\begin{tikzcd}[column sep=0pc]
c_0\arrow[rrrr, ""{below, name=s1}] & & \hspace{25pt}& &  c_1\arrow[rd, ""{swap, name=t2}, bend left=35]\arrow[rd, ""{name=s2}, bend right=25, color={\vertcolor}]\arrow[Rightarrow, from=s2, to=t2, shorten=-0.4ex, start anchor={[yshift=1.5pt]}, end anchor={[yshift=-0.5pt]}, color=blue]\\
& d_0\arrow[lu, ""{swap, name=s3}, bend left=25, color={\vertcolor}]\arrow[lu, ""{name=t3}, bend right=35]\arrow[Rightarrow, from=s3, to=t3, shorten=-0.4ex,  start anchor={[yshift=0.5pt]}, end anchor={[yshift=-1.5pt]}, color=blue] \arrow[rrrr, ""{above, name=t1}] & & & &  d_1 
\arrow[Rightarrow, from=s1, to=t1, shorten=0.5ex, bend left=40, start anchor={[xshift=0ex, yshift=-0.3ex]}, end anchor={[xshift=0ex, yshift=1ex]}]\arrow[Rightarrow, from=s1, to=t1, shorten=1ex, bend right=50, color={\vertcolor}, start anchor={[xshift=-0.5ex]}, end anchor={[xshift=0ex, yshift=1ex]}].
\end{tikzcd}
$$
Here the left diagram defines an arrow from $c_0\rt c_1$ to $d_0\rt d_1$ and the right diagram defines a (blue) $2$-morphism from the red to the black morphism. The projection to $\cat{C}^{\op}\times \cat{C}$ restricts to the left and right column. 

By restricting to $\cat{C}^{\op}\times \{c\}$ or $\{c\}\times \cat{C}$, we obtain an explicit description of the unstraightening of the representable presheaf $\Map_{\cat{C}}(-, c)$ and copresheaf $\Map_{\cat{C}}(c, -)$ in terms of lax versions of the over- and under-category of $c$.
\end{example}
\begin{lemma}\label{lem:straightening and cores}
Let $p\colon \cat{D}\rt \cat{C}$ be a $d$-cocartesian fibration between $(d+1)$-categories. Then the following assertions hold:
\begin{enumerate}[label=(\arabic*)]
\item\label{it:1-core of cocart} The induced map $p'\colon \core_1\cat{D}\rt \core_1\cat{C}$ is a $1$-cocartesian fibration, whose straightening $\mm{St}(p')$ can be identified with the $1$-functor
$$\begin{tikzcd}[column sep=2.5pc]
\mm{St}(p')\colon \core_1\cat{C}\arrow[r, "\mm{St}(p)"] & \Cat_d\arrow[r, "\mm{core}_1"] & \Cat_1.
\end{tikzcd}$$
\item\label{it:underlying 0-cocart} The restriction to cocartesian arrows $p''\colon (\core_1\cat{D})^{\mm{cocart}}\rt \core_1\cat{C}$ is a $0$-cocartesian fibration, whose straightening $\mm{St}(p'')$ can be identified with the $1$-functor
$$\begin{tikzcd}[column sep=2.5pc]
\mm{St}(p'')\colon \core_1\cat{C}\arrow[r, "\mm{St}(p)"] & \Cat_d\arrow[r, "\mm{core}_0"] & \sS.
\end{tikzcd}$$
\end{enumerate}
\end{lemma}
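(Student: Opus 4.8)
The plan is to prove (1) first and then deduce (2) by restricting to cocartesian arrows. That $p'\colon\core_1\cat{D}\rt\core_1\cat{C}$ is a $1$-cocartesian fibration is essentially immediate: $p$ is in particular a pre-cocartesian fibration, so Remark~\ref{rem:precocart reform} already tells us that $\core_1(\cat{D})\rt\core_1(\cat{C})$ is a cocartesian fibration of $1$-categories, which is the same thing as a $1$-cocartesian fibration between $1$-categories (cf.\ Lemma~\ref{lem:d-cocart as d+1-cocart} and Remark~\ref{rem:straightening low degrees}). I would also record the auxiliary fact that every $p$-cocartesian $1$-morphism of $\cat{D}$ is $p'$-cocartesian: applying the limit-preserving functor $\mm{core}_0$ to the cartesian square of mapping $d$-categories \eqref{diag:standard cocartesian condition} produces the cartesian square of mapping spaces that characterises $p'$-cocartesian arrows.

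For the straightening the key leverage is that (un)straightening intertwines restriction of copresheaves with base change of $d$-cocartesian fibrations (Corollary~\ref{cor:naturality space level}). Writing $F\colon\cat{C}\rt\hcat{Cat}_{d}$ for the straightening of $p$, viewed as a $(d+1)$-functor, we therefore have $\cat{D}\simeq F^{\ast}\cat{U}_{d}$ over $\cat{C}$, where $\cat{U}_{d}\rt\hcat{Cat}_{d}$ is the universal $d$-cocartesian fibration (cf.\ Proposition~\ref{prop:space representability}), whose fibre over a $d$-category $\cat{K}$ is $\cat{K}$ by Example~\ref{ex:straightening fibers}. Since $\core_1\colon\Cat_{d+1}\rt\Cat_1$ is a composite of right adjoints (Remark~\ref{rem:(d-1)-cat vs d-cat}), it preserves this pullback square, so $p'$ is the base change along the underlying $1$-functor of $F$ — i.e.\ along the $1$-functor called $\mm{St}(p)\colon\core_1\cat{C}\rt\Cat_d$ in the statement — of the map $\core_1\cat{U}_{d}\rt\core_1\hcat{Cat}_d=\Cat_d$. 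Using naturality once more, it then suffices to identify this last map with the unstraightening of $\mm{core}_1\colon\Cat_d\rt\Cat_1$. Its fibre over $\cat{K}$ is $\core_1(\cat{K})$ (again because $\core_1$ preserves fibres), and by Example~\ref{ex:straightening morphisms} its cocartesian transport along $f\colon\cat{K}\rt\cat{L}$ is computed from the unique cocartesian-arrow-preserving lift of the identity; since $\cat{U}_d$-cocartesian arrows map to $\core_1\cat{U}_d$-cocartesian arrows as above, and the transport of $\cat{U}_d$ along $f$ is $f$ itself (because $\mm{St}(\cat{U}_d)=\mm{id}$), this transport is $\core_1(f)=\mm{core}_1(f)$. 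The hard part is then to upgrade this agreement of values on objects and $1$-morphisms into a genuine equivalence of functors $\Cat_d\rt\Cat_1$; I expect to handle it by observing that the whole comparison is induced by the single functor $\core_1$ applied to universal data, so the higher coherences are automatic, and by invoking the $1$-categorical uniqueness packaged into Corollary~\ref{cor:naturality space level} (equivalently, by running the argument in the explicit point-set models of Theorem~\ref{thm:rectification}).

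Part (2) should then be routine. The map $p''\colon(\core_1\cat{D})^{\mm{cocart}}\rt\core_1\cat{C}$ is the restriction of $p'$ to its cocartesian arrows; since all arrows of $(\core_1\cat{D})^{\mm{cocart}}$ are $p''$-cocartesian and its fibres over $\core_1\cat{C}$ coincide with those of $p'$, the map $p''$ is a $0$-cocartesian fibration. The left fibration of cocartesian arrows inside a cocartesian fibration $q$ straightens to $\mm{core}_0\circ\mm{St}(q)$ — the same universal-case computation, now comparing $(\core_1\cat{U}_d)^{\mm{cocart}}\rt\Cat_d$ with $\mm{core}_0\colon\Cat_d\rt\sS$. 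Hence $\mm{St}(p'')\simeq\mm{core}_0\circ\mm{St}(p')\simeq\mm{core}_0\circ\mm{core}_1\circ\mm{St}(p)\simeq\mm{core}_0\circ\mm{St}(p)$, where the last equivalence uses that the underlying space of a $d$-category is the underlying space of its $1$-core.
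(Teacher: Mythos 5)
Your proof takes a genuinely different route from the paper's, and it has a gap at exactly the step you flag as ``the hard part.'' You reduce to the universal $d$-cocartesian fibration $\cat{U}_d\rt\hcat{Cat}_d$ and then try to identify the straightening of $\core_1\cat{U}_d\rt\Cat_d$ with $\mm{core}_1$ by checking its fibers and its cocartesian transport along $1$-morphisms. Agreement on objects and on transport functors does not determine a functor of $\infty$-categories $\Cat_d\rt\Cat_1$ up to equivalence: all the higher coherence data is missing, and the assertion that ``the higher coherences are automatic because the comparison is induced by the single functor $\core_1$'' is not an argument -- to make it one you would have to exhibit a map of cocartesian fibrations (or of copresheaves) inducing the comparison, which is precisely what is to be constructed. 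The appeal to Corollary \ref{cor:naturality space level} does not close this either: that corollary gives uniqueness of the \emph{natural straightening equivalence}, not a recognition principle for a functor $\Cat_d\rt\Cat_1$ from its values on objects and arrows. (Your preliminary steps are fine: $p'$ being a $1$-cocartesian fibration via Remark \ref{rem:precocart reform}, $p$-cocartesian arrows being $p'$-cocartesian by applying the limit-preserving $\core_0$ to \eqref{diag:standard cocartesian condition}, and the reduction of $p'$ to a base change of $\core_1\cat{U}_d\rt\Cat_d$ since $\core_1$ preserves pullbacks.)

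The paper avoids the issue entirely with a universal-property argument. After base-changing to reduce to $\cat{C}$ a $1$-category, Lemma \ref{lem:d-cocart as d+1-cocart} shows that the $1$-cocartesian fibrations over $\cat{C}$ are exactly the $d$-cocartesian fibrations with $1$-categorical total space; since maps out of a $1$-category factor uniquely through the $1$-core, $p'\colon\core_1\cat{D}\rt\cat{C}$ is the \emph{terminal} $1$-cocartesian fibration equipped with a strong map to $p$. Straightening is an equivalence of $1$-categories, hence carries this terminal object to the terminal $\Cat_1$-valued copresheaf with a natural transformation to $\mm{St}(p)$, which is $\mm{core}_1\circ\mm{St}(p)$ because $\core_1$ is right adjoint to the inclusion $\Cat_1\hookrightarrow\Cat_d$ (applied pointwise). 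Part (2) is the same argument one level down, with $p''$ terminal among $0$-cocartesian fibrations mapping to $p'$ and $\core_0$ right adjoint to $\sS\hookrightarrow\Cat_1$. If you want to salvage your computation with $\cat{U}_d$, the cleanest fix is to import this same idea in the universal case: characterize $\core_1\cat{U}_d\rt\Cat_d$ by a universal property that straightening manifestly preserves, rather than by its values.
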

\begin{proof}
By replacing $p$ by its base change $\cat{D}\times_{\cat{C}}\core_1\cat{C}\rt \core_1\cat{C}$, we can reduce to the case where $\cat{C}$ is a $1$-category. In this case, Lemma \ref{lem:d-cocart as d+1-cocart} implies that a $d$-cocartesian fibration $q\colon \cat{E}\rt \cat{C}$ is a $1$-cocartesian fibration if and only if $\cat{E}$ is a $1$-category. Consequently, $p'$ is the terminal $1$-cocartesian fibration over $\cat{C}$ equipped with a natural map to $p$. Under straightening, this corresponds to the terminal $\Cat_1$-valued copresheaf on $\cat{C}$ equipped with a natural transformation to $\mm{St}(p)$. This is precisely the composite functor given in \ref{it:1-core of cocart}.

Likewise, Lemma \ref{lem:d-cocart as d+1-cocart} implies $p''$ is the terminal $0$-cocartesian fibration with a map to $p'$. Under straightening, this corresponds to the terminal space-valued presheaf with a natural map to $\mm{St}(p')$, which is precisely the functor given in \ref{it:underlying 0-cocart}. 
\end{proof}

\subsection{Straightening and unstraightening: higher functoriality}
Theorem \ref{thm:unstraightening} shows that for every $(d+1)$-category $\cat{C}$, there is an equivalence between the $1$-category of $d$-cocartesian fibrations over $\cat{C}$ and the $1$-category underlying $\Fun_{d+1}(\cat{C}, \hcat{Cat}_d)$. 
We will now refine this equivalence to a natural equivalence of $(d+2)$-functors with values in $(d+1)$-categories, using Theorem \ref{thm:unstraightening} one categorical dimension higher. 
\begin{definition}\label{def:cocart fib functor}
Consider the $(d+1)$-cartesian fibration $\pi\colon \hcat{Cocart}_d\rt \hcat{Cat}_{d+1}$ from Proposition \ref{prop:cocart forms cart fibration}. We will write
$$\begin{tikzcd}
\hcat{Cocart}_d(-)\colon \hcat{Cat}_{d+1}^{\op}\arrow[r] & \hcat{CAT}_{d+1}
\end{tikzcd}$$
for the $(d+2)$-functor corresponding to $\pi$ under the (unique) $1$-natural straightening equivalence of Corollary \ref{cor:naturality space level} (in the large setting).
\end{definition}
We now come to our main result:
\begin{theorem}[Straightening and unstraightening]\label{thm:unstraightening very functorial}
The $(d+1)$-category of natural equivalences of $(d+2)$-functors $\hcat{Cat}_{d+1}^{\op}\rt \hcat{CAT}_{d+1}$
$$\begin{tikzcd}
\mm{St}\colon \hcat{Cocart}_d(-)\arrow[r, yshift=1ex] & \Fun_{d+1}\big(-, \hcat{Cat}_d\big)\colon \mm{Un}\arrow[l, yshift=-1ex, "\sim"{swap}]
\end{tikzcd}$$
is equivalent to a single point.
\end{theorem}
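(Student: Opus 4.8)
The plan is to split the statement into an \emph{existence} part (there is at least one natural equivalence $\mm{St}\colon \hcat{Cocart}_d(-)\rto{\sim}\Fun_{d+1}(-,\hcat{Cat}_d)$) and a \emph{rigidity} part (there are no nontrivial automorphisms), and to reduce the second to Proposition~\ref{prop:rigidity of d-cat}. For the rigidity part, note that $\Fun_{d+1}(-,\hcat{Cat}_d)=\Map_{\hcat{Cat}_{d+1}}(-,\hcat{Cat}_d)$ is exactly the image of $\hcat{Cat}_d$ under the enriched Yoneda embedding of \eqref{diag:yoneda}, applied one categorical dimension up to $\hcat{Cat}_{d+1}^{\op}$: this gives a fully faithful functor from $\hcat{Cat}_{d+1}$ into the $(d+2)$-category of $(d+2)$-functors $\hcat{Cat}_{d+1}^{\op}\rt\hcat{CAT}_{d+1}$, sending $X\mapsto\Map_{\hcat{Cat}_{d+1}}(-,X)$ (fully faithfulness recalled in Section~\ref{sec:corep}, valid in the large setting). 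Granting an equivalence $\mm{St}$, postcomposition with $\mm{St}$ identifies the $(d+1)$-category of natural equivalences $\hcat{Cocart}_d(-)\rto{\sim}\Fun_{d+1}(-,\hcat{Cat}_d)$ with $\Aut\bigl(\Fun_{d+1}(-,\hcat{Cat}_d)\bigr)$, and full faithfulness of Yoneda identifies this with $\Aut_{\hcat{CAT}_{d+1}}(\hcat{Cat}_d)$, which is contractible by Proposition~\ref{prop:rigidity of d-cat}. This proves the theorem, modulo existence.

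For the existence part, the idea is to show that the $(d+2)$-functor $\hcat{Cocart}_d(-)$ of Definition~\ref{def:cocart fib functor} is representable by $\hcat{Cat}_d$. I would invoke the evident analogue of Proposition~\ref{prop:representable from cotensored}, with $\hcat{Cat}_d$ replaced by $\hcat{CAT}_{d+1}$ and $\cat{C}$ by $\hcat{Cat}_{d+1}$ (the proof there only uses the enriched Yoneda lemma and (co)tensoring, all available one dimension up, since $\hcat{CAT}_{d+1}$ is cartesian closed). As $\hcat{Cat}_{d+1}$ is tensored over itself, this reduces existence to two checks: (i) the underlying $1$-functor $\Cat_{d+1}^{\op}\rt\sS$, $\cat{C}\mapsto\core_0\hcat{Cocart}_d(\cat{C})$, is representable; and (ii) $\hcat{Cocart}_d(-)$ is cotensored over $\hcat{Cat}_{d+1}$ --- and by Remark~\ref{rem:less than full tensoring} it suffices that it be cotensored over a full subcategory containing $\Theta_{d+1}$ and closed under products. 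Point (i) is built into the setup: by Definition~\ref{def:cocart fib functor}, the restriction of $\hcat{Cocart}_d(-)$ to underlying $1$-categories is the straightening of $\cat{Cocart}_d\rt\Cat_{d+1}$, which Theorem~\ref{thm:unstraightening} identifies with $\SegCoPSh_d\rt\Cat_{d+1}$, so by Proposition~\ref{prop:space representability} the associated right fibration is represented by $\hcat{Cat}_d$ (equivalently, this is packaged by Corollary~\ref{cor:naturality space level}).

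Point (ii) will be the main obstacle. Unwinding the definitions, one must show that for every $(d+1)$-category $K$ the comparison map $\hcat{Cocart}_d(K\times\cat{C})\rt\Fun_{d+1}\bigl(K,\hcat{Cocart}_d(\cat{C})\bigr)$ is an equivalence of $(d+1)$-categories. I would first reduce to $K$ an iterated product of intervals $[1]_{\cat{A}}$ with $\cat{A}\in\Theta_d$, using that $d$-cocartesian fibrations and strong morphisms are closed under the relevant limits (Lemma~\ref{lem:stability properties cocart1}(3) and Remark~\ref{rem:limits of adjointable}), and then analyse such fibrations over $K\times\cat{C}$ directly: Lemma~\ref{lem:stability properties cocart1}(2) shows $\Fun_{d+1}(K,-)$ preserves $d$-cocartesian fibrations and strong maps, while the fiberwise recognition criteria (Corollary~\ref{cor:pullback fiberwise}, Lemma~\ref{lem:local cartesian}, and the pre-cocartesian description of Corollary~\ref{cor:classical def of pre-cocart}) should let one recognise a $d$-cocartesian fibration over $K\times\cat{C}$ as precisely a $K$-shaped diagram in $\hcat{Cocart}_d(\cat{C})$. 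The delicate point is that $\hcat{Cocart}_d(-)$ is defined abstractly as a straightening, so carrying out this comparison requires extracting enough of its structure as a $(d+2)$-functor from its $1$-categorical shadow; this is where the bulk of the combinatorial bookkeeping will concentrate, and where I would expect to spend the most effort.
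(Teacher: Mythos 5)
Your high-level architecture (existence via representability of $\hcat{Cocart}_d(-)$, uniqueness via Yoneda plus Proposition \ref{prop:rigidity of d-cat}) is the same as the paper's, and your treatment of point (i) and of the rigidity half is essentially right — modulo a size point you gloss over: $\hcat{Cat}_d$ is not an object of $\hcat{Cat}_{d+1}$, so full faithfulness of $X\mapsto\Map(-,X)$ as a functor on $\hcat{Cat}_{d+1}^{\op}$ is not literally the enriched Yoneda lemma; the paper first applies Yoneda in $\hcat{CAT}_{d+1}$ and then shows restriction along $\hcat{Cat}_{d+1}\hookrightarrow\hcat{CAT}_{d+1}$ is fully faithful on representables (via a right-Kan-extension/cofiltered-limit argument). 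The same size issue forces the paper to work with the $\kappa$-bounded functors $\hcat{Cocart}_d^\kappa$ throughout and only pass to $\kappa=\kappa_{\mm{sm}}$ at the end.

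The genuine gap is your point (ii), which you correctly flag as the main obstacle but for which your proposed route does not work. Observe that for $\cat{C}=\ast$ the cotensoring statement reads $\hcat{Cocart}_d(K)\simeq\Fun_{d+1}(K,\hcat{Cat}_d)$ — i.e.\ it \emph{is} the theorem. So "recognising a $d$-cocartesian fibration over $K\times\cat{C}$ as a $K$-shaped diagram in $\hcat{Cocart}_d(\cat{C})$" by direct combinatorics is not a reduction at all; it is the full statement, and nothing in Lemma \ref{lem:stability properties cocart1} gives it (part (2) of that lemma produces a fibration over $\Fun_{d+1}(K,\cat{C})$, not a comparison with $\Fun_{d+1}(K,\hcat{Cocart}_d(\cat{C}))$). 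The paper's actual argument for (ii) avoids this circularity by a bootstrap: after reducing to $K,\cat{C}\in\Theta_{d+1}$ via Lemmas \ref{lem:base change is cotensored}, \ref{lem:tensored equivalence} and \ref{lem:cocart fib preserves limits} (the last of which itself rests on the representability results of Proposition \ref{prop:space representability} and Variant \ref{var:maps of presheaves}, not merely on closure of fibrations under limits), it uses the already-established $1$-categorical straightening (Theorem \ref{thm:unstraightening}) to identify the underlying $1$-categories of \emph{both} sides with $\core_1\Fun_{d+1}(K\times\cat{C},\hcat{Cat}_d(\kappa))$, so that $\mu$ becomes an endofunctor of this $1$-category commuting with restriction to objects; it then shows $\mu$ has a left adjoint $\mu_!$ and that the unit $\mm{id}\rt\mu\mu_!$ is an equivalence on representables, the last step invoking the rigidity of $\Cat_d$ (\cite[Theorem 10.1]{bar21}) to see that the resulting natural endomorphism of $\mm{id}_{\Cat_d(\kappa)}$ is trivial. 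Without this identification-plus-rigidity step your proof of existence is incomplete, and the direct approach you sketch would amount to reproving the theorem by hand.
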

Since $\hcat{Cat}_d$ is itself a large $(d+1)$-category, it will be convenient to first study a version of the above result where we bound the cardinality.
\begin{notation}\label{not:kappa-small cocart}
Let $\kappa$ be a regular uncountable cardinal. Consider the $(d+2)$-functor $\pi\colon \hcat{Cocart}_d(\kappa)\rt \hcat{Cat}_{d+1}$ whose domain is the full subcategory of $\hcat{Cocart}_d$ on the $d$-cocartesian fibrations with $\kappa$-small fibers. This is a $(d+1)$-cartesian fibration, and we will write $\hcat{Cocart}_d^\kappa\colon \hcat{Cat}_{d+1}^{op}\rt \hcat{Cat}_{d+1}$ for its image under the (natural) straightening of Corollary \ref{cor:naturality space level}. Note that the values of this functor are indeed small categories: Corollary \ref{cor:naturality space level} identifies their spaces of objects with the spaces of enriched functors $\cat{C}\rt \hcat{Cat}_d(\kappa)$.
\end{notation}
Let us start by recording some properties of the $(d+2)$-functor $\hcat{Cocart}^\kappa_d$.
\begin{lemma}\label{lem:base change is cotensored}
Let $f\colon \cat{C}\rt \cat{C}'$ be a functor of $(d+1)$-categories. Then the $(d+1)$-functor 
$$\begin{tikzcd}
f^*=\hcat{Cocart}^\kappa_d(f)\colon \hcat{Cocart}^\kappa_d(\cat{C}')\arrow[r] & \hcat{Cocart}^\kappa_d(\cat{C})
\end{tikzcd}$$
is tensored over $\hcat{Cat}_{d}(\kappa)$ (in particular, so are its domain and codomain).
\end{lemma}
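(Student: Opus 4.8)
The plan is to exhibit the tensoring explicitly and then observe that base change preserves it on the nose. First I would unwind the definitions. By Notation \ref{not:kappa-small cocart}, $\hcat{Cocart}^\kappa_d(\cat{C}')$ is by construction the value at $\cat{C}'$ of the straightening of the $(d+1)$-cartesian fibration $\pi\colon\hcat{Cocart}_d(\kappa)\rt\hcat{Cat}_{d+1}$; since the straightening of a (co)cartesian fibration is computed fibrewise on objects and by base change on morphisms (Examples \ref{ex:straightening fibers} and \ref{ex:straightening morphisms}, applied one categorical dimension up, together with Remark \ref{rem:cart case} and the description of the cartesian lifts of $\pi$ as base changes in the proof of Proposition \ref{prop:cocart forms cart fibration}), one may identify $\hcat{Cocart}^\kappa_d(\cat{C}')$ with the $(d+1)$-category of $d$-cocartesian fibrations over $\cat{C}'$ with $\kappa$-small fibres and strong morphisms, in such a way that $f^{*}=\hcat{Cocart}^\kappa_d(f)$ becomes the base-change functor $(-)\times_{\cat{C}'}\cat{C}$.

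Next I would define the candidate tensoring. For a $d$-cocartesian fibration $p\colon\cat{D}\rt\cat{C}'$ with $\kappa$-small fibres and $K\in\hcat{Cat}_d(\kappa)$, write $\iota K$ for $K$ regarded as a $(d+1)$-category and set
$$
K\otimes\cat{D}\ :=\ \cat{D}\times_{\cat{C}'}\big(\cat{C}'\times\iota K\big)\ \cong\ \iota K\times\cat{D}\ \xrightarrow{\ p\circ\mm{pr}_{\cat{D}}\ }\ \cat{C}'.
$$
The projection $\cat{C}'\times\iota K\rt\cat{C}'$ is a $d$-cocartesian fibration, being the base change of $\iota K\rt\ast$ — which is a $d$-cocartesian fibration, as it is the unstraightening of $K\colon\ast\rt\hcat{Cat}_d$ (Example \ref{ex:straightening fibers}) — so $K\otimes\cat{D}\rt\cat{C}'$ is a $d$-cocartesian fibration by Lemma \ref{lem:stability properties cocart1}, with $\kappa$-small fibres $\cat{D}_c\times K$. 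Base change preserves this construction up to canonical equivalence by associativity of fibre products: $f^{*}(K\otimes\cat{D})\cong(\cat{D}\times_{\cat{C}'}\cat{C})\times_{\cat{C}}(\cat{C}\times\iota K)\cong K\otimes f^{*}(\cat{D})$, and one checks this identification is the comparison map $\mu$ of Definition \ref{def:tensored functor}. Hence, granting that $K\otimes(-)$ defines a tensoring on each $\hcat{Cocart}^\kappa_d(\cat{C}')$, the functor $f^{*}$ is automatically tensored, which is the assertion.

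It remains to verify the universal property: a natural equivalence of $d$-categories
$$
\Map_{\hcat{Cocart}^\kappa_d(\cat{C}')}\!\big(K\otimes\cat{D},\,\cat{E}\big)\ \simeq\ \Fun_d\!\big(K,\,\Map_{\hcat{Cocart}^\kappa_d(\cat{C}')}(\cat{D},\cat{E})\big).
$$
Here $\Map_{\hcat{Cocart}^\kappa_d(\cat{C}')}(\cat{D},\cat{E})$ is the full sub-$d$-category of the $d$-category of functors $\cat{D}\rt\cat{E}$ over $\cat{C}'$ spanned by the strong ones. Combining the exponential equivalence $\Fun_{d+1}(\iota K\times\cat{D},\cat{E})\cong\Fun_{d+1}(\cat{D},\Fun_{d+1}(\iota K,\cat{E}))$ with the adjunction $\iota\dashv\core_d$ (Remark \ref{rem:(d-1)-cat vs d-cat}, one dimension up) yields a natural equivalence $\core_d\Fun_{d+1}(\iota K\times\cat{D},\cat{E})\simeq\Fun_d\big(K,\core_d\Fun_{d+1}(\cat{D},\cat{E})\big)$, compatible with the projections to $\cat{C}'$ and hence restricting to morphisms over $\cat{C}'$. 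The substantive point — and the step I expect to be the main obstacle — is to show that this equivalence matches the full sub-$d$-categories of \emph{strong} morphisms on the two sides; equivalently, that a functor $g\colon\iota K\times\cat{D}\rt\cat{E}$ over $\cat{C}'$ is strong precisely when its restriction $g|_{\{k\}\times\cat{D}}$ is strong for every object $k\in K$. This amounts to showing that the $p$-lateral $j$-morphisms of $K\otimes\cat{D}\rt\cat{C}'$ (for $1\le j\le d$) are preserved and reflected by the fibre inclusions $\{k\}\times\cat{D}\hooklongrightarrow\iota K\times\cat{D}$. I would prove this by induction on $d$: realise $K\otimes\cat{D}\rt\cat{C}'$ as the composite of the base change $\iota K\times\cat{D}\rt\cat{D}$ of $\iota K\rt\ast$ — whose lateral morphisms, and whose fibrewise behaviour over points of $\cat{D}$, are governed by Corollary \ref{cor:disjoint unions of pre-cocart}, Lemma \ref{lem:stability properties cocart2}, and the preservation of lateral morphisms under base change — with $p\colon\cat{D}\rt\cat{C}'$, and then pass to mapping $(d-1)$-categories to descend in $d$. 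If the cotensoring of $\hcat{Cocart}^\kappa_d(\cat{C}')$ over $\hcat{Cat}_d(\kappa)$ is also wanted, it is produced in the mirror-image fashion using $\cat{D}^{K}:=\Fun_{d+1}(\iota K,\cat{D})\times_{\Fun_{d+1}(\iota K,\cat{C}')}\cat{C}'$, which is $d$-cocartesian by Lemma \ref{lem:stability properties cocart1}(\ref{it:functor}).
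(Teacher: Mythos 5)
Your proposal is correct, and its core is the same as the paper's: the tensoring on $\hcat{Cocart}^\kappa_d(\cat{C})$ is $K\otimes(\cat{D}\rt\cat{C})=K\times\cat{D}\rt\cat{C}$, and base change visibly commutes with it. The difference lies in how the two nontrivial points are discharged. The paper strictifies: it models $\pi\colon\hcat{Cocart}_d(\kappa)\rt\hcat{Cat}_{d+1}$ by a strict functor of $\CompSegS_{d+1}$-enriched categories, asserts that each strict fibre $\modcat{Cocart}^\kappa_d(\cat{C})$ is \emph{strictly} tensored over $\CompSegS_d(\kappa)$ by $\cat{D}\times K$, and then uses Example \ref{ex:straightening morphisms} to check that the strict base-change functor is a model for $\hcat{Cocart}^\kappa_d(f)$, at which point strict preservation of the tensoring is immediate. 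You argue invariantly, and therefore have to confront head-on the point that the paper's strict formulation leaves implicit: that the equivalence $\Map(K\otimes\cat{D},\cat{E})\simeq\Fun_d\big(K,\Map(\cat{D},\cat{E})\big)$ is compatible with passing to the sub-$d$-categories of \emph{strong} morphisms, i.e.\ that a functor $\iota K\times\cat{D}\rt\cat{E}$ over $\cat{C}$ is strong precisely when each restriction to $\{k\}\times\cat{D}$ is. Your reduction is the right one, and your sketch closes it: since $\iota K\times\cat{D}\rt\cat{C}$ is the composite of the base change of $\iota K\rt\ast$ (all of whose lateral morphisms are equivalences) with $p$, its lateral $j$-morphisms are, up to equivalence, pairs of an equivalence in $K$ and a $p$-lateral $j$-morphism, so strongness is indeed detected and reflected fibrewise in $K$. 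In effect you have supplied the verification that underwrites the paper's one-line claim that the strict fibre is tensored by $\cat{D}\times K$; the paper's strictification buys brevity at the cost of model-categorical bookkeeping, while your route makes explicit the step where the actual content lies.
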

Heuristically, this simply comes down to the fact that the tensoring of $\hcat{Cocart}^\kappa_d(\cat{C})$ is given by $K\otimes (\cat{D}\to \cat{C}) = (K\times \cat{D}\rt \cat{C})$ and that such products are stable under base change. However, since we defined the $(d+1)$-functor $f^*$ somewhat abstractly, in terms the straightening from Corollary \ref{cor:naturality space level}, we will provide a more detailed argument using point-set models.
\begin{proof}
The $(d+2)$-functor $\pi\colon \hcat{Cocart}_d(\kappa)\rt \hcat{Cat}_{d+1}$ can be modeled by a strict functor of $\CompSegS_{d+1}$-enriched categories $\ul{\pi}\colon \modcat{Cocart}_d(\kappa)\rt \CompSegS_{d+1}$. Here $\modcat{Cocart}_d(\kappa)$ is the enriched subcategory of $\Fun\big([1], \CompSegS_d\big)$, whose objects are fibrations in $\CompSegS_{d+1}$ that present $d$-cocartesian fibrations with $\kappa$-small fibers and with 
$$
\Map_{\modcat{Cocart}_d(\kappa)}(p, q)\subseteq \Map_{\Fun([1], \CompSegS_d)}(p, q)
$$
the maximal subobject in $\CompSegS_{d+1}$ that is a model for the full sub-$d$-category of strong morphisms. Example \ref{ex:straightening fibers} identifies the values of $\hcat{Cocart}^\kappa_d(-)$ with the fibers of $\pi$. Since $\ul{\pi}$ is a fibration in the model category of $\CompSegS_{d+1}$-enriched categories, the fibers of $\pi$ can be modeled by the strict fibers of $\ul{\pi}$. Furthermore, as we already know that the fibers $\hcat{Cocart}^\kappa_d(\cat{C})$ are $(d+1)$-categories (instead of $(d+2)$-categories), we can model each $\hcat{Cocart}^\kappa_d(-)$ by the $\CompSegS_d$-enriched category underlying $\modcat{Cocart}^\kappa_d(\cat{C})=\ul{\pi}^{-1}(\cat{C})$.

As a $\CompSegS_d$-enriched category (rather than a $\CompSegS_{d+1}$-enriched category), this fiber is strictly tensored over $\CompSegS_d(\kappa)$: the tensoring of $p\colon \cat{D}\rt \cat{C}$ with $K\in \CompSegS_d(\kappa)$ is simply given by $\cat{D}\times K\rt \cat{C}$. Note that for any map $f\colon \cat{C}\rt \cat{C}'$ in $\CompSegS_{d+1}$, taking (strict) base change along $f$ defines an enriched functor $f^*\colon \modcat{Cocart}^\kappa_d(\cat{C}')\rt \modcat{Cocart}^\kappa_d(\cat{C})$ which strictly preserves the tensoring over $\CompSegS_d(\kappa)$. It therefore remains to verify that this $f^*$ indeed presents the value of $\hcat{Cocart}^\kappa_d(-)$ on $f$.

To see this, note that associated to $f$ is a strict diagram of $\CompSegS_{d+1}$-enriched categories
$$\begin{tikzcd}
\modcat{Cocart}^\kappa_d(\cat{C}')\times [1]\arrow[d]\arrow[r, "\phi"] & \modcat{Cocart}_d(\kappa)\arrow[d]\\
{[1]}\arrow[r, "f"{swap}] & \CompSegS_{d+1}.
\end{tikzcd}$$
Here $\phi$ is given on $\modcat{Cocart}^\kappa_d(\cat{C}')\times \{1\}$ by the fiber inclusion, and on $\modcat{Cocart}^\kappa_d(\cat{C}')\times \{0\}$ by the strict base change functor $f^*$ (which comes with an evident natural transformation to the fiber inclusion). One easily sees that this enriched functor $\phi$ models a map of $(d+1)$-cartesian fibrations (i.e.\ it preserves cartesian $1$-morphisms). Example \ref{ex:straightening morphisms} then shows that the strict functor $f^*$ indeed presents (up to homotopy) the value of the straightening $\hcat{Cocart}^\kappa_d(-)$ on the arrow $f$, so that the latter is tensored as well.
\end{proof}
\begin{lemma}\label{lem:cocart fib preserves limits}
The underlying functor between $1$-categories $\hcat{Cocart}^\kappa_d\colon \cat{Cat}_{d+1}^{\op}\rt \cat{Cat}_{d+1}$ preserves all small limits.
\end{lemma}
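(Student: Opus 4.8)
The plan is to verify, for an arbitrary small diagram $\cat{C}_\bullet\colon \cat{I}\rt \Cat_{d+1}$, that the canonical comparison functor
$$
\Theta\colon \hcat{Cocart}^\kappa_d\big(\colim\nolimits_{\cat{I}}\cat{C}_\bullet\big)\rt \lim\nolimits_{\cat{I}}\hcat{Cocart}^\kappa_d(\cat{C}_\bullet),
$$
induced by the colimiting cocone on $\cat{C}_\bullet$, is an equivalence of $(d+1)$-categories. I would deduce this from Lemma \ref{lem:tensored equivalence}: it suffices to exhibit $\Theta$ as a tensored functor between $(d+1)$-categories tensored over $\hcat{Cat}_d(\kappa)$ (this weakened form of tensoring is allowed by Remark \ref{rem:less than full tensoring}, since $\hcat{Cat}_d(\kappa)$ contains $\Theta_d$ and is closed under products) whose underlying functor of $1$-categories is an equivalence.

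For the tensoring, Lemma \ref{lem:base change is cotensored} already provides that each value $\hcat{Cocart}^\kappa_d(\cat{C})$ is tensored over $\hcat{Cat}_d(\kappa)$, with $K\otimes(\cat{D}\rt\cat{C})=(\cat{D}\times K\rt\cat{C})$ — still $\kappa$-small on fibers — and that every transition functor $\hcat{Cocart}^\kappa_d(f)$ is tensored. I would then check that the limit $\lim_{\cat{I}}\hcat{Cocart}^\kappa_d(\cat{C}_\bullet)$ inherits a tensoring over $\hcat{Cat}_d(\kappa)$ with tensored projections, by setting $K\otimes(x_i)_i=(K\otimes x_i)_i$: this is well defined because the transition functors preserve $\otimes$, and it has the required universal property because the mapping $d$-categories in a limit of $(d+1)$-categories are computed as the limit of mapping $d$-categories, while $\Fun_d(K,-)$ preserves limits (Corollary \ref{cor:internal hom}). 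Since the composite of $\Theta$ with the $i$-th projection is the tensored functor $\hcat{Cocart}^\kappa_d(\cat{C}_i\rt\colim_{\cat{I}}\cat{C}_\bullet)$, it then follows formally that $\Theta$ is tensored.

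For the underlying $1$-category, iterating the right adjoints of Remark \ref{rem:(d-1)-cat vs d-cat} exhibits $\core_1\colon \Cat_{d+1}\rt\Cat_1$ as a right adjoint, hence limit-preserving. Now $\core_1\hcat{Cocart}^\kappa_d(\cat{C})$ is the $1$-category $\cat{Cocart}_d(\kappa)(\cat{C})$ of $\kappa$-small $d$-cocartesian fibrations over $\cat{C}$, and by Theorem \ref{thm:unstraightening} together with the $\kappa$-bounded refinements of Proposition \ref{prop:space representability} and Variant \ref{var:maps of presheaves} it is $1$-naturally equivalent to $\core_1\Fun_{d+1}(\cat{C},\hcat{Cat}_d(\kappa))$. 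Applied to $(d+1)$-categories, Corollary \ref{cor:internal hom} shows $\Fun_{d+1}(-,\hcat{Cat}_d(\kappa))$ is an internal mapping object in the cartesian closed $1$-category $\Cat_{d+1}$, so it sends colimits to limits; composing with the limit-preserving $\core_1$ gives a limit-preserving functor $\Cat_{d+1}^{\op}\rt\Cat_1$. Chasing the $1$-natural equivalences then identifies $\core_1\Theta$ with the comparison map $\core_1\Fun_{d+1}(\colim_{\cat{I}}\cat{C}_\bullet,\hcat{Cat}_d(\kappa))\rt\lim_{\cat{I}}\core_1\Fun_{d+1}(\cat{C}_\bullet,\hcat{Cat}_d(\kappa))$, which is an equivalence. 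Lemma \ref{lem:tensored equivalence} then yields that $\Theta$ is an equivalence.

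The mathematical content is entirely borrowed from earlier sections, so I expect the main obstacle to be the bookkeeping around naturality and $\kappa$-bounds: checking that the equivalence of Theorem \ref{thm:unstraightening} restricts to $\kappa$-small fibers in a way that is $1$-natural in $\cat{C}$ and compatible with base change, so that $\core_1\Theta$ genuinely coincides with the comparison map above, and verifying carefully that a limit of $(d+1)$-categories tensored over $\hcat{Cat}_d(\kappa)$ along tensored functors is again tensored over $\hcat{Cat}_d(\kappa)$. If preferable, one may reduce throughout to small products (handled directly) and pullbacks (where Remark \ref{rem:limits of adjointable} can be invoked to phrase the tensoring via stability of fully faithful left adjoints), since all small limits in $\Cat_{d+1}$ are generated by these.
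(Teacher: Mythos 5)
Your proposal is correct and follows essentially the same route as the paper: reduce via the tensoring over $\hcat{Cat}_d(\kappa)$ (Lemma \ref{lem:base change is cotensored}, Lemma \ref{lem:tensored equivalence}, Remark \ref{rem:less than full tensoring}) to the underlying functor of $1$-categories, and then conclude from the representability statements of Proposition \ref{prop:space representability} and Variant \ref{var:maps of presheaves}. The only caveat is in the last step: you should not lean on a full $1$-natural equivalence of $1$-category-valued functors $\core_1\hcat{Cocart}^\kappa_d(-)\simeq \core_1\Fun_{d+1}(-,\hcat{Cat}_d(\kappa))$, since those two results only provide naturality separately on the spaces of objects and of arrows; instead, as in the paper, it suffices to check that $\Map_{\Cat_1}\big([i], \core_1\hcat{Cocart}^\kappa_d(-)\big)$ preserves limits for $i=0,1$, which is exactly what the two representability statements deliver.
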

\begin{proof}
By Lemma \ref{lem:base change is cotensored}, the functor of $1$-categories $\hcat{Cocart}^\kappa_d\colon \cat{Cat}_{d+1}^{\op}\rt \cat{Cat}_{d+1}$ takes values in $(d+1)$-categories and functors which are tensored over $\hcat{Cat}_d(\kappa)$. Using Lemma \ref{lem:tensored equivalence}, Remark \ref{rem:less than full tensoring} and the fact that taking cores preserves limits, it then suffices to verify that the composite
$$\begin{tikzcd}[column sep=3.2pc]
F\colon \cat{Cat}_{d+1}^{\op}\arrow[r, "\hcat{Cocart}^\kappa_d"] & \cat{Cat}_{d+1}\arrow[r, "\core_1"] & \cat{Cat}_1
\end{tikzcd}$$
preserves limits. To see this it suffices to verify that for $i=0, 1$, 
\begin{equation}\label{eq:map into copresheaf functor}\begin{tikzcd}
\Map_{\cat{Cat}_1}\big([i], F(-)\big)\colon \cat{Cat}_{d+1}^{\op}\arrow[r] & \sS
\end{tikzcd}\end{equation}
preserves limits. By Lemma \ref{lem:straightening and cores}, the functor $F$ arises as the unstraightening of the cartesian fibration of $1$-categories $\pi\colon \cat{Cocart}(\kappa)_d\rt \CAT_{d+1}$. Theorem \ref{thm:unstraightening} implies that this can be identified with $\SegCoPSh(\kappa)\rt \cat{Cat}_{d+1}$, where the domain consists of Segal copresheaves whose fibers are $\kappa$-small. 

\sloppy The two functors \eqref{eq:map into copresheaf functor} therefore arise as the straightening of the right fibrations \mbox{$\SegCoPSh_d(\kappa)^{\cart}\rt \cat{Cat}_{d+1}$} and $\cat{X}^\mm{cart}\rt \cat{Cat}_{d+1}$ from Proposition \ref{prop:space representability} and Variant \ref{var:maps of presheaves} respectively. It was shown there that the straightenings of these two right fibrations are representable, by $\hcat{Cat}_d(\kappa)$ and $\Fun_{d+1}\big([1], \hcat{Cat}_d(\kappa)\big)$ respectively. In particular, they preserve all limits, as required.
\end{proof}
\begin{theorem}\label{thm:cocart representable}
The $(d+2)$-functor $\hcat{Cocart}_d^\kappa\colon \hcat{Cat}_{d+1}^{\op}\rt \hcat{Cat}_{d+1}$ from Notation \ref{not:kappa-small cocart} is representable by $\hcat{Cat}_d(\kappa)$.
\end{theorem}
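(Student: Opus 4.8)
The plan is to derive Theorem~\ref{thm:cocart representable} from Proposition~\ref{prop:representable from cotensored}, applied one categorical dimension higher: the role of ``$\cat{C}$'' is played by the (large) $(d+2)$-category $\hcat{Cat}_{d+1}$, which is tensored and cotensored over itself --- in particular over $\Theta_{d+1}$ --- as in the example immediately following Definition~\ref{def:tensored functor}, and the role of ``$F$'' is played by the $(d+2)$-functor $\hcat{Cocart}_d^\kappa\colon\hcat{Cat}_{d+1}^{\op}\rt\hcat{Cat}_{d+1}$ of Notation~\ref{not:kappa-small cocart}, whose values are small $(d+1)$-categories. Invoking Remark~\ref{rem:less than full tensoring}, it then suffices to establish two things: (i) the underlying space-valued $1$-functor $\cat{C}\mapsto\core_0\hcat{Cocart}_d^\kappa(\cat{C})$ on $\Cat_{d+1}^{\op}$ is representable by $\hcat{Cat}_d(\kappa)$; and (ii) the $(d+2)$-functor $\hcat{Cocart}_d^\kappa$ is cotensored over $\Theta_{d+1}$. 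Granting these, Proposition~\ref{prop:representable from cotensored} yields a natural equivalence $\hcat{Cocart}_d^\kappa(-)\simeq\Fun_{d+1}(-,\hcat{Cat}_d(\kappa))$, and inspection of its proof identifies the representing object as the object representing the underlying space functor, namely (by (i)) the $(d+1)$-category $\hcat{Cat}_d(\kappa)$.

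Point (i) only reassembles earlier results. By Notation~\ref{not:kappa-small cocart} together with Example~\ref{ex:straightening fibers}, $\hcat{Cocart}_d^\kappa(\cat{C})$ is the $(d+1)$-category of $\kappa$-small $d$-cocartesian fibrations over $\cat{C}$, so $\core_0\hcat{Cocart}_d^\kappa(\cat{C})$ is the space of such fibrations; Theorem~\ref{thm:unstraightening}, being an equivalence of cartesian fibrations over $\Cat_{d+1}$ and hence $1$-natural in $\cat{C}$, rewrites this as the space of $\kappa$-small Segal copresheaves over $\cat{C}$, and Proposition~\ref{prop:space representability} identifies the latter, $1$-naturally in $\cat{C}$, with $\Map_{\Cat_{d+1}}\big(\cat{C},\hcat{Cat}_d(\kappa)\big)$.

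Point (ii) is the main obstacle. Unwinding Definition~\ref{def:tensored functor}, it asks for a natural equivalence $\hcat{Cocart}_d^\kappa(K\times\cat{C})\simeq\Fun_{d+1}\big(K,\hcat{Cocart}_d^\kappa(\cat{C})\big)$ for $K\in\Theta_{d+1}$, induced by base change along the inclusions $\{k\}\times\cat{C}\hookrightarrow K\times\cat{C}$. Both sides are $(d+1)$-categories tensored over $\hcat{Cat}_d(\kappa)$ --- the left by Lemma~\ref{lem:base change is cotensored}, the right by the pointwise tensoring on a functor $(d+1)$-category, using that $\Fun_{d+1}(K,\cat{D})\rt\Fun_{d+1}(K,\cat{C})$ is again a $d$-cocartesian fibration by Lemma~\ref{lem:stability properties cocart1}\ref{it:functor} --- and the comparison functor is visibly tensored, so by Lemma~\ref{lem:tensored equivalence} and Remark~\ref{rem:less than full tensoring} it is enough to check that it is an equivalence on underlying $1$-categories, i.e.\ that $\core_1\hcat{Cocart}_d^\kappa(K\times\cat{C})\simeq\core_1\Fun_{d+1}\big(K,\hcat{Cocart}_d^\kappa(\cat{C})\big)$. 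Since both sides carry colimits of $K$ to limits (for the left side by Lemma~\ref{lem:cocart fib preserves limits}, since $K\mapsto K\times\cat{C}$ preserves colimits; for the right side because $\Fun_{d+1}(-,\hcat{Cocart}_d^\kappa(\cat{C}))$ turns colimits into limits), and since a cell of $\Theta_{d+1}$ is a finite iterated pushout of objects $[1]_{\cat{A}}$ with $\cat{A}$ a $d$-category (Example~\ref{ex:interval}), this reduces to the case $K=[1]_{\cat{A}}$; that case in turn follows from Theorem~\ref{thm:unstraightening} applied both to $[1]_{\cat{A}}\times\cat{C}$ and to $\cat{C}$, since a $\kappa$-small $d$-cocartesian fibration over $[1]_{\cat{A}}\times\cat{C}$ corresponds to a copresheaf $[1]_{\cat{A}}\times\cat{C}\rt\hcat{Cat}_d(\kappa)$, hence to an object of $\core_1\Fun_{d+1}\big([1]_{\cat{A}},\Fun_{d+1}(\cat{C},\hcat{Cat}_d(\kappa))\big)$, and analogously for morphisms. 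The genuinely delicate part of (ii) is the coherence bookkeeping that makes these identifications natural in $\cat{C}$ and that exhibits the base-change operation as a $(d+1)$-functor out of $K$ to begin with; for this I would use that $\pi\colon\hcat{Cocart}_d\rt\hcat{Cat}_{d+1}$ is a $(d+1)$-cartesian fibration (Proposition~\ref{prop:cocart forms cart fibration}) together with the functoriality of straightening, viewing a $d$-cocartesian fibration $\cat{D}\rt K\times\cat{C}$ as a $(d+1)$-cocartesian fibration $\cat{D}\rt K$ (via Lemma~\ref{lem:d-cocart as d+1-cocart}) equipped with a strong map to the constant family $K\times\cat{C}\rt K$, whose straightening is the desired $(d+1)$-functor $K\rt\hcat{Cocart}_d^\kappa(\cat{C})$.
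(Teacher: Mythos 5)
Your skeleton matches the paper's: invoke Proposition \ref{prop:representable from cotensored}, prove representability of the underlying space-valued functor via Proposition \ref{prop:space representability} and Theorem \ref{thm:unstraightening} (your point (i) is exactly the paper's argument, packaged inside its Lemma \ref{lem:cocart fib preserves limits}), and reduce the cotensoring statement to an equivalence of underlying $1$-categories using Lemma \ref{lem:tensored equivalence} and the limit-preservation of both sides. Up to and including the reduction to generating objects of $\Theta_{d+1}$, you are following the paper.

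The gap is in the last step of (ii). After unstraightening, you conclude that a $\kappa$-small $d$-cocartesian fibration over $K\times\cat{C}$ ``corresponds to'' an object of $\core_1\Fun_{d+1}\big(K,\Fun_{d+1}(\cat{C},\hcat{Cat}_d(\kappa))\big)$, and analogously for morphisms. This only shows that the domain and codomain of the comparison map $\mu$ are \emph{abstractly} equivalent $1$-categories; it does not show that $\mu$ itself --- the specific functor induced by base change along the inclusions $\{k\}\times\cat{C}\hookrightarrow K\times\cat{C}$ --- realizes that equivalence. After transporting along the two unstraightening identifications, $\mu$ becomes some endofunctor of $\core_1\Fun_{d+1}(K\times\cat{C},\hcat{Cat}_d(\kappa))$, and these identifications are only known to be compatible with restriction functors (not with composition or any higher coherence), so there is no a priori reason for this endofunctor to be the identity, or even invertible. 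This is precisely where the paper does its real work: it observes that $\mu$ commutes with restriction to the sets of objects of $K$ and $\cat{C}$, hence preserves $\kappa$-small limits and colimits, detects equivalences, and admits a left adjoint $\mu_!$; it then checks the unit $G\rt\mu\mu_!G$ on corepresentables $r_!(k,c,\cat{A})$, where by adjunction the unit is controlled by a natural (in $\cat{A}$) map $\cat{A}\rt K(k,k)\times\cat{C}(c,c)\times\cat{A}$, i.e.\ (using that $K,\cat{C}\in\Theta_{d+1}$ have trivial endomorphism categories) by a natural endomorphism of $\mm{id}_{\Cat_d(\kappa)}$; and it concludes via the rigidity theorem of Barwick--Schommer-Pries that this endomorphism is homotopic to the identity. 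Your proposal contains no substitute for this rigidity input, and your closing paragraph locates the remaining difficulty in the construction of the comparison functor rather than in proving it invertible. (Note also that the paper needs $\cat{C}$, not just $K$, to be reduced to $\Theta_{d+1}$ for the trivial-endomorphism step.)
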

\begin{proof}
\sloppy By Proposition \ref{prop:representable from cotensored}, it suffices to verify that the functor of $(d+2)$-categories \mbox{$\hcat{Cocart}^{\kappa}_d\colon \hcat{Cat}_{d+1}^{\op}\rt \hcat{Cat}_{d+1}$} is cotensored over $\hcat{Cat}_{d+1}$ and that the functor of $1$-categories $\core_0\hcat{Cocart}^\kappa_d\colon \cat{Cat}_{d+1}^{\op}\rt \sS$ is representable. We have already seen in the proof of Lemma \ref{lem:cocart fib preserves limits} that this latter functor is representable by the $(d+1)$-category $\hcat{Cat}_d(\kappa)$.

It remains to verify that for any $K\in \cat{Cat}_{d+1}$ and $\cat{C}\in \cat{Cat}_{d+1}$, the comparison map 
$$\begin{tikzcd}
\mu\colon \hcat{Cocart}^\kappa_d(K\times \cat{C})\arrow[r] & \Fun_{d+1}\big(K, \hcat{Cocart}^\kappa_d(\cat{C})\big)
\end{tikzcd}$$
from Definition \ref{def:tensored functor} is an equivalence of $(d+1)$-categories. This will follow immediately from Lemma \ref{lem:tensored equivalence} once we show the following two claims:
\begin{enumerate}[label=(\alph*)]
\item\label{it:mu underlying equiv} $\mu$ induces an equivalence on the underlying $1$-categories.

\item\label{it:mu tensored} Both the domain and codomain of $\mu$ are tensored over $\hcat{Cat}_d(\kappa)$ and $\mu$ is tensored over $\hcat{Cat}_d(\kappa)$.
\end{enumerate}
Let us start with assertion \ref{it:mu underlying equiv}. As a first reduction, note that $\mu$ depends in particular $1$-functorially on $K$ and $\cat{C}$. Since both its domain and codomain send colimits in $K$ and $\cat{C}$ to limits (Lemma \ref{lem:cocart fib preserves limits} and Corollary \ref{cor:internal hom}), it suffices to verify that $\mu$ is an equivalence on underlying $1$-categories for generating objects $K$ and $\cat{C}$. We can therefore reduce to the case where $K$ and $\cat{C}$ are contained in $\Theta_{d+1}$. In particular, we may assume that $K$ is $\kappa$-small and that for any object $k\in K$, $K(k, k)\simeq \ast$, and likewise for $\cat{C}$.

By Theorem \ref{thm:rectification}, Theorem \ref{thm:unstraightening} and Lemma \ref{lem:straightening and cores}, the $1$-cartesian fibration between $1$-categories $\pi\colon \core_1\hcat{Cocart}_d(\kappa)\rt \core_1\hcat{Cat}_{d+1}$ arises as the localization of the homotopy cartesian fibration $\EnrFun_d(\kappa)\rt \Cat(\CompSegS_{d})$, where $\EnrFun_d(\kappa)$ consists of enriched categories $\scat{C}$ together with an enriched functor $\scat{C}\rt \CompSegS_d$ whose values are homotopically $\kappa$-small, as in the proof of Proposition \ref{prop:space representability}. In particular, this implies that there are equivalences 
\begin{align*}
\core_1\hcat{Cocart}_d^\kappa(K\times \cat{C})&\simeq \core_1\Fun_{d+1}(K\times \cat{C}, \hcat{Cat}_{d}(\kappa))\\
\core_1\Fun_{d+1}\big(K, \hcat{Cocart}_d^\kappa(\cat{C})\big)&\simeq \core_1\Fun_{d+1}(K\times \cat{C}, \hcat{Cat}_{d}(\kappa))
\end{align*}
such that for any two maps $K\rt K'$ and $\cat{C}\rt \cat{C}'$, the functoriality on the left hand side is homotopic to restriction of functors on the right hand side (we will not need any information about composition or homotopy coherence).

Using these identifications, the map $\mu$ therefore determines an endofunctor that fits into a commuting diagram of $1$-categories
$$\begin{tikzcd}
\core_1\Fun_{d+1}(K\times \cat{C}, \hcat{Cat}_{d}(\kappa))\arrow[d, "r"{swap}]\arrow[r, "\mu"] & \core_1\Fun_{d+1}(K\times \cat{C}, \hcat{Cat}_{d}(\kappa))\arrow[d, "r"]\\
\prod_{k\in K, c\in \cat{C}} \Cat_{d}(\kappa)\arrow[r, "\prod \mu"] & \prod_{k\in K, c\in \cat{C}} \Cat_{d}(\kappa).
\end{tikzcd}$$
Here $r$ denotes the functor restricting to the sets of (isomorphism classes of) objects of $K$ and $\cat{C}$. Note that $\mu$ is (by construction) homotopic to the identity when applied to monoidal unit $K=\ast$. Up to homotopy, we may therefore assume that the bottom map in the square is the identity.

The restriction functor $r$ preserves $\kappa$-small limits and colimits and detects equivalences. By our assumption that $K$ and $\cat{C}$ are $\kappa$-small, it has a left adjoint $r_!$ sending each $(k, c, \cat{A})$ to the representable functor $K(k, -)\times \cat{C}(c, -)\times \cat{A}$; this follows directly from the point-set presentation of $r$ in terms of enriched functor categories. The above square then shows that $\mu\colon \core_1\Fun_{d+1}(K\times \cat{C}, \hcat{Cat}_{d}(\kappa))\rt \core_1\Fun_{d+1}(K\times \cat{C}, \hcat{Cat}_{d}(\kappa))$ preserves $\kappa$-small limits and colimits and detects equivalences as well, and admits a left adjoint $\mu_!$. It suffices to show that for any $G\colon K\times \cat{C}\rt \hcat{Cat}_{d}(\kappa)$, the unit map $\eta\colon G\rt \mu \mu_!(G)$ is an equivalence. Since $\mu$ and $\mu_!$ both preserve $\kappa$-small colimits, it suffices to verify this when $G=r_!(k, c, \cat{A})$ is left Kan extended from an object. By adjunction, $\eta$ is then determined by a map
$$
\phi_{k, c} \colon \cat{A}\rt K(k, k)\times \cat{C}(c, c)\times \cat{A}
$$
depending ($1$-)naturally on $\cat{A}$. Since $K$ and $\cat{C}$ were assumed to have trivial endomorphisms, we therefore obtain an endomorphism $\phi_{k, c}\colon \mm{id}\rt \mm{id}$ of the identity functor on $\cat{Cat}_{d}(\kappa)$. There is a contractible space of such natural endomorphisms by \cite[Theorem 10.1]{bar21}. 

The map $\phi_{k, c}$ is therefore naturally (in $\cat{A}$) homotopic to $\{\mm{id}_k\}\times \{\mm{id}_c\}\times\mm{id}_{\cat{A}}$. This means precisely that $\eta$ is homotopic to the identity, so in particular an equivalence. We conclude that $\mu_!$ and $\mu$ form an adjoint equivalence, so that $\mu$ does indeed induce an equivalence on the underlying $1$-categories, as asserted.

We now turn to assertion \ref{it:mu tensored}. Note that the construction of $\mu$ (Definition \ref{def:tensored functor}) implies that for any object $k\in K$, the composite with evaluation at $k$
$$\begin{tikzcd}
\hcat{Cocart}^\kappa_d(K\times \cat{C})\arrow[r, "\mu"] & \Fun_{d+1}\big(K, \hcat{Cocart}^\kappa_d(\cat{C})\big)\arrow[r, "\mm{ev}_k"] & \hcat{Cocart}^\kappa_d(\cat{C})
\end{tikzcd}$$
is equivalent to the value of $\hcat{Cocart}^\kappa_d(-)$ on  the inclusion $\{k\}\times \cat{C}\hookrightarrow K\times \cat{C}$. By Lemma \ref{lem:base change is cotensored}, this implies that the domain and codomain of each $\mm{ev}_k\circ \mu$ are tensored over $\hcat{Cat}_d(\kappa)$ and that $\mm{ev}_k\circ \mu$ preserves the tensoring. Claim \ref{it:mu tensored} will therefore follow once we know that $\Fun_{d+1}\big(K, \hcat{Cocart}^\kappa_d(\cat{C})\big)$ is tensored over $\hcat{Cat}_d(\kappa)$ and that evaluation at each $k\in K$ preserves the tensoring.

To see this, one can either use the formalism from \cite{hin20} or use the following trick. We will first prove assertion \ref{it:mu tensored} in the case where $\cat{C}=\ast$, so that the target of $\mu$ reduces to $\Fun_{d+1}(K, \hcat{Cat}_d(\kappa))$. By Example \ref{ex:tensoring copresheaves}, this is indeed tensored over $\hcat{Cat}_d(\kappa)$ and restriction along $\{k\}\rt K$ preserves the tensoring. Combined with \ref{it:mu underlying equiv}, it follows that $\mu$ yields an equivalence $\hcat{Cocart}^\kappa_d(K)\simeq \Fun_{d+1}(K, \hcat{Cat}_d^\kappa)$. Using this equivalence (with $K$ replaced by $\cat{C}$), the target of the map $\mu$ is then equivalent to
$$
\Fun_{d+1}\big(K, \hcat{Cocart}^\kappa_d(\cat{C})\big)\simeq \Fun_{d+1}\big(K, \Fun_{d+1}(\cat{C}, \hcat{Cat}_d(\kappa))\big)\simeq \Fun_{d+1}\big(K\times \cat{C}, \hcat{Cat}_d(\kappa)\big).
$$
Applying Example \ref{ex:tensoring copresheaves} once more then shows that this $(d+1)$-category is indeed tensored over $\hcat{Cat}_d(\kappa)$ and that evaluation at $k\in K$ preserves the tensoring. We conclude that the natural transformation $\mu$ satisfies both \ref{it:mu underlying equiv} and \ref{it:mu tensored}, so that it is a natural equivalence as desired.
\end{proof}
\begin{proof}[Proof of Theorem \ref{thm:unstraightening very functorial}]
Let us apply Theorem \ref{thm:cocart representable} in the large setting, with $\kappa=\kappa_\mm{sm}$ the supremum of all small cardinals. It follows that the $(d+2)$-functor on large $(d+1)$-categories
$$\begin{tikzcd}
\hcat{Cocart}_d^{\kappa_\mm{sm}}\colon \hcat{CAT}_{d+1}^{op}\arrow[r] & \hcat{CAT}_{d+1}
\end{tikzcd}$$
is representable by $\hcat{Cat}_d$. Note that the restriction of $\hcat{Cocart}_d^{\kappa_\mm{sm}}$ to $\hcat{Cat}_{d+1}\subseteq \hcat{CAT}_{d+1}$ is precisely the functor $\hcat{Cocart}_d$ from Theorem \ref{thm:unstraightening very functorial}. This proves the existence of a natural equivalence $\hcat{Cocart}_d(-)\simeq \Fun_{d+1}(-, \hcat{Cat}_d)$ between $(d+2)$-functors $\hcat{Cat}_{d+1}^{\op}\rt \hcat{CAT}_{d+1}$.

For uniqueness, let us write $F\colon \hcat{CAT}_{d+1}^{\op}\rt \hcat{CAT}$ for the $(d+2)$-functor represented by $\hcat{Cat}_d$. Let us then write
\begin{align*}
\cat{Iso}&\subseteq \Map_{\Fun_{d+2}(\hcat{Cat}^{\op}_{d+1}, \hcat{CAT}_{d+1})}\big(\hcat{Cocart}_d, F\big|\hcat{Cat}_{d+1}\big),\\
\cat{Aut}(F\big|\hcat{Cat}_{d+1})&\subseteq \Map_{\Fun_{d+2}(\hcat{Cat}^{\op}_{d+1}, \hcat{CAT}_{d+1})}\big(F\big|\hcat{Cat}_{d+1}, F\big|\hcat{Cat}_{d+1}\big)
\end{align*}
for the full sub-$(d+1)$-categories spanned by the natural equivalences between $(d+2)$-functors. We have to show that $\cat{Iso}\simeq \ast$. We have just verified that $\cat{Iso}$ is nonempty, and precomposing with any element $\phi\in \cat{Iso}$ determines an equivalence $\cat{Iso}\simeq \cat{Aut}(F\big|\hcat{Cat}_{d+1})$. It therefore remains to verify that the $(d+1)$-category of automorphisms of $F\big|\hcat{Cat}_{d+1}$ is equivalent to a point.
As in Proposition \ref{prop:unique unstraightening}, there are natural maps of $(d+1)$-categories
$$\begin{tikzcd}
\mm{Aut}_{\hcat{CAT}_{d+1}}(\hcat{Cat}_d)\arrow[r] &  \mm{Aut}(F)\arrow[r] & \mm{Aut}(F\big|\hcat{Cat}_{d+1}).
\end{tikzcd}$$
The first map is an equivalence by the enriched Yoneda lemma (Section \ref{sec:corep}). The second map arises by restricting the natural automorphisms of $F$ along $i\colon \hcat{Cat}^{\op}_{d+1}\hooklongrightarrow \hcat{CAT}^{\op}_{d+1}$. To see that this is an equivalence, note that together with right Kan extension, restriction along $i$ defines a coreflective localization of (very large) $(d+2)$-categories
$$\begin{tikzcd}
i^*\colon \Fun_{d+2}\big(\hcat{CAT}^{\op}_{d+1}, \hcat{CAT}_{d+1}\big)\arrow[r, yshift=1ex]\arrow[r, hookleftarrow, yshift=-1ex] & \Fun_{d+2}\big(\hcat{Cat}^{\op}_{d+1}, \hcat{CAT}_{d+1}\big)\colon i_*.
\end{tikzcd}$$
The essential image of $i_*$ consists of enriched functors $\hcat{CAT}^{\op}_{d+1}\rt \hcat{CAT}_{d+1}$ whose underlying $1$-functor preserves large, $\kappa_{\mm{sm}}$-cofiltered limits; this can be verified by presenting everything by enriched (model) categories of strictly enriched functors. In particular, the representable functor $F$ is contained in the essential image of $i_*$, so that restriction along $i$ induces an equivalence on its automorphism $(d+1)$-categories. 
The result now follows since $\mm{Aut}_{\hcat{CAT}_{d+1}}(\hcat{Cat}_d)\simeq \ast$ by Proposition \ref{prop:rigidity of d-cat}.
\end{proof}

\appendix
\section{Technicalities on categories of fibrant objects}\label{sec:fib obj}
The purpose of this appendix is to describe a version of the Grothendieck construction for categories of fibrant objects, in the following sense:
\begin{definition}
By a \emph{category of fibrant objects} $\cat{M}$ we will mean a $(1, 1)$-category with two classes of maps, called \emph{weak equivalences} and \emph{fibrations}, both containing the isomorphisms and closed under composition, such that the following conditions hold:
\begin{enumerate}
\item The weak equivalences have the 2-out-of-6 property.
\item $\cat{M}$ admits pullbacks along fibrations and the base change of an (acyclic) fibration is again an (acyclic) fibration. 
\item $\cat{M}$ admits a terminal object and each $x\rt \ast$ is a fibration.
\item Each map in $\cat{M}$ admits a factorization into a weak equivalence, followed by a fibration.
\end{enumerate}
A map between categories of fibrant objects $f\colon \cat{M}\rt \cat{N}$ is a functor preserving fibrations, weak equivalences, the terminal object and pullbacks along fibrations. Let us write $\cat{FibCat}$ for the $(2, 1)$-category of categories of fibrant objects.
\end{definition}
Our condition that weak equivalences have the 2-out-of-6 property is slightly nonstandard; it is equivalent to the assertion that a morphism in $\cat{M}$ is a weak equivalence if and only if its image in the localization $\cat{M}[W^{-1}]$ is an equivalence (see \cite[Theorem 7.2.7]{rad09} or use the description of $\cat{M}[W^{-1}]$ from Remark \ref{rem:localization fibcat}).
\begin{lemma}\label{lem:fibcat grothendieck}
Let $\cat{B}\in \cat{FibCat}$. Consider a (pseudo-)functor $\cat{B}^{\op}\rt \cat{FibCat};\, b\longmapsto \cat{E}(b)$ and denote by $p\colon \cat{E}\rt \cat{B}$ its (classical) Grothendieck construction. Suppose that the following two assertions hold:
\begin{enumerate}[label=(\alph*)]
\item\label{it:homotopy invariance} For every weak equivalence $f\colon b_0\rt b_1$, the functor $f^*\colon \cat{E}(b_1)\rt \cat{E}(b_0)$ induces an equivalence on localizations.

\item\label{it:factorization} Let $f\colon b_0\rt b_1$ be a morphism and $g\colon x\rt f^*y$ a morphism in $\cat{E}(b_0)$. Then there exists a factorization $f=pi$ with $p\colon b_0'\rt b_1$ a fibration and $i\colon b_0\rt b_0'$ a weak equivalence, together with a factorization $g=i^*(q)j$ with $q\colon z\twoheadrightarrow p^*y$ a fibration in $\cat{E}(b'_0)$ and $j\colon x\rt i^*z$ a weak equivalence in $\cat{E}(b_0)$.
\end{enumerate}
Define a map $x_0\rt x_1$ in $\cat{E}$ to be a weak equivalence (fibration) if it factors (uniquely) as $x_0\rt x\rt x_1$ where the second map is a $p$-cartesian lift of a weak equivalence (fibration) and the first map is a weak equivalence (fibration) in $\cat{E}(px_0)$. This makes $p\colon \cat{E}\rt \cat{B}$ a map between categories of fibrant objects.
\end{lemma}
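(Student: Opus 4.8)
The plan is to unwind the Grothendieck construction and verify the four axioms of a category of fibrant objects for $\cat{E}$ directly, together with the assertion that $p$ is a morphism in $\cat{FibCat}$. Recall that an object of $\cat{E}$ is a pair $(b,x)$ with $b\in\cat{B}$ and $x\in\cat{E}(b)$, a morphism $(b_0,x_0)\to(b_1,x_1)$ is a pair $(f,\phi)$ with $f\colon b_0\to b_1$ in $\cat{B}$ and $\phi\colon x_0\to f^*x_1$ in $\cat{E}(b_0)$, and $(g,\psi)\circ(f,\phi)=(gf,\;f^*\psi\circ\phi)$. Every morphism factors canonically as the vertical morphism $(\mathrm{id},\phi)$ in $\cat{E}(b_0)$ followed by the $p$-cartesian lift $(f,\mathrm{id})$ of $f$, and by definition $(f,\phi)$ is a weak equivalence (resp.\ fibration) in $\cat{E}$ exactly when $f$ is one in $\cat{B}$ and $\phi$ is one in $\cat{E}(b_0)$. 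Since each reindexing functor $f^*$ is a morphism of fibration categories---hence preserves weak equivalences, fibrations, the terminal object and pullbacks along fibrations---one checks at once that the two classes contain all isomorphisms and are closed under composition.

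Three of the four axioms are essentially formal. The terminal object of $\cat{E}$ is $(\ast_{\cat{B}},\ast)$ with $\ast$ terminal in $\cat{E}(\ast_{\cat{B}})$: for any $(b,x)$ the map $b\to\ast_{\cat{B}}$ is a fibration, and since $f^*\ast\cong\ast_{\cat{E}(b)}$ there is a unique, fibration, morphism $x\to f^*\ast$. For factorizations, given $(f,\phi)$ apply hypothesis \ref{it:factorization} with $g=\phi\colon x_0\to f^*x_1$: it produces $f=pi$ with $p$ a fibration and $i$ a weak equivalence, and $\phi=i^*(q)\circ j$ with $q$ a fibration in $\cat{E}(b_0')$ and $j$ a weak equivalence in $\cat{E}(b_0)$, so $(f,\phi)=(p,q)\circ(i,j)$ is the required weak equivalence followed by a fibration. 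For pullbacks, given a fibration $(g,\psi)\colon(b_1,x_1)\to(b_2,x_2)$ and any $(h,\chi)\colon(b_0,x_0)\to(b_2,x_2)$, form $b_0\times_{b_2}b_1$ in $\cat{B}$ (possible since $g$ is a fibration) with projections $\pi,\rho$, and set
\[
(b_0,x_0)\times_{(b_2,x_2)}(b_1,x_1)\;=\;\bigl(b_0\times_{b_2}b_1,\ \pi^*x_0\times_{\rho^*g^*x_2}\rho^*x_1\bigr),
\]
the fiber pullback existing because $\rho^*\psi$ is a fibration in $\cat{E}(b_0\times_{b_2}b_1)$. A short check of the universal property (using that each $k^*$ preserves pullbacks along fibrations) identifies this as the pullback in $\cat{E}$, with projection to $(b_0,x_0)$ equal to $(\pi,\mathrm{pr}_1)$; this is a fibration, and acyclic when $(g,\psi)$ is, since $\pi$ and $\mathrm{pr}_1$ are then base changes of acyclic fibrations. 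That $p$ preserves fibrations, weak equivalences, the terminal object and these pullbacks is immediate from the formulas.

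The substantive point is the $2$-out-of-$6$ axiom, and this is the only place hypothesis \ref{it:homotopy invariance} enters. The key auxiliary fact is that \emph{a morphism of fibration categories inducing an equivalence on localizations reflects weak equivalences}: indeed, in any fibration category a map is a weak equivalence precisely when its image in the localization is an equivalence (the reformulation of $2$-out-of-$6$ recorded just after the definition of category of fibrant objects), and equivalences of categories are conservative. Now let $(f_1,\phi_1),(f_2,\phi_2),(f_3,\phi_3)$ be composable morphisms in $\cat{E}$ whose two consecutive composites are weak equivalences. On the base, $2$-out-of-$6$ in $\cat{B}$ shows $f_1,f_2,f_3$ and $f_3f_2f_1$ are weak equivalences. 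Applying the reindexing $f_1^*$ to the fiber component of $(f_3,\phi_3)(f_2,\phi_2)$ transports all the data into $\cat{E}(b_0)$, where one obtains composable maps $\phi_1$, $f_1^*\phi_2$, $f_1^*f_2^*\phi_3$ whose two consecutive composites are weak equivalences; $2$-out-of-$6$ in $\cat{E}(b_0)$ makes all three, and their total composite, weak equivalences, so that $(f_1,\phi_1)$ and the triple composite of the three original maps are weak equivalences in $\cat{E}$. Finally, $f_1^*\phi_2$ being a weak equivalence in $\cat{E}(b_0)$ forces $\phi_2$ to be a weak equivalence in $\cat{E}(b_1)$, by the key fact applied to $f_1^*$ (which induces an equivalence on localizations, by hypothesis \ref{it:homotopy invariance}, since $f_1$ is a weak equivalence); hence $(f_2,\phi_2)$ is a weak equivalence in $\cat{E}$. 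Then $2$-out-of-$3$ in $\cat{E}(b_1)$ gives that $f_2^*\phi_3$ is a weak equivalence there, and the same argument for $f_2^*$ yields that $\phi_3$, hence $(f_3,\phi_3)$, is a weak equivalence in $\cat{E}$. I expect this interplay between the fiberwise $2$-out-of-$6$ and the localization-detection principle---and getting the bookkeeping of the reindexing functors right---to be the only delicate part; everything else is routine computation in the Grothendieck construction.
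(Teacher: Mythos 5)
Your proof is correct and follows the same route as the paper's (much terser) argument: the terminal object, pullbacks along fibrations, and factorizations are handled by direct computation in the Grothendieck construction (with factorization being exactly hypothesis \ref{it:factorization}), and the only substantive axiom is 2-out-of-6, which you verify by transporting everything into a single fiber and using that hypothesis \ref{it:homotopy invariance} makes $f^*$ detect weak equivalences for $f$ a weak equivalence — precisely the observation the paper's proof hinges on. Your write-up simply supplies the details the paper leaves to the reader.
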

\begin{proof}
One easily verifies that pullbacks along fibrations exist and that the base change of an (acyclic) fibration remains an (acyclic) fibration. The terminal object in $\cat{E}$ is given by $\ast\in \cat{E}(\ast)$ and each $x\rt \ast$ is a fibration. Condition \ref{it:homotopy invariance} implies that for any weak equivalence $f\colon b_0\rt b_1$, the functor $f^*\colon \cat{E}(b_1)\rt \cat{E}(b_0)$ detects weak equivalences (which are detected in the localization). Using this, one readily verifies that the weak equivalences have the 2-out-of-6 property. Finally, the factorization condition is precisely \ref{it:factorization}.
\end{proof}
\begin{remark}\label{rem:factorization model cat case}
Suppose that $\cat{B}^{\op}\rt \cat{FibCat}$ arises from a diagram of model categories $\cat{N}(b)$ and Quillen adjunctions $f_!\colon \cat{N}(b_0)\leftrightarrows \cat{N}(b_1)\colon f^*$, by letting $\cat{E}(b)\subseteq \cat{N}(b)$ be the full subcategory of fibrant objects. In this case, condition \ref{it:factorization} follows from the following assertion: for every map $f\colon b_0\rt b_1$ in $\cat{B}$, there exists a factorization $f=pi$ with $p\colon b'_0\rt b_1$ a fibration and $i\colon b_0\rt b_0'$ a weak equivalence such that $i_!\colon \cat{N}(b_0)\rt \cat{N}(b'_0)$ preserves all weak equivalences.

Indeed, for a map $g\colon x\rt f^*y$ in $\cat{E}(b_0)$, we can consider the adjoint map $i_!x\rt p^*y$ and factor it into $i_!x\rto{\sim} z$ followed by a fibration $q\colon z\twoheadrightarrow p^*y$. One then obtains the desired factorization $g=i^*(q)j$, where $j\colon x\rt i^*i_!x\rt i^*z$ is a model for the derived unit map and hence an equivalence by condition \ref{it:homotopy invariance}.
\end{remark}
\begin{example}
Consider a model category $\cat{M}$ and a diagram of model categories and left Quillen functors $\cat{M}\rt \cat{ModCat^L}$ sending $b\longmapsto \cat{N}(b)$. Let $p\colon \cat{N}\rt \cat{M}$ be the Grothendieck construction of this diagram and let us say that a map $\alpha\colon x\rt y$ in $\cat{N}$ is:
\begin{itemize}
\item a (trivial) fibration if $p(\alpha)\colon px\to py$ is a (trivial) fibration and the map $x\to p(\alpha)^*y$ is a (trivial) fibration in $\cat{N}(px)$.
\item a (trivial) cofibration if $p(\alpha)\colon px\to py$ is a (trivial) cofibration and the map $p(\alpha)_!x\to y$ is a (trivial) fibration in $\cat{N}(py)$.
\end{itemize}
Harpaz and Prasma have shown that this determines a model structure on $\cat{N}$ if the diagram $b\longmapsto \cat{N}(b)$ satisfies three conditions \cite{har15}: (a) every weak equivalence in $\cat{M}$ induces a Quillen equivalence on fibers, (b) for every trivial cofibration in $\cat{M}$, the associated left Quillen functor preserves weak equivalences and (c) for every trivial fibration in $\cat{M}$, the associated right Quillen functor preserves weak equivalences.

The diagram of model categories indexed by $\cat{M}$ induces a diagram of categories of fibrant objects $\cat{M}^{\mm{fib}, \op}\rt \cat{FibCat}$ sending each fibrant object $b$ in $\cat{M}$ to the full subcategory of fibrant objects in $\cat{N}(b)$. In light of Remark \ref{rem:factorization model cat case}, conditions (a) and (b) from \cite{har15} imply conditions \ref{it:homotopy invariance} and \ref{it:factorization} from Lemma \ref{lem:fibcat grothendieck}. The category of fibrant objects resulting from Lemma \ref{lem:fibcat grothendieck} is then simply the category of fibrant objects associated to the model category $\cat{N}$ from \cite{har15}.
\end{example}
\begin{definition}\label{def:homotopically cart fib}
A map $p\colon \cat{E}\rt \cat{B}$ of categories of fibrant objects is said to be a \emph{homotopy cartesian fibration} if it arises as in Lemma \ref{lem:fibcat grothendieck}. In this situation, we will say that $\alpha\colon x_0\rt x_1$ in $\cat{E}$ is \emph{homotopy $p$-cartesian} if $x_0\rt p(\alpha)^*x_1$ is a weak equivalence in the fiber, and we will write $\cat{E}^{\mm{hocart}}\subseteq \cat{E}$ for the wide subcategory of homotopy cartesian arrows.
\end{definition}

\begin{remark}\label{rem:Ehocart}
If $w$ is a weak equivalence in $\cat{E}$, then a composition $wf$ is homotopy cartesian if and only if $f$ is homotopy cartesian, and likewise for a composition $fw$. In particular, $\cat{E}^\mm{hocart}$ is a relative category (with the same weak equivalences as in $\cat{E}$) whose weak equivalences are generated under the $2$-out-of-$3$ property by the acyclic fibrations in $\cat{E}$.
\end{remark}
\begin{proposition}\label{prop:localizing homotopy cartesian fibrations}
Let $p\colon \cat{E}\rt \cat{B}$ be a homotopy cartesian fibration between categories of fibrant objects. Then the following assertions hold.
\begin{enumerate}
\item The pullback square of relative categories
$$\begin{tikzcd}
\cat{E}(b)\arrow[r]\arrow[d] & \cat{E}\arrow[d, "p"]\\
\{b\}\arrow[r] & \cat{B}
\end{tikzcd}$$
induces a cartesian square upon localization at the weak equivalences.

\item The functor between localizations $\cat{E}^{\mm{hocart}}[W^{-1}]\rt \cat{E}[W^{-1}]$ is a subcategory inclusion, whose image is the wide subcategory of cartesian morphisms for $p\colon \cat{E}[W^{-1}]\rt \cat{B}[W^{-1}]$. 

\item The localization $p\colon \cat{E}[W^{-1}]\rt \cat{B}[W^{-1}]$ is a cartesian fibration.

\item Suppose that $f\colon b_0\rt b_1$ is a morphism in $\cat{B}$ such that $f^*\colon \cat{E}(b_1)\rt \cat{E}(b_0)$ admits a left adjoint $f_!$ that preserves acyclic fibrations. For every $x_0\in \cat{E}(b_0)$, the natural arrow $x_0\rt f_!x_0$ in $\cat{E}$ determines a $p$-cocartesian arrow in $\cat{E}[W^{-1}]$.
\end{enumerate}
\end{proposition}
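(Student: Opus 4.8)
The plan is to reduce the whole statement to a computation of mapping spaces in the localizations, using the description of $\cat{M}[W^{-1}]$ for a category of fibrant objects $\cat{M}$ from Remark~\ref{rem:localization fibcat}: the localization functor is the identity on objects, and for $x,y\in\cat{M}$ one has a filtered (hence homotopy) colimit
$$
\Map_{\cat{M}[W^{-1}]}(x,y)\;\simeq\;\colim_{x'\twoheadrightarrow x}\Hom_{\cat{M}}(x',y)
$$
over the cofiltered category of acyclic fibrations with target $x$. Both $\cat{E}$ and $\cat{B}$ are categories of fibrant objects (Lemma~\ref{lem:fibcat grothendieck}) and $p$ is a morphism of such, so $p$ preserves acyclic fibrations and pullbacks along fibrations, sends weak equivalences to weak equivalences, and therefore descends to a functor $\bar p\colon\cat{E}[W^{-1}]\rt\cat{B}[W^{-1}]$ of $\infty$-categories to which the above formula also applies.

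\emph{The key computation} is that a strictly $p$-cartesian arrow of $\cat{E}$ localizes to a $\bar p$-cartesian morphism. Fix $f\colon b_0\rt b_1$ in $\cat{B}$ and $x_1\in\cat{E}(b_1)$, and let $\tilde f\colon f^{*}x_1\rt x_1$ be the strictly $p$-cartesian lift supplied by the classical Grothendieck construction; it is homotopy $p$-cartesian (Definition~\ref{def:homotopically cart fib}). One checks that for every $x'\in\cat{E}$ with $b':=p(x')$ the canonical map
$$
\Map_{\cat{E}[W^{-1}]}(x',f^{*}x_1)\;\longrightarrow\;\Map_{\cat{E}[W^{-1}]}(x',x_1)\times^{h}_{\Map_{\cat{B}[W^{-1}]}(b',b_1)}\Map_{\cat{B}[W^{-1}]}(b',b_0)
$$
is an equivalence, as follows. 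For every acyclic fibration $x''\twoheadrightarrow x'$ the strict universal property of the Grothendieck construction gives an isomorphism of sets $\Hom_{\cat{E}}(x'',f^{*}x_1)\cong\Hom_{\cat{E}}(x'',x_1)\times_{\Hom_{\cat{B}}(px'',b_1)}\Hom_{\cat{B}}(px'',b_0)$, and a strict pullback of sets is a homotopy pullback of the underlying discrete spaces. Passing to the colimit over $x''\twoheadrightarrow x'$ and commuting the filtered colimit past this finite limit, the left side becomes $\Map_{\cat{E}[W^{-1}]}(x',f^{*}x_1)$ and the first right-hand factor becomes $\Map_{\cat{E}[W^{-1}]}(x',x_1)$; for the two $\cat{B}$-factors one uses that $x''\mapsto px''$ sends acyclic fibrations over $x'$ to acyclic fibrations over $b'$ and that the resulting functor on replacement categories is final --- for every $b''\twoheadrightarrow b'$ the relevant comma category has the strictly cartesian lift $(b'')^{*}x'\twoheadrightarrow x'$ as a terminal object --- so that $\colim_{x''\twoheadrightarrow x'}\Hom_{\cat{B}}(px'',b_i)\simeq\Map_{\cat{B}[W^{-1}]}(b',b_i)$.

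The remaining parts are then formal. For~(3), every morphism of $\cat{B}[W^{-1}]$ is represented by a zig-zag in $\cat{B}$ and the key computation produces a $\bar p$-cartesian lift ending at any prescribed object, so $\bar p$ admits cartesian lifts of all morphisms and is a cartesian fibration \cite[\S 2.4]{lur09}. For~(1), the same formula computes, for $x_0,x_1\in\cat{E}(b)$, the homotopy fibre of $\Map_{\cat{E}[W^{-1}]}(x_0,x_1)\rt\Map_{\cat{B}[W^{-1}]}(b,b)$ over $\mathrm{id}_b$ as $\colim_{(x''\twoheadrightarrow x_0,\ \sigma\colon px''\twoheadrightarrow b)}\Hom_{\cat{E}(px'')}(x'',\sigma^{*}x_1)$, and the comparison functor $\cat{E}(b)[W^{-1}]\rt\cat{E}[W^{-1}]\times_{\cat{B}[W^{-1}]}\{b\}$ is induced by the subfamily of \emph{fibrewise} replacements; full faithfulness amounts to finality of this subfamily, while essential surjectivity follows by transporting an object along a $\bar p$-cartesian equivalence covering a weak equivalence into $b$ and then using homotopy invariance~\ref{it:homotopy invariance} to land strictly inside $\cat{E}(b)$. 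For~(2), a morphism of $\cat{E}$ is homotopy $p$-cartesian exactly when it factors as a fibrewise weak equivalence followed by a strictly $p$-cartesian arrow, so by the key computation homotopy cartesian arrows localize to $\bar p$-cartesian morphisms, giving an identity-on-objects functor $\cat{E}^{\mm{hocart}}[W^{-1}]\rt(\cat{E}[W^{-1}])^{\cart}$ onto the (genuine, by \cite{lur09}) wide subcategory of $\bar p$-cartesian morphisms; both are cartesian fibrations over $\cat{B}[W^{-1}]$ with fibres the cores $\core_0(\cat{E}(b)[W^{-1}])$ --- for $\cat{E}^{\mm{hocart}}$ this uses Remark~\ref{rem:Ehocart} and \cite[Lemmas 14 and 15]{lan17} --- so \cite[Corollary 2.4.4.4]{lur09} makes it an equivalence, and composing with the subcategory inclusion gives~(2). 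For~(4), Ken Brown's lemma shows that $f_!$ preserving acyclic fibrations preserves all weak equivalences, so the strict adjunction $f_!\dashv f^{*}$ (both functors homotopical) descends to an adjunction $Lf_!\dashv\bar{f^{*}}$ between $\cat{E}(b_0)[W^{-1}]$ and $\cat{E}(b_1)[W^{-1}]$; since $\bar p$ is a cartesian fibration, the unit of this adjunction at $x_0$ exhibits $f_!x_0$ as the target of a $\bar p$-cocartesian lift of $f$, which is precisely the arrow $x_0\rt f_!x_0$ of $\cat{E}$.

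I expect the genuine obstacle to be not the key computation --- which is fairly formal once the colimit description of mapping spaces and the strict universal property of cartesian lifts are available --- but the full faithfulness in part~(1): one must show that fibrewise acyclic fibrations over $x_0$ are final among all acyclic fibrations over $x_0$ carrying a compatible projection to a replacement of $b$, and this is exactly where the factorization property~\ref{it:factorization} of a homotopy cartesian fibration is needed, to manufacture a fibrewise replacement out of an arbitrary one.
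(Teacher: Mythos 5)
There is a genuine gap, and it is at the very foundation of your argument: the formula
$$
\Map_{\cat{M}[W^{-1}]}(x,y)\;\simeq\;\colim_{x'\twoheadrightarrow x}\Hom_{\cat{M}}(x',y)
$$
is false for a general category of fibrant objects. The right-hand side is a filtered colimit of sets, hence a discrete space, whereas mapping spaces in the $\infty$-categorical localization are typically not discrete (take $\cat{M}$ to be Kan complexes: the localization is $\sS$ and its mapping spaces carry higher homotopy that no colimit of hom-sets can see). What a Brown-style calculus of fractions gives you is at best a description of hom-\emph{sets} in the homotopy $(1,1)$-category, and even that only after quotienting by a homotopy relation. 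The correct description, recorded in Remark \ref{rem:localization fibcat}, presents $\Map_{\cat{M}[W^{-1}]}(x_0,x_1)$ as the \emph{classifying space} of the category of spans $x_0\stackrel{\sim}{\twoheadleftarrow}x_{01}\rt x_1$; the extra homotopy lives in the non-invertible morphisms of that span category. Since your ``key computation'' consists of taking a strict pullback of hom-sets and commuting a filtered colimit past a finite limit of discrete spaces, it does not establish that the relevant square of genuine mapping spaces is a homotopy pullback, and parts (1)--(3) (and hence (4), which leans on (3)) are not proved.

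The hard content you are missing is exactly what replaces this step once the span model is used: one must show that the functor $\Span_{\cat{E}}(x_0,x_1)\rt\Span_{\cat{B}}(px_0,px_1)$ is a cartesian fibration of $(1,1)$-categories whose base-change functors between fibers $\Span_{\cat{E}(b)}(\alpha^*x_0,\beta^*x_1)$ induce equivalences on classifying spaces --- this is where condition \ref{it:homotopy invariance} of Lemma \ref{lem:fibcat grothendieck} enters --- so that Quillen's Theorem~B applies and the classifying-space functor preserves the pullbacks you need. With that in hand, your overall architecture (identify the fibers, show strictly/homotopy cartesian arrows become cartesian, deduce the fibration property, and treat (4) via the descended adjunction and the fact that cocartesian equals locally cocartesian over a cartesian fibration) does match the intended line of argument; your instinct that the factorization axiom is needed to replace an arbitrary resolution by a fiberwise one is also on target. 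But as written the proof does not go through, because its one quantitative input is an incorrect model for the mapping spaces.
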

This proposition is essentially a special case of a result of Hinich \cite{hin16}. We have included a proof that does not rely on Lurie's (un)straightening machinery (in contrast to loc.\ cit.).
\begin{remark}\label{rem:localization fibcat}
For $\cat{M}\in \cat{FibCat}$, its ($\infty$-categorical) localization can be modeled as follows \cite{hor16, nui16}. Let us write $\Span_{\cat{M}}$ for the $(2, 2)$-category whose:
\begin{enumerate}[start=0]
\item objects are the objects of $\cat{M}$.
\item mapping categories $\Span_{\cat{M}}(x_0, x_1)$ are the full subcategories of spans $x_0\lt x_{01}\rt x_1$ where the left morphism $\alpha\colon x_{01}\rt x_0$ is an acyclic fibration and $\beta\colon x_{01}\rt x_1$ is any morphism (and composition by composition of spans). We will abbreviate such a span by $(\alpha, x_{01}, \beta)\colon x_0\dashrightarrow x_1$. 
\end{enumerate}
Taking classifying spaces then determines a $1$-categorical algebra (i.e.\ a Segal space) whose completion models the localization \cite{hor16}. In fact, this description works for any relative category $(\cat{C}, W)$ where $W$ is generated under the $2$-out-of-$3$ property by a class of `acyclic fibrations' which is stable under base change.
\end{remark}
\begin{proof}
Let us start by considering the map of $(2, 2)$-categories $p\colon \Span_{\cat{E}}\rt \Span_{\cat{B}}$. For each $x_0, x_1\in \cat{E}$, the induced functor on mapping $(1, 1)$-categories 
\begin{equation}\label{eq:span functor maps}
\begin{tikzcd}
\Span_{\cat{E}}(x_0, x_1)\arrow[r] & \Span_{\cat{B}}(px_0, px_1)
\end{tikzcd}\end{equation} is a cartesian fibration, whose fiber over a span $(\alpha, b, \beta)\colon px_0\dashrightarrow px_1$ can be identified with $\Span_{\cat{E}(b)}(\alpha^*x_0, \beta^*x_0)$. Using condition \ref{it:homotopy invariance} and Remark \ref{rem:localization fibcat}, it follows that for each morphism $\gamma\colon b'\rto{\sim} b$ in $\Span_{\cat{B}}(px_0, px_1)$, the induced map on fibers 
$$
\gamma^*\colon \Span_{\cat{E}(b)}(\alpha^*x_0, \beta^*x_0)\rt \Span_{\cat{E}(b')}(\gamma^*\alpha^*x_0, \gamma^*\beta^*x_0)
$$
induces an equivalence on classifying spaces. It follows that \eqref{eq:span functor maps} satisfies the conditions of Quillen's Theorem B; consequently, the classifying space functor $|-|\colon \Cat_{(1, 1)}\rt \sS$ preserves all pullbacks along it.

In particular, this implies that the pullback square of $(2, 2)$-categories
$$\begin{tikzcd}
\Span_{\cat{E}(b)}\arrow[r]\arrow[d] & \Span_{\cat{E}}\arrow[d]\\
\{b\}\arrow[r] & \Span_{\cat{B}}
\end{tikzcd}$$
induces a (homotopy) pullback square of Segal spaces upon taking classifying spaces of all mapping categories. To prove assertion (1), we have to show that this also induces a homotopy pullback square on completions. This follows as soon as $\Span_{\cat{E}}\rt \Span_{\cat{B}}$ induces an isofibration on homotopy $(1, 1)$-categories (cf.\ \cite[Corollary 4.4]{har20}). 

To see this, note that by the 2-out-of-6 property, any isomorphism in $\ho(\Span_{\cat{B}})$ arises from a span $(\alpha, b, \beta)\colon b_0\dashrightarrow b_1$ in $\cat{B}$ with $\alpha$ a trivial fibration and $\beta$ a weak equivalence. It therefore suffices to prove the following stronger assertion: 
\begin{itemize}[label={($\ast$)}]
\item For any span $(\alpha, b, \beta)\colon b_0\dashrightarrow b_1$ in $\cat{B}$ with $\alpha$ a trivial fibration and any $x_1\in \cat{E}_{b_1}$, there exists a lift to a span $(\tilde{\alpha}, x_{12}, \tilde{\beta})\colon x_0\dashrightarrow x_1$ with $\tilde{\alpha}$ a trivial fibration and $\tilde{\beta}$ homotopy cartesian.
\end{itemize}
To see this, first take $\beta^*x_1\in \cat{E}_b$ and recall that $\alpha^*\colon \cat{E}_{b_0}\rt \cat{E}_b$ induces an equivalence on localizations. This implies that there exists some $x_0\in \cat{E}_{b_0}$ together with a zig-zag $\alpha^*(x_0)\stackrel{\sim}{\twoheadleftarrow} x\rto{\sim} \beta^*x_1$. The composite $x_0\stackrel{\sim}{\twoheadleftarrow} \alpha^*(x_0) \stackrel{\sim}{\twoheadleftarrow} x\rto{\sim} \beta^*(x_1)\rt x_1$ then provides the desired lift.

This proves (1). For (2) and (3), note that by Remark \ref{rem:Ehocart} and Remark \ref{rem:localization fibcat}, the localization of $\cat{E}^{\mm{hocart}}$ arises from the $(2, 2)$-category $\Span_{\cat{E}^\mm{hocart}}$. Since being a homotopy $p$-cartesian morphism is invariant under weak equivalences, $\Span_{\cat{E}^\mm{hocart}}(x_0, x_1)\rt \Span_{\cat{E}}(x_0, x_1)$ is an inclusion of connected components, so that $\cat{E}^\mm{hocart}[W^{-1}]\rt \cat{E}[W^{-1}]$ is indeed the inclusion of a wide subcategory. Assertions (2) and (3) then follow from ($\ast$) as soon as every arrow in $\cat{E}^\mm{hocart}[W^{-1}]$ is a cartesian arrow.

Let $(\tilde{\alpha}, x_{12}, \tilde{\beta})\colon x_1\dashrightarrow x_2$ be a morphism in $\Span_{\cat{E}^\mm{hocart}}$, so that $\tilde{\beta}$ is homotopy $p$-cartesian, and let $(\alpha, b_{12}, \beta)\colon b_1\dashrightarrow b_2$ denote its image in $\cat{B}$. To see that this map is an equivalence, we have to show that for any $x_0\in \cat{E}$ with image $b_0=px_0$ in $\cat{B}$, the square of categories
\begin{equation}\label{eq:span square}\begin{tikzcd}[column sep=5pc]
\Span_{\cat{E}}(x_0, x_1)\arrow[d]\arrow[r, "{(\tilde{\alpha}, x_{12}, \tilde{\beta})\circ (-)}"] & \Span_{\cat{E}}(x_0, x_2)\arrow[d]\\
 \Span_{\cat{B}}(b_0, b_1)\arrow[r, "{(\alpha, b_{12}, \beta)\circ (-)}"] & \Span_{\cat{B}}(b_0, b_2)
\end{tikzcd}\end{equation}
induces a homotopy cartesian square of classifying spaces. Since the vertical maps are cartesian fibrations satisfying the conditions of Quillen's Theorem B, it suffices to show that for any span $(\gamma, b_{01}, \delta)\colon b_0\dashrightarrow b_1$ in $\Span_{\cat{B}}(b_0, b_1)$, the induced map between fibers induces an equivalence on classifying spaces. To see this, let us denote the composite span $(\alpha, b_{12}, \beta)\circ (\gamma, b_{01}, \delta)$ by
$$\begin{tikzcd}[sep=small]
& & b_{02}\arrow[rd, "\delta'"]\arrow[ld, "\alpha'"{swap}, two heads, "\sim"]\\
& b_{01}\arrow[ld, "\gamma"{swap}, "\sim", two heads]\arrow[rd, "\delta"] & & b_{12}\arrow[ld, "\alpha"{swap}, "\sim", two heads]\arrow[rd, "\beta"]\\
b_0 & & b_1 & & b_2
\end{tikzcd}$$
where $b_{02}=b_{01}\times_{b_1} b_{12}$. Recall from the beginning of the proof that the fibers of the vertical maps in \eqref{eq:span square} are themselves given by categories of spans. Unraveling the definition, the map between the vertical fibers over $(\gamma, b_{01}, \delta)$ can then be identified with the composite
$$\begin{tikzcd}[column sep=0.6pc, row sep=1pc, cramped]
\Span_{\cat{E}(b_{01})}(\gamma^*x_0, \delta^*x_1)\arrow[rd, "(\alpha')^*"{swap}]\arrow[rr] & & \Span_{\cat{E}(b_{02})}\Big((\gamma\alpha')^*x_0, (\beta\delta')^*x_2\Big)\\
& \Span_{\cat{E}(b_{02})}\Big((\gamma\alpha')^*x_0, (\alpha\delta')^*x_2\Big)\arrow[ru, "f\circ (-)"{swap}]
\end{tikzcd}$$
The first map is an equivalence since $(\alpha')^*$ induces an equivalence on localizations. The second map takes the postcomposition with the span $(\delta')^*\alpha^*x_1\lt (\delta')^*x_{12}\rt (\delta')^*\beta^*x_2$, where both maps are weak equivalences since $\tilde{\alpha}$ and $\tilde{\beta}$ were homotopy cartesian. This map induces an equivalence on classifying spaces as well.

Finally, for (4), let us start by recalling that for any cartesian fibration $p\colon \cat{D}\rt \cat{C}$ of $1$-categories, an arrow in $\cat{D}$ is $p$-cocartesian if and only if it is \emph{locally} $p$-cocartesian, i.e.\ cocartesian in the base change of $p$ to a $1$-simplex \cite[Corollary 5.2.2.4]{lur09}. In the present situation, the observations at the beginning of the proof imply that the base change of $\cat{E}[W^{-1}]\rt \cat{B}[W^{-1}]$ to the $1$-simplex $f\colon b_0\rt b_1$ can be computed at the level of $(2, 2)$-categories. Unraveling the definitions as in the proofs of parts (2) and (3), we reduce to showing that for any $x_2\in \cat{E}(b_1)$,
\begin{equation}\label{eq:unit compose span}\begin{tikzcd}[column sep=2.5pc]
\Span_{\cat{E}(b_1)}(f_!x_0, x_2)\arrow[r, "f^*"] & \Span_{\cat{E}(b_1)}(f^*f_!x_0, f^*x_2)\arrow[r, "(-)\circ \eta"] & \Span_{\cat{E}(b_1)}(x_0, f^*x_2)
\end{tikzcd}\end{equation}
induces an equivalence on classifying spaces. Here the second map precomposes spans with the unit $\eta\colon x_0\rt f^*f_!x_0$. Explicitly, \eqref{eq:unit compose span} sends $f_!x_0\lt x_{12}\rt x_2$ to the span $x_0\lt x_0\times_{f^*f_!x_0} f^*x_{12}\rt f^*x_2$. This induces an equivalence on classifying spaces since it admits a left adjoint, sending $x_0\lt x_{02}\rt f^*x_2$ to $f_!x_0\lt f_!x_{02}\rt f_!f^*x_2\rt x_2$.
\end{proof}

\bibliographystyle{abbrv}
\bibliography{bibliography_thesis} 

\end{document}